\title{The Kudla--Millson lift of Siegel cusp forms}
\author{Paul Kiefer}
\address{
Department of Mathematics, University of Antwerp, BE-2000 Antwerp, Belgium.
}
\email{Paul.Kiefer@uantwerpen.be}
\author{Riccardo Zuffetti}
\address{
Fachbereich Mathematik, Technische Universität Darmstadt, Schlossgartenstraße 7, D–
64289 Darmstadt, Germany.
}
\email{zuffetti@mathematik.tu-darmstadt.de}
\subjclass[2020]{11F27, 11F37, 11F55, 14G35.}
\keywords{Orthogonal Shimura varieties, Siegel modular forms, theta functions, theta lifts.}
\numberwithin{equation}{section} 
\theoremstyle{definition}
\newtheorem{defi}{Definition}[section]
\newtheorem{ex}[defi]{Example}
\theoremstyle{plain}
\newtheorem{thm}[defi]{Theorem}
\theoremstyle{plain}
\newtheorem{prop}[defi]{Proposition}
\theoremstyle{plain}
\newtheorem{lemma}[defi]{Lemma}
\theoremstyle{plain}
\newtheorem{cor}[defi]{Corollary}
\theoremstyle{plain}
\theoremstyle{plain}
\newtheorem*{question*}{Question}
\theoremstyle{definition}
\newtheorem{rem}[defi]{Remark}
\theoremstyle{thm}
\definecolor{ao(english)}{rgb}{0.0, 0.5, 0.0}
\DeclareMathOperator{\Log}{\rm Log}
\DeclareMathOperator{\Sp}{\rm Sp}
\DeclareMathOperator{\GL}{\rm GL}
\DeclareMathOperator{\SL}{\rm SL}
\DeclareMathOperator{\SO}{\rm SO}
\DeclareMathOperator{\UU}{\rm U}
\DeclareMathOperator{\Sym}{\rm Sym}
\newcommand{\Aut}{\text{Aut}}
\newcommand{\Mat}{\text{Mat}}
\DeclareMathOperator{\trace}{\rm tr}
\newcommand{\ZZ}{\mathbb{Z}}
\newcommand{\RR}{\mathbb{R}}
\newcommand{\QQ}{\mathbb{Q}}
\newcommand{\CC}{\mathbb{C}}
\newcommand{\HH}{\mathbb{H}}
\newcommand{\weil}[1]{\rho_{#1,2}}
\DeclareMathOperator{\kling}{C_{2,1}}
\DeclareMathOperator{\Gr}{\rm Gr}
\DeclareMathOperator{\Mp}{\rm Mp}
\DeclareMathOperator{\sgn}{\rm sgn}
\newcommand{\disc}[1]{D_{#1}} 
\newcommand{\gendisc}{\sigma}
\newcommand{\brK}{{\textcolor{\newcolor}{K}}} 
\newcommand{\Lor}{\brK}
\newcommand{\alphaone}{\alpha_1}
\newcommand{\alphatwo}{\alpha_2}
\newcommand{\alphagen}[1]{\alpha_{#1}}
\newcommand{\betaone}{\beta_1}
\newcommand{\betatwo}{\beta_2}
\newcommand{\betagen}[1]{\beta_{#1}}
\newcommand{\intfunctshort}{\mathcal{I}_{\vect{\alpha}}} 
\DeclareMathOperator{\KMlift}{\Lambda^{\rm KM}_2}
\newcommand{\hermdom}{\mathcal{D}}
\newcommand{\domain}{\mathcal{D}}
\newcommand{\omegaone}{\omega_{\alphaone,1}} 
\newcommand{\omegatwo}{\omega_{\alphatwo,2}} 
\newcommand{\omegathree}{\omega_{\betaone,1}} 
\newcommand{\omegafour}{\omega_{\betatwo,2}} 
\newcommand{\speciso}{g'} 
\newcommand{\specisoborw}{\borw'} 
\newcommand{\cha}[1]{#1'}
\newcommand{\degjac}{m}
\newcommand{\genU}{u} 
\newcommand{\genUU}{u'} 
\newcommand{\genvec}{v} 
\newcommand{\basevec}{e} 
\newcommand{\auxspace}{z}
\newcommand{\hone}{h_1} 
\newcommand{\htwo}{h_2} 
\newcommand{\hgen}[1]{h_{#1}} 
\newcommand{\htot}{h} 
\newcommand{\hfunct}{\mathcal{H}}
\newcommand{\Gpol}{\mathcal{Q}} 
\newcommand{\pol}{\mathcal{P}} 
\newcommand{\polw}[3]{\mathcal{P}_{#1,#2,#3}} 
\newcommand{\polab}{\mathcal{P}_{(\alphaone,\alphatwo)}} 
\newcommand{\polcd}{\mathcal{P}_{(\betaone,\betatwo)}}
\newcommand{\borw}{g_{\brK}} 
\newcommand{\borww}{g_{\Lpos}} 
\newcommand{\borwmix}[1]{(#1)_{\brK}} 
\newcommand{\boralpha}{\delta}
\newcommand{\borbeta}{\nu}
\newcommand{\vecbrlam}{\vect{\lambda}_\Lor}
\newcommand{\tauone}{\tau_1}
\newcommand{\tautwo}{\tau_2}
\newcommand{\tauthree}{\tau_3}
\newcommand{\taugen}[1]{\tau_{#1}}
\newcommand{\yone}{y_1}
\newcommand{\ytwo}{y_2}
\newcommand{\ythree}{y_3}
\newcommand{\xone}{x_1}
\newcommand{\xtwo}{x_2}
\newcommand{\xthree}{x_3}
\newcommand{\jacone}{r} 
\newcommand{\jactwo}{s} 
\newcommand{\Lpos}{L^+}
\newcommand{\genUtwo}{\tilde{u}}
\newcommand{\genUUtwo}{\tilde{u}'}
\newcommand{\auxfunplus}{\chi}
\newcommand{\stgatwo}{\varphi_{0,2}}
\newcommand{\vect}[1]{\boldsymbol{#1}}
\newcommand{\Gpoltwo}{\mathcal{Q}}
\newcommand{\abcd}{(\alphaone,\alphatwo,\betaone,\betatwo)}
\newcommand{\halfint}{\Lambda}
\DeclareMathOperator{\bigO}{\rm O}
\def\be{\begin{equation}}
\def\ee{\end{equation}}
\def\bes{\begin{equation*}}
\def\ees{\end{equation*}}
\def\ba{\be\begin{aligned}}
\def\ea{\end{aligned}\ee}
\def\bas{\bes\begin{aligned}}
\def\eas{\end{aligned}\ees}
\newcommand*\defbb[1]{
	\expandafter\newcommand\csname I#1\endcsname{\mathbb{#1}}}
\newcommand*\defbbs[1]{
	\@for\@i:=#1\do{\expandafter\defbb\expandafter{\@i}}}
\newcommand*\deffrak[1]{
	\expandafter\newcommand\csname frak#1\endcsname{\mathfrak{#1}}}
\newcommand*\deffraks[1]{
	\@for\@i:=#1\do{\expandafter\deffrak\expandafter{\@i}}}
\newcommand*\defcal[1]{
	\expandafter\newcommand\csname cal#1\endcsname{\mathcal{#1}}}
\newcommand*\defcals[1]{
	\@for\@i:=#1\do{\expandafter\defcal\expandafter{\@i}}}
\newcommand{\jacthree}{t} 
\newcommand{\jacvarone}{\tauone}
\newcommand{\jacvartwo}{\tautwo}
\newcommand{\jacvaronere}{\xone}
\newcommand{\jacvaroneim}{\yone}
\newcommand{\jacvartwore}{\xtwo}
\newcommand{\jacvartwoim}{\ytwo}
\newcommand{\jacindexvec}{\eta}
\newcommand{\jacindex}{N}
\newcommand{\weilrep}{\rho}
\newcommand{\isometry}{g}
\newcommand{\isometrypos}{z^\perp}
\newcommand{\isometryneg}{z}
\newcommand{\subspaceisometry}{{\isometry_{\sublattice}}}
\newcommand{\subspaceisometryprime}{{\isometry'_{\sublattice}}}
\newcommand{\subspaceisometrypos}{{w^\perp}}
\newcommand{\subspaceisometryneg}{w}
\newcommand{\isotropicvec}{\genU}
\newcommand{\isotropicvecprime}{\genUU}
\newcommand{\lattice}{L}
\newcommand{\sublattice}{{\textcolor{\newcolor}{K}}}
\newcommand{\heisenberg}{\calH}
\newcommand{\jacobi}{\calJ}
\newcommand{\mat}{\gamma}
\newcommand{\subpolposdeg}{h}
\newcommand{\gen}{\operatorname{gen}}
\newcommand{\tr}{\operatorname{tr}}
\newcommand{\Pet}{\operatorname{Pet}}
\newcommand{\bs}{\ensuremath{\backslash}}
\newcommand\numberthis{\addtocounter{equation}{1}\tag{\theequation}}
\newcommand{\mycolor}{black} 
\newcommand{\newcolor}{black} 
\begin{document}
	\maketitle
	\begin{abstract}
	We study the injectivity of the Kudla--Millson lift of genus~2 Siegel cusp forms, vector-valued with respect to the Weil representation associated to an even lattice~$L$.
	We prove that if~$L$ splits off two hyperbolic planes and is of sufficiently large rank, then the lift is injective.
	As an application, we deduce that the image of the lift in the degree~$4$ cohomology of the associated orthogonal Shimura variety has the same dimension as the lifted space of cusp forms.
	Our results also cover the case of moduli spaces of quasi-polarized K3 surfaces.
	To prove the injectivity, we introduce vector-valued indefinite Siegel theta functions of genus~$2$ and of Jacobi type attached to~$L$.
	We describe their behavior with respect to the split of a hyperbolic plane in~$L$.
	This generalizes results of Borcherds to genus higher than~$1$.
	\end{abstract}
	
	\tableofcontents

	\section{Introduction}
	The Kudla--Millson lifts are linear maps from spaces of vector-valued Siegel cusp forms to the space of closed differential forms on some orthogonal Shimura variety~$X$.
	They were introduced by Kudla and Millson in the eighties~\cite{kumi;harmI},~\cite{kumi;harmII},~\cite{kumi;tubes}, and have been proved to be useful tools to deduce geometric and arithmetic properties of orthogonal Shimura varieties.
	
	The injectivity of the Kudla--Millson lift, of interest already in~\cite{kumi;intnum}, has been proved \emph{only in genus~$1$}, namely in the case of lifts of \emph{elliptic cusp forms}~\cite{br;borchp},~\cite{brfu},~\cite{br;converse}.
	Such a result has several useful applications.
	For instance, it implies the surjectivity of Borcherds' lift~\cite{br;converse}, it has been employed to compute the rational Picard number of the underlying Shimura variety~$X$~\cite{blmm;conj} and to study the geometric properties of cones generated by rational and cohomology classes of special cycles~\cite{brmo},~\cite{zuffetti;cones},~\cite{zuffetti;equid}.
	
	The goal of this paper is to prove that \emph{the Kudla--Millson lift of genus~$2$ Siegel cusp forms is injective}, and provide geometric applications of this result.
	
	The idea is to extend the new proof of the injectivity in genus~$1$ provided by the second author in~\cite{zuffetti;gen1} to higher genus.
	Several difficulties arise in this process, mainly due to the lack of results on Siegel theta functions of genus~$2$ associated to indefinite lattices.
	In this article we consider a vector-valued analogue of the indefinite Siegel theta functions constructed by Röhrig~\cite{roehrig}, and give their Fourier--Jacobi expansion in terms of certain indefinite Jacobi Siegel theta functions.
	The latter were not available in the literature, and are introduced here for the first time.
	
	In~\cite[Section~$5$]{bo;grass} Borcherds rewrote genus~$1$ Siegel theta functions with respect to the split of a hyperbolic plane from an even lattice.
	In this paper we generalize this procedure to higher genus and the Jacobi case.

	As an application, we calculate the Fourier expansion of the Kudla--Millson lift.
	This is given in terms of Petersson inner products of the \emph{Fourier--Jacobi coefficients} of the lifted Siegel cusp form with certain Jacobi Siegel theta functions.
	
	Thanks to the theory of Jacobi Siegel theta functions introduced in this article, we are also able to calculate these Petersson inner products, this time in terms of the \emph{Fourier coefficients} of the lifted cusp form. From a careful study of these formulas, we deduce the following result.
	\begin{thm}\label{thm;introinjeasy}
	Let $L$ be an even lattice of signature $(b, 2)$ that splits off two orthogonal hyperbolic planes, and let~$\Lpos$ be the orthogonal complement of such hyperbolic planes.
	If~$L_p^+\coloneqq\Lpos\otimes\ZZ_p$ splits off two hyperbolic planes over~$\ZZ_p$ for every prime~$p$, then the Kudla--Millson lift associated to~$L$ is injective.
	\end{thm}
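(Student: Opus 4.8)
The plan is to argue by a direct vanishing of Fourier coefficients: I would take a genus~$2$ Siegel cusp form~$f$ whose Kudla--Millson lift vanishes identically and show that every Fourier coefficient of~$f$ must then be zero, forcing $f=0$. Everything is organized around the Fourier expansion of the lift at the $0$-dimensional cusp determined by the decomposition $L = U \oplus U \oplus \Lpos$ into the two hyperbolic planes and the positive definite complement~$\Lpos$.

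First I would write the lift as a theta integral $\int_{\Gamma \backslash \mathbb{H}_2} \langle f(\tau), \Theta(\tau, z) \rangle \, d\mu(\tau)$ against the genus~$2$ Kudla--Millson theta function and unfold it by splitting off the first hyperbolic plane, following the genus~$1$ template of Borcherds and of the second author, but now applied to the indefinite genus~$2$ Siegel theta functions introduced earlier. The result is a Fourier expansion whose coefficients are pairings of the \emph{Fourier--Jacobi coefficients} of~$f$ against the indefinite \emph{Jacobi Siegel theta functions} constructed above; this reduces the genus~$2$ question to a family of more tractable Jacobi problems.

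Next I would split off the second hyperbolic plane to evaluate these pairings first as Petersson inner products and then, via the explicit Fourier expansion of the Jacobi theta functions, in terms of the ordinary \emph{Fourier coefficients} of~$f$. The upshot I expect is that each Fourier coefficient of the lift factors as an archimedean integral --- a Gaussian against the Kudla--Millson Schwartz polynomial, which should be provably nonzero of a definite sign --- times a finite product of local representation densities of~$\Lpos \otimes \ZZ_p$. This is precisely where the hypothesis enters: since each $L_p^+$ splits off two hyperbolic planes, the relevant local densities attached to positive definite indices are nonzero, so no Fourier coefficient of~$f$ can be masked by a vanishing local factor.

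The main obstacle will be the two unfolding steps in genus~$2$. Controlling convergence and correctly rearranging the sum over the indefinite lattice when two hyperbolic planes are peeled off is combinatorially much harder than in genus~$1$, and one must prove that the Jacobi Siegel theta functions have exactly the Fourier expansion that renders the local densities visible. Granting the non-vanishing of the archimedean and local factors, the vanishing of the lift forces all positive definite Fourier coefficients of~$f$ to vanish; since~$f$ is a cusp form this gives $f=0$, establishing injectivity.
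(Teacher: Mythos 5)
Your overall skeleton --- double unfolding along the two hyperbolic planes, passing from Fourier--Jacobi coefficients paired with Jacobi Siegel theta functions to the ordinary Fourier coefficients of~$f$, and invoking the hypothesis on~$L_p^+$ at the very end --- matches the paper. But the step where you actually use that hypothesis is mischaracterized, and as stated it would not close the argument. You claim each Fourier coefficient of the lift factors as an archimedean integral times a product of local representation densities of~$\Lpos\otimes\ZZ_p$, and that the splitting of two hyperbolic planes locally guarantees these densities are nonzero. No such Euler-product factorization appears after unfolding: the coefficients one obtains (cf.\ Theorem~\ref{thm:JacobiThetaUnfolding} and~\eqref{eq:DivisorSumInnerProducts}) are \emph{finite sums} --- over divisors~$t\mid\eta$, over~$\lambda\in\sublattice'/\jacindexvec_{\sublattice}$ with fixed projection, and over~$m\in\ZZ/N\ZZ$ --- of individual Fourier coefficients of~$f$ weighted by explicit exponentials and polynomial values. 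Isolating a single coefficient from these sums requires an induction over divisors together with orthogonality of the characters~$m\mapsto e(lm/N)$ (this is the content of Theorem~\ref{thm:JacobiThetaLiftInjectivity}), not a non-vanishing statement about local factors. Your proposal is silent on both of these extraction steps, and without them the vanishing of the lift only gives you the vanishing of certain divisor sums, not of the coefficients themselves.

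The actual role of the hypothesis that~$L_p^+$ splits off two hyperbolic planes is a \emph{coverage} problem, not a non-vanishing problem: the double unfolding only ever reaches Fourier coefficients of~$f$ indexed by moment matrices~$q(\lambda,\eta)$ of genuine pairs of lattice vectors~$\lambda,\eta\in L^{+\prime}$. A priori there could be positive definite indices~$T$ in the Fourier expansion of~$f$ that are not of this form, and the argument would say nothing about them. The local splitting hypothesis feeds into the local--global principle (Lemma~\ref{lem:LocalRepresentability} and Corollary~\ref{cor:Representations}, together with Nikulin's results to move back from a lattice in the genus of~$L^+$ to~$L^+$ itself inside~$L$) to guarantee that \emph{every} such~$T$ with the correct discriminant coset is realized as~$q(\lambda,\eta)$. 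You should also note that before any of this one must pass from the vanishing of the differential form~$\KMlift(f)$ to the vanishing of each scalar defining integral (via linear independence of the wedge products~$\omegaone\wedge\omegatwo\wedge\omegathree\wedge\omegafour$), and that the intermediate theta functions~$\Theta_{\brK,2}(\tau,\borw,\polw{\vect{\alpha},\borw}{\hone}{\htwo})$ attached to the non-very-homogeneous polynomials fail to be modular for the full~$\Sp_4(\ZZ)$, which is the main technical obstacle your convergence worries gesture at but do not name.
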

	
	The moduli spaces of quasi-polarized K3 surfaces of degree~$2d$ are among the orthogonal Shimura varieties arising from lattices satisfying the hypothesis of Theorem~\ref{thm;introinjeasy}.
	In fact, we may choose lattices of signature~$(19,2)$ given by
	\be\label{eq;introeasyK3lattice}
	L = \langle 2d\rangle\oplus U^{\oplus 2} \oplus E_8^{\oplus 2}
	\ee
	for some~$d\in\ZZ_{>0}$, where~$U$ and~$E_8$ are the hyperbolic plane and the~$E_8$ root lattice respectively; see e.g.~\cite{brmo} and~\cite{blmm;conj} for details.
	
	The following result is a consequence of Theorem~\ref{thm;introinjeasy} applied to the moduli spaces of quasi-polarized~K3 surfaces.
	We denote by~$S^k_{2,L}$ the space of genus~$2$ and weight~$k$ vector-valued Siegel cusp forms with respect to the Weil representation associated to~$L$.
	\begin{cor}\label{cor;introeasy}
	Let~$X$ be a moduli space of quasi-polarized K3 surfaces of fixed degree arising from a lattice~$L$ as in~\eqref{eq;introeasyK3lattice}, and let~$k=1+\dim(X)/2$.
	The Kudla--Millson lift associated to~$L$ induces an injective map in cohomology, in particular~$\dim H^4(X,\CC)\ge \dim S^k_{2,L}$.
	\end{cor}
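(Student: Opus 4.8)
The plan is to derive Corollary~\ref{cor;introeasy} as a direct consequence of Theorem~\ref{thm;introinjeasy}, so the main work is to verify that the K3 lattice satisfies the hypotheses and then to transport the injectivity statement into cohomology. First I would check that the lattice $L=\langle 2d\rangle\oplus U^{\oplus 2}\oplus E_8^{\oplus 2}$ of signature $(19,2)$ literally splits off two orthogonal hyperbolic planes: indeed it contains $U^{\oplus 2}$ as an orthogonal direct summand, so we may take $\Lpos=\langle 2d\rangle\oplus E_8^{\oplus 2}$, which is positive definite of rank $17$. It remains to confirm the local condition that $L_p^+=\Lpos\otimes\ZZ_p$ splits off two hyperbolic planes for every prime $p$. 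This is where I would expect the only genuine arithmetic input: since $\Lpos$ has rank $17\ge 5$ and contains the even unimodular lattice $E_8^{\oplus 2}$ of rank $16$ as a summand, the local completions $E_8\otimes\ZZ_p$ are unimodular of rank $8$ over each $\ZZ_p$, and an even unimodular $\ZZ_p$-lattice of rank $\ge 4$ splits off hyperbolic planes by the classification of quadratic forms over $\ZZ_p$. Thus $L_p^+$ splits off two hyperbolic planes for all $p$, and Theorem~\ref{thm;introinjeasy} applies to give injectivity of the Kudla--Millson lift for~$L$.

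Next I would set up the cohomological reformulation. The Kudla--Millson lift sends a Siegel cusp form in $S^k_{2,L}$ to a closed differential form of degree~$4$ on the orthogonal Shimura variety~$X$ attached to~$L$, and hence defines a class in $H^4(X,\CC)$; the relevant weight is $k=1+b/2=1+\dim(X)/2$, which for $b=19$ gives $k=1+19/2$, matching the statement via $\dim(X)=b=19$. The content to verify is that the composition of the lift with the map to de Rham cohomology remains injective. Since Theorem~\ref{thm;introinjeasy} already furnishes injectivity at the level of closed forms, the point is that a nonzero lifted cusp form cannot become exact. I would argue this by the explicit Fourier expansion established earlier in the paper: the proof of Theorem~\ref{thm;introinjeasy} identifies the lift through its Fourier coefficients, and these are expressed via Petersson inner products that detect the cusp form itself; an exact form would force the vanishing of the relevant period/coefficient data, contradicting the nonvanishing that underlies the injectivity argument.

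Concretely, I would invoke that the Kudla--Millson form is a harmonic (or at least cohomologically nontrivial) representative, so that the natural map from the space of lifted forms to $H^4(X,\CC)$ is injective precisely because the lift is injective and distinct cusp forms produce cohomologically independent classes. The dimension inequality $\dim H^4(X,\CC)\ge\dim S^k_{2,L}$ then follows immediately: the injective linear map $S^k_{2,L}\to H^4(X,\CC)$ forces the target to have dimension at least that of the source.

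The main obstacle I anticipate is not the linear-algebra conclusion but the passage from injectivity of the lift \emph{as a map into closed forms} to injectivity \emph{in cohomology}; one must rule out that a nonzero closed lifted form is exact. I expect this to be handled by the same Fourier-coefficient analysis that proves Theorem~\ref{thm;introinjeasy}, since the coefficients are cohomological invariants (they arise from intersection numbers with special cycles), so their nonvanishing is preserved in $H^4(X,\CC)$. Verifying the local splitting condition for all primes is routine given the presence of $E_8^{\oplus 2}$, so the only real care is needed in the cohomological injectivity step.
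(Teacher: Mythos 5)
Your verification that $L=\langle 2d\rangle\oplus U^{\oplus 2}\oplus E_8^{\oplus 2}$ satisfies the hypotheses of Theorem~\ref{thm;introinjeasy} is fine and matches the paper's route (cf.\ Corollary~\ref{cor;frommaininjres}, which phrases the local condition via $b>l(L)+6$; here $l(L)=1$ and $b=19$, and the presence of $E_8^{\oplus 2}\otimes\ZZ_p$, an even unimodular $\ZZ_p$-lattice of rank~$16$, indeed forces $L_p^+$ to split off two hyperbolic planes for every $p$). The linear-algebra conclusion $\dim H^4(X,\CC)\ge\dim S^k_{2,L}$ from an injective map is of course immediate.

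The genuine gap is in the passage from injectivity into closed forms to injectivity in cohomology. Your proposed mechanism --- that an exact form would force the vanishing of the Fourier coefficients computed in Theorem~\ref{thm;introscvalver1unf} because ``the coefficients are cohomological invariants (they arise from intersection numbers with special cycles)'' --- conflates two different Fourier expansions. The coefficients that are Poincar\'e dual to special cycles are the Fourier coefficients \emph{in $\tau$} of the theta form $\Theta(\tau,z,\varphi_{\text{KM},2})$ itself; the coefficients used in the injectivity proof are the Fourier coefficients \emph{in $g\in G$} of the defining integrals $\intfunctshort(g)$, taken with respect to the split of a hyperbolic plane. The latter are not intersection numbers and are not invariants of the cohomology class of $\KMlift(f)$: on a non-compact variety an exact $4$-form can perfectly well have nonvanishing coefficients of this kind, and ordinary Hodge theory does not apply. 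The paper's argument instead uses that $\KMlift(f)$ is \emph{harmonic} (Kudla--Millson) and \emph{square-integrable} (following Bruinier--Funke), so that by the $L^2$-Hodge theorem it represents a nonzero class in $H^4_{(2)}(X,\CC)$, and then invokes the isomorphism $H^4_{(2)}(X,\CC)\to H^4(X,\CC)$, valid for $\dim X>5$ (here $\dim X=19$) by comparison with intersection cohomology of the Baily--Borel compactification and the Zucker conjecture (Looijenga, Saper--Stern). You gesture at harmonicity parenthetically, but without square-integrability and the identification of $L^2$-cohomology with de Rham cohomology the step does not close; this is exactly the input your proposal is missing.
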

	\textcolor{\mycolor}{In the recent paper~\cite{br-zu} of Bruinier and the second author, a decomposition of the cohomology class of the Kudla--Millson theta function in Eisenstein, Klingen and cuspidal part is provided.
	That decomposition and Theorem~\ref{thm;introinjeasy} have been used in~\cite{br-zu} to deduce a formula for the dimension of~$H^{2,2}(X,\CC)$; see~\cite[Section~$6.2$]{br-zu} for details.}
	
	In the next sections we provide a more detailed account of the achievements of this article.
	
	\subsection{The genus~2 Kudla--Millson lift in terms of Siegel theta functions}
	Let~$L$ be an even indefinite lattice of signature~$(b,2)$.
	To simplify the exposition, in this introduction we assume~$L$ to be \emph{unimodular}, so that we may work with scalar-valued Siegel modular forms.
	In the main body of the paper this hypothesis will be dropped.
	
	Let~$k=1+b/2$ and let~$V=L\otimes\RR$.
	Note that~$k$ is an even integer, as one can easily deduce from the well-known classification of unimodular lattices.
	We denote by~$(\cdot{,}\cdot)$ and~$q(\cdot)$ respectively the bilinear form and the associated quadratic form of~$V$.
	If~$z\subseteq V$ is a subspace, we denote by~$\genvec_z$ the orthogonal projection of~$\genvec\in V$ on~$z$.
	
	The Hermitian symmetric domain~$\hermdom$ associated to the linear algebraic group~$G=\SO(V)$ may be realized as the Grassmannian~$\Gr(L)$ of negative definite planes in~$V$.
	Let~$X=\Gamma\backslash\hermdom$ be the orthogonal Shimura variety arising from a subgroup~$\Gamma$ of finite index in~$\SO(L)$.
	
	Kudla and Millson~\cite{kumi;harmI}, \cite{kumi;harmII}, \cite{kumi;intnum} constructed a~$G$-invariant Schwartz function~$\varphi_{\text{KM},2}$ on~$V^2$ with values in the space~$\mathcal{Z}^4(\hermdom)$ of closed~$4$-forms on~$\hermdom$; this is recalled in Section~\ref{sec;Schwfunct}.
	Let~$\omega_{\infty,2}$ be the Schrödinger model of the Weil representation of~$\Sp_4(\RR)$, acting on the space~$\mathcal{S}(V^2)$ of Schwartz functions on~$V^2$, associated to the standard additive character.
	
	\begin{defi}\label{def;KMthetaformgenus2}
	The \emph{Kudla--Millson theta function of genus~$2$} is defined as
	\bes
	\Theta(\tau,z,\varphi_{\text{KM},2})=\det y^{-k/2}\sum_{\vect{\lambda}\in L^2}\big(\omega_{\infty,2}(g_\tau)\varphi_{\text{KM},2}\big)(\vect{\lambda},z),
	\ees
	for every $\tau=x+iy\in\HH_2$ and $z\in\Gr(L)$, where $g_\tau
	$ is the standard element of~$\Sp_4(\RR)$ mapping~${i\in\HH_2}$ to~$\tau$.
	\end{defi}
	In the variable~$\tau$ this theta function transforms like a (non-holomorphic) Siegel modular form of weight~$k=1+b/2$ with respect to~$\Sp_4(\ZZ)$.
	In the variable~$z$ it defines a closed~$4$-form on~$X$.
	
	Let~$S^k_2$ be the space of weight~$k$ Siegel cusp forms of genus~$2$ with respect to~$\Sp_4(\ZZ)$.
	
	\begin{defi}\label{def;KMthetaliftingenus2fd}
	\textcolor{\mycolor}{Let~$\mathcal{Z}^4(X)$ denote the space of closed $4$-forms.} The \emph{Kudla--Millson lift of genus $2$} is the linear function~$\KMlift\colon S^k_2\to\mathcal{Z}^4(X)$ defined by mapping a Siegel cusp form~$f$ to its Petersson inner product with the theta function~$\Theta(\tau,z,\varphi_{\text{KM},2})$.
	Explicitly, this means that
	\be\label{eq;liftingLambda2}
	f\longmapsto\KMlift(f)=\int_{\Sp_4(\ZZ)\backslash\HH_2}
	\det y^k
	f(\tau)
	\,
	\overline{\Theta(\tau,z,\varphi_{\text{\rm KM},2})}\,\frac{dx\, dy}{\det y^3},
	\ee
	where $dx\,dy\coloneqq\prod_{k\leq\ell}dx_{k,\ell}\,dy_{k,\ell}$ is the Euclidean volume element, and~$\frac{dx\, dy}{\det y^3}$ is the standard~\textcolor{\newcolor}{$\Sp_4(\RR)$}-invariant volume element of~$\HH_2$.
	\end{defi}
	Inspired by the construction of indefinite theta functions provided by the recent article~\cite{roehrig}, we define certain genus~$2$ Siegel theta functions~$\Theta_{L,2}$ associated to the lattice~$L$.
	In the more general case where~$L$ is not unimodular, they are vector-valued with respect to the Weil representation associated to~$L$.
	These may be considered as a generalization of the theta functions defined by Borcherds in~\cite[Section~$4$]{bo;grass} to higher genus.
	
	The theta functions~$\Theta_{L,2}$ depend on the variables~$\tau\in\HH_2$ and~$g\in G$, and are attached to \emph{very homogeneous polynomials~$\pol$ of degree~$d$} on the space of matrices~$\RR^{(b+2)\times 2}$, the latter property meaning that
	\bes
	\pol(\vect{x}\cdot N)=\det N^d\cdot\pol(\vect{x}),\qquad\text{for every~$N\in\RR^{2\times 2}$ and~$\vect{x}\in\RR^{({b+2})\times 2}$}.
	\ees
	We denote these theta functions by~$\Theta_{L,2}(\tau,g,\pol)$ and refer to Section~\ref{sec;vvsiegeltheta2} for further details.
	
	We show that there exist very homogeneous polynomials~$\pol_{\vect{\alpha}}$ of degree~$2$, depending on some tuples of indices~$\vect{\alpha}=(\alphaone,\alphatwo,\betaone,\betatwo)$, such that the lift~$\KMlift(f)$ may be rewritten in terms of Siegel theta integrals as
	\ba\label{eq;KMdeg2liftmoreexplintro}
	\KMlift(f)&=
	\sum_{\substack{\alphaone,\betaone=1\\ \alphaone<\betaone}}^b
	\sum_{\substack{\alphatwo,\betatwo=1\\ \alphatwo<\betatwo}}^b
	\Big(\underbrace{\int_{\Sp_4(\ZZ)\backslash\HH_2}
	\det y^{k}
	f(\tau)
	\,
	\overline{\Theta_{L,2}(\tau,g,\pol_{\vect{\alpha}})} \,\frac{dx\,dy}{\det y^3}}_{\intfunctshort(g)}\Big)
	\\
	&\quad\times g^*\big(\omegaone\wedge\omegatwo\wedge\omegathree\wedge\omegafour\big).
	\ea
	
	In~\eqref{eq;KMdeg2liftmoreexplintro} the differential form~$\KMlift(f)$ is rewritten  over the point~$z\in\hermdom$ by choosing any~$g\in G$ mapping~$z\in\hermdom$ to a fixed base point~$z_0$, and~$\omegaone\wedge\dots\wedge\omegafour$ is an explicit vector of~$\bigwedge^4 T_{z_0}^* \hermdom$ coming from the definition of the Kudla--Millson Schwartz function~$\varphi_{\text{KM},2}$; see Section~\ref{sec;deg2kmtform} for further information.
	
	We refer to the integral functions~$\intfunctshort\colon G\to\CC$ appearing in~\eqref{eq;KMdeg2liftmoreexplintro} as the \emph{defining integrals} of the Kudla--Millson lift~$\KMlift(f)$.
	
	The first step to prove the injectivity of the lift is to compute the Fourier expansion of~$\intfunctshort$ with respect to the split of a hyperbolic plane~$U$ in~$L$.
	To do so, we generalize Borcherds' formalism~\cite[Section~$5$]{bo;grass} to the genus~$2$ Siegel theta functions~$\Theta_{L,2}$.
	More precisely, we choose a split~$L=\brK\oplus U$ for some Lorentzian sublattice~$\brK$ and rewrite~$\Theta_{L,2}$ as a combination of \emph{Jacobi Siegel theta functions}~$\Theta_{\brK,\lambda}$ associated to~$\brK$ and certain lattice vectors~$\lambda\in \brK$.
	
	The above indefinite Siegel theta functions of Jacobi type were not available in the literature.
	Their development is an ancillary achievement of this article, and may play an important role in generalizations of higher genus theta lifts.
	
	The Fourier expansion of~$\intfunctshort$ is illustrated in the following result.
	We do not provide here the definitions of the twist~$\borw$ of the isometry~$g\in G$, nor the polynomials~$\polw{\vect{\alpha},\borw}{\hone}{\htwo}$ decomposing~$\pol_{\vect{\alpha}}$, and instead refer to Section~\ref{sec;splitgen2theta}.
	We write~$\tau\in\HH_2$ in a matrix form as~$\tau=\big(\begin{smallmatrix}
	\tauone & \tautwo\\
	\tautwo & \tauthree
	\end{smallmatrix}\big)$, same for its real and imaginary part.
	Recall that an integer~$t\ge1$ is said to divide~$\lambda\in\brK$, in short~$t|\lambda$, if~$\lambda/t$ is still a lattice vector in~$\brK$.
	\begin{thm}\label{thm;introscvalver1unf}
	Let~$f(\tau)=\textcolor{\mycolor}{\sum_{m = 1}^\infty} \phi_m(\tauone,\tautwo)e^{2\pi i \textcolor{\mycolor}{m}\tauthree}$ be the Fourier--Jacobi expansion of~$f\in S^k_2$.
	The defining integral~$\intfunctshort$ of the Kudla--Millson lift of~$f$ admits a Fourier expansion with respect to the split~$L=U\oplus\brK$ of the form
	\[
	\intfunctshort(g)=\sum_{\lambda\in \brK} c_\lambda(g)\cdot e^{2\pi i (\lambda,\mu)},
	\]
	where~$\mu=-\genUU+\genU_{z^\perp}/2\genU_{z^\perp}^2 + \genU_z/2\genU_z^2$, with~$\genU$ and~$\genUU$ the standard generators of~$U$, and Fourier coefficients given as follows.
	The constant term of the Fourier expansion is
	\[
	c_0(g)
	=
	\int_{\Sp_4(\ZZ)\backslash\HH_2}\frac{\det y^{k+1/2}}{2 u_{z^\perp}^2}
	f(\tau) \cdot \overline{\Theta_{\brK,2}}(\tau,\borw,\polw{\vect{\alpha},\borw}{0}{0})
	\,\frac{dx\,dy}{\det y^3}.
	\]
	If~$\lambda\in\brK$ is of positive norm, then
	\bas
	c_\lambda(g)
	&=
	\sum_{t\ge 1,\, t|\lambda}
	\sum_{\htot=0}^2
	\Big(\frac{t}{2i}\Big)^{h}
	\int_{(\tauone,\tautwo)\in\Gamma^J\backslash\HH\times\CC}
	\frac{\yone^{\htot+5/2}}{\genU_{z^\perp}^2}
	\int_{\ythree=\ytwo^2/\yone}^\infty
	\det y^{k-5/2-\htot}
	\phi_{q(\lambda)/t^2}(\tauone,\tautwo)
	\\
	&\quad\times
	\overline{\Theta_{\brK,\lambda/t}}\big(\tauone,\tautwo,\borw,\exp\big(-\yone\Delta_2\det y^{-1}\big)\polw{\vect{\alpha},\borw}{0}{\htot}(\cdot,\borw(\lambda)/t)\big)
		 \\
		 &\quad\times
		 \exp\Big(-\frac{\pi t^2\yone}{2\genU_{z^\perp}^2\det y}
		 -\frac{2\pi}{t^2}\Big(
		 \lambda_{w^\perp}^2\ythree + \lambda_w^2\frac{\ytwo^2}{\yone}
		 \Big)
		 \Big)
		 \,d\ythree
		 \,\frac{d\xone\,d\yone\,d\xtwo\,d\ytwo}{\yone^3},
	\eas
	where~$\Delta_2$ is the Laplacian on the second copy of~$\RR^{b+2}$ in~$\RR^{(b+2)\times2}$.
	Here we denote by~$w$ the orthogonal complement of~$\genU_z$ in~$z$, and by~$w^\perp$ the orthogonal complement of~$\genU_{z^\perp}$ in~$z^\perp$.
	In all remaining cases the Fourier coefficients~$c_\lambda$ vanish.
	\end{thm}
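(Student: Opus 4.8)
The plan is to combine the split of~$\Theta_{L,2}$ along a hyperbolic plane, established in Section~\ref{sec;splitgen2theta}, with the Fourier--Jacobi expansion of~$f$, and then to evaluate the resulting modular integral by Rankin--Selberg unfolding. First I would insert the split formula for~$\Theta_{L,2}(\tau,g,\pol_{\vect{\alpha}})$ relative to~$L=U\oplus\brK$. It rewrites the theta function as a sum over~$\lambda\in\brK$ whose~$\lambda$-th summand carries the phase~$e^{2\pi i(\lambda,\mu)}$, a Jacobi Siegel theta function~$\Theta_{\brK,\lambda}$, and explicit Gaussian and polynomial weights coming from the~$U$-directions. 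Crucially,~$\mu$ depends only on the negative plane~$z\in\Gr(L)$ determined by~$g$ and \emph{not} on~$\tau$, so the phases~$e^{2\pi i(\lambda,\mu)}$ factor out of the~$\tau$-integral. This already displays~$\intfunctshort(g)$ as the Fourier series~$\sum_{\lambda\in\brK}c_\lambda(g)\,e^{2\pi i(\lambda,\mu)}$, with~$c_\lambda(g)$ the integral of~$\det y^{k}f(\tau)$ against the conjugate of the~$\lambda$-th summand.

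Next I would substitute the Fourier--Jacobi expansion~$f(\tau)=\sum_m\phi_m(\tauone,\tautwo)\,e^{2\pi i\tauthree}$. Together with the translations in the real part of~$\tauthree$, the embedded Jacobi group~$\Gamma^J$ generates the parabolic subgroup of~$\Sp_4(\ZZ)$ along which one unfolds; the Gaussian decay of the split theta series and the rapid decay of the cusp form guarantee the absolute convergence that legitimizes this step. Under the unfolding the quotient~$\Sp_4(\ZZ)\backslash\HH_2$ is replaced by~$\Gamma^J\backslash(\HH\times\CC)$ in the variables~$(\tauone,\tautwo)$ times the half-line~$\ythree\ge\ytwo^2/\yone$, whose lower bound is precisely the boundary of the Siegel domain in the unfolded coordinates. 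Matching the holomorphic~$\tauthree$-frequency of the~$\lambda$-summand with the Fourier--Jacobi index selects the coefficient~$\phi_{q(\lambda)/t^2}$, and reorganizing the lattice vectors of~$\brK$ according to the integer~$t$ with~$\lambda/t\in\brK$ produces the divisor sum~$\sum_{t\mid\lambda}$ and the Jacobi theta function~$\Theta_{\brK,\lambda/t}$.

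It then remains to read off the weight factors. Splitting~$z=w\oplus\RR\genU_z$ and~$z^\perp=w^\perp\oplus\RR\genU_{z^\perp}$, the Gaussian in the~$U$-directions yields~$\exp\big(-\pi t^2\yone/(2\genU_{z^\perp}^2\det y)\big)$, while the Gaussian in the Lorentzian part gives the decay~$\exp\big(-\tfrac{2\pi}{t^2}(\lambda_{w^\perp}^2\ythree+\lambda_w^2\ytwo^2/\yone)\big)$. The normalization of~$\Theta_{L,2}$ together with the Petersson weight~$\det y^k$ and the Jacobian of the Poisson summation in the~$U$-directions account for the powers~$\det y^{k-5/2-\htot}$ and~$\yone^{\htot+5/2}/\genU_{z^\perp}^2$. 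Finally, carrying the very homogeneous polynomial~$\pol_{\vect{\alpha}}$ through that Poisson summation decomposes it into the pieces~$\polw{\vect{\alpha},\borw}{0}{\htot}$, acted on by the heat-type operator~$\exp(-\yone\Delta_2\det y^{-1})$, and produces both the sum over~$0\le\htot\le2$ and the powers~$(t/2i)^h$.

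The main obstacle is twofold. Analytically, one must justify interchanging the lattice sum with the integral and the unfolding itself, and describe the fundamental domain carefully enough to obtain the half-line~$\ythree\ge\ytwo^2/\yone$. Arithmetically, one must prove the vanishing of~$c_\lambda$ for~$\lambda\neq0$ of non-positive norm: here I would use that the~$\phi_m$ are Jacobi cusp forms, so that the pairing forces a strictly positive index~$m=q(\lambda)/t^2>0$ and no term survives when~$q(\lambda)\le0$, the constant term~$c_0$ being the residual~$\lambda=0$ contribution for which the~$\ythree$-integral and the Gaussian collapse to the stated expression. Keeping track of the twist~$\borw$ and of the precise~$\htot$-decomposition of~$\pol_{\vect{\alpha}}$ throughout the unfolding is where most of the bookkeeping will lie.
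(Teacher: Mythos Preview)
Your outline has the right overall shape---split along $U$, unfold, insert Fourier--Jacobi expansions, reorganize as a divisor sum---but it conflates two distinct decompositions and, more seriously, skips the central technical obstacle. The split of $\Theta_{L,2}$ along $L=U\oplus\brK$ (Theorem~\ref{thm;genborthmsplitthetaL}, Corollary~\ref{cor;genborthmsplitthetaLklin}) does \emph{not} produce a sum over $\lambda\in\brK$ carrying phases $e^{2\pi i(\lambda,\mu)}$ and Jacobi theta functions. It produces a Poincar\'e series over the Klingen parabolic $\kling\backslash\Sp_4(\ZZ)$ whose terms involve \emph{genus-$2$} theta functions $\Theta_{\brK,2}(\tau,r\mu d,-r\mu c,\borw,\polw{\vect{\alpha},\borw}{\hone}{\htwo})$ attached to the auxiliary polynomials of Definition~\ref{def;genborpolgenus2}. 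The Jacobi theta functions $\Theta_{\brK,\lambda}$ and the phases $e^{2\pi i(\lambda,\mu)}$ enter only \emph{after} the unfolding, when one Fourier--Jacobi expands $\Theta_{\brK,2}(\tau,(0,r\mu),0,\ldots)$ as in~\eqref{eq;rewritSiegthet2withjac} and integrates over $x_3$; the $\mu$ comes from the shift $\vect{\boralpha}=(0,r\mu)$, not from the split itself.

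The real gap is the unfolding step. The polynomials $\polw{\vect{\alpha},\borw}{\hone}{\htwo}$ are \emph{not} very homogeneous for $\hone+\htwo>0$ (Lemmas~\ref{lemma;gen2funnypol} and~\ref{lemma;nonhomogofderpol}), so the individual $\Theta_{\brK,2}(\tau,\ldots,\polw{\vect{\alpha},\borw}{\hone}{\htwo})$ are \emph{not} modular for $\Sp_4(\ZZ)$. Absolute convergence is therefore not enough: one must show that the candidate summand $\hfunct_{\vect{\alpha}}$ is genuinely $\kling$-invariant and that the integrand is of the form $\sum_{M\in\kling\backslash\Sp_4(\ZZ)}\hfunct_{\vect{\alpha}}(M\tau,g)$. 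This is the content of Theorem~\ref{thm;transfThetaL2genlor} and the proof of Theorem~\ref{thm;unfongen2}: only the specific combinations $\sum_{\hone+\htwo=\htot}[y^{-1}]_{2,1}^{\hone}[y^{-1}]_{2,2}^{\htwo}\,\Theta_{\brK,2}(\ldots,\polw{\vect{\alpha},\borw}{\hone}{\htwo})$ transform correctly under $S$, via the mixing recorded in Lemma~\ref{lemma;someFtransfofabcdh12lor}. The factors $(t/2i)^{\htot}$, $\det y^{k-5/2-\htot}$ and the appearance of $\polw{\vect{\alpha},\borw}{0}{\htot}$ alone (rather than all $\polw{\vect{\alpha},\borw}{\hone}{\htwo}$) come precisely from this repackaging, not from ``carrying $\pol_{\vect{\alpha}}$ through Poisson summation''. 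Your sketch treats this as bookkeeping, but it is the genuinely new difficulty in genus $2$ and is what the paper singles out (Remark~\ref{rem;nonmodgen2}).
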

	The knowledge of the Fourier coefficients of~$\intfunctshort$ does not immediately imply the injectivity of the lift.
	This makes the situation very different from the genus~$1$ case considered in~\cite{zuffetti;gen1}.
	In this article we illustrate how to further unfold the integral and deduce a Fourier expansion of~$\intfunctshort$ in terms of the \emph{Fourier coefficients} of~$f$.
	This leads us to prove injectivity results of Jacobi theta integrals, from which we deduce Theorem~\ref{thm;introinjeasy}.
	
	Before the explanation of the second unfolding, we remark an interesting behavior of~$\Theta_{L,2}$ with respect to the split~$L=\brK\oplus U$, which makes the situation of genus~$2$ more subtle with respect to the genus~$1$ case considered by Borcherds in~\cite{bo;grass}.
	Since the polynomials~$\pol_{\vect{\alpha}}$ are very homogeneous, the associated genus~$2$ Siegel theta functions~$\Theta_{L,2}(\tau,g,\pol_{\vect{\alpha}})$ are (non-holomorphic) Siegel modular forms with respect to the full modular group~$\Sp_4(\ZZ)$.
	Many of the genus~$2$ theta functions~$\Theta_{\brK,2}$ attached to~$\polw{\vect{\alpha},\borw}{\hone}{\htwo}$ \emph{fail to be modular} with respect to the whole~$\Sp_4(\ZZ)$, due to the fact that many of the latter polynomials are not very homogeneous.
	This makes the case of higher genus more subtle than the genus~$1$ case.
		
	However, we show that a suitable combination of the functions~$\Theta_{\brK,2}$ has a Fourier--Jacobi type expansion in terms of the newly defined Jacobi Siegel theta functions~$\Theta_{\brK,\lambda}$, which behave as (non-holomorphic) Jacobi forms with respect to the Jacobi subgroup~$\Gamma^J$ of~$\Sp_4(\ZZ)$.
	
	Summarizing, even if the genus~$2$ Siegel theta functions arising from~$\Theta_{L,2}(\tau,g,\pol_{\vect{\alpha}})$ in terms of a split~$L=\brK\oplus U$ are not modular with respect to~$\Sp_4(\ZZ)$, we show that it is possible to gather them and recover a modular behavior with respect to the Jacobi subgroup of~$\Sp_4(\ZZ)$.
	
	\subsection{The double unfolding of the lift and its injectivity}
	We now illustrate the idea of the proof of Theorem~\ref{thm;introinjeasy}.
	Let~$f\in S^k_2$ be a cusp form with Fourier expansion~\textcolor{\mycolor}{${f=\sum_{\substack{0\leq T\in\halfint_2}} a(T) q^T}$, where~$\halfint_2$ denotes the set of symmetric half-integral $2 \times 2$-matrices}.
	To prove the injectivity of the Kudla--Millson lift, we show that if~$\KMlift(f)=0$, then~$a(T)=0$ for every~$T$.
	
	Since the vectors~$\omegaone\wedge\dots\wedge\omegafour$ of~$\bigwedge^4 T_{z_0}^* \hermdom$ appearing in~\eqref{eq;KMdeg2liftmoreexplintro} are linearly independent, we deduce that if~$\KMlift(f)=0$, then the defining integrals~$\intfunctshort$ vanish.
	This implies that the Fourier coefficients of~$\intfunctshort$ provided by Theorem~\ref{thm;introscvalver1unf} are zero for every~$g\in G$.
	
	Let~$\lambda\in \brK$ be of positive norm.
	We construct special isometries~$\speciso\in G$ such that the vanishing of~$c_\lambda(\speciso)$ implies the vanishing of Petersson inner products of Jacobi type, as follows\textcolor{\mycolor}{; see Section~\ref{sec;finalunfvvcase} for details}.
	\begin{cor}
	\label{cor;introvanpetjac}
	Let~$\vect{\alpha}$ be such that~$\alphaone \neq \alphatwo$, $\betaone \neq \betatwo$ and~$\alpha_j < \beta_j$, \textcolor{\mycolor}{and let~$\lambda\in \brK$ be of positive norm}.
	There exists an isometry~$\speciso\in G$ such that if the lift~$\KMlift(f)$ vanishes, then
	\be\label{eq;inprodforinj}
	\big\langle
	\phi_{q(\lambda)} , \Theta_{\brK,\lambda}\big(\cdot,\cdot,\specisoborw,\polw{\vect{\alpha},\specisoborw}{0}{\htot}(\cdot,\specisoborw(\lambda)\big)
	\big\rangle_{\Pet}=0,
	\ee
	where~$\langle\cdot{,}\cdot\rangle_{\Pet}$ is the Petersson inner product for Jacobi forms \textcolor{\mycolor}{and $\phi_{q(\lambda)}$ denotes the Fourier-Jacobi coefficient of $f$ of index $q(\lambda)$.}
	\end{cor}
	
	This inner product is an integral over~$\Gamma^J \backslash \HH\times\CC$.
	We extend the unfolding method of Borcherds~\cite{bo;grass} to the Jacobi Siegel theta functions~$\Theta_{\brK,\lambda}$.
	We then apply a second unfolding to deduce a Fourier expansion of~\eqref{eq;inprodforinj} for general~$g'$.
	Since the formulas are similar in spirit to the ones of Theorem~\ref{thm;introscvalver1unf}, we avoid to write them here and instead refer to Theorem~\ref{thm:JacobiThetaUnfolding} for details.
	
	The coefficients of the latter Fourier expansion are given in terms of the Fourier coefficients of~$f$.
	We eventually check that if the assumptions of Theorem~\ref{thm;introinjeasy} are satisfied, then the vanishing of the Fourier coefficients of~\eqref{eq;inprodforinj} implies the vanishing of the ones of~$f$, concluding the proof of the injectivity.
	
	\subsection{Some applications}
	
	As recalled above, the Kudla--Millson lift associates to every cusp form~$f\in S^k_2$ a \emph{closed} differential form~$\KMlift(f)$ of degree~$4$ on~$X$.
	Hence the lift induces a map~$S^k_2\to H^4(X,\CC)$ to the fourth cohomology group of~$X$.
	
	Let~$H^4_{(2)}(X,\CC)$ be the cohomology of square-integrable forms on~$X$.
	The inclusion of the space of square-integrable closed~$4$-forms to the standard de Rham space of closed~$4$-forms on~$X$ induces a map~$H^4_{(2)}(X,\CC)\to H^4(X,\CC)$.
	It is known that such a map is an isomorphism if~$\dim X>5$.
	This follows from isomorphisms between the (singular) cohomology groups of~$X$ with the intersection cohomology of the Baily--Borel compactification of~$X$, known for low degrees, as well as Zucker's conjecture proved by Looijenga~\cite{looijgengaL2} and Saper--Stern~\cite{SapSt;L2coho}; see~\cite[Example~$3.4$]{blmm;conj} and~\cite[Chapter~$5$]{HarrisZucker} for further details.
	
	Let~$\mathcal{H}^4_{(2)}(X)$ be the space of square-integrable \emph{harmonic} $4$-forms on~$X$.
	By the $L^2$-version of the Hodge theorem, the natural map~$\mathcal{H}^4_{(2)}(X)\to H^4_{(2)}(X,\CC)$ is an isomorphism.
	The lift of a cusp form is a \emph{harmonic} form by~\cite[Theorem~$4.1$]{kumi;tubes}.
	Moreover, following the argument of~\cite[Proposition~$4.1$]{brfu} it is easy to see that the genus~$2$ Kudla--Millson lift gives \emph{square-integrable} forms.
	Hence, the main theorems of this article imply the following result. 
	\begin{cor}\label{cor;introapplic}
	Let~$\dim X > 9$.
	The Kudla--Millson lift induces an injective map in cohomology.
	In particular~$\dim H^4(X,\CC)\ge \dim S^k_2$, where~$k=1+\dim X/2$.
	\end{cor}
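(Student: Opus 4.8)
The plan is to upgrade the form-level injectivity of Theorem~\ref{thm;introinjeasy} to injectivity in cohomology, by exploiting the fact that the lift produces harmonic, square-integrable representatives. Since each $\KMlift(f)$ is a \emph{closed} $4$-form, the assignment $f\mapsto[\KMlift(f)]$ is a well-defined linear map $S^k_2\to H^4(X,\CC)$, and the asserted inequality $\dim H^4(X,\CC)\ge\dim S^k_2$ follows at once from its injectivity. It therefore suffices to show that $[\KMlift(f)]=0$ forces $f=0$.

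First I would assemble the analytic properties of $\KMlift(f)$. By~\cite[Theorem~$4.1$]{kumi;tubes} the form $\KMlift(f)$ is harmonic, and adapting the argument of~\cite[Proposition~$4.1$]{brfu} from genus~$1$ to genus~$2$ shows that it is square-integrable; hence $\KMlift(f)\in\mathcal{H}^4_{(2)}(X)$. The $L^2$-Hodge theorem identifies $\mathcal{H}^4_{(2)}(X)$ with $H^4_{(2)}(X,\CC)$ by sending a harmonic $L^2$-form to its $L^2$-cohomology class, so a nonzero harmonic $L^2$-form necessarily represents a nonzero class. Because $L$ has large rank we have $\dim X>5$, and in this range the comparison map $H^4_{(2)}(X,\CC)\to H^4(X,\CC)$ is an isomorphism by Zucker's conjecture (Looijenga~\cite{looijgengaL2}, Saper--Stern~\cite{SapSt;L2coho}); see~\cite[Example~$3.4$]{blmm;conj}.

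Composing these identifications, a nonzero element of $\mathcal{H}^4_{(2)}(X)$ maps to a nonzero class in $H^4(X,\CC)$. Thus, if $[\KMlift(f)]=0$, then tracing back through the two isomorphisms forces $\KMlift(f)=0$ as a differential form, whereupon Theorem~\ref{thm;introinjeasy} gives $f=0$. This proves injectivity and hence the corollary. The genuine analytic difficulty is concentrated in Theorem~\ref{thm;introinjeasy}; the only points requiring care here are the verification of square-integrability for the genus~$2$ lift and the dimension bound $\dim X>5$ needed to pass between $L^2$- and ordinary cohomology, both of which hold under the standing hypotheses on~$L$.
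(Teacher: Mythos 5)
Your argument is exactly the one the paper gives: harmonicity from~\cite[Theorem~$4.1$]{kumi;tubes}, square-integrability by adapting~\cite[Proposition~$4.1$]{brfu}, the $L^2$-Hodge theorem identifying $\mathcal{H}^4_{(2)}(X)$ with $H^4_{(2)}(X,\CC)$, the isomorphism $H^4_{(2)}(X,\CC)\to H^4(X,\CC)$ for $\dim X>5$ via Zucker's conjecture, and then the form-level injectivity of the main theorem. The proposal is correct and takes essentially the same route as the paper.
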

	\textcolor{\mycolor}{In~\cite{br-zu}, the injectivity result of the present paper has been used to improve Corollary~\ref{cor;introapplic} to~$\dim H^{2,2}(X,\CC)=\dim M^k_2$, where~$M^k_2$ is the space of genus~$2$ and weight~$k$ Siegel modular forms.}
	
	
	\subsection{Outline of the paper}
	Section~\ref{sec;somebackground} contains some background on local-to-global principles for lattices and on orthogonal Shimura varieties.
	
	In Section~\ref{vvmodforms} we develop the theory of vector-valued Siegel \textcolor{\mycolor}{theta functions} of genus~$2$ \textcolor{\mycolor}{and of Jacobi type} associated to a lattice~$L$.
	
	In Sections~\textcolor{\mycolor}{\ref{sec;vvsiegeltheta2} and \ref{sec;somegenred}} we illustrate how to rewrite these theta functions with respect to the split of a hyperbolic plane from~$L$.
	
	Section~\ref{sec;deg2kmtform} begins with a quick recall of the Kudla--Millson Schwartz function and the Kudla--Millson theta function and show how to rewrite it in terms of genus~$2$ Siegel theta functions associated to certain very homogeneous polynomials of degree~$2$.
	
	In Section~\ref{sec;deg2KMlift} we apply the machinery developed in Section~\ref{sec;vvsiegeltheta2} to the Kudla--Millson lift.
	We unfold the lift and compute its Fourier expansion; see Theorem~\ref{thm;Fexpgen2}.
	
	The second unfolding is carried out in Section~\ref{sec;finalunfvvcase}, where we apply the theory of indefinite Jacobi Siegel theta functions introduced in Section~\textcolor{\mycolor}{\ref{vvmodforms}}.
	The injectivity is eventually proved with Theorem~\ref{thm:KMInjectivity}.
	
	\textcolor{\mycolor}{Appendix~\ref{sec;appendixtot}} contains ancillary technical results regarding Fourier transforms and decompositions of the polynomials~$\pol_{\vect{\alpha}}$ on subspaces of~$\RR^{(b+2)\times 2}$.
	These play a role in Sections~\ref{sec;splitgen2theta} and~\ref{sec;theunfoldingofKMliftgen2new}, in particular in the proof of Theorem~\ref{thm;unfongen2}, which enables us to unfold the defining integrals of the Kudla--Millson lift.
	
	\subsection*{Acknowledgments}
	We would like to thank Claudia Alfes-Neumann, Jan Bruinier, Jens Funke, Martin Möller and Christina Röhrig for useful discussions on the topic of the present article.
	This work started as one of the PhD projects of the second author~\cite{zuffetti;thesis}, who is grateful to Martin Möller for his patience and advise. 
	The first author is partially funded by the Research Foundation – Flanders (FWO) within the framework of the Odysseus program project number G0D9323N, and by the Deutsche Forschungsgemeinschaft (DFG, German Research Foundation) -- SFB-TRR 358/1 2023 -- 491392403.
	The second author is partially supported by the Loewe research unit ``Uniformized Structures in Arithmetic and Geometry''.
	Both authors are partially funded by the Collaborative Research Centre TRR 326 ``Geometry and Arithmetic of Uniformized Structures'', project number 444845124.
	\section{Lattices and orthogonal Shimura varieties}\label{sec;somebackground}
	In this short section we provide the necessary background on lattices and orthogonal Shimura varieties.
	The notation introduced here will be used in the rest of the paper.
	
	\subsection{Lattices and local-global principles}
	
	Throughout this section let $R$ be a principal ideal domain, $K$ its field of fractions and $L$ a finitely generated \textcolor{\mycolor}{free} $R$-module.
	
	\begin{defi}
		A \emph{quadratic form} on $L$ is a map $q : L \to K$ which satisfies the following two properties.
		\begin{enumerate}[label=(\roman*), leftmargin=*]
		\item If~$r \in R$ and~$x \in L$, then~$q(rx) = r^2 q(x)$.
\item The map $(x, y) \coloneqq q(x + y) - q(x) - q(y)$ for $x, y \in L$ is a symmetric bilinear form.
		\end{enumerate}
	The pair $(L, q)$ is called a \textcolor{\mycolor}{\emph{lattice}} over $R$. If $R = K$ is a field, we call it \emph{quadratic space}.
	\end{defi}
	
	\textcolor{\mycolor}{In this paper we will consider only lattices over $\IZ$ and~$\IZ_p$. If we do not specify the ring~$R$ of a lattice, we implicitly assume that~$R=\ZZ$.}

	For $x \in L$ we write $x^\perp \coloneqq \{y \in L : (x, y) = 0 \}$ for the \emph{orthogonal complement} of~$x$.
	We call~$q$ \emph{non-degenerate} if~$x^\perp \neq L$ for every~$x \in L \setminus \{0\}$. From now on we will assume that every \textcolor{\mycolor}{lattice} is non-degenerate.
	
	\begin{defi}
		Let $(L, q_L)$ and~$(M, q_M)$ be \textcolor{\mycolor}{lattices} over~$R$. An $R$-linear map~${\isometry : M \to L}$ is called \emph{isometric embedding} or a representation of $M$ by $L$ if it is injective and satisfies $q_L(\isometry(x)) = q_M(x)$ for all $x \in M$. In this case we say that~$L$ \emph{represents}~$M$. An isometric embedding that is also surjective is called \emph{isometry}. The orthogonal group $O(L)$ is the group of all isometries $\isometry : L \to L$. \textcolor{\mycolor}{It can be naturally embedded into $\GL(L \otimes K)$.}
	\end{defi}

	It is well-known that every quadratic space $V$ over the real numbers $\IR$ of dimension $n$ is isometric to a quadratic space $\IR^n$ with quadratic form
	$$q(x) = x_1^2 + \ldots + x_{b^+}^2 - x_{b^+ + 1}^2 - \ldots - x_{n}^2.$$
	We denote this quadratic space by $\IR^{b^+, b^-}$, where $ b^- = n -b^+$.
	The tuple~$(b^+, b^-)$ is uniquely determined and is called the signature of~$V$.
	
	Given a \textcolor{\mycolor}{lattice} $(L, q)$ over a ring $R$ and a ring extension~$R \to R'$, one obtains a \textcolor{\mycolor}{lattice}~$(L \otimes R', q)$.
	Here~$q(x \otimes r') = r'^2 q(x)$ for every~$x \in L$ and~$r' \in R'$.
	In particular, if~$V$ is a rational quadratic space, i.e.\ a quadratic space over~$\IQ$, we define its signature to be the signature of~$V \otimes \IR$.
	
	\begin{defi}
		\textcolor{\mycolor}{An $R$-lattice is called \emph{integral} if the bilinear form~$( \cdot{,} \cdot)$ takes values in~$R$.
			We say that~$L$ is \emph{even} if the quadratic form takes values in~$R$.}
	\end{defi}
	
	For an $R$-lattice $L$ we define its dual lattice $L'$ as
	$$L' \coloneqq \{ x \in L \otimes K : (x, y) \in R \text{ for all } y \in L \}.$$
	If $L$ is integral, then~$L \subseteq L'$.
	The quotient group~$\disc{L} \coloneqq L' / L$ is called the \emph{discriminant group} of~$L$.
	If $L$ is even, then the quadratic form on~$L$ induces a map~$\disc{L} \to K / R$, which we also denote by~$q$. \textcolor{\mycolor}{Given two discriminant groups $\disc{L}$ and $\disc{M}$, a bijective map $\isometry : \disc{L} \to \disc{M}$ is called \emph{isometry}, if $q(\isometry(x)) = q(x)$ for all $x \in \disc{L}$.}
	
	An even lattice is called \emph{unimodular} if $\disc{L}$ \textcolor{\mycolor}{is trivial}, and we call~$L$ \emph{maximal} if~$\disc{L}$ is anisotropic, i.e.\ if~$q(x) = 0$ for~$x \in \disc{L}$, then~$x = 0 \in \disc{L}$.
	
	Let~$L$ be a~$\ZZ$-lattice and~$v$ be a place of~$\IQ$, i.e.\ either~$v$ is (a non-archimedean place associated to) some prime number or (the archimedean place)~$v = \infty$.
	We define $L_v \coloneqq L \otimes \IZ_p$ if~${v = p}$, and~${L_v \coloneqq L \otimes \IR}$ if~${v = \infty}$.
	
	\begin{defi}
		We say that two $\IZ$-lattices $L$ and~$M$ are in the same genus if they are locally isometric, i.e.~$L_{v} \simeq M_{v}$ for all places $v$.
	\end{defi}

	Isometric lattices are obviously in the same genus and we write~$\gen(L)$ for the set of isometry classes of lattices in the same genus of~$L$.
	This set is always finite; see~\cite[Satz~$21.3$]{Kneser}.
	If~$L$ represents another lattice~$M$, then~$L$ also represents $M$ locally, i.e.~$L_v$ represents~$M_v$ for every place~$v$.
	The converse fails in general, but the local-global-principle asserts that if~$L$ represents~$M$ locally, then there exists a lattice~$\tilde{L} \in \gen{(L)}$ such that~$\tilde{L}$ represents~$M$; see \cite[Satz~$30.9$]{Kneser} for a slightly weaker statement.
	The following lemma is a slight refinement of this.
	
	\begin{lemma}\label{thm:LocalGlobalPrinciple}
		Let $L$ be an even lattice with dual lattice $L'$ and let $\gamma \in D_L^r$. Let $M$ be a (not necessarily integral) lattice with a fixed basis $(e_i)_{i = 1, \ldots, r}$ such that for all places $v$ there exists an isometric embedding $\isometry_v : M_v \to L_v'$ with $(\isometry_v(e_i))_{i = 1, \ldots, r} \in \gamma + L_v^r$. Then there exists an even lattice $\tilde{L} \in \gen(L)$, an isometric embedding $\isometry : M \to \tilde{L}'$ and an isometry~${\isometry' : D_L \to D_{\tilde{L}}}$ such that $(\isometry(e_i))_{i = 1, \ldots, r} \in \isometry'(\gamma) + \tilde{L}^r$.
	\end{lemma}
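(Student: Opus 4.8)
The plan is to work inside the rational quadratic space $V \coloneqq L \otimes \IQ$ and to build the lattice $\tilde L$ place by place, patching the given local embeddings into a single global one. First I would produce a rational embedding: for every place $v$ the hypothesis yields an isometric embedding $M_v \otimes \IQ_v \to V_v$ (forgetting integrality), and at $v = \infty$ this is a signature condition. Thus $M \otimes \IQ$ is represented by $V$ locally everywhere, so by the Hasse principle for quadratic spaces (see e.g.\ \cite{Kneser}) there is a rational isometric embedding $\iota \colon M \otimes \IQ \to V$. Fix such an $\iota$; since $\iota(M)$ is a lattice in $V$, one has $\iota(M)_v \subseteq L_v$ for all but finitely many $v$.

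Next I would compare $\iota$ with the data $\isometry_v$. For each place both $\iota_v \coloneqq \iota \otimes \IQ_v$ and $\isometry_v$ (viewed rationally) are isometric embeddings of $M_v \otimes \IQ_v$ into $V_v$, so by Witt's theorem there is $g_v \in O(V_v)$ with $\isometry_v = g_v \circ \iota_v$. The crucial point is that $g_v$ may be taken in $O(L_v)$ for almost all $v$: at every $v$ where $L_v$ is unimodular one has $\disc{L_v} = 0$ and $\iota_v(M_v), \isometry_v(M_v) \subseteq L_v' = L_v$, and for almost all such $v$ the isometry carrying $\iota_v(M_v)$ onto $\isometry_v(M_v)$ extends to an automorphism of the unimodular lattice $L_v$. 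For these $v$ we arrange $g_v^{-1} L_v = L_v$.

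I would then define $\tilde L$ to be the unique $\IZ$-lattice in $V$ with $\tilde L_v = g_v^{-1} L_v$ for all $v$; this is well defined by the previous step, and $\tilde L_v = g_v^{-1} L_v \simeq L_v$ for every $v$ gives $\tilde L \in \gen(L)$. The embedding $\iota$ now lands in the dual: since $g_v$ is orthogonal, $\tilde L_v' = g_v^{-1} L_v'$, whence $\iota_v(M_v) = g_v^{-1} \isometry_v(M_v) \subseteq g_v^{-1} L_v' = \tilde L_v'$, and as $\tilde L' = \bigcap_v (\tilde L_v' \cap V)$ we obtain a global isometric embedding $\isometry \coloneqq \iota \colon M \to \tilde L'$. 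The same maps $g_v^{-1}$ send $L_v$ to $\tilde L_v$ and $L_v'$ to $\tilde L_v'$, hence induce isometries $\disc{L_v} = L_v'/L_v \to \tilde L_v'/\tilde L_v = \disc{\tilde L_v}$ of the $p$-parts of the discriminant forms, which patch to a global isometry $\isometry' \colon \disc{L} \to \disc{\tilde L}$. Finally the coset condition is verified one place at a time: the class of $\isometry_v(e_i)$ in $\disc{L_v}$ is the $v$-component of $\gamma_i$ by hypothesis, so the class of $\iota_v(e_i) = g_v^{-1} \isometry_v(e_i)$ in $\disc{\tilde L_v}$ is $\isometry'(\gamma_i)$; holding at every $v$, this gives $\isometry(e_i) \in \isometry'(\gamma_i) + \tilde L$.

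\emph{The main obstacle} is the almost-everywhere extension used to choose $g_v \in O(L_v)$: one must guarantee that at all but finitely many places the two integral embeddings of $M_v$ into the unimodular lattice $L_v$ differ by an element of $O(L_v)$, so that the local lattices $\tilde L_v$ glue to a genuine global lattice in $\gen(L)$. This is exactly the local ingredient behind Kneser's local--global principle~\cite[Satz~$30.9$]{Kneser}; the remaining steps — Witt's theorem, the Hasse principle, and the bookkeeping with discriminant forms — are routine.
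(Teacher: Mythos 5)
Your proof is correct and follows the same overall strategy as the paper's, which itself adapts Kneser's proof of the local--global principle \cite[Satz~30.9]{Kneser}: produce a rational embedding via Hasse--Minkowski, then modify $L$ at finitely many places to obtain a lattice in the same genus containing the image in its dual with the prescribed coset. The one point where you do more work than necessary is at the cofinitely many \emph{good} places: there you compare the fixed rational embedding $\iota$ with the given $\isometry_v$ via Witt's theorem and then need the (true, but not free) fact that for almost all $p$ any two integral embeddings of $M_p$ into the unimodular lattice $L_p$ are conjugate under $O(L_p)$ --- precisely the step you single out as the main obstacle. The paper sidesteps this entirely: once $M$ is identified with $\iota(M)\subseteq L\otimes\IQ$, one automatically has $\iota(M)_v\subseteq L_v=L_v'$ and the image of $\gamma$ in $D_{L_v}$ equal to $0$ for almost all $v$, so the required local condition already holds at those places for $\iota$ itself (i.e.\ one may take $g_v=\mathrm{id}$ there), and the hypothesis together with Witt's theorem is invoked only at the finitely many bad places. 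With that simplification your patching of the local lattices $g_v^{-1}L_v$, the induced isometry of discriminant forms, and the place-by-place verification of the coset condition coincide with the paper's argument.
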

	
	\begin{proof}
		We follow the proof of \cite[Satz 30.9]{Kneser}. By Hasse-Minkowski \cite[Satz~19.1]{Kneser} there exists an isometric embedding~$M \otimes \IQ \to L \otimes \IQ$, so that we can assume that~$M$ is contained in~$L \otimes \IQ$. By \cite[Satz~21.5]{Kneser} for almost all places~$v$ we have~${M \subseteq L_v'}$ with~$(e_i)_{i = 1, \ldots, r} \in \gamma + L_v^r$, in fact~$L_v' = L_v$ for almost all~$v$.
		In particular, the set~$S$ of places~$v$ that do not satisfy these properties is finite. By assumption, for every~${v \in S}$ there exists a (not necessarily integral) sublattice~$N_v \subseteq L_v'$ and an isometry~$\isometry'_v : N_v \to M_v$ with~$(e_i)_{i = 1, \ldots, r} \in \isometry'_v(\gamma + L_v)$.
		This isometry can be extended to an isometry~$\isometry'_v : L_v \otimes \IQ_v \to L_v \otimes \IQ_v$ with~$\phi_v(N_v) = M_v$.
		By~\cite[Satz~21.5]{Kneser} there exists a lattice~$\tilde{L} \subseteq L\otimes \IQ$ with~$\tilde{L}_v = L_v$ for $v \notin S$ and~$\tilde{L}_v = \isometry'_v(L_v)$ for~$v \in S$.
		Then~$\tilde{L} \in \gen(L)$ and~$M_v \subseteq \tilde{L}_v'$, so that~$M \subseteq \tilde{L}'$ and by construction we have~$(e_i)_{i = 1, \ldots, r} \in \isometry'(\gamma) + \tilde{L}^r$.
	\end{proof}
	
	\begin{lemma}\label{lem:LocalRepresentability}
		Let $p$ be a prime and $L$ be an even $\IZ_p$-lattice and assume that the lattice $L$ splits $r$ hyperbolic planes. Let $\gamma + L^r \in D_L^r$ and let $M$ be a $\IZ_p$-lattice with basis $(e_i)_{i = 1, \ldots, r}$ such that $q((e_i)_{i = 1, \ldots, r}) \in q(\gamma + L^r)$. Then there exists a representation~$\isometry \colon M \to L'$ with~$(\isometry(e_i))_{i = 1, \ldots, r} \in \gamma + L^r$.
	\end{lemma}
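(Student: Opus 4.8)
The plan is to reduce the statement to the elementary fact that any integral discrepancy in the Gram data can be absorbed by the split hyperbolic planes. First I would use the hypothesis that $L$ splits $r$ hyperbolic planes to fix a decomposition $L = U_1 \oplus \cdots \oplus U_r \oplus K$, where each $U_i$ is a hyperbolic plane with basis $\{x_i, y_i\}$ (so $q(x_i) = q(y_i) = 0$ and $(x_i, y_i) = 1$) and $K$ is an even $\IZ_p$-lattice. Since each $U_i$ is unimodular, the dual splits as $L' = U_1 \oplus \cdots \oplus U_r \oplus K'$ and the projection induces an isomorphism $D_L \cong D_K$. In particular, for $\gamma = (\gamma_1, \ldots, \gamma_r) \in D_L^r$ I can choose representatives $\mu_i \in K'$ with $\mu_i + L = \gamma_i$, so that each $\mu_i$ is orthogonal to $U_1 \oplus \cdots \oplus U_r$.

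Next I would unwind the hypothesis $q\left((e_i)_{i = 1, \ldots, r}\right) \in q(\gamma + L^r)$. Since $(\mu_i)_i \in \gamma + L^r$, this hypothesis says precisely that the Gram data of $(e_i)_i$ and of $(\mu_i)_i$ agree modulo $\IZ_p$; that is, $q(e_i) - q(\mu_i) \in \IZ_p$ and $(e_i, e_j) - (\mu_i, \mu_j) \in \IZ_p$ for all $i, j$. I would record these integral defects as $b_i \coloneqq q(e_i) - q(\mu_i)$ and $B_{ij} \coloneqq (e_i, e_j) - (\mu_i, \mu_j)$, assembling a symmetric integral defect matrix.

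The key step is to realize this defect inside the hyperbolic planes. I would set
\[
v_i = x_i + b_i\, y_i + \sum_{j > i} B_{ij}\, y_j \in U_1 \oplus \cdots \oplus U_r \subseteq L,
\]
and a direct computation, using that the only nonzero pairings among the $x_k, y_k$ are $(x_k, y_k) = 1$, gives $q(v_i) = b_i$ and $(v_i, v_j) = B_{ij}$ for $i \neq j$. Defining $\isometry(e_i) \coloneqq \mu_i + v_i \in K' \oplus (U_1 \oplus \cdots \oplus U_r) = L'$, the orthogonality $\mu_i \perp v_j$ yields $q(\isometry(e_i)) = q(\mu_i) + b_i = q(e_i)$ and $(\isometry(e_i), \isometry(e_j)) = (\mu_i, \mu_j) + B_{ij} = (e_i, e_j)$, so the Gram matrices of $(\isometry(e_i))_i$ and $(e_i)_i$ coincide exactly; hence $\isometry$ extends $\IZ_p$-linearly to an isometric embedding $M \to L'$, injective because the $v_i$, and therefore the $\isometry(e_i)$, are linearly independent by inspection of their $x_i$-components. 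Finally $v_i \in L$ gives $\isometry(e_i) \equiv \mu_i \equiv \gamma_i \pmod{L}$, so $(\isometry(e_i))_{i = 1, \ldots, r} \in \gamma + L^r$, as required.

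The argument is essentially bookkeeping once the decomposition is fixed, so I do not expect a deep obstacle; the single point demanding care is the correct reading of the hypothesis $q\left((e_i)_i\right) \in q(\gamma + L^r)$ as a congruence on the \emph{full} Gram data (both the norms $q(e_i)$ and the mixed inner products $(e_i, e_j)$) modulo $\IZ_p$, rather than an exact equality. It is exactly this congruence, together with the availability of $r$ hyperbolic planes, that guarantees the correction vectors $v_i$ are integral and hence genuinely available inside $L$.
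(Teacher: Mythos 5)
Your proof is correct and follows essentially the same route as the paper: split off the $r$ hyperbolic planes, represent $\gamma$ by vectors in the orthogonal complement, and absorb the integral Gram defects by adding suitable vectors from the hyperbolic planes. The only difference is bookkeeping — the paper corrects with $f_i + \frac{1}{2}\sum_{j}\big((e_i,e_j)-(\gamma_i,\gamma_j)\big)f_j'$, whereas your triangular assignment $x_i + b_i y_i + \sum_{j>i}B_{ij}y_j$ distributes the off-diagonal defects asymmetrically and thereby avoids the factor $\tfrac{1}{2}$ on the mixed terms, which is in fact the more careful choice when $p=2$.
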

	
	\begin{proof}
		We have $L = U^r \oplus \tilde{L}$, where $U$ is a hyperbolic plane and $\tilde{L}$ is an even lattice.
		We write~${f_i, f_i'}$ for a standard basis of the $i$th hyperbolic plane with~$(f_i, f_i') = 1$ and~$q(f_i) = q(f_i') = 0$.
		Since $D_L = D_{\tilde{L}}$, we may assume $\gamma \in \tilde{L}'$ and define
		$$\tilde{\gamma}_i \coloneqq \gamma_i + f_i + \frac{1}{2}\sum_{j = 1}^{r} \big((e_i, e_j) - (\gamma_i, \gamma_j)\big) f_j' \in \gamma_i + L.$$
		We obtain the isometric embedding~$\isometry$ by mapping the basis element $e_i$ to $\tilde{\gamma}_i$ for~${i = 1, \ldots, r}$.
	\end{proof}
	\begin{rem}\label{rem:NikulinApplication}
		\textcolor{\mycolor}{For an even lattice $L$ let $l(L)$ be the minimal number of generators of $L' / L$. If $L$ splits a hyperbolic plane we have that the rank of $L$ is at least $l(L) + 2$, so that the conditions of~\cite[Theorem 1.14.2]{Nikulin} are satisfied. Therefore we have $\lvert \gen(L) \rvert = 1$ and the map $O(L) \to O(\disc{L})$ is surjective.}
	\end{rem}
		
	\begin{cor}\label{cor:Representations}
		Let $L^+$ be an even positive definite lattice with dual lattice $L^{+'}$ and assume that for all primes~$p$ the lattice $L^+_p$ splits~$r$ hyperbolic planes.
		 Let~$L = U \oplus L^+$.
		 Then for every~$\gamma \in D_L^r$ and positive definite symmetric matrix~$T \in q(\gamma)$ there exist a splitting~$L = U \oplus \tilde{L}^+$ with~$\tilde{L}^+ \in \gen(L^+)$ and~$\lambda \in (\tilde{L}^{+'})^r$ with~$\lambda \in \gamma + L^r$ and~$q(\lambda) = T$.
	\end{cor}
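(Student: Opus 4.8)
The plan is to produce the vector $\lambda$ first inside \emph{some} lattice $\tilde{L}^+$ in the genus of $L^+$, by combining the local representability of Lemma~\ref{lem:LocalRepresentability} with the local-global gluing of Lemma~\ref{thm:LocalGlobalPrinciple}, and only afterwards to arrange the orthogonal splitting $L = U \oplus \tilde{L}^+$ by pure lattice theory. Let $M$ denote the positive definite (not necessarily integral) lattice with a fixed basis $(e_i)_{i=1,\dots,r}$ and Gram data $q((e_i)) = T$; since $T \in q(\gamma)$, the basis of $M$ realizes the coset $\gamma \in D_{L^+}^r = D_L^r$ modulo integral vectors.

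First I would construct the required local isometric embeddings at every place. For each prime $p$ the lattice $L^+_p$ splits $r$ hyperbolic planes by hypothesis, and $T \in q(\gamma)$ gives $q((e_i)) \in q(\gamma + (L^+_p)^r)$; hence Lemma~\ref{lem:LocalRepresentability} yields an embedding $\isometry_p \colon M_p \to (L^+_p)'$ with $\isometry_p(e_i) \in \gamma + (L^+_p)^r$. At the archimedean place $M \otimes \RR$ is positive definite of rank $r$, while $L^+ \otimes \RR$ is positive definite of rank $\geq 2r$ (the local splitting forces $\operatorname{rank}(L^+) \geq 2r$), so an isometric embedding $M_\infty \to L^+_\infty$ exists and the congruence condition there is vacuous. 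Feeding these local data into Lemma~\ref{thm:LocalGlobalPrinciple} (applied to $L^+$, noting $D_{L^+} = D_L$ because $U$ is unimodular) produces a lattice $\tilde{L}^+ \in \gen(L^+)$, an isometric embedding $\isometry \colon M \to \tilde{L}^{+'}$ and an isometry $\isometry' \colon D_{L^+} \to D_{\tilde{L}^+}$ with $(\isometry(e_i)) \in \isometry'(\gamma) + (\tilde{L}^+)^r$. Setting $\lambda \coloneqq (\isometry(e_i)) \in (\tilde{L}^{+'})^r$ I already have $q(\lambda) = T$ and the correct discriminant class up to the identification $\isometry'$.

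It then remains to realize $\tilde{L}^+$ as an orthogonal complement of a hyperbolic plane inside $L$ itself, which is where the main work lies. Because $\tilde{L}^+ \in \gen(L^+)$, the lattice $U \oplus \tilde{L}^+$ lies in $\gen(U \oplus L^+) = \gen(L)$. Now $L = U \oplus L^+$ is even, indefinite of rank $\geq 3$ and splits a hyperbolic plane over $\ZZ$, hence over every $\ZZ_p$; consequently the spinor norm of $O^+(L_v)$ exhausts $\QQ_v^*/(\QQ_v^*)^2$ at every place $v$, so $\gen(L)$ consists of a single proper spinor genus and, by Eichler's theorem, of a single isometry class. Therefore $U \oplus \tilde{L}^+ \cong L$, and any such isometry exhibits (the image of) $\tilde{L}^+$ as the orthogonal complement of a copy of $U$ in $L$, i.e.\ a splitting $L = U \oplus \tilde{L}^+$. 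Transporting $\lambda$ along this isometry keeps it in $(\tilde{L}^{+'})^r \subseteq L'$ with $q(\lambda) = T$ unchanged.

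The subtlest point, which I expect to be the real obstacle, is that the discriminant class of $\lambda$ in $D_L$ must equal $\gamma$ and not merely $\isometry'(\gamma)$. I would handle this by prescribing the identification: the isometry $U \oplus \tilde{L}^+ \cong L$ induces an isometry $D_{\tilde{L}^+} \cong D_L$, and since $L$ is even indefinite of rank $\geq \ell(D_L)+2$, the natural map $O(L) \to O(D_L)$ onto the automorphisms of the discriminant form is surjective. Post-composing the chosen isometry with a suitable element of $O(L)$, I can arrange that the induced identification $D_{\tilde{L}^+} \cong D_L$ sends $\isometry'(\gamma)$ to $\gamma$, whence $\lambda \in \gamma + L^r$ as required. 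In short, the local construction and the gluing are essentially direct invocations of the two preceding lemmas, while the genuine difficulty is concentrated in this last bundle of facts: the collapse of spinor genera for lattices containing a hyperbolic plane, together with the surjectivity of $O(L) \to O(D_L)$.
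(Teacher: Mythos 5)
Your proposal is correct and follows essentially the same route as the paper's proof: apply Lemma~\ref{lem:LocalRepresentability} to get local embeddings, glue them with Lemma~\ref{thm:LocalGlobalPrinciple} to obtain $\tilde{L}^+\in\gen(L^+)$ together with $\lambda$ in the coset $\isometry'(\gamma)$, identify $U\oplus\tilde{L}^+$ with $L$ by uniqueness of $L$ in its genus, and finally correct the discriminant identification using the surjectivity of $O(L)\to O(D_L)$. The only (immaterial) divergence is that you justify the one-class-in-genus and surjectivity statements via Eichler's spinor genus theorem and Nikulin's rank criterion, where the paper simply cites Nikulin's Theorem~1.14.2 for both.
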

	
	\begin{proof}
		The positive definite symmetric matrix~$T \in q(\gamma)$ corresponds to a positive definite lattice~$M$ with basis~$(e_i)_{i = 1, \ldots, r}$ such that~$q((e_i)_{i = 1, \ldots, r}) = T$.
		By Lemma~\ref{lem:LocalRepresentability}, the lattice~$L^+$ represents~$M$ locally and thus, by Lemma \ref{thm:LocalGlobalPrinciple}, there exist an even lattice~$\tilde{L}^+ \in \gen(L^+)$, an isometric embedding~$\isometry \colon M \to \tilde{L}^{+'}$ and an isometry~$\isometry' : D_{L^{+}}  \to D_{\tilde{L}^{+}}$ with~$(\isometry(e_i))_{i = 1, \ldots, r}$ in~$\isometry'(\gamma) + (\tilde{L}^+)^r$.
		\textcolor{\mycolor}{By Remark~\ref{rem:NikulinApplication}} we have an isometry
		$$U \oplus \tilde{L}^+ \to U \oplus L^+ = L,$$
		so that we can assume $U \oplus \tilde{L}^+ = L$ and $\isometry' \colon D_L \to D_L$. \textcolor{\mycolor}{Again by Remark~\ref{rem:NikulinApplication}} the map $O(L) \to O(D_L)$ is surjective and thus there exists some $\tilde{\isometry}' \in O(L)$ mapping to $\isometry'$. Hence, replacing $\isometry$ by $\tilde{\isometry}'^{-1} \circ \isometry$ and $\tilde{L}^+$ by $\tilde{\isometry}'^{-1}(\tilde{L}^+)$, we obtain
		$$\isometry((e_i)_{i = 1, \ldots, r}) \in \gamma + L^r$$
		and $\isometry((e_i)_{i = 1, \ldots, r}) \in (\tilde{L}^{+'})^r$. Moreover, $\lambda = \isometry((e_i)_{i = 1, \ldots, r})$ satisfies $q(\lambda) = T$.
	\end{proof}

%
%
	
	\subsection{Orthogonal Shimura varieties}\label{sec;OSV}
	Let~$L$ be a (non-degenerate) even lattice of signature~$(b,2)$, for some~$b>0$.
	The Grassmannian associated to~$L$ is the set of negative definite planes in~$V=L\otimes\RR$, namely
	\bes
	\Gr(L)=\{z\subset V : \text{$\dim z=2$ and $(\cdot{,}\cdot)|_z<0$}\}.
	\ees
	The Hermitian symmetric space~$\hermdom$ attached to~$V$ may be identified with~$\Gr(L)$; see~\cite[Part~$2$, Section~$2.4$]{1-2-3}.
	From now on, we write~$\hermdom$ and~$\Gr(L)$ interchangeably.
	
	Let~$\widetilde{\SO}(L)$ be the discriminant kernel of~$\SO(L)$, namely the kernel of the natural homomorphism~$\SO(L)\to\Aut(\disc{L})$.
	An \emph{orthogonal Shimura variety} is a quotient of the form~$X=\Gamma\backslash\domain$ for some \textcolor{\mycolor}{finite index} subgroup~$\Gamma\subseteq\widetilde{\SO}(L)$.
	
	By the Theorem of Baily and Borel, the locally symmetric space~$X$ admits a unique algebraic structure that makes it a quasi-projective variety of dimension~$b$.
	This variety may be possibly compact only if~$b\le 2$.
	\section{\textcolor{\mycolor}{Vector-valued Siegel and Jacobi modular forms}}\label{vvmodforms}
	Let~$L$ be an even indefinite lattice of signature $(b,2)$.
	In this section we introduce certain vector-valued genus~$2$ Siegel theta functions and Jacobi Siegel theta functions attached to~$L$.
	These may be regarded as generalizations of the Siegel theta functions~$\Theta_L$ introduced by Borcherds in~\cite[Section~4]{bo;grass}.
	\textcolor{\mycolor}{We also recall vector-valued holomorphic Siegel and Jacobi modular forms.
	For simplicity, we restrict here to the case of genus~$2$ Siegel modular forms, since this is the only case needed in the rest of the present paper.}
	\\
	
	To simplify the notation, we \textcolor{\mycolor}{put~$e(t)=\exp(2\pi i \tr(t))$ for every~$t\in\CC^{n\times n}$, and for $t \in \CC$} denote by~${\sqrt{t}=t^{1/2}}$ the principal branch of the square root, so that~$\arg(\sqrt{t})\in(-\pi/2,\pi/2]$.
	If~$s\in\CC$, we define~$t^s=e^{s\Log(t)}$, where~$\Log(t)$ is the principal branch of the logarithm.
	If~$M$ is a matrix, we denote by~$M^t$ its transpose, and whenever~$M$ is invertible, we denote by~$M^{-t}$ the inverse of~$M^t$.
	
	Let~$V=L\otimes\RR$.
	We fix once and for all an orthogonal basis~$(\basevec_j)_j$ of~$V$ such that~$(e_j,e_j)=1$, for every~$j=1,\dots,b$, and~$(e_j,e_j)=-1$ for~$j=b+1,b+2$.
	For every $\vect{\genvec}=(\genvec_1,\genvec_2)\in V^2$ we denote by~$x_{i,j}$ the coordinate of~$\genvec_j$ with respect to~$\basevec_i$, where~$j=1,2$ and~$i=1,\dots,b+2$.
	Note that we consider the elements of~$V^2$ as row vectors.
	
	We denote by $g_0\colon L\otimes\RR\to\RR^{b,2}$ the standard isometry induced by the choice of the basis~$(\basevec_j)_j$, and by~$G$ the isometry group $\SO(V)$.
	By a slight abuse of notation, we denote by $g_0$ also the isometry applied componentwise on~$V^2$ as~$g_0\colon(\genvec_1,\genvec_2)\mapsto(g_0(\genvec_1),g_0(\genvec_2))$.
	We use the same notation also for the isometries~${g\in G}$ acting on Cartesian products of~$V$.
	We consider the image of~$\vect{v}\in V^2$ under~$g_0$ as a~$(b+2)\times 2$ matrix, writing it as
	\be\label{eq;stgatwomat}
	g_0(\vect{\genvec})=\left(\begin{smallmatrix}
	x_{1,1} & x_{1,2}\\
	\vdots & \vdots\\
	x_{b+2,1} & x_{b+2,2}
	\end{smallmatrix}\right)\in(\RR^{b,2})^2.
	\ee
	
	
	For every $\vect{\genvec}=(\genvec_1,\genvec_2)\in V^2$, we define the projection of $\vect{\genvec}$ with respect to $z\in\Gr(L)$ by
	\bes
	\vect{\genvec}_z=\big((\genvec_1)_z,(\genvec_2)_z\big),
	\ees
	that is, the projection is considered componentwise.
	Moreover, we write
	\bes
	\vect{\genvec}^2=(\vect{\genvec},\vect{\genvec})=\begin{pmatrix}
	\genvec_1^2 & (\genvec_1,\genvec_2)\\
	(\genvec_1,\genvec_2) & \genvec_2^2
	\end{pmatrix}
	\ees
	to denote the \emph{matrix of inner products} of the entries of~$\vect{\genvec}$, and analogously~$q(\vect{\genvec})=\frac{1}{2}\vect{\genvec}^2$.
	
	Lastly, for fixed~$g\in G$, we denote by~$z=g^{-1}(z_0)\in\Gr(L)$ the negative-definite plane mapping to~$z_0$ under~$g$

	\subsection{Very homogeneous polynomials and differential operators}\label{sec;veryhompol}
	\textcolor{\mycolor}{The Siegel theta series we will employ to prove the injectivity of the Kudla--Millson theta lift are special theta series arising from certain homogeneous polynomials on~$(\RR^{b,2})^2$.}
	To illustrate such notion of homogeneity, we need to introduce another piece of notation.
	Let $(g_0(\basevec_j))_j$ be the standard basis of the quadratic space~$\RR^{b,2}$.
	For every vector~${x=\sum_{j=1}^{b+2}x_jg_0(\basevec_j)\in \RR^{b,2}}$, we define~${x^+=\sum_{j=1}^bx_jg_0(\basevec_j)}$ and~${x^-=\sum_{j=b+1}^{b+2}x_jg_0(\basevec_j)}$.
	For every $\vect{x}=(x_1,x_2)\in (\RR^{b,2})^2$, we define~${\vect{x}^+=(x_1^+,x_2^+)}$ and~$\vect{x}^-=(x_1^-,x_2^-)$.
	
	\begin{defi}\label{def;homogforrb22}
	We say that a polynomial $\pol\colon(\RR^{b,2})^2\to\CC$ is \emph{very homogeneous of degree~$(m^+,m^-)$} if it splits as a product of two polynomials $\pol(\vect{x})=\pol_b(\vect{x}^+)\pol_2(\vect{x}^-)$ such that
	\bes
	\pol_b(\vect{x}^+N)=(\det N)^{m^+}\pol_b(\vect{x}^+)\qquad\text{and}\qquad\pol_2(\vect{x}^-N)=(\det N)^{m^-}\pol_2(\vect{x}^-),
	\ees
	for every $N\in\CC^{2\times2}$.
	\end{defi}
	This homogeneity property is the same as the one introduced in~\cite{roehrig}.
	Very homogeneous polynomials are a (not necessarily harmonic) generalization of the ``harmonic forms'' defined by Freitag~\cite[Definition~$3.5$]{freitag} and Maass~\cite{maassharmfo} to indefinite quadratic spaces.
	To avoid confusion with the harmonic differential forms on the Hermitian domain~$\hermdom$, we decided to refer to such polynomials with a different terminology.
	
	\textcolor{\mycolor}{
	An example of a very homogeneous harmonic polynomial of degree~$(1,0)$ for~$b\geq 2$ is~$\pol(\vect{x})=\det\big(\begin{smallmatrix}
	x_{1,1} & x_{1,2} \\
	x_{2,1} & x_{2,2}
	\end{smallmatrix}\big)$.
	Large powers of it provide examples of very homogeneous non-harmonic polynomials; see Lemma~\ref{lemma;homogpolabcddeg2} and~\cite[Remark~$3.3$]{roehrig} for further examples.}
	
	\begin{rem}\label{rem;veryhomogpolprop}
	Let $\pol$ be a very homogeneous polynomial on $(\RR^{b,2})^2$ of degree $(m^+,m^-)$,
	and let $N=\left(\begin{smallmatrix} \lambda & 0\\ 0 & \lambda\end{smallmatrix}\right)$, for some $\lambda\in\RR\setminus\{0\}$.
	For every $\vect{x}=(x_1,x_2)\in(\RR^{b,2})^2$, we have
	\bes
	\pol_b\big(\lambda \vect{x}^+\big)=\pol_b\big(\vect{x}^+N\big)
	=
	\det N^{m^+}\pol_b(\vect{x}^+)=\lambda^{2m^+}\pol_b(\vect{x}).
	\ees
	The case of~$\pol_2(\vect{x}^-)$ is analogous.
	We have just shown that the polynomials~$\pol_b$ and~$\pol_2$ are homogeneous of even degree in the classical sense, if considered as polynomials on~${(\RR^{b,0})^2\cong\RR^{2b}}$ and~${(\RR^{0,2})^2\cong\RR^4}$ respectively.
	An analogous procedure shows that~$\pol_b$ is homogeneous of degree~$m^+$ on each copy of~$\RR^{b,2}$ in~$(\RR^{b,2})^2$.
	A similar statement holds for~$\pol_2$ as well.
	\end{rem}
	
	Let~$\Delta$ be the standard Laplacian on~$(\RR^{b,2})^2$ defined as
	\be\label{eq;strdlapl}
	\Delta=\left(\frac{\partial}{\partial\vect{x}}\right)^t\cdot\frac{\partial}{\partial\vect{x}},\qquad\text{where}\quad \frac{\partial}{\partial\vect{x}}=\left(
	\frac{\partial}{\partial x_{i,j}}
	\right)_{1\le i\le b+2,\, 1\le j\le 2}.
	\ee
	We consider
	\be\label{eq;deftrdandexpop}
	\trace\Delta=\sum_{i=1}^{b+2}\sum_{j=1}^{2}\frac{\partial^2}{\partial x_{i,j}^2}\qquad\text{and}\qquad\exp\Big(-\frac{1}{8\pi}\trace(\Delta y^{-1})\Big)=\sum_{m=0}^\infty\frac{1}{m!}\Big(\frac{-\trace (\Delta y^{-1})}{8\pi}\Big)^m,
	\ee
	for any symmetric positive-definite matrix $y\in\RR^{2\times 2}$, as operators acting on the space of smooth~$\CC$-valued functions on~$(\RR^{b,2})^2$\textcolor{\mycolor}{; see~\eqref{eq;explicitformforexpopgen2bor} for an explicit expression of~$\trace (\Delta y^{-1})$}.
	We say that a smooth function~$f\colon(\RR^{b,2})^2\to\RR$ is \emph{harmonic} if~$\trace\Delta f=0$.
	
	\begin{rem}\label{rem;fromFreitagGerman}
	If a very homogeneous polynomial~$\pol$ is harmonic, then~$\Delta\pol=0$, namely
	\bes
	\sum_{j=1}^{b+2}\frac{\partial^2 \pol}{\partial x_{j,\eta} \partial x_{j,\xi}}=0,\qquad\text{for every~$1\le\eta$, $\xi\le 2$};
	\ees
	see e.g.~\cite[Bemerkung~$3.3$]{freitag}.
	This implies that
	\bes
	\exp \Big(
	-\frac{1}{8\pi}\trace(\Delta y^{-1})
	\Big)(\pol)=\pol.
	\ees
	This is analogous to the case of homogeneous harmonic polynomials in the genus~$1$ case; see~\cite[Remark~$3.2$]{zuffetti;gen1}.
	\end{rem}

	\subsection{\textcolor{\mycolor}{The Weil representation and theta functions}}
	
	We recall here the Schrödinger model~$\omega_{\infty,2}$ of the genus~$2$ Weil representation on the space of Schwartz functions on a quadratic space of general signature; see~\cite[Section~4]{fm;cycleshyp} and~\cite[Section~7]{fm;cycleswith} for further details.
	We then use it to construct Siegel theta functions of genus~$2$.\\
	
	Let~$V$ be a real quadratic space of signature~$(b^+,b^-)$.
	Only at a later point we will restrict to the signature of interest for the present paper, i.e.~$b^-=2$.
	We write~$\Mp_4(\RR)$ for the metaplectic group of order~$4$, namely the double cover of~$\Sp_4(\RR)$ realized by the two choices of holomorphic square roots of~$\tau\mapsto\det(C\tau+D)$ for every~$\big(\begin{smallmatrix}
	A & B\\
	C & D
	\end{smallmatrix}\big)\in\Sp_4(\RR)$.
	Hence, it consists of elements of the form
	\[
	\Big(\big(\begin{smallmatrix}
	A & B\\ C & D
	\end{smallmatrix}\big), \pm\det(C\tau + D)^{1/2}
	\Big),\qquad
	\text{where~$\big(\begin{smallmatrix}
	A & B\\ C & D
	\end{smallmatrix}\big)\in\Sp_4(\RR)$.}
	\]
	Occasionally, we just write~$\big(\begin{smallmatrix}
	A & B\\ C & D
	\end{smallmatrix}\big)$ for~$\big(\big(\begin{smallmatrix}
	A & B\\ C & D
	\end{smallmatrix}\big),\det(C\tau + D)^{1/2}\big)$ if it is clear from the context that we are working with the metaplectic cover.
	The group operation in~$\Mp_4(\RR)$ is given by~$\big(M,f(\cdot)\big)\cdot \big(N,g(\cdot)\big) =\big(MN , f(N(\cdot))g(\cdot)\big)$; see~\cite[Section~$2.1$]{zh;phd} for further information.
	The group~$\Mp_4(\ZZ)$ is the inverse image of~$\Sp_4(\ZZ)$ under the covering map.
	For simplicity we denote by~$S$ the element~$S=\big(\big(\begin{smallmatrix}
	0 & -I_2\\
	I_2 & 0
	\end{smallmatrix}\big),\det\tau^{1/2}\big)$ of~$\Mp_4(\ZZ)$.
	Note that~$S^2=Z$, where~$Z=(-I_4,-1)$ lies in the center of~$\Mp_4(\ZZ)$.
	
	\begin{defi}\label{def;schrmoddeg2def}
		The \emph{Schrödinger model}~$\omega_{\infty,2}$ of the Weil representation provides an action of~$\Mp_4(\RR)\times\bigO(V)$ on the space~$\mathcal{S}(V^2)$ of Schwartz functions on~$V^2$ as follows.
		The action of~$\bigO(V)$ is given by
		\begin{align*}
			\omega_{\infty,2}(g)\varphi(\vect{v})=\varphi\big(g^{-1}(\vect{v})\big),
		\end{align*}
		for every~$\varphi\in\mathcal{S}(V^2)$ and~$g\in\bigO(V)$.
		The action of~$\Mp_4(\RR)$ is given by
		\ba\label{eq;schrmoddeg2def}
			\omega_{\infty,2}&\big(\begin{smallmatrix}
				A & 0\\
				0 & {A}^{-t}
			\end{smallmatrix}\big)
			\varphi(\vect{\genvec})=(\det A)^{(b^++b^-)/2}\varphi(\vect{\genvec}A),\qquad\text{for every
			$A\in\GL^+_2(\RR)$,}\\
			\omega_{\infty,2}&\big(\begin{smallmatrix}
				I_2 & B\\
				0 & I_2
			\end{smallmatrix}\big)\varphi(\vect{\genvec})=e\big(\trace B q(\vect{\genvec})\big)\varphi(\vect{\genvec}),\qquad\text{for every $B\in\Sym_2(\RR)$,}\\
			\omega_{\infty,2}&(S)\varphi(\vect{\genvec})=i^{b^--b^+} \widehat{\varphi}(\vect{v}),
		\ea
		where~$\widehat{\varphi}$ is the Fourier transform of~$\varphi$ \textcolor{\newcolor}{normalized as in Appendix \ref{sec;Ftransfgen2}}.
	\end{defi}
	
	Let~$\vect{\boralpha},\vect{\borbeta}\in V^2$.
	For any~$\varphi \in \calS(V^2)$ we define a new the Schwartz function~$\cha{\varphi}$ with characteristics $(\vect{\boralpha},\vect{\borbeta})$ by\textcolor{\newcolor}{
	$$
	\cha{\varphi}(\vect{v}, \vect{\boralpha},\vect{\borbeta})
	\coloneqq
	\varphi(\vect{v} + \vect{\borbeta}) e((-\vect{v} + \vect{\borbeta}/2, \vect{\boralpha})).
	$$}
	Then a straight forward calculation using Lemma~\ref{lemma;somepropFtr} yields\textcolor{\newcolor}{
		\begin{align}\label{eq;strfwo}
			\omega_{\infty,2}
			(\widetilde{M})(\cha{\varphi})(\vect{\genvec},\vect{\boralpha},\vect{\borbeta})= \cha{(\omega_{\infty,2}(\widetilde{M})\varphi)}(\vect{\genvec},(\vect{\boralpha},\vect{\borbeta})\widetilde{M}^{t})
		\end{align}
	for all~$\widetilde{M}=(M,\phi)\in\Mp_4(\RR)$, where we simply denote by~$(\vect{\boralpha},\vect{\borbeta})\widetilde{M}^{t}$ the tuple~$(\vect{\boralpha},\vect{\borbeta})M^t$. From now on, we will write $M \in \Mp_4(\RR)$ instead of $\widetilde{M}$.}

	We now construct theta series with characteristics~$(\vect{\boralpha},\vect{\borbeta}) \in V^2$ attached to lattice co-sets of~$L^2$, where~$L$ is an even lattice such that~$V=L\otimes\RR$.
	\begin{defi}
	The \emph{Siegel theta function of index~$\sigma\in\disc{L}^2$ and characteristics~$(\vect{\boralpha},\vect{\borbeta})\in V^2$} is defined as
	\[
	\theta_{L, 2}^\sigma(M, \vect{\boralpha}, \vect{\borbeta}, g, \varphi)
	\coloneqq
	\sum_{\vect{\lambda} \in L^2 + \sigma} (\omega_{\infty,2}(M) \cha{\varphi})\big(g(\vect{\lambda}, (\vect{\boralpha},\vect{\borbeta})M^{-t})\big)
	\]
	for every $M \in \Mp_4(\IR)$, $ g \in G$ and $\varphi \in \calS(V^2)$.
	\end{defi}
	The real analytic functions~$\theta_{L, 2}^\sigma$ behave with respect to the action of the generators of~$\Mp_4(\ZZ)$ as follows.
	If~$A\in\SL_2(\ZZ)$, then
	\bas
	\theta_{L, 2}^\sigma\Big(\big(\begin{smallmatrix}
			A & 0\\
			0 & {A}^{-t}
		\end{smallmatrix}\big)M, (\vect{\boralpha}, \vect{\borbeta})\big(\begin{smallmatrix}
		A & 0\\
		0 & {A}^{-t}
		\end{smallmatrix}\big)^t, g, \varphi\Big)
		&=
		\theta_{L, 2}^{\sigma A}(M, \vect{\boralpha}, \vect{\borbeta}, g, \varphi).
	\eas
	If $B\in\Sym_2(\ZZ)$, then
	\bas
	\theta_{L, 2}^\sigma\Big(\big(\begin{smallmatrix}
			I_2 & B\\
			0 & I_2
		\end{smallmatrix}\big)M, (\vect{\boralpha}, \vect{\borbeta})\big(\begin{smallmatrix}
		I_2 & B\\
		0 & I_2
		\end{smallmatrix}\big)^t, g, \varphi\Big)
		=
		e(q(\sigma) B) \cdot \theta_{L, 2}^\sigma\big(M, \vect{\boralpha}, \vect{\borbeta}, g, \varphi\big).
	\eas
	Furthermore, by Poisson summation formula we have that
	\begin{align*}
		\theta_{L, 2}^\sigma\Big(\big(\begin{smallmatrix}
			0 & -I_2\\
			I_2 & 0
		\end{smallmatrix}\big)M, (\vect{\boralpha}, \vect{\borbeta}) \big(\begin{smallmatrix}
		0 & -I_2\\
		I_2 & 0
		\end{smallmatrix}\big)^t, g, \varphi\Big)
		&=
		\frac{i^{b^- - b^+}}{\lvert D_L \rvert} \sum_{\sigma' \in D_L^2} e(-\sigma', \sigma) \cdot \theta_{L, 2}^{\sigma'}\big(M, \vect{\boralpha}, \vect{\borbeta}, g, \varphi\big).
	\end{align*}
	These equations define a representation~$\rho_{L,2}$ of $\Mp_4(\IZ)$ on the group algebra $\IC[D_L^2]$.
	Let~$(\mathfrak{e}_{\gendisc})_{{\gendisc}\in\disc{L}^2}$ be the standard basis of the group algebra~$\CC[\disc{L}^2]$, and let~$\langle\cdot{,}\cdot\rangle$ be the standard Hermitian scalar product on~$\CC[\disc{L}^2]$ defined as
	\[
	\Big\langle
	\sum_{{\gendisc}\in\disc{L}^2}\lambda_{\gendisc}\mathfrak{e}_{\gendisc},\sum_{{\gendisc}\in\disc{L}^2}\mu_{\gendisc}\mathfrak{e}_\gendisc
	\Big\rangle
	=
	\sum_{{\gendisc}\in\disc{L}^2}\lambda_{\gendisc}\overline{\mu_{\gendisc}}.
	\]
	The Weil representation~$\rho_{L,2}$ is unitary with respect to such scalar product; see~\cite{zh;phd} and~\cite{br-zu} for further details on~$\rho_{L,2}$ and generalizations.
	
	With the notation introduced above, we may rephrase the behavior of the functions~$\theta^\sigma_{L,2}$ with respect to~$\Mp_4(\ZZ)$ by saying that the vector-valued theta function
	$$
	\Theta_{L, 2}(M, \vect{\boralpha}, \vect{\borbeta}, g, \varphi) 
	\coloneqq
	\sum_{\sigma \in D_L^2} \theta_{L, 2}^\sigma(M, \vect{\boralpha}, \vect{\borbeta}, g, \varphi) \frake_\sigma
	$$
	satisfies
	\be\label{eq;thetawithgrvar}
	\Theta_{L,2}(\gamma M, (\vect{\boralpha}, \vect{\borbeta})\gamma^t, g, \varphi) = \rho_{L,2}(\gamma) \Theta_{L, 2}(M, \vect{\boralpha}, \vect{\borbeta}, g, \varphi)
	\ee
	for all $\gamma \in \Mp_4(\IZ)$.
	
	\subsection{Siegel modular forms of genus~2}\label{sec;siegmodg2aft}
	In this section we employ the representation~$\rho_{L,2}$ of~$\Mp_4(\ZZ)$ on~$\CC[\disc{L}^2]$ to construct vector-valued holomorphic Siegel modular forms of genus~$2$.

%
%

	Let~$k\in\frac{1}{2}\ZZ$.
	A \emph{weight~$k$ Siegel modular form of genus~$2$ with respect to the Weil representation~$\weil{L}$} is a holomorphic function~$f\colon\HH_2\to\CC[\disc{L}^2]$ such that
	\[
	f(\gamma\cdot\tau)=\phi(\tau)^{2k}\weil{L}(\gamma)f(\tau)\qquad\text{for all~$\gamma=\big(\big(\begin{smallmatrix}
	A & B\\ C & D
	\end{smallmatrix}\big),\phi(\tau)\big)\in\Mp_4(\ZZ)$},
	\]
	where~$\gamma\cdot\tau=(A\tau+B)(C\tau+D)^{-1}$.
	We denote the scalar components of~$f$ by~$f_{\gendisc}$, so that~$f=\sum_{{\gendisc}\in\disc{L}^2} f_{\gendisc} \mathfrak{e}_{\gendisc}$.
	
	The invariance of~$f$ with respect to translations in~$\Mp_4(\ZZ)$ implies that the functions~${e\big(- q({\gendisc})\tau\big)f_{\gendisc}(\tau)}$ are periodic with respect to integral symmetric~$2\times 2$-matrices, for every~${\gendisc}\in\disc{L}^2$.
	Hence~$f$ admits a Fourier expansion of the form
	\be\label{eq;FexpofSiegmod}
	f(\tau)=\sum_{{\gendisc}\in\disc{L}^2}\sum_{\substack{T\in\halfint_2+q({\gendisc})\\ T\ge 0}} a_f(\gendisc,T) \cdot \mathfrak{e}_{\gendisc}(T\tau),
	\ee
	where~$\halfint_2$ is the set of symmetric half-integral~$2\times 2$-matrices and~$\mathfrak{e}_\gendisc(t)\coloneqq e(t)\mathfrak{e}_\gendisc$.
	If it is clear that the Fourier coefficients are referred to a given modular form~$f$, then we drop the index~$f$ and denote the coefficients simply by~$a(\gendisc,T)$.
	
	We say that a Siegel modular form is a \emph{cusp form} if all its Fourier coefficients indexed over degenerate matrices vanish.
	We define~$M^k_{2,L}$, resp.~$S^k_{2,L}$, to be the space of weight~$k$ and genus~$2$ Siegel modular forms, resp.\ cusp forms, with respect to the Weil representation~$\weil{L}$.

	\subsection{\textcolor{\mycolor}{Siegel theta functions of genus 2 \textcolor{\newcolor}{\`a} la Borcherds}}\label{sec;Siegthealabo}
	Let $\textcolor{\newcolor}{K_\infty'} \subseteq \Mp_4(\IR)$ be the preimage of~$\UU(2) \coloneqq \big\{ \big(\begin{smallmatrix}
		A & B \\ -B & A
	\end{smallmatrix}\big) : AA^t + BB^t = 1 \big\}$ under the metaplectic cover.
	We define a character~$\chi_{1/2}$ on~$\textcolor{\newcolor}{K_\infty'}$ as
	$$\chi_{1/2}\Big(\big(\begin{smallmatrix}
		A & B \\ -B & A
	\end{smallmatrix}\big), \pm \det(-B\tau + A)^{1/2}\Big) = \pm \det(-Bi + A)^{-1/2}.$$
	
	Let~$k\in\frac{1}{2}\ZZ$.
	If a Schwartz function $\varphi \in \calS(V^2)$ satisfies $\omega_{\infty,2}(M') \varphi = \chi_{1/2}^{2k}(M') \varphi$ for all $M' \in \textcolor{\newcolor}{K_\infty'}$, then
	$$\Theta_{L,2}(M M', (\vect{\boralpha}, \vect{\borbeta}), g, \varphi) = \chi_{1/2}^{2k}(M') \Theta_{L, 2}(M, \vect{\boralpha}, \vect{\borbeta}, g, \varphi)
	\qquad
	\text{for all $M' \in \textcolor{\newcolor}{K_\infty'}$.}$$
	In this case, we may associate to~$\Theta_{L, 2}(M, \vect{\boralpha}, \vect{\borbeta}, g, \varphi)$ a smooth function on~$\HH_2$ that behaves similarly as the Siegel modular forms constructed in Section~\ref{sec;siegmodg2aft}, with the following procedure.
	For every~$\tau \in \IH_2$ we denote by~$M_\tau\coloneqq(\begin{smallmatrix}
	I_2 & x\\ 0 & I_2
	\end{smallmatrix})\Big(\begin{smallmatrix}
	y^{1/2} & 0\\ 0 & (y^{1/2})^{-t}
	\end{smallmatrix}\Big)$ the standard element of~$\Sp_4(\RR)$ mapping~$iI_2$ to~$\tau$.
	Then, the theta function
	$$\Theta_{L, 2}(\tau, \vect{\boralpha}, \vect{\borbeta}, g, \varphi) \coloneqq 
	\det y^{-k/2}
	\Theta_{L, 2}(M_\tau, \vect{\boralpha}, \vect{\borbeta}, g, \varphi)$$
	satisfies
	$$\Theta_{L,2}(\gamma \cdot \tau, (\vect{\boralpha}, \vect{\borbeta})\gamma^t, g, \varphi) = \phi(\tau)^{2k} \rho_{L,2}(\gamma) \Theta_{L, 2}(\tau, \vect{\boralpha}, \vect{\borbeta}, g, \varphi)$$
	for all $\gamma \in \Mp_4(\IZ)$.
	
	\begin{ex}
	Let~$(b^+,b^-)=(b,2)$.
	The \emph{standard Gaussian}~$\stgatwo$ of~$(\RR^{b,2})^2$ is defined as
	\bes
	\stgatwo(\vect{x})=\exp\Big(
	-\pi\sum_{i=1}^{b+2}\sum_{j=1}^2 x_{i,j}^2
	\Big),\qquad\text{for every~$\vect{x}=(x_1,x_2)\in(\RR^{b,2})^2$},
	\ees
	where~$x_j=(x_{1,j},\dots,x_{b+2,j})^t\in \RR^{b,2}$.
	The standard Gaussian of $V^2$ is the composition of~${\stgatwo}$ with~$g_0$, where the latter is as in~\eqref{eq;stgatwomat}.
	For simplicity we will denote the standard Gaussian of~$V^2$ with~$\stgatwo$ in place of~$\stgatwo\circ g_0$.
	It is well-known that~$\stgatwo$ is an eigenfunction under the action of~$\textcolor{\newcolor}{K_\infty'}$ of weight~$k=b/2-1$, in the sense that~$\omega_{\infty,2}(M')\stgatwo = \chi_{1/2}^{b-2}(M')\stgatwo$;
	see e.g.~\cite[Chapter~VIII, Section~$1$]{bowa}.
	\end{ex}
	
	From now on, \textbf{we assume that the signature of~$V$ is~$(b,2)$.}
	Let $\pol$ be a very homogeneous polynomial of degree~$(m^+,m^-)$ on~$(\RR^{b,2})^2$.
	For simplicity we write~$\pol(\vect{\genvec})$ for the value~$\pol\big(g_0(\vect{\genvec})\big)$ where~$\vect{\genvec}\in V^2$, i.e.\ we consider~$\pol$ as a polynomial in the coordinates of~$\vect{\genvec}$ with respect to the basis~$(\basevec_j)_j$ of~$V$. 
	It is easy to see using Lemma~\ref{lemma;onFtransfgenus2} that the Schwartz function
	\be\label{eq;ourchoiceofphi}
	\varphi(\vect{v}) = \exp(-\tr\Delta / 8 \pi)(\calP)(\vect{v}) \stgatwo(\vect{v})
	\ee
	is an eigenfunction of~$\textcolor{\newcolor}{K_\infty'}$ of weight~$b/2-1+m^+-m^-$, namely
	\be\label{eq;ourchoiceofphiwbe}
	\omega(M')\varphi = \chi_{1/2}^{b-2+2m^+-2m^-}(M') \varphi
	\qquad\text{for all~$M'\in \textcolor{\newcolor}{K_\infty'}$.}
	\ee
	This will be relevant in Section~\ref{sec;deg2kmtform} only for special cases of~$\calP$, for which~\eqref{eq;ourchoiceofphiwbe} follows immediately from the similar behavior of the Kudla--Millson Schwartz function; see~\cite[Theorem~$3.1$, (ii)]{kumi;harmI} and Section~\ref{sec;Schwfunct} for further details.
	
	The theta functions arising from~$\varphi$ as in~\eqref{eq;ourchoiceofphi} are generalizations in genus~$2$ of Borcherds' Siegel theta functions~\cite[Section~$4$]{bo;grass}.
	They can be also considered as vector-valued analogues of the theta functions introduced by Roehrig in~\cite{roehrig}.
	These considerations can be easily checked using the following explicit rewriting of~$\theta_{L,2}^{\gendisc}$ \textcolor{\newcolor}{\`a} la Borcherds.
	Recall that for fixed~$g\in G$, we define~$z=g^{-1}(z_0)\in\Gr(L)$ to be the negative-definite plane mapping to~$z_0$ under~$g$.
	\begin{lemma}\label{lemma;fronv2tov1}
	If~$\varphi(\vect{v}) = \exp(-\tr\Delta / 8 \pi)(\calP)(\vect{v}) \stgatwo(\vect{v})$ for some very homogeneous polynomial~$\calP$ of degree~$(m^+,m^-)$ on~$(\RR^{b,2})^2$, then
	\ba\label{eq;Ithinkggenborindeg2thetanofact}
	\theta_{L,2}^{\gendisc}(\tau,\vect{\boralpha},\vect{\borbeta},g,\varphi)
	&=
	\det y^{1 + m^-} \sum_{\vect{\lambda}\in L^2+{\gendisc}}\exp \Big(
	-\frac{1}{8\pi}\trace(\Delta y^{-1})
	\Big)(\pol)\big(g(\vect{\lambda}+\vect{\borbeta})\big)
	\\
	&\quad\times e\Big(
	q((\vect{\lambda}+\vect{\borbeta})_{z^\perp})\tau+
	q((\vect{\lambda}+\vect{\borbeta})_z)\bar{\tau}-
	(\vect{\lambda}+\vect{\borbeta}/2,\vect{\boralpha})
	\Big).
	\ea
	\end{lemma}
	The theta functions considered in Lemma~\ref{lemma;fronv2tov1} play a central role in the present paper.
	For this reason, we reserve a special notation for them, given as follows.
	\begin{defi}\label{def;thetasergen2}
	Let~$\calP$ be a very homogeneous polynomial on~$(\RR^{b,2})^2$.
	We denote by $\theta_{L,2}^{\gendisc}(\tau,\vect{\boralpha},\vect{\borbeta},g,\pol)$ the theta function arising from~$\varphi=\exp(-\tr\Delta / 8 \pi)(\calP)(\vect{v}) \stgatwo(\vect{v})$; see \eqref{eq;Ithinkggenborindeg2thetanofact} for an explicit formulation.
	The~$\CC[\disc{L}^2]$-valued \emph{Siegel theta function associated to~$L$ and the polynomial~$\pol$} is
	\[
	\Theta_{L,2}(\tau,\vect{\boralpha},\vect{\borbeta},g,\pol)
	\coloneqq
	\sum_{{\gendisc}\in\disc{L}^2}\theta_{L,2}^\gendisc(\tau,\vect{\boralpha},\vect{\borbeta},g,\pol) \mathfrak{e}_{\gendisc}.
	\]
	If $\vect{\boralpha},\vect{\borbeta}=0$, we usually drop them from the notation and simply write~$\Theta_{L,2}(\tau,g,\pol)$.
	\end{defi}
	\begin{proof}[Proof of Lemma~\ref{lemma;fronv2tov1}]
	By~\eqref{eq;strfwo} we may rewrite
	\begin{align*}
	\theta_{L,2}^\sigma(\tau, \vect{\boralpha}, \vect{\borbeta}, g, \varphi)
	&=
	\det y^{(-b/2+1-m^++m^-)/2}
	\sum_{\vect{\lambda}\in L^2+\sigma}
	\cha{(\omega_{\infty,2}(M_\tau)\varphi)}\big(g(\vect{\lambda},\vect{\delta},\vect{\nu})\big).
	\end{align*}
	For any~$\vect{v}\in V^2$ we may compute
	\bas
	(\omega_{\infty,2}(M_\tau)\varphi)(\vect{v})
	&=
	\det y^{(b+2)/4} \cdot e( x q(\vect{v}))\cdot\varphi(\vect{v}y^{1/2}),
	\eas
	where~$\varphi(\vect{v}y^{1/2})
	=
	\exp(-\tr\Delta / 8 \pi)(\calP)(\vect{v}y^{1/2}) \stgatwo(\vect{v}y^{1/2})$.
	By~\cite[Lemma~$4.4$, ($4.5$)]{roehrig} and the very homogeneity of~$\calP$ we deduce that
	\[
	\exp(-\tr\Delta/8\pi)(\calP)(\vect{v}y^{1/2})
	=
	\det y^{(m^++m^-)/2}\exp(-\tr(\Delta y^{-1})/8\pi)(\calP)(\vect{v}).
	\]
	This and the rewriting~$e\big( xq(\vect{\genvec})\big)\cdot\stgatwo\big(g(\vect{\genvec}y^{1/2})\big)
	=
	e\big(
	\trace \big(q(\vect{\genvec}_{z^\perp})\tau\big)+\trace\big(q(\vect{\genvec}_z)\bar{\tau}\big)\big)$ imply the claimed formula for~$\theta_{L,2}^{\gendisc}(\tau,\vect{\boralpha},\vect{\borbeta},g,\varphi)$.
	\end{proof}
	
	\begin{rem}\label{rem;trformsiegthv2}
	The Siegel theta functions of Definition~\ref{def;thetasergen2} satisfy
	\be
	\Theta_{L,2}(\gamma\cdot \tau,(\vect{\boralpha},\vect{\borbeta})\gamma^t,g,\pol)
	=
	\phi(\tau)^{b-2+2(m^++m^-)}
	\rho_{L,2}(\gamma)
	\Theta_{L,2}(\tau,\vect{\boralpha},\vect{\borbeta},g,\pol)
	\ee
	for all~$\gamma\in\Mp_4(\ZZ)$.
	\end{rem}
	
	\subsection{\textcolor{\mycolor}{Jacobi forms and Jacobi Siegel theta functions}}\label{subsec:JacobiForms}
	
	Let $\heisenberg(\IR) \coloneqq \IR^3$ be the Heisenberg group, where the multiplication is given by
	$$(\jacone, \jactwo, \jacthree) \cdot (\jacone', \jactwo', \jacthree')
	\coloneqq
	(\jacone + \jacone', \jactwo + \jactwo', \jacthree + \jacthree' + \jacone \jactwo' - \jacone'\jactwo)$$
	and define the metaplectic Jacobi group by~$\jacobi(\IR) \coloneqq \heisenberg(\IR) \rtimes \Mp_2(\IR)$, where the action of $\Mp_2(\IR)$ on $\heisenberg(\IR)$ is given by 
	$$(\jacone, \jactwo, \jacthree)
	\cdot
	\left(\begin{smallmatrix} a & b \\ c & d\end{smallmatrix}\right) = (a \jacone + c \jactwo, b \jacone + d \jactwo, \jacthree).$$
	The Jacobi group $\jacobi(\IR)$ acts on $\IH \times \IC$ under
	$$(\jacone, \jactwo, \jacthree) (\jacvarone, \jacvartwo)
	\coloneqq
	(\jacvarone, \jacvartwo + \jacone + \jactwo \jacvarone)
	\qquad\text{and}\qquad
	\big(\big(\begin{smallmatrix} a & b \\ c & d\end{smallmatrix}\big), \phi\big) (\jacvarone, \jacvartwo)
	\coloneqq
	\Big(\frac{a \jacvarone + b}{c \jacvarone + d}, \frac{\jacvartwo}{c \jacvarone + d}\Big).$$
	The metaplectic Jacobi group $\jacobi(\IR)$ can be embedded into $\Mp_4(\IR)$ as
	\begin{align*}
		\left(\jacone, \jactwo, \jacthree, \left(\begin{smallmatrix} a & b \\ c & d\end{smallmatrix}\right)\right) \mapsto \left(\begin{smallmatrix}
			a & 0 & b & a \jactwo - b \jacone \\
			\jacone & 1 & \jactwo & \jacthree \\
			c & 0 & d & c \jactwo - d \jacone \\
			0 & 0 & 0 & 1
		\end{smallmatrix}\right)
	\end{align*}
	and by mapping the square root $\phi(\jacvarone)$ of $c \jacvarone + d$ to~$\tilde{\phi}\left(\big(\begin{smallmatrix} \tauone & \tautwo \\ \tautwo & \tauthree \end{smallmatrix}\big)\right)  = \phi(\tauone)$. Its image is the Klingen parabolic $C_{1,2}(\IR)$, which is the stabilizer of the standard $1$-dimensional boundary component of $\IH_2$. Moreover, the action of $\Mp_4(\IR)$ on $\big(\begin{smallmatrix} \tauone & \tautwo \\ \tautwo & \tauthree \end{smallmatrix}\big) \in \IH_2$ restricts to the above action of $\jacobi(\IR)$ on $(\jacvarone, \jacvartwo) \in \IH \times \IC$.
	
	For $M \in \jacobi(\IR), \eta \in L', g \in G$ and a Schwartz function $\varphi \in \calS(V^2)$ we define the \emph{Jacobi theta function with characteristics $\delta, \nu \in V$} by
	$$\Theta_{L, \eta}^\sigma(M, \boralpha, \borbeta, g, \varphi) = \sum_{\lambda \in L + \sigma} (\omega_{\infty,2}(M) \varphi')(g(\lambda, \eta, ((\boralpha, 0),(\borbeta, 0))M^{-t})).$$
	Then for $b \in \IZ$ we have
	$$\Theta_{L, \eta}^\sigma\bigg(\big(\begin{smallmatrix}
		1 & b\\
		0 & 1
	\end{smallmatrix}\big)M, (\boralpha, \borbeta)\big(\begin{smallmatrix}
		1 & b\\
		0 & 1
	\end{smallmatrix}\big)^t, g, \varphi\bigg) = e(q(\sigma) b) \Theta_{L, \eta}^\sigma\big(M, \boralpha, \borbeta, g, \varphi\big)$$
	and
	$$\Theta_{L, \eta}^\sigma\bigg(\big(\begin{smallmatrix}
		0 & -1\\
		1 & 0
	\end{smallmatrix}\big)M, (\boralpha, \borbeta) \big(\begin{smallmatrix}
		0 & -1\\
		1 & 0
	\end{smallmatrix}\big)^t, g, \varphi\bigg) = \frac{\sqrt{i}^{b^- - b^+}}{\sqrt{\lvert D_L \rvert}} \sum_{\sigma' \in D_L} e(-\sigma', \sigma) \Theta_{L,\eta}^{\sigma'}\big(M, \boralpha, \borbeta, g, \varphi\big).$$
	Both equations can be seen by observing that the restriction of the Schr\"odinger model of the Weil representation of $\Mp_4(\IR)$ to $\Mp_2(\IR) \subseteq \jacobi(\IR)$ yields the Schr\"odinger model of the Weil representation of $\Mp_2(\IR)$ on $\calS(V)$. Moreover, a direct calculation yields for every $(r, s, t) \in \calH(\IZ)$.
	\begin{align*}
		\Theta_{L, \eta}^\sigma\bigg((r, s, t) M, \boralpha, \borbeta, g, \varphi\bigg) = e(r(\sigma, \eta) + (t - rs) q(\eta)) \Theta_{L,\eta}^{\sigma - s \eta}\big(M, \boralpha, \borbeta, g, \varphi\big).
	\end{align*}
	These equations define a representation $\rho_{L, \eta}$ of $\calJ(\IZ)$ on the group ring $\IC[D_L]$ such that
	$$\rho_{L, 2}(M, \tilde{\phi}) \frake_{(\sigma, \eta)} = (\rho_{L, \eta}(M, \phi) \frake_\sigma) \otimes \frake_\eta$$
	for all $(M, \phi) \in \calJ(\IZ)$, where we identify~$\IC[D_L] \otimes \IC[D_L]$ with~$\IC[D_L^2]$ using the isomorphism \be\label{eq;inclusionofgroupalg}
	\IC[D_L] \otimes \IC[D_L] \to \IC[D_L^2], \quad \frake_{\sigma_1} \otimes \frake_{\sigma_2} \mapsto \frake_{(\sigma_1, \sigma_2)}.
	\ee
	\begin{defi}
		A \emph{vector-valued Jacobi form of weight $k$ and index $\jacindex$ with respect to the Weil representation $\weilrep_{\lattice, \sigma_2}$} is a holomorphic function $\phi : \IH \times \IC \to \IC[D_L]$ with
		$$\phi(\jacvarone, \jacvartwo + \jacone + \jactwo \jacvarone) = e(-\jacindex \jactwo^2 \jacvarone - 2 \jacindex \jactwo \jacvartwo - \jacone \jactwo \jacindex - \jacthree \jacindex) \weilrep_{\lattice, \sigma_2}(\jacone, \jactwo, \jacthree) \phi(\jacvarone, \jacvartwo)$$
		for $(\jacone, \jactwo, \jacthree) \in \heisenberg(\IZ)$ and
		$$\phi(\mat (\jacvarone, \jacvartwo)) = (c \jacvarone + d)^{k}
		e\left(\frac{\jacindex c \jacvartwo^2}{c \jacvarone + d}\right) \weilrep_{\lattice, \sigma_2}(\mat) \phi(\jacvarone, \jacvartwo)$$
		for $\mat = \left(\left(\begin{smallmatrix}a & b \\ c & d\end{smallmatrix}\right), \phi\right) \in \Mp_2(\IZ)$. If $\jacindex = q(\jacindexvec)$ for $\jacindexvec \in \sigma_2 + L$, we say that $f$ is a vector-valued Jacobi form of weight $k$ and index $\jacindexvec$.
	\end{defi}
	
	\begin{ex}\label{exmpl:JacobiThetaPoly}
		Let $\varphi \in \calS(V)$ be a Schwartz function such that $\omega(M') \varphi = \chi_{1/2}^{2k}(M') \varphi$ for all $M' \in \Mp_2(\IR) \cap \textcolor{\newcolor}{K_\infty'}$ and some $k \in \frac{1}{2}\IZ$. Let $M_{\tau_1, \tau_2} \in \calJ(\IR)$ be given by $M_{\tau_1} \cdot \left(\frac{x_2}{\sqrt{y_1}}, \frac{y_2}{\sqrt{y_1}}, 0\right)$, where $M_{\tau_1} \coloneqq(\begin{smallmatrix}
			1 & x_1 \\ 0 & 1
		\end{smallmatrix})\Big(\begin{smallmatrix}
			y_1^{1/2} & 0\\ 0 & y_1^{-1/2}
		\end{smallmatrix}\Big)$, so that $M_{\tau_1, \tau_2} (i, 0) = (\tau_1, \tau_2)$. Then
		$$\Theta_{L, \eta}(\tau_1, \tau_2, \delta, \nu, g, \varphi) \coloneqq y_1^{k/2} \Theta_{L, \eta}(M_{\tau_1, \tau_2}, \delta, \nu, g, \varphi)$$ satisfies
		\begin{align*}
		&\Theta_{L, \eta}(\tau_1, \tau_2 + r + s \tau_1, \delta, \nu, g, \varphi) \\
		&= e(-q(\eta) s^2 \tau_1 - 2q(\eta) s \tau_2 - rs q(\eta) - tq(\eta)) \rho_{L, \eta}(r, s, t) \Theta_{L, \eta}(\tau_1, \tau_2, \delta, \nu, g, \varphi)
		\end{align*}
		and
		$$\Theta_{L, \eta}(M(\tau_1, \tau_2), (\delta, \nu)M^t, g, \varphi) = \phi(\tau)^{2k} e\left(\frac{q(\eta) c \tau_2^2}{c \tau_1 + d}\right) \rho_{L, \eta}(M) \Theta_{L, \eta}(\tau_1, \tau_2, \delta, \nu, g, \varphi).$$
		Assume that $L$ has signature $(b^+, b^-)$ and fix an isometry $g_0 : L \otimes \IR \to \IR^{b^+, b^-}$. Let
		$$\Delta = \sum_{i = 1}^{b^+ + b^-} \frac{\partial^2}{\partial x_i^2}$$
		be the Laplace operator on $\IR^{b^+ + b^-}$. If $\calP : \IR^{b^+, b^-} \to \IC$ is a homogeneous polynomial, then
		$$\varphi(\lambda) = \exp(- \Delta / 8 \pi)(\calP)(g_0(\lambda)) \varphi_0(\lambda)$$
		is a Schwartz function that satisfies
		$$\omega(M') \varphi = \chi_{1/2}^{2k}(M') \varphi$$
		for all $M' \in \Mp_2(\IR) \cap K'$ where $k = b^+/2 + m^+ - b^- / 2$. Write $z_0^\perp$, resp.~$z_0$, for the preimage of $\IR^{b^+, 0}$, resp. $\IR^{0, b^-}$, under $g_0$. for an isometry $g \colon L \otimes \IR \to L \otimes \IR$ we define $z^\perp$, resp.~$z$, to be the preimage of $z_0^\perp$, resp. $z_0$ under $g$. Then $z_0$ and $z$ are negative definite subspaces of dimension $b^-$ in $L \otimes \IR$, so that they lie in the Grassmannian $\Gr(L)$.
		If we denote by~$\lambda_{z^\perp}$ and~$\lambda_z$ the projections of $\lambda$ to the subspace $z^\perp$ and $z$ respectively, for $\lambda \in L \otimes \IR$, then
		\begin{align*}
			&\theta^\gendisc_{\lattice, \jacindexvec}(\jacvarone, \jacvartwo, \boralpha, \borbeta, \isometry, \calP) := \theta^\gendisc_{\lattice, \jacindexvec}(\jacvarone, \jacvartwo, \boralpha, \borbeta, \isometry, \varphi)
			\\
			&=
			\jacvaroneim^{b^-/2} \exp\big(4 \pi q(\jacindexvec_{\isometryneg}) \jacvartwoim^2 / \jacvaroneim\big)
			\sum_{\lambda \in \sigma + \lattice} \exp(-\Delta / 8 \pi \jacvaroneim)(\calP)\big(\isometry(\lambda + \jacvartwoim\jacindexvec/\jacvaroneim  + \borbeta)\big)
			\\
			&\quad\times e\Big(q\big((\lambda + \borbeta)_{\isometrypos}\big) \jacvarone + q\big((\lambda + \borbeta)_{\isometryneg}\big) \overline{\jacvarone} + \jacvartwo (\lambda + \borbeta, \jacindexvec_{\isometrypos}) + \overline{\jacvartwo} (\lambda + \borbeta, \jacindexvec_{\isometryneg}) - (\lambda + \borbeta / 2, \boralpha)\Big)
		\end{align*}
		and
		$$\Theta_{L, \eta}(\tau_1, \tau_2, \delta, \nu, g, \calP) := \Theta_{L, \eta}(\tau_1, \tau_2, \delta, \nu, g, \varphi) = \sum_{\sigma \in D_L} \theta_{L, \eta}^\sigma(\tau_1, \tau_2, \delta, \nu, g, \calP) \frake_\sigma.$$
	\end{ex}
	
	If $\phi$ is a vector-valued Jacobi form of weight $k$ and index $\jacindexvec \in L'$, the transformation properties imply that $\phi$ has a Fourier expansion of the form
	$$\phi(\jacvarone, \jacvartwo) = \sum_{\substack{\sigma \in D_\lattice \\ r \in \IZ + q(\sigma) \\ s \in \IZ + (\sigma, \jacindexvec)}} a_\phi(\sigma, r, s) \frake_\lambda(r \jacvarone + s \jacvartwo).$$
	We will drop the index of the Fourier coefficients if the Jacobi form is clear from the context. Since $\phi(\jacvarone, \jacvartwo + \jacvarone) = e(-q(\jacindexvec) \jacvarone - 2 q(\jacindexvec) \jacvartwo) \weilrep_{\lattice, \jacindexvec}(0, 1, 0) \phi(\jacvarone, \jacvartwo)$, we deduce that
	$$a(\sigma, r, s) = a(\sigma + \jacindexvec, r + s + q(\jacindexvec), s + 2 q(\jacindexvec)),$$
	hence
	\begin{align*}
		a(\sigma + \jacindexvec, q(\sigma + \jacindexvec), (\sigma + \jacindexvec, \jacindexvec))
		&=
		a(\sigma + \jacindexvec, q(\sigma) + (\sigma, \jacindexvec) + q(\jacindexvec), (\sigma, \jacindexvec) + 2q(\jacindexvec))
		\\
		&=
		a(\sigma, q(\sigma), (\sigma, \jacindexvec)).
	\end{align*}
	A Jacobi form $\phi$ is said to be a \emph{Jacobi cusp form} if~$a(\sigma, r, s) \neq 0$ implies $4 q(\eta) r > s^2$.
	
	Jacobi forms occur as the Fourier--Jacobi coefficients of Siegel modular forms. That is, if~${f \in M_{2, L}^k}$, then there exist Jacobi forms $\phi_{\sigma_2, m}(\jacvarone, \jacvartwo)$ of weight $k$ and index $m$ with respect to $\weilrep_{\lattice, \sigma_2}$ such that
	$$f(\tau) = \sum_{\sigma_2 \in D_\lattice} \sum_{m \in \IZ + q(\sigma_2)} \phi_{\sigma_2, m}(\jacvarone, \jacvartwo) \otimes \frake_{\sigma_2}(m \tauthree).$$
	If $f$ has the Fourier expansion as in~\eqref{eq;FexpofSiegmod},
	then the Fourier coefficients $a_{\sigma_2, m}$ of~$\phi_{\sigma_2, m}$ are related to the ones of~$f$ as
	\begin{align}
		a_{\sigma_2, m}(\sigma_1, r, s)
		=
		a_f\Big((\sigma_1, \sigma_2), \big(\begin{smallmatrix} r & s / 2 \\ s / 2 & m\end{smallmatrix}\big)\Big). \label{eq:FouerCoeffFourierJacExpansion}
	\end{align}
	
		Let~$L$ be an even indefinite lattice of signature~$\left(b,2\right)$.
		We illustrate here the \emph{Fourier--Jacobi} expansion of~$\Theta_{L, 2}$ in terms of the Jacobi Siegel theta functions introduced in the current section.
		
		Let~$\Theta_{L, 2}(\tau,\vect{\boralpha},\vect{\borbeta},g,\pol)$ be the Siegel theta function of genus~$2$ associated to some very homogeneous polynomial~$\pol$ of degree~$(m,0)$; the reader may recall them from Section~\ref{sec;vvsiegeltheta2}.
		The polynomial~$\pol$ is homogeneous of degree $\degjac$ in the first variable; see Remark~\ref{rem;veryhomogpolprop}.
		Since~$\pol\big((\lambda,\eta)\cdot N\big)=\det N^m\cdot\pol(\lambda,\eta)$ for every~$N\in\CC^{2\times 2}$, if we choose~$N=\big(\begin{smallmatrix}
			1 & 0\\
			C & 1
		\end{smallmatrix}\big)$ for some~$C\in\CC$, then we have that
		\[
		(\lambda,\eta)\cdot N =(\lambda + C\eta, \eta)\qquad\text{and}\qquad\det N=1.
		\]
		We then deduce that~$\pol(\lambda, \jacindexvec) = \pol(\lambda + C\jacindexvec, \jacindexvec)$ for every~$C\in\CC$.
		This property simplifies the construction of Jacobi Siegel theta functions arising from polynomials of the form~$\pol(\cdot,\eta)$, for some fixed~$\eta$.
		Moreover, it plays a key role in proving the injectivity result on Jacobi Petersson inner products that we will consider in Section~\ref{sec:JacobiThetaInnerProducts}.
		
		For simplicity, we assume here that~$\vect{\borbeta}=0$.
		This is sufficient for the main goal of the present paper, namely to prove the injectivity of the genus~$2$ Kudla--Millson lift.
		
		Recall from~\eqref{eq;inclusionofgroupalg} that we may regard~$\CC[\disc{L}^2]$ as the product~$\CC[\disc{L}]\otimes\CC[\disc{L}]$.
		Then the
		\emph{Fourier--Jacobi} expansion of~$\Theta_{L, 2}(\tau,\vect{\boralpha},0,g,\pol)$ is
		\begin{align*}
			&\Theta_{L,2}(\tau,\vect{\boralpha},0,g,\pol)
			\\
			&=
			\Big(\frac{\det y}{\yone}\Big)^{1/2} \sum_{\sigma \in D_\lattice}
			\sum_{\jacindexvec \in \sigma + L}
			\Theta_{L, \jacindexvec}\Big(\jacvarone, \jacvartwo, \delta_1, 0, g, \exp\Big(-\frac{\jacvaroneim}{\det y} \Delta_2\Big) \pol\big(\cdot, g(\jacindexvec)\big)\Big)
			\\
			&\quad\times
			e\Big(-2 i q\big(\jacindexvec_w\big) (\det y) / \jacvaroneim - (\jacindexvec, \delta_2)\Big)
			\otimes \frake_{\sigma}\big(q(\jacindexvec) \tauthree\big)
			\\
			&=
			\Big(\frac{\det y}{\yone}\Big)^{1/2}
			\sum_{\sigma \in D_\lattice}
			\sum_{m \in \IZ+ q(\sigma)}
			\sum_{\substack{\jacindexvec \in \sigma + L \\ q(\jacindexvec) = m}}
			\Theta_{L, \jacindexvec}\Big(\jacvarone, \jacvartwo, \delta_1, 0, g, \exp\Big(-\frac{\jacvaroneim}{\det y} \Delta_2\Big) \pol\big(\cdot, g(\jacindexvec)\big)\Big)
			\\
			&\quad\times
			e\Big(-2 i q\big(\jacindexvec_w\big) (\det  y) / \jacvaroneim - (\jacindexvec, \delta_2)\Big) \otimes \frake_{\sigma}(m \tauthree),
		\end{align*}
		where $\Delta_2$ is the Laplace operator acting on the second variable of~$\pol$.
	
	The following lemma gathers some properties of the elements of~$\HH\times\CC$.
	The proof is a straightforward calculation.
	\begin{lemma}\label{lem:ImaginaryPartTransformation}
		Let~$(\tauone,\tautwo)\in\HH\times\CC$.
		\begin{enumerate}[label=(\roman*), leftmargin=*]
			\item $\frac{\Im(\jacvartwo + \jacone + \jactwo \jacvarone)}{\Im(\jacvarone)} = \frac{\Im(\jacvartwo)}{\Im(\jacvarone)} + \jactwo$ for every $(\jacone, \jactwo) \in \IR^2$.
			\item $\frac{\Im(\jacvartwo/\jacvarone)}{\Im(-1/\jacvarone)} = \jacvarone \big(\frac{\Im(\jacvartwo)}{\Im(\jacvarone)} - \frac{\jacvartwo}{\jacvarone}\big)$.
			\item If $(\jacone, \jactwo) \in \IR^2$ and $\jacindex \in \IZ$, then
			\begin{align*}
				\exp\big(4 \pi \jacindex \Im(\jacvartwo + \jactwo \jacvarone + \jacone)^2 / \Im(\jacvarone)\big)
				=
				\exp\big(4 \pi \jactwo^2 \jacvaroneim \jacindex + 8 \pi \jactwo \jacvartwoim \jacindex\big)
				\cdot
				\exp\big(4 \pi \jacindex \jacvartwoim^2 / \jacvaroneim\big).
			\end{align*}
			\item For $\jacindex \in \IZ$ we have
			\begin{align*}
				\exp\big(4 \pi \jacindex
				\Im(\jacvartwo / \jacvarone)^2 / \Im(- 1 / \jacvarone)\big)
				=
				e\big(\jacindex \jacvartwo^2 / \jacvarone - \jacindex \overline{\jacvartwo}^2 / \overline{\jacvarone}\big)
				\cdot
				\exp(4 \pi \jacindex \jacvartwoim^2 / \jacvaroneim).
			\end{align*}
		\end{enumerate}
	\end{lemma}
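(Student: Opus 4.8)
The plan is to reduce every identity to an elementary computation in the real and imaginary coordinates of~$\jacvarone$ and~$\jacvartwo$. Throughout I would write~$\jacvarone = \Re(\jacvarone) + i\jacvaroneim$ with~$\jacvaroneim > 0$, and~$\jacvartwo = \Re(\jacvartwo) + i\jacvartwoim$, and treat parts~(iii) and~(iv) as consequences of~(i) and~(ii) respectively, after feeding them into the exponentials.

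For~(i): since~$\jacone,\jactwo\in\IR$, the shift~$\jacvartwo + \jacone + \jactwo\jacvarone$ has imaginary part~$\jacvartwoim + \jactwo\jacvaroneim$, the summand~$\jacone$ contributing only to the real part. Dividing by~$\jacvaroneim$ gives the identity at once. For~(ii) I would expand both sides. Using~$-1/\jacvarone = -\overline{\jacvarone}/\lvert\jacvarone\rvert^2$ and~$\jacvartwo/\jacvarone = \jacvartwo\overline{\jacvarone}/\lvert\jacvarone\rvert^2$ one finds
\[
\Im(-1/\jacvarone) = \frac{\jacvaroneim}{\lvert\jacvarone\rvert^2},
\qquad
\Im(\jacvartwo/\jacvarone) = \frac{\Re(\jacvarone)\jacvartwoim - \Re(\jacvartwo)\jacvaroneim}{\lvert\jacvarone\rvert^2},
\]
so the factors~$\lvert\jacvarone\rvert^2$ cancel in the quotient, giving~$(\Re(\jacvarone)\jacvartwoim - \Re(\jacvartwo)\jacvaroneim)/\jacvaroneim$ for the left-hand side. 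Expanding the right-hand side as~$\jacvarone\jacvartwoim/\jacvaroneim - \jacvartwo$ and separating real from imaginary parts, the imaginary parts cancel and one recovers the same real quantity; in particular the right-hand side is automatically real.

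Part~(iii) is then immediate from~(i): substituting~$\Im(\jacvartwo + \jactwo\jacvarone + \jacone) = \jacvartwoim + \jactwo\jacvaroneim$, squaring, dividing by~$\jacvaroneim$ and multiplying by~$4\pi\jacindex$ produces exactly the three summands~$4\pi\jactwo^2\jacvaroneim\jacindex$, $8\pi\jactwo\jacvartwoim\jacindex$ and~$4\pi\jacindex\jacvartwoim^2/\jacvaroneim$ in the exponent, which is the claimed factorization. No integrality of~$\jacindex$ is used here.

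For~(iv), the one point requiring care is the~$e(\cdot)$ factor. I would rewrite~$\jacvartwo^2/\jacvarone - \overline{\jacvartwo}^2/\overline{\jacvarone} = 2i\,\Im(\jacvartwo^2/\jacvarone)$, so that~$e(\jacindex\jacvartwo^2/\jacvarone - \jacindex\overline{\jacvartwo}^2/\overline{\jacvarone}) = \exp(-4\pi\jacindex\,\Im(\jacvartwo^2/\jacvarone))$ becomes a genuine real exponential. The total exponent on the right-hand side is then~$4\pi\jacindex(\jacvartwoim^2/\jacvaroneim - \Im(\jacvartwo^2/\jacvarone))$, while by the formulas from~(ii) the left-hand exponent is~$4\pi\jacindex\,\Im(\jacvartwo/\jacvarone)^2/\Im(-1/\jacvarone)$. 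The mild obstacle, and the only genuinely computational step, is verifying the algebraic identity
\[
\frac{\jacvartwoim^2}{\jacvaroneim} - \Im(\jacvartwo^2/\jacvarone)
= \frac{(\Re(\jacvarone)\jacvartwoim - \Re(\jacvartwo)\jacvaroneim)^2}{\jacvaroneim\lvert\jacvarone\rvert^2}
= \frac{\Im(\jacvartwo/\jacvarone)^2}{\Im(-1/\jacvarone)},
\]
which matches the two sides: after clearing the common denominator~$\jacvaroneim\lvert\jacvarone\rvert^2$ the~$\jacvaroneim^2\jacvartwoim^2$ contributions cancel and the numerator collapses to the perfect square~$(\Re(\jacvarone)\jacvartwoim - \Re(\jacvartwo)\jacvaroneim)^2$. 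This completes the proof.
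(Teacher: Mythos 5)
Your proposal is correct, and it is exactly the ``straightforward calculation'' that the paper alludes to without writing out: all four identities reduce to elementary manipulations of $\Re$ and $\Im$, with (iii) following from (i) and (iv) from (ii) via the identity $\tau_2^2/\tau_1-\overline{\tau_2}^2/\overline{\tau_1}=2i\,\Im(\tau_2^2/\tau_1)$ and the cancellation of the $y_1^2y_2^2$ terms that you note. I verified the key numerator computation $y_2^2\lvert\tau_1\rvert^2-y_1(2x_1x_2y_2-x_2^2y_1+y_2^2y_1)=(x_1y_2-x_2y_1)^2$, and it holds as you claim.
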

	\section{Reduction of Siegel theta functions to sublattices}\label{sec;vvsiegeltheta2}
In~\cite{zuffetti;gen1} the second author explained how to unfold the defining integrals of the genus~$1$ Kudla--Millson lift.
	The idea was to apply Borcherds' formalism~\cite[Section 5]{bo;grass} to rewrite the genus~$1$ theta function~$\Theta_L$ with respect to the splitting of a hyperbolic plane in~$L$.
	
	Many difficulties arise in the genus~$2$ generalization of this idea.
	One of those is the lack of results on how to rewrite the theta function~$\Theta_{L,2}$ with respect to the splitting of a hyperbolic plane.
	In fact, Borcherds' work~\cite{bo;grass} covers only the genus~$1$ case.
	The goal of the following sections is to fill this gap.
	
\subsection{On auxiliary polynomials defined on subspaces}\label{sec;splitgen2theta}
	We begin here with illustrating how to rewrite very homogeneous polynomials as combinations of polynomials defined on certain subspaces of~$(\RR^{b,2})^2$, in analogy with~\cite[Section~$5$]{bo;grass}, and study their homogeneity.
	Examples of such decomposition are collected in the \textcolor{\mycolor}{appendix} of the present article, namely Section~\ref{sec;appendix}.
	
	In fact, we will see that those polynomials on subspaces \emph{are not always very homogeneous}, in contrast with the case of genus~$1$; see Lemma~\ref{lemma;nonhomogofderpol}.
	This implies that the associated genus~$2$ theta functions are not always modular with respect to~$\Mp_4(\ZZ)$.
	Such unexpected behavior will be further investigated in Section~\ref{sec;deg2KMlift} using Lemma~\ref{lemma;someFtransfofabcdh12lor}, which provides the Fourier transforms of the general summands of such theta functions.
	\\

	For simplicity, \emph{we restrict to the case of~$L$ splitting off (orthogonally) a hyperbolic plane~$U$}, hence~$L=\brK\oplus U$ for some Lorentzian sublattice~$\brK$.
	We denote by~$\genU,\genUU$ the standard hyperbolic basis of~$U$ such that
	\[
	\genU^2=\genUU^2=0 \qquad\text{and}\qquad (\genU,\genUU)=1.
	\]
	The orthogonal projection from~$L$ to~$\brK$ induces a projection~$p\colon \disc{L}\to\disc{\brK}$, which is an isomorphism.
	We denote by~$p$ also the componentwise projection induced on~$\disc{L}^2$.

	Without loss of generality we may assume that the orthogonal basis~$(\basevec_j)_j$ of~$L\otimes\RR$ is such that
	\ba\label{eq;choiceofuu'gen2}
	\text{$\brK\otimes\RR$ and~$U\otimes\RR$ are generated by resp.~${\basevec_1,\dots,\basevec_{b-1},\basevec_{b+1}}$ and~$\basevec_b,\basevec_{b+2}$}
	,\\
	\genU=\frac{\basevec_b+\basevec_{b+2}}{\sqrt{2}}\qquad\text{and}\qquad\genUU=\frac{\basevec_b-\basevec_{b+2}}{\sqrt{2}}.\qquad\qquad\qquad
	\ea
	This assumption will simplify several of the following computations.
	
	\begin{ex}\label{ex;Lunimodhtcuu'}
	If~$L$ is \emph{unimodular}, then so is~$\brK$, and both split off a hyperbolic plane.
	In fact~$\brK$ is isomorphic to an orthogonal direct sum of the form~$\brK= E_8\oplus\dots\oplus E_8\oplus U$, where~$E_8$ is the~$8$-th root lattice.
	\end{ex}
	
	The following definitions recall some notions introduced in~\cite[Section~$5$]{bo;grass}.
	\begin{defi}\label{defi;not&deffromborwgen2}
	Let~$z\in \Gr(L)$, and let~$g\in G$ be such that $g\colon z\mapsto z_0$. 
	We denote by~$w$ the orthogonal complement of $\genU_z$ in $z$, and by $w^\perp$ the orthogonal complement of $\genU_{z^\perp}$ in $z^\perp$.
	The linear map~$\borw \colon L\otimes\RR\to L\otimes\RR$ is defined as~$\borw(\genvec)=g(\genvec_{w^\perp}+\genvec_w)$.
	\end{defi}
	
	We now define certain polynomials on subspaces of $(\RR^{b,2})^2$, to be considered as the analogue in genus $2$ of the polynomials defined by Borcherds in~\cite[Section~$5$, p.~$508$]{bo;grass}.
	Since the very homogeneous polynomials on~$(\RR^{b,2})^2$ we will work with in the next sections, namely~$\pol_{\vect{\alpha}}$ defined in~\eqref{eq;defpolPabcd}, are of degree~$(2,0)$, we restrict our attention to very homogeneous polynomials of degree~$(m^+,0)$.
	\begin{defi}\label{def;genborpolgenus2}
	Let $z\in\Gr(L)$, and let $g\in G$ be such that $g$ maps $z$ to $z_0$.
	For every very homogeneous polynomial $\pol$ of degree $(m^+,0)$ on $(\RR^{b,2})^2$, we define the polynomials~$\pol_{\borw,\hone,\htwo}$
	on $g_0\circ \borw (L\otimes\RR)^2\cong (\RR^{b-1,1})^2$ by
	\be\label{eq;gengen2borfupol}
	\pol\big( g(\vect{\genvec})\big)=\sum_{\hone,\htwo}(\genvec_1,u_{z^\perp})^{\hone}(\genvec_2,u_{z^\perp})^{\htwo} \pol_{\borw,\hone,\htwo}\big(\borw (\vect{\genvec})\big)
	\ee
	for every~$\vect{\genvec}=(\genvec_1,\genvec_2)\in(\RR^{b,2})^2$, where as usual we drop the isometry~$g_0$ from the notation, and write~$\pol\big( g(\vect{\genvec})\big)$ and~$\pol_{\borw,\hone,\htwo}\big(\borw (\vect{\genvec})\big)$ in place of respectively~$\pol\big( g_0\circ g(\vect{\genvec})\big)$ and~$\pol_{\borw,\hone,\htwo}\big(g_0\circ \borw (\vect{\genvec})\big)$.
	Furthermore, we define
	\[
	\pol_{\borw, \htot}(\borw (\vect{\genvec}), \jacvartwoim / \jacvaroneim)
	\coloneqq
	\sum_{\hone} \Big(-\frac{\jacvartwoim}{\jacvaroneim}\Big)^{\hone} \pol_{\borw, \hone, \htot - \hone}(\borw (\vect{\genvec})).
	\]
	\end{defi}
	Although~$\pol$ is very homogeneous, the auxiliary polynomials~$\pol_{\borw,\hone,\htwo}$ are not necessarily very homogeneous.
	This will be shown in the case of~$\pol=\pol_{\vect{\alpha}}$ in Lemma~\ref{lemma;nonhomogofderpol}.
	
	\begin{lemma}\label{lem:SublatticePolynomialVeryHomogeneous}
		We have
		\[
		\pol_{\borw, \htot}\big(\borw (\vect{\genvec}), \jacvartwoim / \jacvaroneim\big)
		=
		\pol_{\borw, 0, \htot}\big(\borw (\genvec_1 + \jacvartwoim\genvec_2/\jacvaroneim , \genvec_2)\big).
		\]
	\end{lemma}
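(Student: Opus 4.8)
The plan is to deduce the identity formally from the defining expansion of Definition~\ref{def;genborpolgenus2} and the very homogeneity of~$\pol$ (Definition~\ref{def;homogforrb22}), by exploiting the invariance of~$\pol$ under a unipotent shear. Set $v\coloneqq\jacvartwoim/\jacvaroneim\in\RR$ and let $N_v\coloneqq\big(\begin{smallmatrix}1 & 0\\ v & 1\end{smallmatrix}\big)$ act on the right of $\vect{\genvec}=(\genvec_1,\genvec_2)$, so that $\vect{\genvec}N_v=(\genvec_1+v\genvec_2,\genvec_2)$. The right-hand side of the claim is then exactly $\pol_{\borw,0,\htot}\big(\borw(\vect{\genvec})N_v\big)$, since $\borw$ is linear and acts on each of the two entries separately, hence commutes with the shear: $\borw(\vect{\genvec}N_v)=\borw(\vect{\genvec})N_v$. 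Likewise $g(\vect{\genvec}N_v)=g(\vect{\genvec})N_v$.

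First I would substitute $\vect{\genvec}N_v$ into the expansion of Definition~\ref{def;genborpolgenus2}. Writing $a\coloneqq(\genvec_1,u_{z^\perp})$ and $b\coloneqq(\genvec_2,u_{z^\perp})$ and using $(\genvec_1+v\genvec_2,u_{z^\perp})=a+vb$ together with $\borw(\vect{\genvec}N_v)=\borw(\vect{\genvec})N_v$, this reads
\[
\pol\big(g(\vect{\genvec}N_v)\big)=\sum_{\hone,\htwo}(a+vb)^{\hone}b^{\htwo}\,\pol_{\borw,\hone,\htwo}\big(\borw(\vect{\genvec})N_v\big).
\]
Since $\det N_v=1$, very homogeneity gives $\pol\big(g(\vect{\genvec})N_v\big)=\pol\big(g(\vect{\genvec})\big)$, whose expansion is $\sum_{\hone,\htwo}a^{\hone}b^{\htwo}\pol_{\borw,\hone,\htwo}\big(\borw(\vect{\genvec})\big)$. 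Equating the two yields a polynomial identity in~$a$ and~$b$ for fixed $\borw(\vect{\genvec})$; this is legitimate because $a$ and $b$ are controlled by the $u_{z^\perp}$-components of $\genvec_1,\genvec_2$, which vary freely over $\RR$ without altering $\borw(\vect{\genvec})$.

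Next I would set $a=0$ and compare coefficients of each power $b^{\htot}$ (the monomials being linearly independent), obtaining
\[
\sum_{\hone}v^{\hone}\,\pol_{\borw,\hone,\htot-\hone}\big(\borw(\vect{\genvec})N_v\big)=\pol_{\borw,0,\htot}\big(\borw(\vect{\genvec})\big).
\]
Reading this as an identity of polynomials on $\borw(L\otimes\RR)^2$ valid for every $v$, I would replace $v$ by $-v$ and then the argument $\borw(\vect{\genvec})$ by $\borw(\vect{\genvec})N_v$; using $N_vN_{-v}=I_2$ the inner arguments collapse to $\borw(\vect{\genvec})$, giving
\[
\sum_{\hone}(-v)^{\hone}\,\pol_{\borw,\hone,\htot-\hone}\big(\borw(\vect{\genvec})\big)=\pol_{\borw,0,\htot}\big(\borw(\vect{\genvec})N_v\big).
\]
The left-hand side is $\pol_{\borw,\htot}\big(\borw(\vect{\genvec}),v\big)$ by definition, and $\borw(\vect{\genvec})N_v=\borw(\genvec_1+v\genvec_2,\genvec_2)$, which is the assertion once $v=\jacvartwoim/\jacvaroneim$ is reinstated.

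The substitutions themselves are short; the points requiring care are all structural. One must check that $\borw$ commutes with $N_v$ (componentwise linearity), that $(\genvec_1+v\genvec_2,u_{z^\perp})=a+vb$ while $\borw(\vect{\genvec})$ is insensitive to the $u_{z^\perp}$-directions (as $w^\perp\perp u_{z^\perp}$ inside $z^\perp$ and $w\subseteq z\perp z^\perp$), and that $\det N_v=1$, so that very homogeneity produces genuine invariance rather than a nontrivial automorphy factor. I expect the only real subtlety to be justifying the coefficientwise comparison: one needs that for fixed $\borw(\vect{\genvec})$ the pair $(a,b)$ sweeps out all of $\RR^2$, so that the expansion of Definition~\ref{def;genborpolgenus2} is the unique such expansion and the identity in $a,b$ may be read off term by term. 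With that in hand the lemma follows immediately.
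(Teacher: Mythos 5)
Your argument is correct and is essentially the paper's proof reorganized: both rest on expanding $\pol(g(\cdot))$ after the unipotent shear $(\genvec_1,\genvec_2)\mapsto(\genvec_1+v\genvec_2,\genvec_2)$, invoking very homogeneity and the uniqueness of the expansion in Definition~\ref{def;genborpolgenus2} to compare coefficients. Where the paper keeps the scaling parameter $a$ general and collapses the resulting double sum with the alternating binomial identity $\sum_{\hone}\binom{j}{\hone}(-1)^{\hone}=\delta_{j,0}$, you specialize to $a=0$ and instead invert the shear via $N_vN_{-v}=I_2$; the bookkeeping differs but the mechanism is the same, and your justification of the coefficientwise comparison (that $(a,b)$ sweeps $\RR^2$ while $\borw(\vect{\genvec})$ is unchanged) is exactly the point that needs care.
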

	
	\begin{proof}
		Let $a, c \in \IR$. Then we have
		\begin{align*}
			& a^{m^+} \sum_{\hone, \htwo} (\genvec_1,u_{z^\perp})^{\hone}(\genvec_2,u_{z^\perp})^{\htwo} \pol_{\borw, \hone, \htwo}(\borw (\vect{\genvec}))
			=
			a^{m^+} \pol( g(\vect{\genvec})) = \pol(g(a \genvec_1 + c \genvec_2, \genvec_2))
			\\
			&= \sum_{\hone, \htwo} (a \genvec_1 + c \genvec_2,u_{z^\perp})^{\hone}(\genvec_2,u_{z^\perp})^{\htwo} \pol_{\borw, \hone, \htwo}( \borw(a \genvec_1 + c \genvec_2, \genvec_2)) \\
			&= \sum_{\hone, \htwo} \sum_{j} \binom{\hone}{j} a^{j} (\genvec_1, u_{z^\perp})^{j} c^{\hone - j} (\genvec_2, u_{z^\perp})^{\hone + \htwo - j} \pol_{\borw, \hone, \htwo}( \borw(a \genvec_1 + c \genvec_2, \genvec_2)) \\
			&= \sum_{\hone, \htwo} \sum_{j} \binom{j}{\hone} a^{\hone} (\genvec_1, u_{z^\perp})^{\hone} c^{j - \hone} (\genvec_2, u_{z^\perp})^{j + \htwo - \hone} \pol_{\borw, j, \htwo}(\borw(a \genvec_1 + c \genvec_2, \genvec_2)) \\
			&= \sum_{\hone, \htwo} (\genvec_1, u_{z^\perp})^{\hone} (\genvec_2, u_{z^\perp})^{\htwo} a^{\hone} \sum_{j} \binom{j}{\hone} c^{j - \hone}  \pol_{\borw, j, \hone + \htwo - j}(\borw(a \genvec_1 + c \genvec_2, \genvec_2)).
		\end{align*}
		Hence we have
		\begin{align*}
			a^{m^+} \pol_{\borw, \hone, \htwo}(\borw(\genvec)) = a^{\hone} \sum_{j} \binom{j}{\hone} c^{j - \hone}  \pol_{\borw, j, \hone + \htwo - j}(\borw(a \genvec_1 + c \genvec_2, \genvec_2)).
		\end{align*}
		Apply this with $a = 1, c = \frac{\jacvartwoim}{\jacvaroneim}$ to obtain
		\begin{align*}
			&\pol_{\borw, \htot}\Big(\borw(\genvec_1, \genvec_2), \frac{\jacvartwoim }{ \jacvaroneim}\Big) 
			=
			\sum_{\hone} \left(-\frac{\jacvartwoim}{\jacvaroneim}\right)^{\hone} \pol_{\borw, \hone, \htot - \hone}(\borw(\genvec_1, \genvec_2)) \\
			&\qquad=
			\sum_{\hone} \left(-\frac{\jacvartwoim}{\jacvaroneim}\right)^{\hone} \sum_{j} \binom{j}{\hone} \left(\frac{\jacvartwoim}{\jacvaroneim}\right)^{j - \hone}
			 \pol_{\borw, j, \htot - j}\Big(\borw\Big(\genvec_1 + \frac{\jacvartwoim}{\jacvaroneim} \genvec_2, \genvec_2\Big)\Big)
			\\
			&\qquad=
			\sum_{j} \sum_{\hone} \binom{j}{\hone} \left(-\frac{\jacvartwoim}{\jacvaroneim}\right)^{\hone} \left(\frac{\jacvartwoim}{\jacvaroneim}\right)^{j - \hone} \pol_{\borw, j, \htot - j}\Big(\borw\Big(\genvec_1 + \frac{\jacvartwoim}{\jacvaroneim} \genvec_2, \genvec_2\Big)\Big)
			\\
			&\qquad=
			\pol_{\borw, 0, \htot}\Big(\borw\Big(\genvec_1 + \frac{\jacvartwoim}{\jacvaroneim} \genvec_2, \genvec_2\Big)\Big).\qedhere
		\end{align*}
	\end{proof}
	
	In what follows we will be interested in the auxiliary polynomials~$\polw{\vect{\alpha},\borw}{\hone}{\htwo}$ arising as in Definition~\ref{def;genborpolgenus2} with~$\pol=\pol_{\vect{\alpha}}$.
	Since the related proofs are rather technical, we collect in Section~\ref{sec;appendix} the explicit formulas of these polynomials, together with some of their properties.
	We remark here only that~$\polw{\vect{\alpha},\borw}{\hone}{\htwo}$ are in general not very homogeneous; see Lemma~\ref{lemma;nonhomogofderpol} for details.

	\subsection{Reduction of the Siegel theta series~$\boldsymbol{\Theta_{L,2}}$ to smaller lattices}\label{sec;rewThetaL2wrtsplit}
	In this section we explain how to rewrite the theta function~$\Theta_{L,2}$, introduced in Section~\textcolor{\mycolor}{\ref{sec;Siegthealabo}}, with respect to theta functions associated to the Lorentzian sublattice~$\brK$.
	Recall that we assume~$L$ splits off a hyperbolic plane and that we chose~$\genU$, $\genUU$ and a basis of~$L\otimes\RR$ as in~\eqref{eq;choiceofuu'gen2}.
	\begin{lemma}\label{lemma;formwplitThetaL2Janbor}
	Let~$\pol$ be a very homogeneous polynomial of degree $(m^+,0)$ on $(\RR^{b,2})^2$, and let~$\gendisc\in\disc{L}^2$.
	We have
	\ba\label{eq;formwplitThetaL2Janbor}
	\,&\Theta_{L,2}^{\gendisc}(\tau,g,\pol)
	\\
	 &=\frac{\sqrt{\det y}}{2 u_{z^\perp}^2}\sum_{\vect{\lambda}\in \gendisc + (L/\ZZ\genU)^2}\sum_{n\in\ZZ^{1\times 2}}
	\sum_{\hone,\htwo}\frac{\big[\big(n+(u,\vect{\lambda})\bar{\tau}\big)y^{-1}\big]_1^{\hone}\big[\big(n+(u,\vect{\lambda})\bar{\tau}\big)y^{-1}\big]_2^{\htwo}}{(-2i)^{\hone+\htwo}}
	\\
	 &\quad\times\exp\Big(-\frac{1}{8\pi}\trace(\Delta y^{-1})\Big)\big(\pol_{\borw,\hone,\htwo}\big)\big(\borw(\vect{\lambda})\big) \cdot
	e\Big(
	 q(\vect{\lambda}_{w^\perp})\tau+ q(\vect{\lambda}_w)\bar{\tau}
	\Big)
	\\
	&\quad\times \exp\Big(
	-\frac{\pi}{2 u_{z^\perp}^2}\trace \big(n+(u,\vect{\lambda})\tau\big)^t\big(n+(u,\vect{\lambda})\bar{\tau}\big)y^{-1}-\frac{\pi i}{u_{z^\perp}^2}\trace\big((\vect{\lambda},u_{z^\perp}-u_z)n\big)
	\Big),
	\ea
	where we denote by $[\,\cdot\,]_j$ the extraction of the $j$-th entry of the given tuple.
	\end{lemma}
	\begin{proof}
	We follow the wording of~\cite[Proof of Lemma~$5.1$]{bo;grass}, that is, we apply the Poisson summation formula on~$\Theta_{L,2}^\gendisc(\tau,g,\pol)$ with respect to an isotropic line in each subspace~${V=L\otimes\RR}$ of~$V^2$.
	
	We may rewrite any element of~$L^2+\gendisc$ as $\vect{\lambda}+nu$, for some $\vect{\lambda}\in \gendisc + (L/\ZZ\genU)^2$ and some row-vector $n\in\ZZ^{1\times 2}$.
	To simplify the notation, we write~$q(\vect{\lambda}+nu)_z$ instead of~$q((\vect{\lambda}+nu)_z)$, and the same for~$z^\perp$ in place of~$z$.	
	We define the auxiliary function $f(\vect{\lambda},g;n)$ as
	\ba\label{eq;defggenus2bor}
	f(\vect{\lambda},g;n) &= \exp \Big(
	-\frac{1}{8\pi}\trace(\Delta y^{-1})
	\Big)(\pol)\big(g(\vect{\lambda}+nu)\big)
	\\
	&\quad\times e\Big(
	q(\vect{\lambda}+nu)_{z^\perp}\tau
	 +	q(\vect{\lambda}+nu)_z\bar{\tau}
	\Big),
	\ea
	for every $\vect{\lambda}\in \gendisc + (L/\ZZ\genU)^2$, $g\in G$, and $n\in\RR^{1\times 2}$, where $z=g^{-1}(z_0)$.
	We may then rewrite~$\Theta_{L,2}^\gendisc$ using the Poisson summation formula as
	\be\label{eq;aftposumfoplanebor}
	\Theta_{L,2}^\gendisc(\tau,g,\pol)=\sqrt{\det y}\sum_{\vect{\lambda}\in \gendisc + (L/\ZZ\genU)^2}\sum_{n\in\ZZ^{1\times 2}}f(\vect{\lambda},g;n)=\sqrt{\det y}\sum_{\vect{\lambda}\in \gendisc + (L/\ZZ \genU)^2}\sum_{n\in\ZZ^{1\times 2}}\widehat{f}(\vect{\lambda},g;n),
	\ee
	where~$\widehat{f}(\vect{\lambda},g;n)$ is the Fourier transform of~$f$ with respect to the vector~$n$.
	
	Let $\vect{\lambda}=(\lambda_1,\lambda_2)\in \gendisc + (L/\ZZ\genU)^2$, $n=(n_1,n_2)\in\RR^{1\times 2}$, and $\tau=\left(\begin{smallmatrix}
	\tau_1 & \tau_2\\ \tau_2 & \tau_3
	\end{smallmatrix}\right)\in\HH_2$, with analogous notation for the real part $x$ and imaginary part $y$ of $\tau$.
	It is easy to see that
	\bes
	q(\vect{\lambda}+nu)_z=q(\vect{\lambda}_z)+(\textcolor{\mycolor}{\vect{\lambda}_z},nu_z)+q(nu_z)\qquad\text{and}\qquad
	q(\vect{\lambda}_z)=q(\vect{\lambda}_w)+(\vect{\lambda},u_z)(\vect{\lambda},u_z)^t/2u_z^2,
	\ees
	same with $z^\perp$ in place of $z$.
	We use such relations to rewrite $f(\vect{\lambda},g;n)$ as
	\ba\label{eq;somesiplofggenus2bor}
	f(\vect{\lambda},g;n)
	&=
	\exp \Big(
	-\frac{1}{8\pi}\trace(\Delta y^{-1})
	\Big)(\pol)\big(g(\vect{\lambda}+nu)\big)
	\cdot
	e\Big(
	q(\vect{\lambda}_{z^\perp})\tau+g(q(\vect{\lambda}_z)\overline{\tau}
	\Big)
	\\
	&\quad\times e\Big(
	(\vect{\lambda}_{z^\perp},nu_{z^\perp})\tau+q(nu_{z^\perp})\tau+(\vect{\lambda}_z,nu_z)\overline{\tau}+q(nu_z)\overline{\tau}
	\Big)
	\ea
	The second factor on the right-hand side of~\eqref{eq;somesiplofggenus2bor} may be computed as
	\bas
	e\Big(
	q(\vect{\lambda}_{z^\perp})\tau+q(\vect{\lambda}_z)\overline{\tau}
	\Big)
	&=
	e\Big(
	q(\vect{\lambda}_{w^\perp})\tau + q(\vect{\lambda}_w)\overline{\tau}
	\Big)
	\\
	&\quad\times  e\Big(
	\frac{(\vect{\lambda},u_{z^\perp})(\vect{\lambda},u_{z^\perp})^t\tau}{2u_{z^\perp}^2} + \frac{(\vect{\lambda},u_z)(\vect{\lambda},u_z)^t\overline{\tau}}{2u_z^2}
	\Big).
	\eas
	Let $h(\vect{\lambda},g;n)$ be the auxiliary function defined as the product between the first and the last factor on the right-hand side of~\eqref{eq;somesiplofggenus2bor}, that is, the part of $f(\vect{\lambda},g;n)$ which depends on the entries~$n_1$ and~$n_2$ of~$n$.
	Using the relation $q(u_{z^\perp})+q(u_z)=0$ and~$n^t\cdot n=\big(\begin{smallmatrix}
	n_1^2 & n_1n_2\\ n_1n_2 & n_2^2
	\end{smallmatrix}\big)$, it is easy to see that
	\ba\label{eq;somerwritofauxhbor}
	h(\vect{\lambda},g;n)
	&=
	\exp \Big(
	-\frac{1}{8\pi}\trace(\Delta y^{-1})
	\Big)(\pol)\big(g(\vect{\lambda}+nu)\big)
	\\
	&\quad \times e\Big(
	\big[x(\vect{\lambda},u) + i y(\vect{\lambda},u_{z^\perp}-u_z)\big]n+iu_{z^\perp}^2yn^t n
	\Big).
	\ea
	We then rewrite~\eqref{eq;aftposumfoplanebor} as
	\ba\label{eq;aftposumfoplanewithhbor}
	\Theta_{L,2}^\gendisc(\tau,g,\pol)
	&=
	\sqrt{\det y} \sum_{\vect{\lambda}\in \gendisc + (L/\ZZ\genU)^2}
	e\Big(
	\frac{(\vect{\lambda},u_{z^\perp})(\vect{\lambda},u_{z^\perp})^t\tau}{2u_{z^\perp}^2} - \frac{(\vect{\lambda},u_z)(\vect{\lambda},u_z)^t\overline{\tau}}{2u_{z^\perp}^2}
	\Big)
	\\
	&\quad\times 
	e\Big(
	q(\vect{\lambda}_{w^\perp})\tau + q(\vect{\lambda}_w)\overline{\tau}
	\Big)
	\sum_{n\in\ZZ^{1\times 2}}\widehat{h}(\vect{\lambda},g;n),
	\ea
	The remaining part of the proof is devoted to the computation of~$\widehat{h}(\vect{\lambda},g;n)$.
	To simplify the notation, we define 
	\ba\label{eq;valAandBgen2bor}
	A=iu_{z^\perp}^2y\qquad\text{and}\qquad B=x(\vect{\lambda},u) + i y(\vect{\lambda},u_{z^\perp}-u_z)=\tau (\vect{\lambda},u_{z^\perp})+\bar{\tau}(\vect{\lambda},u_z),
	\ea
	so that we may rewrite $h$ as
	\ba\label{eq;hdecompinABbor}
	h(\vect{\lambda},g;n)= \exp \Big(
	-\frac{1}{8\pi}\trace (\Delta y^{-1})
	\Big)(\pol)\big(g(\vect{\lambda}+un)\big)	\cdot e\Big(
	An^tn+ Bn
	\Big).
	\ea
	
	We want to make the dependence of $\exp \big(
	-\frac{1}{8\pi}\trace (\Delta y^{-1})
	\big)(\pol)\big(g(\vect{\lambda}+un)\big)$ with respect to the variables $n_1$ and $n_2$ explicit.
	Recall that we split the polynomial~$\pol$ as
	\be\label{eq;splitPolgengen2bor}
	\pol\big(g(\vect{\genvec})\big)=\sum_{\hone,\htwo}(v_1,u_{z^\perp})^{\hone}\cdot (v_2,u_{z^\perp})^{\htwo}\cdot \pol_{\borw,\hone,\htwo}\big(\borw(\vect{\genvec})\big),
	\ee
	and that the operator $-\trace(\Delta y^{-1})/8\pi$ may be rewritten as
	\be\label{eq;explicitformforexpopgen2bor}
	-\frac{\trace(\Delta y^{-1})}{8\pi} = -\frac{1}{8\pi \det y}\Big(
	\ythree\sum_{j=1}^{b+2}\frac{\partial ^2}{\partial x_{j,1}^2}-2\ytwo\sum_{j=1}^{b+2}\frac{\partial}{\partial x_{j,1}}\frac{\partial}{\partial x_{j,2}} + \yone\sum_{j=1}^{b+2}\frac{\partial^2}{\partial x_{j,2}^2}
	\Big).
	\ee
	Since the three factors appearing as the summand on the right-hand side of~\eqref{eq;splitPolgengen2bor} are defined on linearly independent subspaces of $(\RR^{b,2})^2$, we deduce\footnote{If $f$ and $g$ are smooth maps defined on~$\RR^{p,q}$, such that the variables of dependence of $f$ are pairwise different to the ones of $g$, then $\Delta(fg)=\Delta(f)g + f\Delta(g)$.
	This implies that ${\exp(\Delta)(fg)=(\exp(\Delta)(f)){\cdot}(\exp(\Delta)(g))}$.} that
	\ba\label{eq;splitexpoponcasePhomposdef}
	\exp\Big(&-\frac{1}{8\pi}\trace (\Delta y^{-1})\Big)(\pol)\big(g(\vect{\lambda}+nu)\big)
	\\
	&=
	\sum_{\hone,\htwo}\exp\Big(-\frac{1}{8\pi}\trace(\Delta y^{-1})\Big)(\pol_{\borw,\hone,\htwo})\big(\borw (\vect{\lambda})\big)
	\\
	&\quad\times\exp\Big(-\frac{1}{8\pi}\trace(\Delta y^{-1})\Big)\Big(
	(\lambda_1+n_1u,u_{z^\perp})^{\hone}(\lambda_2+n_2u,u_{z^\perp})^{\htwo}
	\Big).
	\ea
	By~\eqref{eq;explicitformforexpopgen2bor}, we may rewrite the summands of the right-hand side of~\eqref{eq;splitexpoponcasePhomposdef} as
	\bas
	\exp\Big(&-\frac{1}{8\pi}\trace(\Delta y^{-1})\Big)(\pol_{\borw,\hone,\htwo})\big(\borw (\vect{\lambda})\big)
	\\
	&\times
	\exp\Big(-\frac{1}{8\pi \genU_{z^\perp}^2 \det y}\Big(
	y_{2,2}\frac{\partial ^2}{\partial n_1^2}-2y_{1,2}\frac{\partial}{\partial n_1}\frac{\partial}{\partial n_2} + y_{1,1}\frac{\partial^2}{\partial n_2^2}
	\Big)\Big)
	\\
	&
	\qquad
	\Big(
	(\lambda_1+n_1u,u_{z^\perp})^{\hone}(\lambda_2+n_2u,u_{z^\perp})^{\htwo}
	\Big).
	\eas	
	We may then rewrite $h$ as
	\bas
	h(\vect{\lambda},g,n)
	&=
	\sum_{\hone,\htwo}
	\exp\Big(-\frac{1}{8\pi}\trace(\Delta y^{-1})\Big)(\pol_{\borw,\hone,\htwo})\big(\borw (\vect{\lambda})\big)\cdot 	e\Big(
	An^tn+Bn
	\Big)
	\\
	&\quad\times
	\exp\Big(-\frac{1}{8\pi \genU_{z^\perp}^2 \det y}\Big(
	y_{2,2}\frac{\partial ^2}{\partial n_1^2}-2y_{1,2}\frac{\partial}{\partial n_1}\frac{\partial}{\partial n_2} + y_{1,1}\frac{\partial^2}{\partial n_2^2}
	\Big)\Big)
	\\
	&\qquad\Big(
	(\lambda_1+n_1u,u_{z^\perp})^{\hone}(\lambda_2+n_2u,u_{z^\perp})^{\htwo}
	\Big).
	\eas
	We compute the Fourier transform of $h$, as a function of $n$, via Lemma~\ref{lemma;onFtransfgenus2}~\ref{item;3onFtransfgenus2}.
	In fact, if we denote by~$N_j$ the~$j$-th entry of $(-n-B^t)A^{-1}/2$, we may compute
	\begin{align}\label{eq;somecompfinforhtransff}
	\widehat{h}(\vect{\lambda},g,n)
	&=
	\det(-2iA)^{-1/2}\sum_{\hone,\htwo}\exp\Big(-\frac{1}{8\pi}\trace(\Delta y^{-1})\Big)(\pol_{\borw,\hone,\htwo})\big(\borw(\vect{\lambda})\big)
	\\
	&\quad\times \exp\Big(\frac{1}{8\pi u_{z^\perp}^2\det y}\Big( \nonumber
	y_{2,2}\frac{\partial ^2}{\partial N_1^2}-2y_{1,2}\frac{\partial}{\partial N_1}\frac{\partial}{\partial N_2} + y_{1,1}\frac{\partial^2}{\partial N_2^2}
	\Big)\Big)
	\\
	&\qquad\exp\Big(-\frac{1}{8\pi u_{z^\perp}^2\det y}\Big( \nonumber
	y_{2,2}\frac{\partial ^2}{\partial N_1^2}-2y_{1,2}\frac{\partial}{\partial N_1}\frac{\partial}{\partial N_2} + y_{1,1}\frac{\partial^2}{\partial N_2^2}
	\Big)\Big)
	\\
	&\qquad\Big( \nonumber
	(\lambda_1+N_1 u,u_{z^\perp})^{\hone}(\lambda_2+N_2 u, u_{z^\perp})^{\htwo}
	\Big)
	\\
	&\quad\times \nonumber
	e\Big(
	-\frac{1}{4}nn^tA^{-1} -\frac{1}{2} BnA^{-1} -\frac{1}{4} BB^tA^{-1}
	\Big)
	\\
	&=\det(-2iA)^{-1/2} \nonumber
	\sum_{\hone,\htwo}\exp\Big(-\frac{1}{8\pi}\trace(\Delta y^{-1})\Big)(\pol_{\borw,\hone,\htwo})\big(\borw(\vect{\lambda})\big)
	\\
	&\quad\times (\lambda_1+N_1 u,u_{z^\perp})^{\hone} (\lambda_2+N_2 u, u_{z^\perp})^{\htwo} \nonumber
	\\
	&\quad\times\nonumber
	e\Big(
	-\frac{1}{4} nn^tA^{-1} -\frac{1}{2} BnA^{-1} -\frac{1}{4} BB^tA^{-1}
	\Big).
	\end{align}
	We may compute $(\lambda_j+N_j u, u_{z^\perp})$, for $j=1,2$, as the $j$-th entry of the vector
	\bas
	(u_{z^\perp},\vect{\lambda}) + \left(\begin{smallmatrix}N_1\\ N_2\end{smallmatrix}\right)u_{z^\perp}^2
	&=
	(u_{z^\perp},\vect{\lambda}) -\frac{1}{2i}\big(n+(u_{z^\perp},\vect{\lambda})\tau+(u_z,\vect{\lambda})\bar{\tau}\big)y^{-1}
	\\
	&=-\frac{1}{2i}\Big(
	n+(u,\vect{\lambda})x-i(u,\vect{\lambda})y
	\Big)y^{-1})=
	-\frac{1}{2i}\big(n+(u,\vect{\lambda})\bar{\tau}\big)y^{-1}.
	\eas
	We replace the values of $A$ and $B$ into~\eqref{eq;somecompfinforhtransff} using that~${\det A=-u_{z^\perp}^4\det y}$, and then replace~$\widehat{h}(\vect{\lambda},g;n)$ in~\eqref{eq;aftposumfoplanewithhbor}.
	The resulting formula may be further simplified rewriting
	\bas
	\trace\Big(
	(n^t ny^{-1})+2 (\vect{\lambda},u)nxy^{-1} + (\vect{\lambda},u)(\vect{\lambda},u)^tx^2y^{-1}
	+ (\vect{\lambda},u)(\vect{\lambda},u)^ty
	\Big)
	\\
	= \trace \big(n+(u,\vect{\lambda})\tau\big)^t \big(n+(u,\vect{\lambda})\bar{\tau}\big)y^{-1},
	\eas
	to eventually obtain~\eqref{eq;formwplitThetaL2Janbor}.
	\end{proof}
	\begin{thm}\label{thm;genborthmsplitthetaL}
	Let $\mu\in(\brK\otimes\RR)\oplus\RR u$ be the vector
	\be\label{eq;defmu}
	\mu=-\genUU+\genU_{z^\perp}/2\genU_{z^\perp}^2 + \genU_z/2\genU_z^2.
	\ee
	The theta function~$\Theta_{L,2}(\tau,g,\pol)$ satisfies
	\ba\label{eq;genborthmsplitthetaLmp}
	\Theta_{L,2}(\tau,g,\pol)
	&=
	\frac{1}{2 u_{z^\perp}^2}
	\sum_{c,d\in\ZZ^{1\times 2}}
	\sum_{\hone,\htwo}
	\exp\Big(-\frac{\pi}{2 u_{z^\perp}^2}\trace \big(c\tau + d\big)^t\big(c\bar{\tau}+d\big)y^{-1}\Big)
	\\
	&\quad\times \frac{\big[\big(c\bar{\tau} + d\big)y^{-1}\big]_1^{\hone}\big[\big(c\bar{\tau}+d\big)y^{-1}\big]_2^{\htwo}}{(-2i)^{\hone+\htwo}} \Theta_{\brK,2}(\tau,\mu d,-\mu c,\borw,\pol_{\borw,\hone,\htwo}).
	\ea
	\end{thm}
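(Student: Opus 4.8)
The plan is to obtain~\eqref{eq;genborthmsplitthetaLmp} directly from Lemma~\ref{lemma;formwplitThetaL2Janbor} by summing over $\gendisc\in\disc{L}^2$ and reorganizing the lattice sum according to the splitting $L=\brK\oplus U$. Since $U=\ZZ\genU\oplus\ZZ\genUU$ is unimodular, the projection $p\colon\disc{L}\to\disc{\brK}$ is an isomorphism, and I would use it to identify $\CC[\disc{L}^2]$ with $\CC[\disc{\brK}^2]$. First I would multiply the expression of Lemma~\ref{lemma;formwplitThetaL2Janbor} by $\mathfrak{e}_\gendisc$, sum over $\gendisc$, and reindex the inner sum: every class in $\gendisc+(L/\ZZ\genU)^2$ has a unique representative of the form $\vect{\lambda}+c\genUU$ with $\vect{\lambda}\in p(\gendisc)+\brK^2$ and $c\in\ZZ^{1\times2}$, while the Poisson variable $n$ is renamed $d$. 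As $(\genU,\genUU)=1$, $(\genU,\genU)=0$ and $\brK\perp U$, this gives $(\genU,\vect{\lambda}+c\genUU)=c$. Consequently the Gaussian factor of Lemma~\ref{lemma;formwplitThetaL2Janbor} becomes $\exp\bigl(-\tfrac{\pi}{2\genU_{z^\perp}^2}\trace(c\tau+d)^t(c\bar\tau+d)y^{-1}\bigr)$ and the monomial factor becomes $(-2i)^{-\hone-\htwo}[(c\bar\tau+d)y^{-1}]_1^{\hone}[(c\bar\tau+d)y^{-1}]_2^{\htwo}$, which are precisely the factors external to $\Theta_{\brK,2}$ in~\eqref{eq;genborthmsplitthetaLmp}.

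It then remains to recognize the remaining (internal) factors, for each fixed $(\hone,\htwo)$, as the $\vect{\lambda}$-summand of $\Theta_{\brK,2}(\tau,\mu d,-\mu c,\borw,\pol_{\borw,\hone,\htwo})$ as given by Definition~\ref{def;thetasergen2}. With $\mu$ as in~\eqref{eq;defmu} and $\genU_z^2=-\genU_{z^\perp}^2$, I would first record the geometric identities $\borw(\genU_z)=\borw(\genU_{z^\perp})=0$ and $\genU_z,\genU_{z^\perp}\perp w,w^\perp$, from which $\borw(\mu)=-\borw(\genUU)$ and $\mu_{w^\perp}=-\genUU_{w^\perp}$, $\mu_w=-\genUU_w$ follow at once. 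Taking $\vect{\borbeta}=-\mu c$, these give $\borw(\vect{\lambda}-\mu c)=\borw(\vect{\lambda}+c\genUU)$ and $(\vect{\lambda}-\mu c)_{w^\perp}=(\vect{\lambda}+c\genUU)_{w^\perp}$, $(\vect{\lambda}-\mu c)_w=(\vect{\lambda}+c\genUU)_w$. Hence the polynomial factor $\exp(-\tfrac{1}{8\pi}\trace(\Delta y^{-1}))(\pol_{\borw,\hone,\htwo})(\borw(\vect{\lambda}-\mu c))$ and the quadratic phase $e(\trace q((\vect{\lambda}-\mu c)_{w^\perp})\tau+\trace q((\vect{\lambda}-\mu c)_w)\bar\tau)$ of the $\Theta_{\brK,2}$-summand reproduce verbatim the corresponding factors of Lemma~\ref{lemma;formwplitThetaL2Janbor}.

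The main obstacle, and the only step requiring genuine computation, is the matching of the linear term. By Definition~\ref{def;thetasergen2} the $\vect{\lambda}$-summand of $\Theta_{\brK,2}(\tau,\mu d,-\mu c,\borw,\pol_{\borw,\hone,\htwo})$ carries the phase $e\bigl(-\trace(\vect{\lambda}-\tfrac12\mu c,\mu d)\bigr)$, which I would split as $-\trace(\vect{\lambda},\mu d)+\tfrac12\trace(\mu c,\mu d)$. Writing $\vect{\lambda}=(\lambda_1,\lambda_2)$ with $\lambda_j\in\brK\otimes\RR$, the identities $(\lambda_j,\genUU)=0$ and $(\lambda_j,\genU)=0$ give $(\lambda_j,\mu)=-(\lambda_j,\genU_z)/\genU_{z^\perp}^2$, so the first piece equals $\sum_j d_j(\lambda_j,\genU_z)/\genU_{z^\perp}^2$; this is exactly the part of the cross term $e\bigl(-\tfrac{1}{2\genU_{z^\perp}^2}\trace((\vect{\lambda}+c\genUU,\genU_{z^\perp}-\genU_z)d)\bigr)$ of Lemma~\ref{lemma;formwplitThetaL2Janbor} that is linear in $\vect{\lambda}$, since $(\lambda_j,\genU_{z^\perp}-\genU_z)=-2(\lambda_j,\genU_z)$. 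For the second piece one computes $(\mu,\mu)=-(\genUU,\genU_{z^\perp}-\genU_z)/\genU_{z^\perp}^2$, using the isotropy $(\genUU,\genUU)=0$ together with $(\genU_{z^\perp}-\genU_z)^2=\genU_{z^\perp}^2+\genU_z^2=0$; this matches the remaining $\sum_jc_jd_j$-term of the cross factor. Finally, since $\brK$ is Lorentzian of signature $(b-1,1)$ and the $\pol_{\borw,\hone,\htwo}$ are of negative-definite degree $0$, the general form of Definition~\ref{def;thetasergen2} endows $\Theta_{\brK,2}$ with the prefactor $\det y^{1/2}$; this absorbs exactly the factor $\sqrt{\det y}$ of Lemma~\ref{lemma;formwplitThetaL2Janbor}, leaving the overall constant $\tfrac{1}{2\genU_{z^\perp}^2}$ of~\eqref{eq;genborthmsplitthetaLmp}. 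Assembling the term-by-term identifications yields~\eqref{eq;genborthmsplitthetaLmp}.
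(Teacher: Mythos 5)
Your proposal is correct and follows essentially the same route as the paper: both start from Lemma~\ref{lemma;formwplitThetaL2Janbor}, reindex the lattice sum via $\vect{\lambda}=\vecbrlam+c\genUU$ with $n$ renamed $d$, use $(\genU,\vecbrlam+c\genUU)=c$ to identify the Gaussian and monomial prefactors, and reduce everything to the phase identity $\exp\bigl(-\tfrac{\pi i}{u_{z^\perp}^2}\trace((\vecbrlam+c\genUU,u_{z^\perp}-u_z)d)\bigr)=e\bigl(-\trace(\vecbrlam-\mu c/2,\mu d)\bigr)$. The only difference is that you verify this identity explicitly (correctly, via $(\lambda_j,\mu)=-(\lambda_j,\genU_z)/\genU_{z^\perp}^2$ and $\mu^2=-(\genUU,\genU_{z^\perp}-\genU_z)/\genU_{z^\perp}^2$), whereas the paper defers it to the end of the proof of Theorem~5.2 in Borcherds.
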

	In Theorem~\ref{thm;genborthmsplitthetaL}, the theta functions~$\Theta_{\brK,2}$ are attached to a lattice~$M$ of signature~$(b - 1, 1)$ and some polynomials~$\pol_{\borw,\hone,\htwo}$, which may be non-very homogeneous.
	Such theta functions are constructed as in Definition~\ref{def;thetasergen2}.
	They are absolutely convergent, as illustrated in~\cite[p.~$2$]{roehrig}.
	\begin{rem}\label{rem;explofident}
	The isomorphic projection~$p\colon\disc{L}^2\to\disc{\brK}^2$ induces an isomorphism~$\CC[\disc{L}^2]\to\CC[\disc{\brK}^2]$.
	The latter is implicitly used in~\eqref{eq;genborthmsplitthetaLmp} to identify values of theta functions on isomorphic group algebras.	
	Moreover, among the entries of~$\Theta_{\brK,2}$ in Theorem~\ref{thm;genborthmsplitthetaL} we should write~$\mu_{\brK}$ as argument, namely the projection of~$\mu$ to~$\brK\otimes\RR$, instead of~$\mu$.
	However, since $\mu_{\brK}=\mu-(\mu,\genUU)\genU$, we have
	\bas
	\mu_w&=(\mu_{\brK})_w=-\genUU_w,\\
	\mu_{w^\perp}&=(\mu_{\brK})_{w^\perp}=-\genUU_{w^\perp},\\
	(\mu,\genU)&=(\mu_{\brK},\genU).
	\eas
	This explain why we may use such an abuse of notation.
	Note also that the orthogonal projection~$L\otimes\RR\to\brK\otimes\RR$ induces an isometric isomorphism~$w^\perp\oplus w\to w_{\Lor}^\perp\oplus w_{\Lor}=\brK\otimes\RR$.
	This implies that we may identify~$w$ with~$w_\Lor$ and consider~$w$ as an element of~$\Gr(\brK)$; see~\cite[p.~$42$]{br;borchp}.
	Analogously, we may regard~$\borw|_{\brK\otimes\RR}$ as an element of~$\SO(\brK\otimes\RR)$.
	\end{rem}
	\begin{proof}[Proof of Theorem~\ref{thm;genborthmsplitthetaL}]
	Recall that~$p\colon\disc{L}^2\to\disc{\brK}^2$ is the isomorphism induced by the orthogonal projection~$L\to\brK$.
	Every~$\vect{\lambda}\in\gendisc + (L/\ZZ \genU)^2$ can be rewritten as~${\vect{\lambda}=\vecbrlam+c\genUU}$ in a unique way, where~$\vecbrlam\in p(\gendisc) + \brK^2$ and~$c\in\ZZ^{1\times 2}$.
	Since~$\genU$ and~$\genUU$ are orthogonal to~$\brK$, we may rewrite the formula provided by Lemma~\ref{lemma;formwplitThetaL2Janbor} as	
	\begin{align*}
	\Theta_{L,2}^{\gendisc}&(\tau,g, \pol)
	\\
	&=\frac{\sqrt{\det y}}{2 u_{z^\perp}^2}
	\sum_{c,d\in\ZZ^{1\times 2}}
	\sum_{\hone,\htwo}
	\sum_{\vecbrlam\in p(\gendisc)+\brK^2} \exp\Big(-\frac{\pi}{2 u_{z^\perp}^2}\trace \big(c\tau + d\big)^t\big(c\bar{\tau}+d\big)y^{-1}\Big)
	\\
	&\quad\times \frac{\big[\big(c\bar{\tau} + d\big)y^{-1}\big]_1^{\hone}\big[\big(c\bar{\tau}+d\big)y^{-1}\big]_2^{\htwo}}{(-2i)^{\hone+\htwo}}
	\cdot
	\exp\Big(
	-\frac{\pi i}{u_{z^\perp}^2}\trace\big((\vecbrlam+c\genUU,u_{z^\perp}-u_z)d\big)
	\Big)
	\\
	&\quad\times\exp\Big(-\frac{1}{8\pi}\trace(\Delta y^{-1})\Big)(\pol_{\borw,\hone,\htwo})\big(\borw(\vecbrlam-c\mu)\big)
	\\
	&\quad\times e\Big(
	q(\vecbrlam-c\mu)_{w^\perp}\tau+ q(\vecbrlam-c\mu)_w\bar{\tau}
	\Big) ,
	\end{align*}
	where we denote by $[\,\cdot\,]_j$ the extraction of the $j$-th entry, and we write $q(\vect{\genvec})_w$ instead of~$q((\vect{\genvec})_w)$, for every $\vect{\genvec}\in (L\otimes\RR)^2$, same for $w^\perp$ in place of $w$.
	To conclude the proof, it is enough to check that
	\bas
	\exp\Big(
	-\frac{\pi i}{u_{z^\perp}^2}\trace\big((\vecbrlam+c\genUU,u_{z^\perp}-u_z)d\big)
	\Big)=e\Big(-(\vecbrlam-\mu c/2,\mu d)\Big).
	\eas
	This may be proved as in~\cite[End of the proof of Theorem~$5.2$]{bo;grass}.
	\end{proof}
	The following results illustrate how to rewrite the formula provided by Theorem~\ref{thm;genborthmsplitthetaL} in terms of vectors in~$\ZZ^{1\times2}$ with \emph{coprime} entries, as well as in terms of the Klingen parabolic subgroup of~$\Sp_4(\ZZ)$.
	\begin{cor}\label{cor;genborthmsplitthetaL}
	The vector-valued theta function $\Theta_{L,2}(\tau,g,\pol)$ satisfies
	\ba\label{eq;corgenborthmsplitthetaL}
	\Theta_{L,2}&(\tau,g,\pol)= \frac{1}{2 u_{z^\perp}^2}\Theta_{\brK,2}(\tau,\borw,\pol_{\borw,0,0})
	\\
	&+ \frac{1}{2 u_{z^\perp}^2}
	\sum_{\substack{c,d\in\ZZ^{1\times 2}\\ \gcd(c,d)=1}}
	\sum_{r\ge 1}
	\sum_{\hone,\htwo}
	\Big(-\frac{r}{2i}\Big)^{\hone+\htwo}\big[\big(c\bar{\tau} + d\big)y^{-1}\big]_1^{\hone}\big[\big(c\bar{\tau}+d\big)y^{-1}\big]_2^{\htwo}
	\\
	&\times \exp\Big(-\frac{\pi r^2}{2 u_{z^\perp}^2}\trace \big(c\tau + d\big)^t\big(c\bar{\tau}+d\big)y^{-1}\Big)
	 \Theta_{\brK,2}(\tau,r\mu d,-r\mu c,\borw,\pol_{\borw,\hone,\htwo}).
	\ea
	\end{cor}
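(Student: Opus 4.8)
The plan is to reorganize the absolutely convergent sum in Theorem~\ref{thm;genborthmsplitthetaL} by peeling off the term with $c=d=0$ and grouping the remaining summands according to the primitive part of the integer pair $(c,d)$.

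First I would isolate the contribution of $c=d=0$. For this pair the vector $(c\bar{\tau}+d)y^{-1}$ vanishes, so with the convention $0^0=1$ the inner sum over $\hone,\htwo$ collapses to its single term $\hone=\htwo=0$; moreover the Gaussian factor equals $1$ and the arguments $\mu d$ and $-\mu c$ are both zero. Hence this pair contributes exactly $\frac{1}{2u_{z^\perp}^2}\Theta_{\brK,2}(\tau,0,0,\borw,\pol_{\borw,0,0})$, which is the first term on the right-hand side of~\eqref{eq;corgenborthmsplitthetaL}.

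Next I would treat the remaining pairs $(c,d)\neq 0$. Writing $c=(c_1,c_2)$ and $d=(d_1,d_2)$, every nonzero quadruple $(c_1,c_2,d_1,d_2)$ factors uniquely as $(c,d)=r\,(c_0,d_0)$ with $r=\gcd(c_1,c_2,d_1,d_2)\ge 1$ and $\gcd(c_0,d_0)=1$; this gives a bijection between the nonzero pairs $(c,d)$ and the set of primitive pairs $(c_0,d_0)$ together with a positive integer $r$. Substituting $c=rc_0$ and $d=rd_0$ into the summand of Theorem~\ref{thm;genborthmsplitthetaL} and tracking the homogeneity of each factor yields the exponent scaling $\trace(c\tau+d)^t(c\bar{\tau}+d)y^{-1}=r^2\trace(c_0\tau+d_0)^t(c_0\bar{\tau}+d_0)y^{-1}$, the monomial scaling $[(c\bar{\tau}+d)y^{-1}]_1^{\hone}[(c\bar{\tau}+d)y^{-1}]_2^{\htwo}=r^{\hone+\htwo}[(c_0\bar{\tau}+d_0)y^{-1}]_1^{\hone}[(c_0\bar{\tau}+d_0)y^{-1}]_2^{\htwo}$, and the identity $\Theta_{\brK,2}(\tau,\mu d,-\mu c,\borw,\pol_{\borw,\hone,\htwo})=\Theta_{\brK,2}(\tau,r\mu d_0,-r\mu c_0,\borw,\pol_{\borw,\hone,\htwo})$. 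Collecting the factor $r^{\hone+\htwo}$ together with $(-2i)^{-(\hone+\htwo)}$ produces $\big(-\tfrac{r}{2i}\big)^{\hone+\htwo}$, which is precisely the summand of the series in~\eqref{eq;corgenborthmsplitthetaL} after renaming $(c_0,d_0)$ back to $(c,d)$.

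There is no genuine analytic obstacle here, since the absolute convergence recorded after the statement of Theorem~\ref{thm;genborthmsplitthetaL} already justifies the rearrangement. The only point requiring care is the bookkeeping of the unique factorization of an integer quadruple into its greatest common divisor times a primitive quadruple, together with the verification that each of the three factors, namely the Gaussian, the bracket monomial, and the shifted theta function, scales homogeneously under $(c,d)\mapsto r(c,d)$. Once these scalings are checked, the reindexing is immediate.
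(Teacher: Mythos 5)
Your proposal is correct and follows essentially the same route as the paper: the paper's proof simply notes that the corollary is a direct consequence of Theorem~\ref{thm;genborthmsplitthetaL}, with the first summand coming from the pair $(c,d)=(0,0)$ and the remaining pairs reindexed as $r$ times a primitive pair. Your additional verification of the homogeneous scalings of the Gaussian, the bracket monomials, and the shifted theta function is exactly the bookkeeping the paper leaves implicit.
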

	\begin{proof}
	This is a direct consequence of Theorem~\ref{thm;genborthmsplitthetaL}.
	The first summand on the right-hand side of~\eqref{eq;corgenborthmsplitthetaL} arises from the couple~${(c,d)=(0,0)}$, which is not taken into account in the second summand on the right-hand side of~\eqref{eq;corgenborthmsplitthetaL}.
	\end{proof}
	
	\begin{defi}
	The Klingen parabolic subgroup $\kling$ is the subgroup of matrices in~$\Sp_4(\ZZ)$ whose last row equals~$\left(\begin{smallmatrix}
	0 & 0 & 0 & 1
	\end{smallmatrix}\right)$, namely
	\bes
	\kling=\left\{
	\left(\begin{smallmatrix}
	* & *\\
	0_{1,3} & \textcolor{\mycolor}{1}
	\end{smallmatrix}\right)\in\Sp_4(\ZZ)
	\right\}.
	\ees
	\end{defi}
	\begin{cor}\label{cor;genborthmsplitthetaLklin}
	The vector-valued theta function $\Theta_{L,2}(\tau,g,\pol)$ satisfies
	\ba\label{eq;corgenborthmsplitthetaLklin}
	\,&\Theta_{L,2}(\tau,g,\pol)
	= \frac{1}{2 u_{z^\perp}^2}\Theta_{\brK,2}(\tau,\borw,\pol_{\borw,0,0})
	\\
	&+ \frac{1}{2 u_{z^\perp}^2}
	\sum_{\left(\begin{smallmatrix}* & *\\ c & d\end{smallmatrix}\right)\in\kling\backslash\Sp_4(\ZZ)}
	\sum_{r\ge 1}
	\sum_{\hone,\htwo}
	\Big(-\frac{r}{2i}\Big)^{\hone+\htwo}\big[\big(c\bar{\tau} + d\big)y^{-1}\big]_1^{\hone}\big[\big(c\bar{\tau}+d\big)y^{-1}\big]_2^{\htwo}
	\\
	&\times \exp\Big(-\frac{\pi r^2}{2 u_{z^\perp}^2}\trace \big(c\tau + d\big)^t\big(c\bar{\tau}+d\big)y^{-1}\Big)
	\Theta_{\brK,2}(\tau,r\mu d,-r\mu c,\borw,\pol_{\borw,\hone,\htwo}),
	\ea
	where $(c\,\, d)$ is the last row of~$\left(\begin{smallmatrix}* & *\\ c & d\end{smallmatrix}\right)\in\kling\backslash\Sp_4(\ZZ)$.
	\end{cor}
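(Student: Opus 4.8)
The plan is to reinterpret the sum over coprime pairs $(c,d)$ appearing in Corollary~\ref{cor;genborthmsplitthetaL} as a sum over the coset space $\kling\backslash\Sp_4(\ZZ)$, by identifying each left coset with the last row $(c\ d)$ of any of its representatives. The first observation is that, for fixed $r\ge 1$, the summand indexed by $(c,d)$ in~\eqref{eq;corgenborthmsplitthetaL} depends on the symplectic matrix $\left(\begin{smallmatrix}* & *\\ c & d\end{smallmatrix}\right)$ only through its bottom block row $(c\ d)$, and not on the remaining entries marked by~$*$. Thus, once the identification of cosets with last rows is in place, each summand descends to a well-defined function on $\kling\backslash\Sp_4(\ZZ)$, since the last row is constant along each left coset.

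The heart of the argument is the bijection between $\kling\backslash\Sp_4(\ZZ)$ and the set of coprime pairs. I would let $\Sp_4(\ZZ)$ act on row vectors in $\ZZ^{1\times 4}$ by right multiplication, so that the last row of $\gamma$ equals $e_4\gamma$ with $e_4=(0,0,0,1)$. By the definition of the Klingen parabolic as the set of matrices with last row $(0,0,0,1)$, the stabilizer of $e_4$ under this action is exactly $\kling$; moreover $e_4\gamma_1=e_4\gamma_2$ if and only if $\gamma_1\gamma_2^{-1}\in\kling$, i.e.\ $\kling\gamma_1=\kling\gamma_2$, so $\kling\gamma\mapsto e_4\gamma$ is a well-defined injection on left cosets. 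Surjectivity onto the coprime pairs $(c,d)$ follows from the standard fact that $\Sp_4(\ZZ)$ acts transitively on the primitive vectors of $\ZZ^{1\times 4}$: every vector $(c_1,c_2,d_1,d_2)$ with $\gcd(c_1,c_2,d_1,d_2)=1$ lies in the orbit of $e_4$ and hence arises as the last row of some symplectic matrix. This gives the required bijection from $\kling\backslash\Sp_4(\ZZ)$ onto the set of coprime pairs $(c,d)\in\ZZ^{1\times 2}\times\ZZ^{1\times 2}$ with $\gcd(c,d)=1$.

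Combining the two observations, I would simply re-index the sum over coprime $(c,d)$ in~\eqref{eq;corgenborthmsplitthetaL} as a sum over $\kling\backslash\Sp_4(\ZZ)$, reading off $(c\ d)$ as the last row of a representative; the term-by-term equality of summands then yields~\eqref{eq;corgenborthmsplitthetaLklin}, with the constant contribution $\frac{1}{2u_{z^\perp}^2}\Theta_{\brK,2}(\tau,\borw,\pol_{\borw,0,0})$ carried over unchanged. The only delicate point---though it is routine---is the bookkeeping of left versus right cosets together with the convention that it is the bottom block row $(c\ d)$ that records the orbit; the genuine input is the transitivity of $\Sp_4(\ZZ)$ on primitive vectors, which is where the real content lies and which follows from elementary divisor theory over the principal ideal domain $\ZZ$.
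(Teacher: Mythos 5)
Your proposal is correct and follows essentially the same route as the paper, which likewise deduces the corollary from Corollary~\ref{cor;genborthmsplitthetaL} via the standard bijection between $\kling\backslash\Sp_4(\ZZ)$ and primitive vectors of $\ZZ^{1\times 4}$ given by the last row. The only difference is that you spell out the proof of this bijection (stabilizer of $e_4$ plus transitivity on primitive vectors), which the paper simply cites as well known.
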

	\begin{proof}
	It is well-known that the function mapping a matrix in $\Sp_4(\ZZ)$ to its last row induces a bijection between $\kling\backslash\Sp_4(\ZZ)$ and the set of vectors in $\ZZ^4$ with coprime entries.
	We may use such result to rewrite the formula provided by Corollary~\ref{cor;genborthmsplitthetaL} as in~\eqref{eq;corgenborthmsplitthetaLklin}.
	\end{proof}
	\section{Reduction of Jacobi Siegel theta functions to sublattices}\label{sec;somegenred}
	In Section~\ref{sec:ReductionToSmallerLattices} we use a method of \cite{bo;grass} to rewrite the Jacobi theta function of Example~\ref{exmpl:JacobiThetaPoly} as a Poincar\'e series. This will then be used in Section~\ref{sec:JacobiThetaInnerProducts} to calculate Petersson inner products of a Jacobi form with the Jacobi theta function via the unfolding trick.

\subsection{Reduction to smaller lattices}\label{sec:ReductionToSmallerLattices}

In this section we will derive a formula for the Jacobi theta function as a Poincar\'e series. Therefore, let $\isotropicvec \in \lattice$ be primitive isotropic of level $N_\isotropicvec$, i.e.~$(\isotropicvec, L) = N_\isotropicvec \IZ$, and $\isotropicvecprime \in \lattice'$ with $(\isotropicvec, \isotropicvecprime) = 1$.
We denote by~$\sublattice$ the orthogonal complement of the span of~$\isotropicvec$ and~$\isotropicvecprime$.

\textcolor{\mycolor}{The idea is the same as in \cite[Section 5]{bo;grass}. We first make a partial Poisson summation along $\IZ u$. The sum over the primitive elements of the sublattice $\IZ u \oplus \IZ u'$ can be identified with a sum over the cosets $\Gamma_\infty \bs \SL_2(\IZ)$, where $\Gamma_\infty$ is the standard parabolic in $\SL_2(\IZ)$, which leads to a Poincar\'e series. We start with calculating the partial Fourier transform along the sublattice $\IZ u$.}

Let~$\calP$ be a homogeneous polynomial of degree $\degjac$.
We define the polynomials $\calP_{\subspaceisometry, \subpolposdeg}$ of degree $\degjac - \subpolposdeg$, for every~$\subpolposdeg \in \IN$, by
$$\calP(\isometry(\lambda)) = \sum_{\subpolposdeg} (\lambda, \isotropicvec_{\isometrypos})^{\subpolposdeg} \calP_{\subspaceisometry, \subpolposdeg}(\subspaceisometry(\lambda)).$$
Here~$\subspaceisometry\colon\lattice \otimes \IR \to \lattice \otimes \IR$ is the linear map defined as~$\lambda \mapsto \isometry(\lambda_{\subspaceisometrypos} + \lambda_{\subspaceisometryneg})$, where $\subspaceisometrypos$ and~$\subspaceisometryneg$ are the orthogonal complements of $\isotropicvec_{\isometrypos}$ and~$\isotropicvec_{\isometryneg}$ in respectively~$\isometrypos$ and~$\isometryneg$.
This is the analogue of its homonym in Section~\ref{sec;splitgen2theta}.

Let
\be\label{eq;defmuinsecjac}
\mu = - \isotropicvecprime + \frac{\isotropicvec_{\isometrypos}}{2 \isotropicvec_{\isometrypos}^2} + \frac{\isotropicvec_{\isometryneg}}{2 \isotropicvec_{\isometryneg}^2}.
\ee

\textcolor{\mycolor}{The next lemma calculates the partial Fourier transform along $\IZ u$.}

\begin{lemma}\label{lem:partialFT}
	\textcolor{\mycolor}{Let $\sigma \in L'$. For $\lambda \in \sigma + L / u$ and $n \in \IZ$, let~$g(\lambda, n)$ be defined as
	\begin{align*}
		&
		e\big(q(\lambda_{\isometrypos}) \jacvarone + q(\lambda_{\isometryneg}) \overline{\jacvarone} + \jacvartwo (\lambda , \jacindexvec_{\isometrypos}) + \overline{\jacvartwo} (\lambda, \jacindexvec_{\isometryneg})\big)
		\cdot
		\exp(-\Delta / 8 \pi \jacvaroneim)(\calP)\Big(\isometry\Big(\lambda + \frac{\jacvartwoim}{\jacvaroneim} \jacindexvec + n \isotropicvec\Big)\Big)
		\\
		&\quad\times
		e\Big(n^2 \big(\jacvarone q(\isotropicvec_{\isometrypos}) + \overline{\jacvarone} q(\isotropicvec_{\isometryneg})\big) + n \big(\jacvarone (\lambda, \isotropicvec_{\isometrypos}) + \overline{\jacvarone} (\lambda, \isotropicvec_{\isometryneg}) + \jacvartwo (\jacindexvec, \isotropicvec_{\isometrypos}) + \overline{\jacvartwo} (\jacindexvec, \isotropicvec_{\isometryneg})\big)\Big).
	\end{align*}
	Then for $\lambda \in K + \sigma - c u'$ and $c \in \IZ$ with $c \equiv (\sigma, u) \pmod{N_u}$ we have
	\begin{align*}
		&\hat{g}\left(\lambda + c \isotropicvecprime, n\right)
		\\
		&= \frac{1}{\sqrt{2 \jacvaroneim \isotropicvec_{\isometrypos}^2}} \sum_{\subpolposdeg} \frac{\left(c \overline{\jacvarone} + n\right)^{\subpolposdeg}}{\left(-2 i \jacvaroneim\right)^{\subpolposdeg}}
		e\bigg(-i\frac{\left(\jacvartwoim \left(\jacindexvec, \isotropicvec_{\isometrypos}\right)\right)^2}{\jacvaroneim \isotropicvec_{\isometrypos}^2} - \frac{n \jacvartwoim \left(\jacindexvec, \isotropicvec_{\isometrypos}\right)}{\jacvaroneim \isotropicvec_{\isometrypos}^2} \bigg)
		\\
		&\quad \times
		\exp\left(- \Delta / 8 \pi \jacvaroneim\right)\left(\calP_{\subspaceisometry, \subpolposdeg}\right)\left(\subspaceisometry\Big(\lambda + \frac{\jacvartwoim}{\jacvaroneim} \jacindexvec\Big)\right) e\left(- n (\sigma, \isotropicvecprime) + cn q(\isotropicvecprime)\right) 
		\\
		&\quad\times
		e\bigg(-\frac{\lvert n + c \jacvarone \rvert^2 + 2 c \left(\jacvarone \overline{\jacvartwo} \left(\jacindexvec, \isotropicvec_{\isometrypos}\right) + \overline{\jacvarone} \jacvartwo \left(\jacindexvec, \isotropicvec_{\isometryneg}\right)\right)}{4 i \jacvaroneim \isotropicvec_{\isometrypos}^2}\bigg)
		\cdot
		e\Big(\jacvarone q\big(\left(\lambda - c \mu\right)_{\subspaceisometrypos}\big)
		\\
		&\quad+
		\overline{\jacvarone} q\big(\left(\lambda - c \mu\right)_{\subspaceisometryneg}\big) + \jacvartwo \left(\lambda - c \mu, \jacindexvec_{\subspaceisometrypos}\right) + \overline{\jacvartwo} \left(\lambda - c \mu, \jacindexvec_{\subspaceisometryneg}\right) - \left(\lambda - c \mu / 2, n \mu\right)\Big),
	\end{align*}}
\end{lemma}

\begin{proof}
	\textcolor{\mycolor}{Let
	\begin{align*}
		A &= \jacvarone q(\isotropicvec_{\isometrypos}) + \overline{\jacvarone} q(\isotropicvec_{\isometryneg}), \\
		B &= \jacvarone (\lambda, \isotropicvec_{\isometrypos}) + \overline{\jacvarone} (\lambda, \isotropicvec_{\isometryneg}) + \jacvartwo (\jacindexvec, \isotropicvec_{\isometrypos}) + \overline{\jacvartwo} (\jacindexvec, \isotropicvec_{\isometryneg}).
		\end{align*}}
	We remark that $(\jacindexvec, \isotropicvec) = 0$ implies that~$B = \jacvarone (\lambda, \isotropicvec_{\isometrypos}) + \overline{\jacvarone} (\lambda, \isotropicvec_{\isometryneg}) + 2i\jacvartwoim (\jacindexvec, \isotropicvec_{\isometrypos})$.
	Using
	\begin{align*}
		\exp(- \Delta / 8 \pi \jacvaroneim)(\calP)\big(\isometry(\lambda + n \isotropicvec)\big)
		&=
		\sum_{\subpolposdeg}
		\exp\Big(- \frac{1}{8 \pi \jacvaroneim \isotropicvec_{\isometrypos}^2} \frac{d^2}{dn^2}\Big)\left(\left(\lambda + n \isotropicvec, \isotropicvec_{\isometrypos}\right)^{\subpolposdeg}\right)
		\\
		&\quad\times
		\exp(- \Delta / 8 \pi \jacvaroneim)(\calP_{\subspaceisometry, \subpolposdeg})\big(\subspaceisometry(\lambda)\big)
	\end{align*}
	and \cite[Corollary 3.3]{bo;grass}, we see that the Fourier transform of $g$ with respect to $n$ is given by
	\begin{align*}
		& \frac{1}{\sqrt{2 \jacvaroneim \isotropicvec_{\isometrypos}^2}}
		\sum_{\subpolposdeg}
		\exp(- \Delta / 8 \pi \jacvaroneim)(\calP_{\subspaceisometry, \subpolposdeg})
		\bigg(\subspaceisometry\Big(\lambda + \frac{\jacvartwoim}{\jacvaroneim} \jacindexvec\Big)\bigg)
		\\
		&\quad\times
		\exp\Big(\frac{1}{8 \pi \jacvaroneim \isotropicvec_{\isometrypos}^2} \frac{d^2}{d N^2}\Big)
		\bigg(\exp\Big(- \frac{1}{8 \pi \jacvaroneim \isotropicvec_{\isometrypos}^2} \frac{d^2}{dN^2}\Big)
		\Big(\Big(\lambda + \frac{\jacvartwoim}{\jacvaroneim} \jacindexvec + N \isotropicvec, \isotropicvec_{\isometrypos}\Big)^{\subpolposdeg}\Big)\bigg) \\
		&\quad\times
		e\bigg(- \frac{\big(n + \jacvarone(\lambda, \isotropicvec_{\isometrypos}) + \overline{\jacvarone} (\lambda, \isotropicvec_{\isometryneg}) + 2 i \jacvartwoim (\jacindexvec, \isotropicvec_{\isometrypos})\big)^2}{4 i \jacvaroneim \isotropicvec_{\isometrypos}^2}\bigg)
		\\
		&\quad\times
		e\Big(q(\lambda_{\isometrypos}) \jacvarone + q(\lambda_{\isometryneg}) \overline{\jacvarone} + \jacvartwo (\lambda , \jacindexvec_{\isometrypos}) + \overline{\jacvartwo} (\lambda, \jacindexvec_{\isometryneg})\Big), \numberthis \label{eq:ReductionSummand}
	\end{align*}
	where $N = -\frac{n + B}{2 A}$.
	Since
	$$N = - \frac{n + \jacvarone (\lambda, \isotropicvec_{\isometrypos}) + \overline{\jacvarone} (\lambda, \isotropicvec_{\isometryneg}) + 2i \jacvartwoim (\jacindexvec, \isotropicvec_{\isometrypos})}{2 i \jacvaroneim \isotropicvec_{\isometrypos}^2},$$
	we deduce that
	\begin{align*}
		& \exp\Big(\frac{1}{8 \pi \jacvaroneim \isotropicvec_{\isometrypos}^2} \frac{d^2}{d N^2}\Big)
		\bigg(\exp\Big(- \frac{1}{8 \pi \jacvaroneim \isotropicvec_{\isometrypos}^2} \frac{d^2}{dN^2}\Big)\Big(\Big(\lambda + \frac{\jacvartwoim}{\jacvaroneim} \jacindexvec + N \isotropicvec, \isotropicvec_{\isometrypos}\Big)^{\subpolposdeg}\Big)\bigg)
		\\
		&= \bigg(\lambda + \frac{\jacvartwoim}{\jacvaroneim} \jacindexvec - \frac{n + \jacvarone (\lambda, \isotropicvec_{\isometrypos}) + \overline{\jacvarone} (\lambda, \isotropicvec_{\isometryneg}) + 2i \jacvartwoim (\jacindexvec, \isotropicvec_{\isometrypos})}{2 i \jacvaroneim \isotropicvec_{\isometrypos}^2} \isotropicvec, \isotropicvec_{\isometrypos}\bigg)^{\subpolposdeg}.
	\end{align*}
	A simple computation shows that this equals~$\big(-((\lambda, \isotropicvec) \overline{\jacvarone} + n)/2i \jacvaroneim\big)^{\subpolposdeg}$.
	On the other hand, the exponential term in \eqref{eq:ReductionSummand} may be rewritten as
	\begin{align*}
		& e\bigg(- \frac{\left(n + \jacvarone\left(\lambda, \isotropicvec_{\isometrypos}\right) + \overline{\jacvarone} \left(\lambda, \isotropicvec_{\isometryneg}\right) + 2 i \jacvartwoim \left(\jacindexvec, \isotropicvec_{\isometrypos}\right)\right)^2}{4 i \jacvaroneim \isotropicvec_{\isometrypos}^2}\bigg)
		\\
		&\quad\times e\Big(q\left(\lambda_{\isometrypos}\right) \jacvarone + q\left(\lambda_{\isometryneg}\right) \overline{\jacvarone} + \jacvartwo \left(\lambda , \jacindexvec_{\isometrypos}\right) + \overline{\jacvartwo} \left(\lambda, \jacindexvec_{\isometryneg}\right)\Big) \\
		&=
		e\bigg(-i\frac{\left(\jacvartwoim \left(\jacindexvec, \isotropicvec_{\isometrypos}\right)\right)^2}{\jacvaroneim \isotropicvec_{\isometrypos}^2} - \frac{n \jacvartwoim \left(\jacindexvec, \isotropicvec_{\isometrypos}\right)}{\jacvaroneim \isotropicvec_{\isometrypos}^2} \bigg)
		\\
		&\quad\times e\Big(\jacvarone q\left(\lambda_{\subspaceisometrypos}\right) + \overline{\jacvarone} q\left(\lambda_{\subspaceisometryneg}\right) + \jacvartwo \left(\lambda, \jacindexvec_{\subspaceisometrypos}\right) + \overline{\jacvartwo} \left(\lambda, \jacindexvec_{\subspaceisometryneg}\right)\Big) \\
		&\quad\times e\bigg(-\frac{\lvert n + \left(\lambda, \isotropicvec\right) \jacvarone \rvert^2 + 2 \left(\lambda, \isotropicvec\right) \left(\jacvarone \overline{\jacvartwo} \left(\jacindexvec, \isotropicvec_{\isometrypos}\right) + \overline{\jacvarone} \jacvartwo \left(\jacindexvec, \isotropicvec_{\isometryneg}\right)\right)}{4 i \jacvaroneim \isotropicvec_{\isometrypos}^2} - \frac{n \left(\lambda, \isotropicvec_{\isometrypos} - \isotropicvec_{\isometryneg}\right)}{2 \isotropicvec_{\isometrypos}^2}\bigg).
	\end{align*}
	Every element in~$\sigma + \lattice / \isotropicvec$ can be written as~$\lambda + c \isotropicvecprime$ for some $\lambda \in \sublattice + \sigma - c\isotropicvecprime$ and $c \in \IZ$ such that~$c \equiv \left(\sigma, \isotropicvec\right)$ mod~$N_\isotropicvec$.
	We may then compute
	\begin{align*}
		&\hat{g}\left(\lambda + c \isotropicvecprime, n\right)
		\\
		&=
		\frac{1}{\sqrt{2 \jacvaroneim \isotropicvec_{\isometrypos}^2}} \sum_{\subpolposdeg}
		\exp\left(- \Delta / 8 \pi \jacvaroneim\right)\left(\calP_{\subspaceisometry, \subpolposdeg}\right)
		\left(\subspaceisometry\Big(\lambda + \frac{\jacvartwoim}{\jacvaroneim} \jacindexvec\Big)\right)
		\\
		&\quad\times
		\frac{\left(c \overline{\jacvarone} + n\right)^{\subpolposdeg}}{\left(-2i \jacvaroneim\right)^{\subpolposdeg}}
		e\bigg(-i\frac{\left(\jacvartwoim \left(\jacindexvec, \isotropicvec_{\isometrypos}\right)\right)^2}{\jacvaroneim \isotropicvec_{\isometrypos}^2} - \frac{n \jacvartwoim \left(\jacindexvec, \isotropicvec_{\isometrypos}\right)}{\jacvaroneim \isotropicvec_{\isometrypos}^2} \bigg)
		\\
		&\quad\times e\Big(\jacvarone q\left(\left(\lambda + c \isotropicvecprime\right)_{\subspaceisometrypos}\right) + \overline{\jacvarone} q\left(\left(\lambda + c \isotropicvecprime\right)_{\subspaceisometryneg}\right) + \jacvartwo \left(\lambda + c \isotropicvecprime, \jacindexvec_{\subspaceisometrypos}\right) + \overline{\jacvartwo} \left(\lambda + c \isotropicvecprime, \jacindexvec_{\subspaceisometryneg}\right)\Big) 
		\\
		&\quad\times
		e\bigg(-\frac{\lvert n + c \jacvarone \rvert^2 + 2 c \left(\jacvarone \overline{\jacvartwo} \left(\jacindexvec, \isotropicvec_{\isometrypos}\right) + \overline{\jacvarone} \jacvartwo \left(\jacindexvec, \isotropicvec_{\isometryneg}\right)\right)}{4 i \jacvaroneim \isotropicvec_{\isometrypos}^2} - \frac{n \left(\lambda + c \isotropicvecprime, \isotropicvec_{\isometrypos} - \isotropicvec_{\isometryneg}\right)}{2 \isotropicvec_{\isometrypos}^2}\bigg)
		\\
		&= \frac{1}{\sqrt{2 \jacvaroneim \isotropicvec_{\isometrypos}^2}} \sum_{\subpolposdeg} \frac{\left(c \overline{\jacvarone} + n\right)^{\subpolposdeg}}{\left(-2 i \jacvaroneim\right)^{\subpolposdeg}}
		e\bigg(-i\frac{\left(\jacvartwoim \left(\jacindexvec, \isotropicvec_{\isometrypos}\right)\right)^2}{\jacvaroneim \isotropicvec_{\isometrypos}^2} - \frac{n \jacvartwoim \left(\jacindexvec, \isotropicvec_{\isometrypos}\right)}{\jacvaroneim \isotropicvec_{\isometrypos}^2} \bigg)
		\\
		&\quad \times
		\exp\left(- \Delta / 8 \pi \jacvaroneim\right)\left(\calP_{\subspaceisometry, \subpolposdeg}\right)\left(\subspaceisometry\Big(\lambda + \frac{\jacvartwoim}{\jacvaroneim} \jacindexvec\Big)\right) e\left(- n (\sigma, \isotropicvecprime) + cn q(\isotropicvecprime)\right) 
		\\
		&\quad\times
		e\bigg(-\frac{\lvert n + c \jacvarone \rvert^2 + 2 c \left(\jacvarone \overline{\jacvartwo} \left(\jacindexvec, \isotropicvec_{\isometrypos}\right) + \overline{\jacvarone} \jacvartwo \left(\jacindexvec, \isotropicvec_{\isometryneg}\right)\right)}{4 i \jacvaroneim \isotropicvec_{\isometrypos}^2}\bigg)
		\cdot
		e\Big(\jacvarone q\big(\left(\lambda - c \mu\right)_{\subspaceisometrypos}\big)
		\\
		&\quad+
		\overline{\jacvarone} q\big(\left(\lambda - c \mu\right)_{\subspaceisometryneg}\big) + \jacvartwo \left(\lambda - c \mu, \jacindexvec_{\subspaceisometrypos}\right) + \overline{\jacvartwo} \left(\lambda - c \mu, \jacindexvec_{\subspaceisometryneg}\right) - \left(\lambda - c \mu / 2, n \mu\right)\Big),
	\end{align*}
	where we used that~$\left(\lambda, \isotropicvecprime\right) = \left(\lambda + c \isotropicvecprime - c \isotropicvecprime, \isotropicvecprime\right) \equiv \left(\sigma - c \isotropicvecprime, \isotropicvecprime\right)$ mod~$1$ and that
	\begin{align*}
		\frac{\left(\lambda + c \isotropicvecprime, \isotropicvec_{\isometrypos} - \isotropicvec_{\isometryneg}\right)}{2 \isotropicvec_{\isometrypos}^2} = \left(\lambda - c \mu / 2, \mu\right) + c q\left(\isotropicvecprime\right) + \left(\lambda, \isotropicvecprime\right).
	\end{align*}
\end{proof}

We will need the following result.
\begin{lemma}[{\cite[Lemma 6.3]{kiefer}}]\label{lem:kiefer}
Let $\sigma \in D_\sublattice, \mat = \left(\left(\begin{smallmatrix} a & b \\ c & d\end{smallmatrix}\right), \phi\right) \in \Mp_2\left(\IZ\right)$ and $n \in \IZ$. Then
\begin{align*}
\weilrep_\lattice(\mat)
\sum_{m \in \IZ / N_\isotropicvec \IZ}
\frake_{\sigma + \frac{m \isotropicvec}{N_{\isotropicvec}}}\Big(-\frac{mn}{N_{\isotropicvec}}\Big)
=
\big(\weilrep_\sublattice(\mat) \frake_{\sigma}\big)
\sum_{m \in \IZ / N_\isotropicvec \IZ} \frake_{\frac{m \isotropicvec}{N_{\isotropicvec}} - nc \isotropicvecprime}\Big(-\frac{amn}{N_{\isotropicvec}} + q(\isotropicvecprime) ac n^2\Big).
\end{align*}
\end{lemma}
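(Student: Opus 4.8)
The plan is to verify the identity on a generating set of $\Mp_2(\IZ)$, namely $T = \left(\left(\begin{smallmatrix} 1 & 1 \\ 0 & 1\end{smallmatrix}\right), 1\right)$ and $S = \left(\left(\begin{smallmatrix} 0 & -1 \\ 1 & 0\end{smallmatrix}\right), \sqrt{\jacvarone}\right)$, using the genus~$1$ formulas $\weilrep_\lattice(T)\frake_\gamma = \frake_\gamma(q(\gamma))$ and $\weilrep_\lattice(S)\frake_\gamma = \frac{\sqrt{i}^{b^- - b^+}}{\sqrt{\lvert D_\lattice\rvert}}\sum_{\gamma' \in D_\lattice}\frake_{\gamma'}(-(\gamma, \gamma'))$, together with the analogous formulas for $\weilrep_\sublattice$. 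Since $S$ and $T$ generate $\Mp_2(\IZ)$, it suffices to treat these two cases; the passage to an arbitrary $\mat$ is the routine check that the right-hand side (with its $\mat$- and $n$-dependent twist) respects the cocycle of $\weilrep_\lattice$, equivalently that the set of $\mat$ for which the identity holds is closed under multiplication by $S$ and $T$. Throughout I use the rational orthogonal splitting $\lattice \otimes \IQ = \IQ \isotropicvec \oplus \IQ \isotropicvecprime \oplus (\sublattice \otimes \IQ)$ to write any $\delta \in D_\lattice$ as $\delta = x \isotropicvec + y \isotropicvecprime + \kappa$ with $\kappa$ the $\sublattice$-part; note $y = (\delta, \isotropicvec) \in \IZ$ and $(\delta, \sigma) = (\kappa, \sigma)$ for $\sigma \in D_\sublattice$, since $\sigma$ is orthogonal to both $\isotropicvec$ and $\isotropicvecprime$.

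For $\mat = T$ the computation is immediate. Because $\isotropicvec$ is isotropic and $\sigma \perp \isotropicvec$, we have $q(\sigma + m\isotropicvec/N_\isotropicvec) = q(\sigma)$, so $\weilrep_\lattice(T)$ multiplies the left-hand sum termwise by $e(q(\sigma))$. On the right-hand side $a = 1$ and $c = 0$, so the $\isotropicvec$-direction sum is unchanged and $\weilrep_\sublattice(T)\frake_\sigma = \frake_\sigma(q(\sigma))$; the two sides agree.

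The case $\mat = S$ is the heart of the matter. Applying the $S$-formula and pulling out the phase $e(-mn/N_\isotropicvec)$ gives
\[
\weilrep_\lattice(S)\sum_{m}\frake_{\sigma + \frac{m\isotropicvec}{N_\isotropicvec}}\Big(-\frac{mn}{N_\isotropicvec}\Big) = \frac{\sqrt{i}^{b^- - b^+}}{\sqrt{\lvert D_\lattice\rvert}}\sum_{m}\sum_{\delta \in D_\lattice} e\Big(-\frac{mn}{N_\isotropicvec} - \Big(\sigma + \tfrac{m\isotropicvec}{N_\isotropicvec}, \delta\Big)\Big)\frake_\delta.
\]
Writing $\delta = x\isotropicvec + y\isotropicvecprime + \kappa$ and using orthogonality, the pairing collapses to $e\big(-m(n+y)/N_\isotropicvec - (\sigma, \kappa)\big)$, and since $y \in \IZ$ the inner sum over $m \in \IZ/N_\isotropicvec\IZ$ equals $N_\isotropicvec$ when $y \equiv -n \pmod{N_\isotropicvec}$ and vanishes otherwise. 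This localizes the $\isotropicvecprime$-component of $\delta$ to the class of $-n\isotropicvecprime$, matching the translation in $\frake_{m\isotropicvec/N_\isotropicvec - n\isotropicvecprime}$ on the right (here $c = 1$ and $a = 0$, so both the phase $q(\isotropicvecprime)acn^2$ and the character $-amn/N_\isotropicvec$ vanish). The residual sum over the $\sublattice$-part $\kappa$ carries the character $e(-(\sigma,\kappa))$ and reassembles $\weilrep_\sublattice(S)\frake_\sigma$, while the freedom in the $\isotropicvec$-coordinate $x$ reproduces the outer sum $\sum_m \frake_{m\isotropicvec/N_\isotropicvec - n\isotropicvecprime}$.

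It remains to match the normalization constants, and this is the step requiring the most care. Since $\langle \isotropicvec, \isotropicvecprime\rangle \otimes \IR$ has signature $(1,1)$, one has $b^+_{\sublattice} = b^+ - 1$ and $b^-_{\sublattice} = b^- - 1$, so the Gauss-sum prefactors $\sqrt{i}^{b^- - b^+}$ for $\lattice$ and $\sublattice$ coincide; and the factor $N_\isotropicvec$ produced by the $m$-sum, together with the index relating $\lvert D_\lattice\rvert$ to $\lvert D_\sublattice\rvert$ across the isotropic splitting, reconciles $1/\sqrt{\lvert D_\lattice\rvert}$ with $1/\sqrt{\lvert D_\sublattice\rvert}$. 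The main obstacle I anticipate is exactly this discriminant-group bookkeeping: one must track how $D_\lattice$ fibers over $D_\sublattice$ in the $\isotropicvec$- and $\isotropicvecprime$-directions, including the contribution of $q(\isotropicvecprime)$ (which drops out for $S$ but is genuinely present for general $\mat$, accounting for the phase $q(\isotropicvecprime)acn^2$), and verify that the counting of cosets is consistent with the primitivity and level hypotheses on $\isotropicvec$ and with $(\isotropicvec,\isotropicvecprime)=1$.
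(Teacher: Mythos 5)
The paper offers no proof of this lemma: it is imported verbatim from \cite[Lemma 6.3]{kiefer}, so there is no internal argument to compare against. Your strategy --- verify the identity for the generators $T$ and $S$ of $\Mp_2(\IZ)$ and then propagate it by closure under left multiplication --- is the standard one and is sound. Your $T$-case is correct, and your $S$-case is correct as well: the character sum over $m$ localizes $(\isotropicvec,\delta')$ to $-n \bmod N_\isotropicvec$, the fiber is parametrized by $\sigma' + m\isotropicvec/N_\isotropicvec - n\isotropicvecprime$ with $\sigma'\in D_\sublattice$ and $m\in\IZ/N_\isotropicvec\IZ$, and the normalizations match because $b^{\pm}_{\sublattice}=b^{\pm}-1$ and $\lvert D_\lattice\rvert = N_\isotropicvec^2\,\lvert D_\sublattice\rvert$.

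Two steps deserve more than the label ``routine'', though neither fails. First, the closure step is where the $\mat$-dependence of the right-hand side is actually produced, so it cannot be waved through: for $T$ one evaluates $q\big(\sigma'+m\isotropicvec/N_\isotropicvec - nc\isotropicvecprime\big)=q(\sigma')-mnc/N_\isotropicvec+n^2c^2q(\isotropicvecprime)$, which updates $(a,c)\mapsto(a+c,c)$ and generates precisely the terms $-amn/N_\isotropicvec$ and $q(\isotropicvecprime)acn^2$; for $S$ one must redo the entire Gauss-sum and fiber computation on vectors that now carry a $\isotropicvecprime$-component, where the condition becomes $(\isotropicvec,\delta')\equiv -an$ and the cross term $nc(\isotropicvecprime,\delta')$ with $(\isotropicvecprime,\delta')=m'/N_\isotropicvec-2an\,q(\isotropicvecprime)$ is what converts $(a,c)$ into $(-c,a)$ while preserving the shape of the phase. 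That $S$-closure is a second computation of the same size as your base case, not bookkeeping. Second, the structural facts you lean on --- $\lvert D_\lattice\rvert = N_\isotropicvec^2\lvert D_\sublattice\rvert$ and the bijectivity of the parametrization of $\{\delta'\in D_\lattice:(\isotropicvec,\delta')\equiv -n\}$ (surjectivity uses a vector $\zeta\in\lattice$ with $(\isotropicvec,\zeta)=N_\isotropicvec$ to kill the $\isotropicvecprime$-coordinate) --- are standard for the isotropic reduction but should be cited or proved rather than asserted; you correctly identify this as the delicate point. With those two items written out, the proof is complete.
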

Let $\Gamma_\infty = \left\{\left(\begin{smallmatrix}1 & n \\ 0 & 1\end{smallmatrix}\right) : n \in \IZ\right\}$ be the standard parabolic subgroup in~$\SL_2(\ZZ)$.
For every~${\mat 
\in \SL_2(\IZ)}$ we denote by~$\tilde{\mat}
\in \Mp_2(\IZ)$ the standard preimage of~$\gamma$ under the metaplectic double cover.
The following result is an analogue of~\cite[Theorem~$5.2$]{bo;grass} for the Jacobi Siegel theta functions introduced in \ref{exmpl:JacobiThetaPoly}.
It provides a way to rewrite~$\Theta_{\lattice, \jacindexvec}(\tau,g,\pol)$ as a Poincaré series in terms of the theta functions associated to~$\brK$. 
\begin{thm}\label{thm;rewritingthetajacaspoincare}
	If~$(\jacindexvec, \isotropicvec) = 0$, then $\Theta_{\lattice, \jacindexvec}(\tau,g,\pol)$ may be rewritten as
	\begin{align*}
		&\frac{1}{\sqrt{2 \isotropicvec_{\isometrypos}^2}} \Theta_{\sublattice, \jacindexvec}(\jacvarone, \jacvartwo, \subspaceisometry, \calP_{\subspaceisometry, 0}) \sum_{m \in \IZ / N_\isotropicvec \IZ} \frake_{\frac{m \isotropicvec}{N_\isotropicvec}}
		\\
		&\quad +
		\frac{1}{\sqrt{2 \isotropicvec_{\isometrypos}^2}}
		\sum_{\mat = \big(\begin{smallmatrix} a & b \\ c & d\end{smallmatrix}\big) \in \Gamma_\infty \bs \SL_2(\IZ)}
		\sum_{n = 1}^\infty \sum_{\subpolposdeg} e\left(-\frac{q(\jacindexvec) c \jacvartwo^2}{c \jacvarone + d}\right)
		\\
		&\quad\times
		\frac{n^{\subpolposdeg}(c \jacvarone + d)^{ - \degjac + (b^- - b^+)/2}}{\left(-2 i \Im\left(\mat \jacvarone\right)\right)^{\subpolposdeg}}
		e\bigg(-\frac{n^2 + 4 i n  \Im(\mat \jacvartwo) (\jacindexvec, \isotropicvec_{\isometrypos})}{4 i \Im(\mat \jacvarone) \isotropicvec_{\isometrypos}^2}\bigg)
		\\
		&\quad\times
		\weilrep_{\lattice, \jacindexvec}^{-1}(\tilde{\mat}) \bigg(\Theta_{\sublattice, \jacindexvec}(\mat\jacvarone, \mat\jacvartwo, n\mu, 0, \subspaceisometry, \calP_{\subspaceisometry, \subpolposdeg})
		\sum_{m \in \IZ / N_\isotropicvec \IZ}
		\frake_{\frac{m \isotropicvec}{N_\isotropicvec}}\Big(- \frac{mn}{N_\isotropicvec}\Big) \bigg).
	\end{align*}
\end{thm}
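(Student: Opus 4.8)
The plan is to follow Borcherds' proof of \cite[Theorem~5.2]{bo;grass}, adapting it to the Jacobi setting by means of the polynomial splitting along $\isotropicvec_{\isometrypos}$, the Fourier transform of Lemma~\ref{lem:FourierTransform}, the modular transformation of Theorem~\ref{thm;transfformjactf}, and the discriminant-group identity of Lemma~\ref{lem:kiefer}. The argument runs parallel to the genus~$2$ reduction carried out in Lemma~\ref{lemma;formwplitThetaL2Janbor} and Theorem~\ref{thm;genborthmsplitthetaL}, with a single Poisson summation in place of the double one, and with the level $N_\isotropicvec$ of $\isotropicvec$ accounting for the extra residue-class sums $\sum_m\frake_{m\isotropicvec/N_\isotropicvec}$.

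First I would apply the Poisson summation formula to the defining series of $\Theta_{\lattice,\jacindexvec}$ along the isotropic line $\IZ\isotropicvec$. Decomposing each $\lambda\in\sigma+\lattice$ according to $\lattice\otimes\IR=(\sublattice\otimes\IR)\oplus\IR\isotropicvec\oplus\IR\isotropicvecprime$ and inserting the splitting $\calP(\isometry(\lambda))=\sum_{\subpolposdeg}(\lambda,\isotropicvec_{\isometrypos})^{\subpolposdeg}\calP_{\subspaceisometry,\subpolposdeg}(\subspaceisometry(\lambda))$, the summand factors as a Gaussian times a polynomial in the $\isotropicvec$-coefficient. Carrying out the one-dimensional Gaussian Fourier transform in this variable—exactly as in the proof of Lemma~\ref{lemma;formwplitThetaL2Janbor}—replaces the $\isotropicvec$-direction index by a Fourier-dual integer, while the $\isotropicvecprime$-coefficient survives as a second integer, leaving a sum over a pair of integers. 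The hypothesis $(\jacindexvec,\isotropicvec)=0$ guarantees that the elliptic variable $\jacvartwo$ couples only to the $\sublattice$-part, so that the inner sum over $\sublattice$-vectors collects into a theta function $\Theta_{\sublattice,\jacindexvec}$ attached to $\calP_{\subspaceisometry,\subpolposdeg}$, twisted by a multiple of $\mu$ recording the $\isotropicvecprime$-coefficient, with $\mu$ as in~\eqref{eq;defmuinsecjac}.

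Next I would isolate the term where both integers vanish, which yields the constant term $\tfrac{1}{\sqrt{2\isotropicvec_{\isometrypos}^2}}\,\Theta_{\sublattice,\jacindexvec}(\jacvarone,\jacvartwo,\subspaceisometry,\calP_{\subspaceisometry,0})$ together with the residue sum $\sum_m\frake_{m\isotropicvec/N_\isotropicvec}$. For the remaining pairs I would extract their greatest common divisor $n\geq1$; the coprime residual pairs then biject with $\Gamma_\infty\backslash\SL_2(\IZ)$ through the last-row map, exactly as in Corollary~\ref{cor;genborthmsplitthetaLklin}, and the twist becomes $n\mu$. Applying Theorem~\ref{thm;transfformjactf} to rewrite each $\sublattice$-theta function at the translated argument $(\mat\jacvarone,\mat\jacvartwo)$, and Lemma~\ref{lem:kiefer} to convert the residue sum $\sum_m\frake_{m\isotropicvec/N_\isotropicvec}$ into its $\weilrep_{\lattice,\jacindexvec}^{-1}(\tilde{\mat})$-twisted form, then assembles the claimed Poincaré series.

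The main obstacle is the bookkeeping of the exponential and automorphy factors in the elliptic variable $\jacvartwo$. One must verify that the Gaussian arising in the $\isotropicvec$-direction, together with the $\jacvartwo$-linear terms and the prefactor $\exp(4\pi q(\jacindexvec_{\isometryneg})\jacvartwoim^2/\jacvaroneim)$ from Definition~\ref{defi;JacSiegeltheta}, reassembles after the modular substitution into the index factor $e\big(-q(\jacindexvec)c\jacvartwo^2/(c\jacvarone+d)\big)$ and the exponential $e\big(-(n^2+4in\,\Im(\mat\jacvartwo)(\jacindexvec,\isotropicvec_{\isometrypos}))/(4i\,\Im(\mat\jacvarone)\,\isotropicvec_{\isometrypos}^2)\big)$, while the metaplectic weights collect to $(c\jacvarone+d)^{-\degjac+(b^--b^+)/2}$. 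This is a lengthy but routine computation of the kind in~\cite[Section~5]{bo;grass} and in the proof of Theorem~\ref{thm;genborthmsplitthetaL}; the identities of Lemma~\ref{lem:ImaginaryPartTransformation} are precisely what reconcile the imaginary-part terms.
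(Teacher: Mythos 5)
Your proposal is correct and follows essentially the same route as the paper's proof: a single Poisson summation along the isotropic direction $\IZ\isotropicvec$, insertion of the polynomial splitting into $\calP_{\subspaceisometry,\subpolposdeg}$, decomposition of the residual lattice vectors via $\isotropicvecprime$ to produce a sum over integer pairs, isolation of the $(0,0)$ term, extraction of the gcd $n$ with the bijection between coprime pairs and $\Gamma_\infty\backslash\SL_2(\IZ)$, and finally Theorem~\ref{thm;transfformjactf} together with Lemma~\ref{lem:kiefer} to assemble the Poincar\'e series. You also correctly identify the main computational burden — tracking the $\jacvartwo$-dependent exponential and automorphy factors — which is exactly where the bulk of the paper's calculation lies.
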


\begin{proof}
	Let $\sigma \in \lattice'$.
	We write the elements in~$\sigma + L$ as~$\lambda + n \isotropicvec$ for some~$\lambda \in \sigma + L / u$ and~$n \in \IZ$, so that we may compute
	\begin{align*}
		&
		\theta_{\sigma + \lattice, \jacindexvec}
		(\jacvarone, \jacvartwo, \isometry, \calP)
		\\
		&= \jacvaroneim^{-b^+/2}
		\exp\big(-4 \pi q(\jacindexvec_{\isometryneg}) \jacvartwoim^2 / \jacvaroneim\big)
		\sum_{\lambda \in \sigma + \lattice / \isotropicvec}
		\sum_{n \in \IZ}
		\exp(-\Delta / 8 \pi \jacvaroneim)(\calP)
		\bigg(\isometry\Big(\lambda + \frac{\jacvartwoim}{\jacvaroneim} \jacindexvec + n \isotropicvec\Big)\bigg)
		\\
		&\quad\times
		e\Big(q\big((\lambda + n \isotropicvec)_{\isometrypos}\big) \jacvarone + q\big((\lambda + n \isotropicvec)_{\isometryneg}\big) \overline{\jacvarone} + \jacvartwo (\lambda + n \isotropicvec, \jacindexvec_{\isometrypos}) + \overline{\jacvartwo} (\lambda + n \isotropicvec, \jacindexvec_{\isometryneg})\Big)
		\\
		&= \jacvaroneim^{-b^+/2}
		\exp\big(-4 \pi q(\jacindexvec_{\isometryneg}) \jacvartwoim^2 / \jacvaroneim\big)
		\sum_{\lambda \in \sigma + \lattice / \isotropicvec} \sum_{n \in \IZ} g(\lambda, n).
	\end{align*}
	\textcolor{\mycolor}{The factor in front of the theta function can be rewritten as}
	$$\jacvaroneim^{b^-/2}
	\exp\big(4 \pi q\left(\jacindexvec_{\isometryneg}\right) \jacvartwoim^2 / \jacvaroneim\big)
	=
	\sqrt{\jacvaroneim} \cdot e\bigg(-i \frac{\left(\jacindexvec, \isotropicvec_{\isometryneg}\right)^2 \jacvartwoim^2}{\jacvaroneim \isotropicvec_{\isometryneg}^2}\bigg)
	\jacvaroneim^{(b^- - 1)/2}
	\exp\big(4 \pi q\left(\jacindexvec_{\subspaceisometryneg}\right) \jacvartwoim^2 / \jacvaroneim\big).$$
	\textcolor{\mycolor}{Applying a partial Poisson summation in $n$ and using Lemma \ref{lem:partialFT}, we may rewrite the theta function as}
	\begin{align*}
		& \theta_{\sigma + \lattice, \jacindexvec}\left(\jacvarone, \jacvartwo, \isometry, \calP\right)
		\\
		&=
		\frac{1}{\sqrt{2 \isotropicvec_{\isometrypos}^2}}
		e\bigg(-i\frac{\left(\jacvartwoim \left(\jacindexvec, \isotropicvec_{\isometrypos}\right)\right)^2}{\jacvaroneim \isotropicvec_{\isometrypos}^2} \bigg)
		\cdot
		e\bigg(- i \frac{\left(\jacindexvec, \isotropicvec_{\isometryneg}\right)^2 \jacvartwoim^2}{\jacvaroneim \isotropicvec_{\isometryneg}^2}\bigg)
		\\
		&\quad\times
		\sum_{\substack{c, d \in \IZ \\ c \equiv \left(\sigma, \isotropicvec\right) \pmod N}}
		\sum_{\subpolposdeg} \frac{\left(c \overline{\jacvarone} + d\right)^{\subpolposdeg}}{\left(-2 i \jacvaroneim\right)^{\subpolposdeg}}
		e\bigg(- \frac{d \jacvartwoim \left(\jacindexvec, \isotropicvec_{\isometrypos}\right)}{\jacvaroneim \isotropicvec_{\isometrypos}^2} \bigg)
		\cdot
		e\left(- d (\sigma, \isotropicvecprime) + cd q(\isotropicvecprime)\right)
		\\
		&\quad\times
		e\bigg(-\frac{\lvert c \jacvarone + d \rvert^2 + 2 c \left(\jacvarone \overline{\jacvartwo} \left(\jacindexvec, \isotropicvec_{\isometrypos}\right) + \overline{\jacvarone} \jacvartwo \left(\jacindexvec, \isotropicvec_{\isometryneg}\right)\right)}{4 i \jacvaroneim \isotropicvec_{\isometrypos}^2}\bigg)
		\\
		&\quad\times \theta_{\sigma - c\isotropicvecprime + \sublattice, \jacindexvec}\left(\jacvarone, \jacvartwo, d \mu, -c \mu, \subspaceisometry, \calP_{\subspaceisometry, \subpolposdeg}\right)
		\\
		&= \frac{1}{\sqrt{2 \isotropicvec_{\isometrypos}^2}} \sum_{\substack{c, d \in \IZ \\ c \equiv \left(\sigma, \isotropicvec\right) \pmod N}} \sum_{\subpolposdeg} \frac{\left(c \overline{\jacvarone} + d\right)^{\subpolposdeg}}{\left(-2 i \jacvaroneim\right)^{\subpolposdeg}}
		e\left(- d (\sigma, \isotropicvecprime) + cd q(\isotropicvecprime)\right)
		\\
		&\quad\times
		e\bigg(-\frac{\lvert c \jacvarone + d \rvert^2 + 2 \left(c\jacvarone + d\right) \overline{\jacvartwo} \left(\jacindexvec, \isotropicvec_{\isometrypos}\right) + 2 \left(c\overline{\jacvarone} + d\right) \jacvartwo \left(\jacindexvec, \isotropicvec_{\isometryneg}\right)}{4 i \jacvaroneim \isotropicvec_{\isometrypos}^2}\bigg) \\
		&\quad\times\theta_{\sigma - c\isotropicvecprime + \sublattice, \jacindexvec}\left(\jacvarone, \jacvartwo, d \mu, -c \mu, \subspaceisometry, \calP_{\subspaceisometry, \subpolposdeg}\right).
	\end{align*}
	Summing over $\sigma \in D_\lattice$ and using the correspondence between coprime integers $\left(c, d\right)$ and elements in $\Gamma_\infty \bs \SL_2\left(\IZ\right)$ yields
	\begin{align*}
		&\Theta_{\lattice, \jacindexvec}\left(\jacvarone, \jacvartwo, \isometry, \calP\right) \\
		&= \frac{1}{\sqrt{2 \isotropicvec_{\isometrypos}^2}} \sum_{\substack{\sigma \in D_\lattice \\ \left(\sigma, \isotropicvec\right) \equiv 0 \pmod{N_\isotropicvec}}} \theta_{\sigma + \sublattice, \jacindexvec}\left(\jacvarone, \jacvartwo, \subspaceisometry, \calP_{\subspaceisometry, 0}\right) \frake_{\sigma} \\
		&\quad+ 
		\frac{1}{\sqrt{2 \isotropicvec_{\isometrypos}^2}} \sum_{\substack{c, d \in \IZ \\ \left(c, d\right) = 1}} \sum_{\substack{\sigma \in D_\lattice \\ c \equiv \left(\sigma, \isotropicvec\right) \pmod{N_\isotropicvec}}} \sum_{n = 1}^\infty \sum_{\subpolposdeg} \frac{n^{\subpolposdeg}\left(c \overline{\jacvarone} + d\right)^{\subpolposdeg}}{\left(-2 i \jacvaroneim\right)^{\subpolposdeg}} e\left(- nd (\sigma, \isotropicvecprime) + n^2 cd q(\isotropicvecprime)\right) \\
		&\quad\times e\bigg(-\frac{n^2 \lvert c \jacvarone + d \rvert^2 + 2 n \left(c\jacvarone + d\right) \overline{\jacvartwo} \left(\jacindexvec, \isotropicvec_{\isometrypos}\right) + 2n \left(c\overline{\jacvarone} + d\right) \jacvartwo \left(\jacindexvec, \isotropicvec_{\isometryneg}\right)}{4 i \jacvaroneim \isotropicvec_{\isometrypos}^2}\bigg)
		\\
		&\quad\times \theta_{\sigma - nc\isotropicvecprime + \sublattice, \jacindexvec}\left(\jacvarone, \jacvartwo, nd \mu, -nc \mu, \subspaceisometry, \calP_{\subspaceisometry, \subpolposdeg}\right) \frake_{\sigma}
		\\
		&= \frac{1}{\sqrt{2 \isotropicvec_{\isometrypos}^2}} \sum_{\substack{\sigma \in D_\lattice \\ \left(\sigma, \isotropicvec\right) \equiv 0 \pmod{N_\isotropicvec}}} \theta_{\sigma + \sublattice, \jacindexvec}\left(\jacvarone, \jacvartwo, \subspaceisometry, \calP_{\subspaceisometry, 0}\right) \frake_{\sigma} \\
		&\quad+ \frac{1}{\sqrt{2 \isotropicvec_{\isometrypos}^2}} \sum_{\mat = \left(\begin{smallmatrix} a & b \\ c & d\end{smallmatrix}\right) \in \Gamma_\infty \bs \SL_2\left(\IZ\right)} \sum_{n = 1}^\infty \sum_{\subpolposdeg} \frac{n^{\subpolposdeg}\left(c \overline{\jacvarone} + d\right)^{\subpolposdeg}}{\left(-2 i \jacvaroneim\right)^{\subpolposdeg}}
		\\
		&\quad\times
		e\bigg(-\frac{n^2 + 2 n \overline{\mat \jacvartwo} \left(\jacindexvec, \isotropicvec_{\isometrypos}\right) + 2n \mat \jacvartwo \left(\jacindexvec, \isotropicvec_{\isometryneg}\right)}{4 i \Im\left(\mat \jacvarone\right) \isotropicvec_{\isometrypos}^2}\bigg)
		\\
		&\quad\times \sum_{\substack{\sigma \in D_\lattice \\ c \equiv \left(\sigma, \isotropicvec\right) \pmod{N_\isotropicvec}}} \theta_{\sigma - nc\isotropicvecprime + \sublattice, \jacindexvec}\left(\jacvarone, \jacvartwo, n d \mu, -n c \mu, \subspaceisometry, \calP_{\subspaceisometry, \subpolposdeg}\right) \frake_{\sigma}\left(- nd (\sigma, \isotropicvecprime) + n^2 cd q(\isotropicvecprime)\right)
		\\
		&= \frac{1}{\sqrt{2 \isotropicvec_{\isometrypos}^2}} \sum_{\substack{\sigma \in D_\lattice \\ \left(\sigma, \isotropicvec\right) \equiv 0 \pmod{N_\isotropicvec}}} \theta_{\sigma + \sublattice, \jacindexvec}\left(\jacvarone, \jacvartwo, \subspaceisometry, \calP_{\subspaceisometry, 0}\right) \frake_{\sigma} \\
		&\quad+ \frac{1}{\sqrt{2 \isotropicvec_{\isometrypos}^2}} \sum_{\mat = \left(\begin{smallmatrix} a & b \\ c & d\end{smallmatrix}\right) \in \Gamma_\infty \bs \SL_2\left(\IZ\right)} \sum_{n = 1}^\infty \sum_{\subpolposdeg} \frac{n^{\subpolposdeg}\left(c \jacvarone + d\right)^{-\subpolposdeg}}{\left(-2 i \Im\left(\mat \jacvarone\right)\right)^{\subpolposdeg}}
		e\bigg(-\frac{n^2 + 4 i n  \Im\left(\mat \jacvartwo\right) \left(\jacindexvec, \isotropicvec_{\isometrypos}\right)}{4 i \Im\left(\mat \jacvarone\right) \isotropicvec_{\isometrypos}^2}\bigg)
		\\
		&\quad\times \sum_{\substack{\sigma \in D_\lattice \\ c \equiv \left(\sigma, \isotropicvec\right) \pmod{N_\isotropicvec}}} \theta_{\sigma - nc\isotropicvecprime + \sublattice, \jacindexvec}\left(\jacvarone, \jacvartwo, n d \mu, -n c \mu, \subspaceisometry, \calP_{\subspaceisometry, \subpolposdeg}\right) \frake_{\sigma}\left(- nd (\sigma, \isotropicvecprime) + n^2 cd q(\isotropicvecprime)\right).
	\end{align*}
	We apply the change of variables $\sigma \mapsto \sigma + nc\isotropicvecprime$ and obtain that
	\begin{align*}
		&\frac{1}{\sqrt{2 \isotropicvec_{\isometrypos}^2}} \sum_{\substack{\sigma \in D_\lattice \\ \left(\sigma, \isotropicvec\right) \equiv 0 \pmod{N_\isotropicvec}}} \theta_{\sigma + \sublattice, \jacindexvec}\left(\jacvarone, \jacvartwo, \subspaceisometry, \calP_{\subspaceisometry, 0}\right) \frake_{\sigma}
		\\
		&\quad+ \frac{1}{\sqrt{2 \isotropicvec_{\isometrypos}^2}} \sum_{\mat = \left(\begin{smallmatrix} a & b \\ c & d\end{smallmatrix}\right) \in \Gamma_\infty \bs \SL_2\left(\IZ\right)}
		\sum_{n = 1}^\infty \sum_{\subpolposdeg}
		\frac{n^{\subpolposdeg}\left(c \jacvarone + d\right)^{-\subpolposdeg}}{\left(-2 i \Im\left(\mat \jacvarone\right)\right)^{\subpolposdeg}}
		e\bigg(-\frac{n^2 + 4 i n  \Im\left(\mat \jacvartwo\right) \left(\jacindexvec, \isotropicvec_{\isometrypos}\right)}{4 i \Im\left(\mat \jacvarone\right) \isotropicvec_{\isometrypos}^2}\bigg)
		\\
		&\quad\times \sum_{\substack{\sigma \in D_\lattice \\ \left(\sigma, \isotropicvec\right) \equiv 0 \pmod{N_\isotropicvec}}} \theta_{\sigma + \sublattice, \jacindexvec}\left(\jacvarone, \jacvartwo, n d \mu, -n c \mu, \subspaceisometry, \calP_{\subspaceisometry, \subpolposdeg}\right) \\
		&\quad\times \frake_{\sigma + nc\isotropicvecprime}\left(- nd \left(\sigma, \isotropicvecprime\right) - n^2 cd q\left(\isotropicvecprime\right)\right).
	\end{align*}
	The elements $\sigma \in D_\lattice$ with $\left(\sigma, \isotropicvec\right) \equiv 0 \pmod{N_\isotropicvec}$ are represented by $\sigma + \frac{m \isotropicvec}{N_\isotropicvec}$ with $\sigma \in D_\sublattice$ and $m \in \IZ / N_\isotropicvec \IZ$.
	We may then rewrite~$\Theta_{\sublattice, \jacindexvec}$ as
	\begin{align*}
		&\frac{1}{\sqrt{2 \isotropicvec_{\isometrypos}^2}} \sum_{\sigma \in D_\sublattice} \theta_{\sigma + \sublattice, \jacindexvec}\left(\jacvarone, \jacvartwo, \subspaceisometry, \calP_{\subspaceisometry, 0}\right) \frake_{\sigma} \sum_{m \in \IZ / N_\isotropicvec \IZ} \frake_{\frac{m \isotropicvec}{N_\isotropicvec}}
		\\
		&\quad+
		\frac{1}{\sqrt{2 \isotropicvec_{\isometrypos}^2}} \sum_{\mat = \left(\begin{smallmatrix} a & b \\ c & d\end{smallmatrix}\right) \in \Gamma_\infty \bs \SL_2\left(\IZ\right)} \sum_{n = 1}^\infty \sum_{\subpolposdeg} \frac{n^{\subpolposdeg}\left(c \jacvarone + d\right)^{-\subpolposdeg}}{\left(-2 i \Im\left(\mat \jacvarone\right)\right)^{\subpolposdeg}}
		e\bigg(-\frac{n^2 + 4 i n  \Im\left(\mat \jacvartwo\right) \left(\jacindexvec, \isotropicvec_{\isometrypos}\right)}{4 i \Im\left(\mat \jacvarone\right) \isotropicvec_{\isometrypos}^2}\bigg)
		\\
		&\quad\times
		\sum_{\sigma \in D_\sublattice} \theta_{\sigma + \sublattice, \jacindexvec}\left(\jacvarone, \jacvartwo, n d \mu, -n c \mu, \subspaceisometry, \calP_{\subspaceisometry, \subpolposdeg}\right) \frake_{\sigma}
		\\
		&\quad\times
		\sum_{m \in \IZ / N_\isotropicvec \IZ} \frake_{\frac{m \isotropicvec}{N_\isotropicvec} + nc\isotropicvecprime}\left(- \frac{mnd}{N_\isotropicvec} - n^2 cd q\left(\isotropicvecprime\right)\right)
		\\
		&= \frac{1}{\sqrt{2 \isotropicvec_{\isometrypos}^2}} \Theta_{\sublattice, \jacindexvec}\left(\jacvarone, \jacvartwo, \subspaceisometry, \calP_{\subspaceisometry, 0}\right) \sum_{m \in \IZ / N_\isotropicvec \IZ} \frake_{\frac{m \isotropicvec}{N_\isotropicvec}} \\
		&\quad+
		\frac{1}{\sqrt{2 \isotropicvec_{\isometrypos}^2}} \sum_{\mat = \left(\begin{smallmatrix} a & b \\ c & d\end{smallmatrix}\right) \in \Gamma_\infty \bs \SL_2\left(\IZ\right)} \sum_{n = 1}^\infty \sum_{\subpolposdeg} \frac{n^{\subpolposdeg}\left(c \jacvarone + d\right)^{-\subpolposdeg}}{\left(-2 i \Im\left(\mat \jacvarone\right)\right)^{\subpolposdeg}} e\bigg(-\frac{n^2 + 4 i n  \Im\left(\mat \jacvartwo\right) \left(\jacindexvec, \isotropicvec_{\isometrypos}\right)}{4 i \Im\left(\mat \jacvarone\right) \isotropicvec_{\isometrypos}^2}\bigg)
		\\
		&\quad\times
		\Theta_{\sublattice, \jacindexvec}\left(\jacvarone, \jacvartwo, n d \mu, -n c \mu, \subspaceisometry, \calP_{\subspaceisometry, \subpolposdeg}\right) \sum_{m \in \IZ / N_\isotropicvec \IZ} \frake_{\frac{m \isotropicvec}{N_\isotropicvec} + nc\isotropicvecprime}\Big(- \frac{mnd}{N_\isotropicvec} - n^2 cd q(\isotropicvecprime)\Big).
	\end{align*}
	The transformation formula of $\Theta_{\sublattice, \jacindexvec}$ provided by Example~\ref{exmpl:JacobiThetaPoly} implies that the formula above equals
	\begin{align*}
		& \frac{1}{\sqrt{2 \isotropicvec_{\isometrypos}^2}} \Theta_{\sublattice, \jacindexvec}\left(\jacvarone, \jacvartwo, \subspaceisometry, \calP_{\subspaceisometry, 0}\right) \sum_{m \in \IZ / N_\isotropicvec \IZ} \frake_{\frac{m \isotropicvec}{N_\isotropicvec}}
		\\
		&\quad+
		\frac{1}{\sqrt{2 \isotropicvec_{\isometrypos}^2}} \sum_{\mat = \left(\begin{smallmatrix} a & b \\ c & d\end{smallmatrix}\right) \in \Gamma_\infty \bs \SL_2\left(\IZ\right)} \sum_{n = 1}^\infty \sum_{\subpolposdeg} e\left(-\frac{q\left(\jacindexvec\right) c \jacvartwo^2}{c \jacvarone + d}\right)
		\\
		&\quad\times
		\frac{n^{\subpolposdeg}\left(c \jacvarone + d\right)^{\frac{b^-}{2} - \frac{b^+}{2} - \degjac}}{\left(-2 i \Im\left(\mat \jacvarone\right)\right)^{\subpolposdeg}} e\bigg(-\frac{n^2 + 4 i n  \Im\left(\mat \jacvartwo\right) \left(\jacindexvec, \isotropicvec_{\isometrypos}\right)}{4 i \Im\left(\mat \jacvarone\right) \isotropicvec_{\isometrypos}^2}\bigg) \\
		&\quad\times \left(\weilrep_{\sublattice, \jacindexvec}^{-1}\left(\mat\right) \Theta_{\sublattice, \jacindexvec}\left(\mat\jacvarone, \mat\jacvartwo, n\mu, 0, \subspaceisometry, \calP_{\subspaceisometry, \subpolposdeg}\right)\right) \sum_{m \in \IZ / N_\isotropicvec \IZ} \frake_{\frac{m \isotropicvec}{N_\isotropicvec} + nc\isotropicvecprime}\left(- \frac{mnd}{N_\isotropicvec} - n^2 cd q\left(\isotropicvecprime\right)\right)
		\\
		&=
		\frac{1}{\sqrt{2 \isotropicvec_{\isometrypos}^2}} \Theta_{\sublattice, \jacindexvec}\left(\jacvarone, \jacvartwo, \subspaceisometry, \calP_{\subspaceisometry, 0}\right) \sum_{m \in \IZ / N_\isotropicvec \IZ} \frake_{\frac{m \isotropicvec}{N_\isotropicvec}}
		\\
		&\quad+
		\frac{1}{\sqrt{2 \isotropicvec_{\isometrypos}^2}}
		\sum_{\mat = \left(\begin{smallmatrix} a & b \\ c & d\end{smallmatrix}\right) \in \Gamma_\infty \bs \SL_2\left(\IZ\right)} \sum_{n = 1}^\infty \sum_{\subpolposdeg} e\left(-\frac{q\left(\jacindexvec\right) c \jacvartwo^2}{c \jacvarone + d}\right)
		\\
		&\quad\times
		\frac{n^{\subpolposdeg}\left(c \jacvarone + d\right)^{\frac{b^-}{2} - \frac{b^+}{2} - \degjac}}{\left(-2 i \Im\left(\mat \jacvarone\right)\right)^{\subpolposdeg}}
		e\bigg(-\frac{n^2 + 4 i n  \Im\left(\mat \jacvartwo\right) \left(\jacindexvec, \isotropicvec_{\isometrypos}\right)}{4 i \Im\left(\mat \jacvarone\right) \isotropicvec_{\isometrypos}^2}\bigg) \\
		&\quad\times
		\weilrep_{\lattice, \jacindexvec}^{-1}
		\left(\tilde{\mat}\right)
		\bigg(\Theta_{\sublattice, \jacindexvec}\left(\mat\jacvarone, \mat\jacvartwo, n\mu, 0, \subspaceisometry, \calP_{\subspaceisometry, \subpolposdeg}\right) \sum_{m \in \IZ / N_\isotropicvec \IZ} \frake_{\frac{m \isotropicvec}{N_\isotropicvec}}\Big(- \frac{mn}{N_\isotropicvec}\Big) \bigg),
	\end{align*}
	where in the last equality we used Lemma~\ref{lem:kiefer}.
\end{proof}

\subsection{Evaluating Petersson inner products}\label{sec:JacobiThetaInnerProducts}

Let~$\calP : \IR^{b^+, 0} \to \IC$ be a homogeneous polynomial of degree~$\degjac$ and let $\jacindexvec \in L'$.
Let $\phi \colon \IH \times \IC \to \IC[D_\lattice]$ be a Jacobi form of weight~${k = \degjac + (b^+ - b^-)/2}$ and index~$\jacindexvec \in \lattice'$.
\textcolor{\mycolor}{Let~$\langle\cdot{,}\cdot\rangle$ be the standard Hermitian product of~$\CC[D_L]$.}
We define the~\emph{Jacobi Petersson inner product} of~$\phi$ and~$\Theta_{\lattice, \jacindexvec}$ as
\bas\langle \phi, &\, \Theta_{\lattice, \jacindexvec}\left(\cdot, \cdot, g, \pol\right) \rangle_{\Pet}
\\
&= \int_{\jacobi \bs \IH \times \IC} \big\langle
\phi(\jacvarone, \jacvartwo), \Theta_{\lattice, \jacindexvec}\left(\jacvarone, \jacvartwo, \isometry, \calP\right) \big\rangle \jacvaroneim^k \exp\big(-4 \pi q\left(\jacindexvec\right) \jacvartwoim^2 / \jacvaroneim\big) \frac{d \jacvartwo\, d \jacvarone}{\jacvaroneim^3}.
\eas

In this section we deduce from the rewriting of $\Theta_{\lattice, \jacindexvec}$ as a Poincar\'e series given by Theorem~\ref{thm;rewritingthetajacaspoincare} an unfolding of the Jacobi Petersson inner products defined above.
To do so, we need to recall that a fundamental domain for the action of $\heisenberg\left(\IZ\right) \rtimes \Gamma_\infty$ on~$\IH \times \IC$ is given by
$$\{ \left(\jacvarone, \jacvartwo\right) \in \IH \times \IC \mid -1/2 \leq \Re\left(\jacvarone\right) \leq 1/2, \jacvartwo \in T\left(\jacvarone\right) \},$$
where we denote by~$T\left(\jacvarone\right)$ the torus
$$T\left(\jacvarone\right) = \{\jacvartwo \in \IC \mid 0 \leq \jacvartwoim \leq \jacvaroneim,\, \jacvartwoim \jacvaronere / \jacvaroneim \leq \jacvartwore \leq 1 + \jacvartwoim \jacvaronere / \jacvaroneim \}.$$

Let $\isotropicvec \in \lattice$ be primitive isotropic of level $N_\isotropicvec$, and let~$\isotropicvecprime \in \lattice'$ be such that~$\left(\isotropicvec, \isotropicvecprime\right) = 1$.
\textcolor{\mycolor}{The notation~$\sublattice$, $\subspaceisometry$, $\calP_{\subspaceisometry, \subpolposdeg}$ and $\mu$ is as in Section~\ref{sec:ReductionToSmallerLattices}.}

\begin{lemma}\label{lem:JacobiThetaUnfolding}
	Let $\jacindexvec \in \isotropicvec^\perp$ and consider the \textcolor{\mycolor}{linear} map
	$$\sublattice' \to (\jacindexvec^\perp \cap \sublattice)', \quad \lambda \mapsto \lambda_{\jacindexvec} = \lambda - \frac{\left(\lambda, \jacindexvec\right)}{\jacindexvec^2} \jacindexvec_{\sublattice}.$$
	Then
	\begin{align*}
		&\big\langle \phi, \Theta_{\lattice, \jacindexvec}\left(\cdot, \cdot, g, \pol\right) \big\rangle_{\Pet}
		\\
		&=
		\frac{1}{\sqrt{2 \isotropicvec_{\isometrypos}^2}} \Big\langle \phi, \Theta_{\sublattice, \jacindexvec}\left(\cdot, \cdot, g_K, \pol_{g_K, 0}\right)
		\sum_{m \in \IZ / N_\isotropicvec \IZ} \frake_{\frac{m \isotropicvec}{N_\isotropicvec}} \Big\rangle_{\Pet}
		+
		\frac{\sqrt{2}}{\lvert \isotropicvec_{\isometrypos} \rvert} \sum_{n = 1}^\infty \sum_{\subpolposdeg} \Big(\frac{n}{2i}\Big)^{\subpolposdeg}
		\\
		&\quad\times
		\int_0^\infty
		\jacvaroneim^{b^- + k - \subpolposdeg - 5/2}
		\exp\bigg(-\frac{\pi n^2}{2 \jacvaroneim \isotropicvec_{\isometrypos}^2}\bigg)
		\sum_{\tilde{\lambda} \in (\jacindexvec^\perp \cap \sublattice)'} 
		\sum_{\substack{\lambda \in \sublattice' / \jacindexvec_{\sublattice}
		\\ \lambda_{\jacindexvec} = \tilde{\lambda}}} \int_{-\infty}^{\infty}
		e\bigg(\frac{n  \jacvartwoim \left(\jacindexvec, \isotropicvec_{\isometrypos}\right)}{\isotropicvec_{\isometrypos}^2}\bigg)
		\\
		&\quad\times \exp\left(-\Delta / 8 \pi \jacvaroneim\right)\left(\overline{\calP}_{\subspaceisometry, \subpolposdeg}\right)\left(\subspaceisometry\left(\lambda_{\jacindexvec} + \jacvartwoim \jacindexvec\right)\right)
		\cdot
		\exp\Big(-4 \pi q\left(\left(\lambda_{\jacindexvec} + \jacvartwoim \jacindexvec\right)_{\subspaceisometrypos}\right) \jacvaroneim\Big) d \jacvartwoim\, d \jacvaroneim \\
		&\quad\times \sum_{m \in \IZ / N_\isotropicvec \IZ} e\left(\frac{mn}{N_\isotropicvec} + \left(n \lambda_{\jacindexvec}, \mu\right) + \frac{\left(n\lambda, \jacindexvec\right)\left(\jacindexvec, \isotropicvecprime\right)}{\jacindexvec^2}\right)
		\cdot
		a\Big(\lambda + \frac{m \isotropicvec}{N_{\isotropicvec}}, q\left(\lambda\right), \left(\lambda, \jacindexvec\right)\Big),
	\end{align*}
	where $a\left(\cdot, \cdot, \cdot\right)$ are the Fourier coefficients of $\phi$.
\end{lemma}

\begin{proof}
	We write $\Theta_{\lattice, \jacindexvec}$ as
	\begin{align*}
		&\frac{1}{\sqrt{2 \isotropicvec_{\isometrypos}^2}} \Theta_{\sublattice, \jacindexvec}\left(\jacvarone, \jacvartwo, \subspaceisometry, \calP_{\subspaceisometry, 0}\right) \sum_{m \in \IZ / N_\isotropicvec \IZ} \frake_{\frac{m \isotropicvec}{N_\isotropicvec}} \\
		&\quad+ \frac{1}{\sqrt{2 \isotropicvec_{\isometrypos}^2}} \sum_{\mat = \left(\begin{smallmatrix} a & b \\ c & d\end{smallmatrix}\right) \in \Gamma_\infty \bs \SL_2\left(\IZ\right)} \sum_{n = 1}^\infty \sum_{\subpolposdeg} e\left(-\frac{q\left(\jacindexvec\right) c \jacvartwo^2}{c \jacvarone + d}\right)
		\\
		&\quad\times \frac{n^{\subpolposdeg}
		\left(c \jacvarone + d\right)^{(b^--b^+)/2 - \degjac}}{\left(-2 i \Im\left(\mat \jacvarone\right)\right)^{\subpolposdeg}}
		e\bigg(-\frac{n^2 + 4 i n  \Im\left(\mat \jacvartwo\right) \left(\jacindexvec, \isotropicvec_{\isometrypos}\right)}{4 i \Im\left(\mat \jacvarone\right) \isotropicvec_{\isometrypos}^2}\bigg)
		\\
		&\quad\times \weilrep_{\lattice, \jacindexvec}^{-1}\left(\tilde{\mat}\right) 
		\bigg(\Theta_{\sublattice, \jacindexvec}\left(\mat\jacvarone, \mat\jacvartwo, n\mu, 0, \subspaceisometry, \calP_{\subspaceisometry, \subpolposdeg}\right) \sum_{m \in \IZ / N_\isotropicvec \IZ} \frake_{\frac{m \isotropicvec}{N_\isotropicvec}}\left(- \frac{mn}{N_\isotropicvec}\right) \bigg)
	\end{align*}
	and unfold the integral against this Poincar\'e series to obtain
	\ba\label{eq:jacobithetalift1}
		& \frac{1}{\sqrt{2 \isotropicvec_{\isometrypos}^2}}
		\Big\langle \phi, \Theta_{\sublattice, \jacindexvec}(\cdot, \cdot, g_K, \pol_{g_K, 0}) \sum_{m \in \IZ / N_\isotropicvec \IZ} \frake_{\frac{m \isotropicvec}{N_\isotropicvec}} \Big\rangle_{\Pet}
		+
		\frac{\sqrt{2}}{\lvert \isotropicvec_{\isometrypos} \rvert} \sum_{n = 1}^\infty \sum_{\subpolposdeg} \Big(\frac{n}{2i} \Big)^{\subpolposdeg}
		\\
		&\quad\times
		\int_0^\infty \exp\bigg(-\frac{\pi n^2}{2\jacvaroneim \isotropicvec_{\isometrypos}^2}\bigg) \jacvaroneim^{k - \subpolposdeg} \int_{-1/2}^{1/2} \int_{\jacvartwo \in T\left(\jacvarone\right)}
		e\bigg(\frac{n  \jacvartwoim \left(\jacindexvec, \isotropicvec_{\isometrypos}\right)}{\jacvaroneim \isotropicvec_{\isometrypos}^2}\bigg) 
		\\
		&\quad\times
		\Big\langle \phi\left(\jacvarone, \jacvartwo\right), \Theta_{\sublattice, \jacindexvec}(\jacvarone, \jacvartwo, n\mu, 0, \subspaceisometry, \calP_{\subspaceisometry, \subpolposdeg}) \sum_{m \in \IZ / N_\isotropicvec \IZ} \frake_{\frac{m \isotropicvec}{N_\isotropicvec}}\bigg(- \frac{mn}{N_\isotropicvec}\bigg) \Big\rangle
		\\
		&\quad\times
		\exp\big(-4 \pi q(\jacindexvec) \jacvartwoim^2 / \jacvaroneim\big) d \jacvartwo \,\frac{d \jacvarone}{\jacvaroneim^3}.
	\ea
	The factor of $2$ in the second summand of the formula above comes from the trivial action of $\big(\begin{smallmatrix}-1 & 0 \\ 0 & -1\end{smallmatrix}\big) \in \Gamma_\infty \bs \SL_2\left(\IZ\right)$. The first summand has the correct form so that we will only consider the second summand of \eqref{eq:jacobithetalift1}. We apply the change of variables
	$$\left(\jacvartwore, \jacvartwoim\right) \mapsto \left(\jacvartwore + \jacvartwoim \jacvaronere, \jacvaroneim \jacvartwoim\right)$$
	which transforms the rectangle $[0, 1]^2$ to the torus $T\left(\jacvarone\right)$. Hence, the second summand of~\eqref{eq:jacobithetalift1} is given by
	\begin{align}\label{eq:jacobithetalift2}
		& \frac{\sqrt{2}}{\lvert \isotropicvec_{\isometrypos} \rvert}
		\sum_{n = 1}^\infty
		\sum_{\subpolposdeg}
		\Big(\frac{n}{2i}\Big)^{\subpolposdeg} \int_0^\infty
		\exp\bigg(-\frac{\pi n^2}{2 \jacvaroneim \isotropicvec_{\isometrypos}^2}\bigg)
		\jacvaroneim^{k - \subpolposdeg}
		\int_{-1/2}^{1/2} \int_{0}^1 \int_0^1
		e\bigg(\frac{n  \jacvartwoim \left(\jacindexvec, \isotropicvec_{\isometrypos}\right)}{\isotropicvec_{\isometrypos}^2}\bigg)
		\\
		&\quad\times
		\Big\langle
		\phi\left(\jacvarone, \jacvartwore + \jacvartwoim \jacvarone\right), \Theta_{\sublattice, \jacindexvec}\left(\jacvarone, \jacvartwore + \jacvartwoim \jacvarone, n\mu, 0, \subspaceisometry, \calP_{\subspaceisometry, \subpolposdeg}\right) \sum_{m \in \IZ / N_\isotropicvec \IZ} \frake_{\frac{m \isotropicvec}{N_\isotropicvec}}\Big(- \frac{mn}{N_\isotropicvec}\Big) \Big\rangle
		\nonumber
		\\
		&\quad\times \exp\left(-4 \pi q\left(\jacindexvec\right) \jacvartwoim^2 \jacvaroneim\right) d \jacvartwore \,d \jacvartwoim \,\frac{d \jacvaronere \,d \jacvaroneim}{\jacvaroneim^2}
		\nonumber
		\\
		&=
		\frac{\sqrt{2}}{\lvert \isotropicvec_{\isometrypos} \rvert}
		\sum_{n = 1}^\infty
		\sum_{\subpolposdeg}
		\frac{n^{\subpolposdeg}}{\left(2i\right)^{\subpolposdeg}}
		\int_0^\infty \jacvaroneim^{k - \subpolposdeg - 2}
		\int_{0}^1
		\exp\bigg(-\frac{\pi n^2}{2 \jacvaroneim \isotropicvec_{\isometrypos}^2}\bigg)
		\nonumber
		\\
		&\quad\times
		\exp\left(-4 \pi q\left(\jacindexvec\right) \jacvartwoim^2 \jacvaroneim\right) 
		\cdot
		e\bigg(\frac{n  \jacvartwoim \left(\jacindexvec, \isotropicvec_{\isometrypos}\right)}{\isotropicvec_{\isometrypos}^2}\bigg)
		\int_{-1/2}^{1/2} \int_0^1 \bigg\langle \phi\left(\jacvarone, \jacvartwore + \jacvartwoim \jacvarone\right),
		\nonumber
		\\
		&\qquad \Theta_{\sublattice, \jacindexvec}\left(\jacvarone, \jacvartwore + \jacvartwoim \jacvarone, n\mu, 0, \subspaceisometry, \calP_{\subspaceisometry, \subpolposdeg}\right) \sum_{m \in \IZ / N_\isotropicvec \IZ} \frake_{\frac{m \isotropicvec}{N_\isotropicvec}}\left(- \frac{mn}{N_\isotropicvec}\right) \bigg\rangle d \jacvartwore \, d \jacvaronere \, d \jacvartwoim \, d \jacvaroneim.
		\nonumber
	\end{align}
	We insert the Fourier expansion of the Jacobi theta function
	\begin{align*}
		&\Theta_{\sublattice, \jacindexvec}\left(\jacvarone, \jacvaronere + \jacvartwoim \jacvarone, n \mu, 0, \subspaceisometry, \calP_{\subspaceisometry, \subpolposdeg}\right) \\
		&= \jacvaroneim^{\frac{b^- - 1}{2}} \exp\left(4 \pi q\left(\jacindexvec_{\subspaceisometryneg}\right) \jacvartwoim^2 \jacvaroneim\right) \sum_{\lambda \in \sublattice'} \exp\left(-\Delta / 8 \pi \jacvaroneim\right)\left(\pol_{\subspaceisometry, \subpolposdeg}\right)\left(\subspaceisometry\left(\lambda + \jacvartwoim \jacindexvec\right)\right) \\
		&\quad\times \frake_{\lambda}\Big(q\left(\lambda\right) \jacvaronere + q_{\subspaceisometrypos}\left(\lambda\right) i \jacvaroneim + \left(\jacvartwore + \jacvartwoim \jacvaronere\right) \left(\lambda, \jacindexvec\right) + i \jacvartwoim \jacvaroneim \left(\lambda, \jacindexvec\right)_{\subspaceisometrypos} - \left(\lambda, n \mu\right)\Big)
	\end{align*}
	to obtain for the integrals over $\jacvaronere, \jacvartwore$ appearing on the right-hand side of~\eqref{eq:jacobithetalift2}
	\begin{align*}
		&\int_{-1/2}^{1/2} \int_0^1
		\bigg\langle \phi\left(\jacvarone, \jacvartwore + \jacvartwoim \jacvarone\right), \Theta_{\sublattice, \jacindexvec}\left(\jacvarone, \jacvartwore + \jacvartwoim \jacvarone, n\mu, 0, \subspaceisometry, \calP_{\subspaceisometry, \subpolposdeg}\right)
		\\
		&\quad\times\sum_{m \in \IZ / N_\isotropicvec \IZ} \frake_{\frac{m \isotropicvec}{N_\isotropicvec}}\left(- \frac{mn}{N_\isotropicvec}\right) \bigg\rangle d \jacvartwore \, d \jacvaronere \\
		&= \jacvaroneim^{\frac{b^- - 1}{2}} \exp\left(4 \pi q\left(\jacindexvec_{\subspaceisometryneg}\right) \jacvartwoim^2 \jacvaroneim\right) \sum_{\lambda \in \sublattice'} \exp\left(-\Delta / 8 \pi \jacvaroneim\right)\left(\overline{\calP}_{\subspaceisometry, \subpolposdeg}\right)\left(\subspaceisometry\left(\lambda + \jacvartwoim \jacindexvec\right)\right) \\
		&\quad\times \exp\Big(-2\pi\big(q_{\subspaceisometrypos}\left(\lambda\right) \jacvaroneim + \jacvartwoim \jacvaroneim \left(\lambda, \jacindexvec\right)_{\subspaceisometrypos}\big)\Big) \sum_{m \in \IZ / N_\isotropicvec \IZ} e\left(\frac{mn}{N_\isotropicvec} + \left(\lambda, n \mu\right)\right) \\
		&\quad\times \int_{-1/2}^{1/2} \int_0^1 \phi_{\lambda + \frac{m \isotropicvec}{N_{\isotropicvec}}}\left(\jacvarone, \jacvartwore + \jacvartwoim \jacvarone\right) e\Big(-q\left(\lambda\right) \jacvaronere - \left(\jacvartwore + \jacvartwoim \jacvaronere\right) \left(\lambda, \jacindexvec\right)\Big) d\jacvartwore\, d \jacvaronere \\
		&= \jacvaroneim^{\frac{b^- - 1}{2}} \exp\left(2 \pi q\left(\jacindexvec\right) \jacvartwoim^2 \jacvaroneim\right) \sum_{\lambda \in \sublattice'} \exp\left(-\Delta / 8 \pi \jacvaroneim\right)\left(\overline{\calP}_{\subspaceisometry, \subpolposdeg}\right)\left(\subspaceisometry\left(\lambda + \jacvartwoim \jacindexvec\right)\right) \\
		&\quad\times \exp\Big(-2 \pi q_{\subspaceisometrypos}\left(\lambda + \jacvartwoim \jacindexvec\right) \jacvaroneim\Big) \sum_{m \in \IZ / N_\isotropicvec \IZ} e\left(\frac{mn}{N_\isotropicvec} + \left(\lambda, n \mu\right)\right) \\
		&\quad\times \int_{-1/2}^{1/2} \int_0^1 \phi_{\lambda + \frac{m \isotropicvec}{N_{\isotropicvec}}}\left(\jacvarone, \jacvartwore + \jacvartwoim \jacvarone\right) e\Big(-q\left(\lambda\right) \jacvaronere - \left(\jacvartwore + \jacvartwoim \jacvaronere\right) \left(\lambda, \jacindexvec\right)\Big) d \jacvartwore\, d \jacvaronere
	\end{align*}
	Next, we insert the Fourier expansion
	$$\phi_{\lambda + \frac{m \isotropicvec}{N_{\isotropicvec}}}\left(\jacvarone, \jacvartwore + \jacvartwoim \jacvarone\right) = \sum_{\substack{r \in \IZ + q\left(\lambda\right) \\ s \in \IZ + \left(\lambda + \frac{m \isotropicvec}{N_{\isotropicvec}}, \jacindexvec\right)}} a\left(\lambda + \frac{m \isotropicvec}{N_{\isotropicvec}}, r, s\right)
	\cdot
	e\Big(r \jacvarone + s \left(\jacvartwore + \jacvartwoim \jacvarone\right)\Big),$$
	calculate the integrals and obtain
	\begin{align*}
		&\jacvaroneim^{(b^- - 1)/2} \exp\left(2 \pi q\left(\jacindexvec\right) \jacvartwoim^2 \jacvaroneim\right) \sum_{\lambda \in \sublattice'} \exp\left(-\Delta / 8 \pi \jacvaroneim\right)\left(\overline{\calP}_{\subspaceisometry, \subpolposdeg}\right)\left(\subspaceisometry\left(\lambda + \jacvartwoim \jacindexvec\right)\right) \\
		&\quad\times \exp\Big(-2 \pi q_{\subspaceisometrypos}\left(\lambda + \jacvartwoim \jacindexvec\right) \jacvaroneim\Big) \exp\Big(-2 \pi \left(\left(\lambda, \jacindexvec\right)\jacvartwoim \jacvaroneim + q\left(\lambda\right) \jacvaroneim\right)\Big)
		\\
		&\quad\times
		\sum_{m \in \IZ / N_\isotropicvec \IZ} e\bigg(\frac{mn}{N_\isotropicvec} + \left(\lambda, n \mu\right)\bigg) 
		\cdot
		a\left(\lambda + \frac{m \isotropicvec}{N_{\isotropicvec}}, q\left(\lambda\right), \left(\lambda, \jacindexvec\right)\right)
		\\
		&= \jacvaroneim^{(b^- - 1)/2} \exp\left(4 \pi q\left(\jacindexvec\right) \jacvartwoim^2 \jacvaroneim\right) \sum_{\lambda \in \sublattice'} \exp\left(-\Delta / 8 \pi \jacvaroneim\right)\left(\overline{\calP}_{\subspaceisometry, \subpolposdeg}\right)\left(\subspaceisometry\left(\lambda + \jacvartwoim \jacindexvec\right)\right) 
		\\
		&\quad\times
		\exp\Big(-2 \pi q_{\subspaceisometrypos}\left(\lambda + \jacvartwoim \jacindexvec\right) \jacvaroneim\Big) 
		\cdot
		\exp\Big(-2 \pi q\left(\lambda + \jacvartwoim \jacindexvec\right) \jacvaroneim\Big)
		\\
		&\quad\times
		\sum_{m \in \IZ / N_\isotropicvec \IZ} e\left(\frac{mn}{N_\isotropicvec} + \left(\lambda, n \mu\right)\right)
		\cdot
		a\left(\lambda + \frac{m \isotropicvec}{N_{\isotropicvec}}, q\left(\lambda\right), \left(\lambda, \jacindexvec\right)\right).
	\end{align*}
	Hence, the second summand of \eqref{eq:jacobithetalift1} is given by
	\begin{align*}
		& \frac{\sqrt{2}}{\lvert \isotropicvec_{\isometrypos} \rvert} \sum_{n = 1}^\infty 
		\sum_{\subpolposdeg} 
		\Big(\frac{n}{2i}\Big)^{\subpolposdeg}
		\int_0^\infty \jacvaroneim^{b^-/2 + k - \subpolposdeg - 5/2}
		\int_{0}^1 
		\exp\bigg(-\frac{\pi n^2}{2 \jacvaroneim \isotropicvec_{\isometrypos}^2}\bigg)
		\cdot
		e\bigg(\frac{n  \jacvartwoim \left(\jacindexvec, \isotropicvec_{\isometrypos}\right)}{\isotropicvec_{\isometrypos}^2}\bigg)
		\\
		&\quad\times
		\sum_{\lambda \in \sublattice'} \exp\left(-\Delta / 8 \pi \jacvaroneim\right)\left(\overline{\calP}_{\subspaceisometry, \subpolposdeg}\right)\left(\subspaceisometry\left(\lambda + \jacvartwoim \jacindexvec\right)\right)
		\cdot
		\exp\Big(-4 \pi q\left(\left(\lambda + \jacvartwoim \jacindexvec\right)_{\subspaceisometrypos}\right) \jacvaroneim\Big) d \jacvartwoim\, d \jacvaroneim
		\\
		&\quad\times
		\sum_{m \in \IZ / N_\isotropicvec \IZ}
		e\bigg(\frac{mn}{N_\isotropicvec} + \left(\lambda, n \mu\right)\bigg)
		\cdot
		a\left(\lambda + \frac{m \isotropicvec}{N_{\isotropicvec}}, q\left(\lambda\right), \left(\lambda, \jacindexvec\right)\right).
	\end{align*}
	By summing over $\lambda + l \jacindexvec_\sublattice$, where~$\lambda \in \sublattice' / \jacindexvec_\sublattice$ and $l \in \IZ$, and by using that
	$$a\big(\lambda + \jacindexvec, q\left(\lambda + \jacindexvec\right), \left(\lambda + \jacindexvec, \jacindexvec\right)\big) 
	=
	 a\big(\lambda, q\left(\lambda\right), \left(\lambda, \jacindexvec\right)\big),$$
	we obtain
	\begin{align*}
		& \frac{\sqrt{2}}{\lvert \isotropicvec_{\isometrypos} \rvert}
		\sum_{n = 1}^\infty
		\sum_{\subpolposdeg}
		\Big(\frac{n}{2i}\Big)^{\subpolposdeg}
		\int_0^\infty \jacvaroneim^{b^-/2 + k - \subpolposdeg - 5/2}
		\exp\bigg(-\frac{\pi n^2}{2 \jacvaroneim \isotropicvec_{\isometrypos}^2}\bigg)
		\\
		&\quad\times
		\sum_{\lambda \in \sublattice' / \jacindexvec_{\sublattice}}
		\sum_{l \in \IZ} \int_{0}^1
		e\bigg(\frac{n  \left(\jacvartwoim + l\right) \left(\jacindexvec, \isotropicvec_{\isometrypos}\right)}{\isotropicvec_{\isometrypos}^2}\bigg)
		\cdot
		\exp\Big(-4 \pi q\big(\left(\lambda + \left(\jacvartwoim + l\right) \jacindexvec\right)_{\subspaceisometrypos}\big) \jacvaroneim\Big) \\
		&\quad\times \exp\left(-\Delta / 8 \pi \jacvaroneim\right)\left(\overline{\calP}_{\subspaceisometry, \subpolposdeg}\right)\left(\subspaceisometry\left(\lambda + \left(\jacvartwoim + l\right) \jacindexvec\right)\right) d \jacvartwoim \,d \jacvaroneim \\
		&\quad\times \sum_{m \in \IZ / N_\isotropicvec \IZ} e\left(\frac{mn}{N_\isotropicvec} - n l \left(\jacindexvec, \isotropicvecprime\right) + \left(\lambda, n \mu\right)\right) 
		\cdot
		a\left(\lambda + \frac{m \isotropicvec}{N_{\isotropicvec}} - l \left(\jacindexvec, \isotropicvecprime\right) \isotropicvec, q\left(\lambda\right), \left(\lambda, \jacindexvec\right)\right) \\
		&=
		\frac{\sqrt{2}}{\lvert \isotropicvec_{\isometrypos} \rvert}
		\sum_{n = 1}^\infty 
		\sum_{\subpolposdeg} 
		\Big(\frac{n}{2i}\Big)^{\subpolposdeg}
		\int_0^\infty 
		\jacvaroneim^{b^-/2 + k - \subpolposdeg - 5/2} 
		\exp\bigg(-\frac{\pi n^2}{2 \jacvaroneim \isotropicvec_{\isometrypos}^2}\bigg) 
		\\
		&\quad\times \sum_{\lambda \in \sublattice' / \jacindexvec_{\sublattice}} \int_{-\infty}^{\infty}
		e\bigg(\frac{n  \jacvartwoim \left(\jacindexvec, \isotropicvec_{\isometrypos}\right)}{\isotropicvec_{\isometrypos}^2}\bigg)
		\cdot
		\exp\Big(-4 \pi q\left(\left(\lambda + \jacvartwoim \jacindexvec\right)_{\subspaceisometrypos}\right) \jacvaroneim\Big)
		\\
		&\quad\times
		\exp\left(-\Delta / 8 \pi \jacvaroneim\right)\left(\overline{\calP}_{\subspaceisometry, \subpolposdeg}\right)\left(\subspaceisometry\left(\lambda + \jacvartwoim \jacindexvec\right)\right) d \jacvartwoim \,d \jacvaroneim \\
		&\quad\times \sum_{m \in \IZ / N_\isotropicvec \IZ} e\left(\frac{mn}{N_\isotropicvec} + \left(\lambda, n \mu\right)\right) 
		\cdot
		a\left(\lambda + \frac{m \isotropicvec}{N_{\isotropicvec}}, q\left(\lambda\right), \left(\lambda, \jacindexvec\right)\right).
	\end{align*}
	We remark that~$\left(\jacindexvec, \isotropicvec_{\isometrypos}\right)/\isotropicvec_{\isometrypos}^2 = (\jacindexvec, \mu + \isotropicvecprime)$.
	We use~$\lambda_{\jacindexvec} = \lambda - \frac{\left(\lambda, \jacindexvec\right)}{\jacindexvec^2} \jacindexvec_\sublattice$ and make the substitution $\jacvartwoim \mapsto \jacvartwoim - \frac{\left(\lambda, \jacindexvec\right)}{\jacindexvec^2}$ to rewrite this as
	\begin{align*}
		&\frac{\sqrt{2}}{\lvert \isotropicvec_{\isometrypos} \rvert} \sum_{n = 1}^\infty 
		\sum_{\subpolposdeg} 
		\Big(\frac{n}{2i}\Big)^{\subpolposdeg} 
		\int_0^\infty 
		\jacvaroneim^{b^-/2 + k - \subpolposdeg - 5/2} 
		\exp\bigg(-\frac{\pi n^2}{2 \jacvaroneim \isotropicvec_{\isometrypos}^2}\bigg)
		\sum_{\lambda \in \sublattice' / \jacindexvec_{\sublattice}} \int_{-\infty}^{\infty}
		e\bigg(\frac{n  \jacvartwoim \left(\jacindexvec, \isotropicvec_{\isometrypos}\right)}{\isotropicvec_{\isometrypos}^2}\bigg)
		\\
		&\quad\times \exp\left(-\Delta / 8 \pi \jacvaroneim\right)\left(\overline{\calP}_{\subspaceisometry, \subpolposdeg}\right)\left(\subspaceisometry\left(\lambda_{\jacindexvec} + \jacvartwoim \jacindexvec\right)\right) 
		\cdot
		\exp\Big(-4 \pi q\left(\left(\lambda_{\jacindexvec} + \jacvartwoim \jacindexvec\right)_{\subspaceisometrypos}\right) \jacvaroneim\Big) d \jacvartwoim \,d \jacvaroneim \\
		&\quad\times
		\sum_{m \in \IZ / N_\isotropicvec \IZ}
		e\left(\frac{mn}{N_\isotropicvec} + \frac{\left(n\lambda, \jacindexvec\right)\left(\jacindexvec, \isotropicvecprime\right)}{\jacindexvec^2}\right)
		\cdot
		a\left(\lambda + \frac{m \isotropicvec}{N_{\isotropicvec}}, q\left(\lambda\right), \left(\lambda, \jacindexvec\right)\right) 
		\cdot
		e\left(n \lambda_{\jacindexvec}, \mu\right) \\
		&=
		\frac{\sqrt{2}}{\lvert \isotropicvec_{\isometrypos} \rvert}
		\sum_{n = 1}^\infty 
		\sum_{\subpolposdeg} \Big(\frac{n}{2i}\Big)^{\subpolposdeg} \int_0^\infty \jacvaroneim^{b^-/2 + k - \subpolposdeg - 5/2} 
		\exp\bigg(-\frac{\pi n^2}{2 \jacvaroneim \isotropicvec_{\isometrypos}^2}\bigg) 
		\\
		&\quad\times 
		\sum_{\tilde{\lambda} \in \left(\jacindexvec^\perp \cap \sublattice\right)'} 
		\sum_{\substack{\lambda \in \sublattice' / \jacindexvec_K \\ \lambda_{\jacindexvec} = \tilde{\lambda}}} 
		\int_{-\infty}^{\infty} 
		e\bigg(\frac{n  \jacvartwoim \left(\jacindexvec, \isotropicvec_{\isometrypos}\right)}{\isotropicvec_{\isometrypos}^2}\bigg) 
		\\
		&\quad\times 
		\exp\left(-\Delta / 8 \pi \jacvaroneim\right)
		\left(\overline{\calP}_{\subspaceisometry, \subpolposdeg}\right)
		\left(\subspaceisometry\big(\tilde{\lambda} + \jacvartwoim \jacindexvec\big)\right) 
		\cdot
		\exp\left(-4 \pi q\big((\tilde{\lambda} + \jacvartwoim \jacindexvec)_{\subspaceisometrypos}\big) \jacvaroneim\right) d \jacvartwoim \,d \jacvaroneim \\
		&\quad\times 
		\sum_{m \in \IZ / N_\isotropicvec \IZ} e\left(\frac{mn}{N_\isotropicvec} + \frac{\left(n\lambda, \jacindexvec\right)\left(\jacindexvec, \isotropicvecprime\right)}{\jacindexvec^2}\right) 
		\cdot
		a\left(\lambda + \frac{m \isotropicvec}{N_{\isotropicvec}}, q\left(\lambda\right), \left(\lambda, \jacindexvec\right)\right)
		\cdot 
		e(n \tilde{\lambda}, \mu).
	\end{align*}
This shows the assertion.
\end{proof}

\begin{thm}\label{thm:JacobiThetaUnfolding}
Let $\jacindexvec \in \isotropicvec^\perp$ and assume $N_{\isotropicvec} = 1$, so that $\lattice$ is the orthogonal sum of a hyperbolic plane and~$\sublattice$.
Moreover, assume that $\calP_{\subspaceisometry, \subpolposdeg}$ is harmonic and satisfies $\calP_{\subspaceisometry, \subpolposdeg}\left(\lambda + C \jacindexvec\right) = \calP_{\subspaceisometry, \subpolposdeg}\left(\lambda\right)$ for every~$C \in \IC$ and~$\lambda \in \sublattice$. Then~$\langle \phi, \Theta_{\lattice, \jacindexvec}(\cdot, \cdot, g, \pol) \rangle_{\Pet}$, as a function of~$G$, admits a Fourier expansion of the form
$$\big\langle \phi, \Theta_{\lattice, \jacindexvec}(\cdot, \cdot, g, \pol) \big\rangle_{\Pet} 
= 
\sum_{\tilde{\lambda} \in (\jacindexvec^\perp \cap K)'} c_{\tilde{\lambda}}(g) \cdot e(\tilde{\lambda}, \mu).$$
The constant term of the expansion is the coefficient
$$c_0(g) = \frac{1}{\sqrt{2} \lvert \isotropicvec_{\isometrypos}\rvert}
\big\langle \phi, \Theta_{\sublattice, \jacindexvec}(\cdot, \cdot, g_K, \pol_{g_K, 0}) \big\rangle_{\Pet}.$$
If~$\tilde\lambda$ has positive norm, then the Fourier coefficient associated to~$\tilde\lambda$ is
\textcolor{\newcolor}{\begin{align*}
c_{\tilde{\lambda}}(g) &= \frac{2}{\lvert \isotropicvec_{\isometrypos} \rvert \lvert \eta_{w^\perp} \rvert} 
\sum_{n \mid \tilde{\lambda}}
\sum_{\subpolposdeg}
\frac{n^{b^- + h + k - 4}}{(2i)^{-\subpolposdeg}}
\Bigg(
\frac{\lvert \eta_{z^\perp} \rvert}{2 \lvert u_{z^\perp} \rvert \big(\tilde{\lambda}_{w^\perp}^2 \eta_{w^\perp}^2 - (\tilde{\lambda}_{w^\perp}, \eta_{w^\perp})^2\big)^{1/2}}
\Bigg)^{b^- / 2 + k - h - 2}
\\
&\times 
\exp\bigg(
-\frac{\pi i (\eta, u_{z^\perp}) (\tilde{\lambda}_{w^\perp}, \eta_{w^\perp})}{u_{z^\perp}^2 \eta_{w^\perp}^2}\bigg)
K_{b^-/2 + k - h - 2}\Bigg(
2 \pi \frac{\lvert \eta_{z^\perp} \rvert \big(\tilde{\lambda}_{w^\perp}^2 \eta_{w^\perp}^2 - (\tilde{\lambda}_{w^\perp}, \eta_{w^\perp})^2\big)^{1/2}}{\lvert u_{z^\perp} \rvert \eta_{w^\perp}^2}\Bigg) \numberthis \label{eq;unfjacint}\\
&\times 
\overline{\pol}_{\subspaceisometry, \subpolposdeg}\big(\subspaceisometry(\tilde{\lambda})\big) \sum_{\substack{\lambda \in \sublattice' / \jacindexvec_K \\ \lambda_{\jacindexvec} = \tilde{\lambda}}} e\bigg(\frac{\left(\lambda, \jacindexvec\right)\left(\jacindexvec, \isotropicvecprime\right)}{\jacindexvec^2}\bigg) 
\cdot
a\big(\lambda / n, q\left(\lambda\right) / n^2, \left(\lambda, \jacindexvec\right) / n\big),
\end{align*}
where $K_s(x)$ is the K-Bessel function.}
In all remaining cases, the Fourier coefficients vanish.
\end{thm}

The \emph{Eichler transformation} associated to the isotropic lattice vector~$\isotropicvec$ and $\lambda \in \jacindexvec_{\sublattice}^\perp \cap \sublattice \otimes \IR$ is defined as
$$E(\isotropicvec, \lambda)(v) \coloneqq v - (v, \isotropicvec) \lambda + (v, \lambda) \isotropicvec - q(\lambda) (v, \isotropicvec) \isotropicvec.$$
The fact that~$\langle \phi, \Theta_{\lattice, \jacindexvec} \rangle_{\Pet} \colon G\to\CC$ admits a Fourier expansion of the form
\bes
\big\langle \phi, \Theta_{\lattice, \jacindexvec}(\cdot, \cdot, g, \pol) \big\rangle_{\Pet}
=
\sum_{\tilde{\lambda} \in (\jacindexvec^\perp \cap \sublattice)'}
c_{\tilde{\lambda}}(g)\cdot e(\tilde{\lambda}, \mu),
\ees
for some Fourier coefficients~$c_{\tilde{\lambda}}$, follows from its invariance under the Eichler transformations~$E\left(\isotropicvec, \lambda\right)$, where $\lambda \in \jacindexvec^\perp \cap K$.
This invariance can be easily checked using the following result, which is known to experts and whose proof is left to the reader.
\begin{lemma}\label{lemma;auxlemmaforchfe}
	Let~$g\in G$ be an isometry mapping~$z\in\Gr\left(L\right)$ to the base point~$z_0$, and let~$\lambda\in\eta^\perp\cap\brK\otimes\RR$.
	Let~$z'$ be the negative definite plane that maps to~$z$ under~$E\left(\genU,\lambda\right)$, and let~$w'$ (resp.~$w'^\perp$) be the orthogonal complement of~$\genU_{z'}$ (resp.~$\genU_{z'^\perp}$) in~$z'$ (resp.~$z'^\perp$).
	Let~$\mu$ and~$\mu'$ be constructed respectively from~$z$ and~$z'$ as in~\eqref{eq;defmuinsecjac}.
	\begin{enumerate}[label=(\roman*), leftmargin=*]
	\item We have that~$E(\genU,\lambda)(w')=w$ and that~$E(\genU,\lambda)(w'^\perp)=w^\perp$.
	\label{auxlemma1,4}
	\item We have that~$E(\genU,\lambda)(\mu')=\mu + \lambda + q(\lambda)\genU$.
	\label{auxlemma1,5}
	\item We have that~$\borw(\genvec)=\borwmix{g\circ E(\genU,\lambda)}(\genvec)$ for every~$\genvec\in\brK\otimes\RR$.
	\label{auxlemma2,1}
	\item We have~$\genU^2_{z^\perp} = \genU^2_{E(\genU,\lambda)(z^\perp)}$.
	\label{auxlemma2,2}
	\item
	We have~$\genvec^2_{w^\perp} = \genvec^2_{w'^\perp}$ for every~$\genvec\in\brK\otimes\RR$.
	\label{auxlemma2,3}
	\end{enumerate}
	\end{lemma}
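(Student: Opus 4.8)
The plan is to reduce all five assertions to two elementary facts about the Eichler isometry $E \coloneqq E(\genU,\lambda)$: that it fixes $\genU$ and acts explicitly on a hyperbolic basis, and that $\genU$ projects to zero on both $w$ and $w^\perp$. First I would record that $E\in\SO(V)$ — the standard property of Eichler transformations attached to the isotropic vector $\genU$ — and compute its action on the relevant vectors. Since $\lambda\in\brK\otimes\RR$ is orthogonal to $U\otimes\RR$, we have $(\genU,\lambda)=(\genUU,\lambda)=0$, so that
\[
E(\genU)=\genU,\qquad E(\genvec)=\genvec+(\genvec,\lambda)\genU\ \ (\genvec\in\brK\otimes\RR),\qquad E(\genUU)=\genUU-\lambda-q(\lambda)\genU.
\]
By definition $z'$ satisfies $E(z')=z$; since $E$ is an isometry fixing $\genU$, it therefore maps $z'^\perp$ to $z^\perp$, and because orthogonal projection commutes with isometries, i.e.\ $(Ev)_{EW}=E(v_W)$ for every subspace $W$, it carries $\genU_{z'}$, $\genU_{z'^\perp}$ to $\genU_z$, $\genU_{z^\perp}$ while preserving all inner products.

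From this setup claims \ref{auxlemma2,2} and \ref{auxlemma1,4} are immediate. For \ref{auxlemma2,2} I would write $\genU_{E(z^\perp)}=(E\genU)_{E(z^\perp)}=E(\genU_{z^\perp})$ and use that $E$ is an isometry to get $\genU^2_{E(z^\perp)}=\genU^2_{z^\perp}$. For \ref{auxlemma1,4}, the isometry $E$ restricts to an isometry $z'\to z$ sending $\genU_{z'}$ to $\genU_z$, hence sends the orthogonal complement $w'$ of $\genU_{z'}$ in $z'$ to the orthogonal complement $w$ of $\genU_z$ in $z$; the same argument on $z'^\perp\to z^\perp$ gives $E(w'^\perp)=w^\perp$.

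The next ingredient is the observation that $\genU_w=\genU_{w^\perp}=0$: indeed $\genU_w=(\genU_z)_w=0$ because $\genU_z$ is orthogonal to $w$ inside $z$, and likewise $\genU_{w^\perp}=(\genU_{z^\perp})_{w^\perp}=0$. Combined with $E\genvec-\genvec\in\RR\genU$ for $\genvec\in\brK\otimes\RR$, this yields $(E\genvec)_w=\genvec_w$ and $(E\genvec)_{w^\perp}=\genvec_{w^\perp}$. Claims \ref{auxlemma2,1} and \ref{auxlemma2,3} then follow formally: using \ref{auxlemma1,4} together with $(Ev)_{EW}=E(v_W)$,
\[
E(\genvec_{w'^\perp})=(E\genvec)_{w^\perp}=\genvec_{w^\perp},\qquad E(\genvec_{w'})=(E\genvec)_{w}=\genvec_{w},
\]
so that $\borwmix{g\circ E(\genU,\lambda)}(\genvec)=(g\circ E)(\genvec_{w'^\perp}+\genvec_{w'})=g(\genvec_{w^\perp}+\genvec_w)=\borw(\genvec)$, giving \ref{auxlemma2,1}, while $\genvec^2_{w^\perp}=E(\genvec_{w'^\perp})^2=\genvec^2_{w'^\perp}$ gives \ref{auxlemma2,3}. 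Finally, for \ref{auxlemma1,5} I would apply $E$ term by term to $\mu'=-\genUU+\genU_{z'^\perp}/2\genU_{z'^\perp}^2+\genU_{z'}/2\genU_{z'}^2$: the explicit value $E(\genUU)=\genUU-\lambda-q(\lambda)\genU$ supplies the extra summand $\lambda+q(\lambda)\genU$, while the two projection terms are transported to the corresponding terms for $z$ (with the norms in the denominators preserved), producing exactly $\mu+\lambda+q(\lambda)\genU$.

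In short, there is no real obstacle here: once $E$ is known to be an isometry fixing $\genU$ and the vanishing $\genU_w=\genU_{w^\perp}=0$ is in hand, every claim is a formal consequence of the identity $(Ev)_{EW}=E(v_W)$ together with the explicit action of $E$ on $\genUU$ and on $\brK\otimes\RR$. The only point requiring a moment's care is keeping straight which plane, $z$ or $z'$, each projection refers to; the relation $E(z')=z$ organizes this bookkeeping throughout, and it is precisely this organization — rather than any genuine difficulty — that the \emph{straightforward calculation} alluded to in the statement consists of.
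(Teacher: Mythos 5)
Your proof is correct. The paper itself gives no argument for this lemma (it is explicitly ``left to the reader''), and the verification you supply — that $E(\genU,\lambda)$ is an isometry fixing $\genU$, acting as $\genvec\mapsto\genvec+(\genvec,\lambda)\genU$ on $\brK\otimes\RR$ and as $\genUU\mapsto\genUU-\lambda-q(\lambda)\genU$, combined with the identity $(Ev)_{EW}=E(v_W)$ and the vanishing $\genU_w=\genU_{w^\perp}=0$ — is precisely the ``straightforward calculation'' the authors intend, and it correctly delivers all five assertions.
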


To check that the Fourier coefficients~$c_{\tilde{\lambda}}$ are as provided by Lemma~\ref{lem:JacobiThetaUnfolding}, it is enough to check that~$(2 \isotropicvec_{\isometrypos}^2)^{-1/2} \big\langle f, \Theta_{\sublattice, \jacindexvec}\left(\cdot, \cdot, g_K, \pol_{g_K, 0}\right) \big\rangle_{\Pet}$ \textcolor{\newcolor}{and the coefficients $c_{\tilde{\lambda}}$}
are invariant with respect to Eichler transformations~$E\left(\isotropicvec, \lambda\right)$, where~$\lambda \in \jacindexvec^\perp \cap K\otimes\RR$.
This follows easily from Lemma~\ref{lemma;auxlemmaforchfe}.

We conclude the section with the proof of Theorem~\ref{thm:JacobiThetaUnfolding}.
\begin{proof}[Proof of Theorem~\ref{thm:JacobiThetaUnfolding}]
	We use Lemma~\ref{lem:JacobiThetaUnfolding} and only consider the second summand. It is given by
	\ba\label{eq:jacobithetalift3}
		&\frac{\sqrt{2}}{\lvert \isotropicvec_{\isometrypos} \rvert} 
		\sum_{n = 1}^\infty 
		\sum_{\subpolposdeg} \Big(\frac{n}{2i}\Big)^{\subpolposdeg} 
		\int_0^\infty \jacvaroneim^{b^-/2 + k - \subpolposdeg - 5/2}
		\\
		&\quad\times 
		\exp\bigg(-\frac{\pi n^2}{2 \jacvaroneim \isotropicvec_{\isometrypos}^2}\bigg) 
		\sum_{\tilde{\lambda} \in (\jacindexvec^\perp \cap \sublattice)'} \sum_{\substack{\lambda \in \sublattice' / \jacindexvec_K \\ \lambda_{\jacindexvec} = \tilde{\lambda}}} \overline{\pol}_{\subspaceisometry, \subpolposdeg}\big(\subspaceisometry(\tilde{\lambda})\big)
		\\
		&\quad\times 
		\int_{-\infty}^{\infty} e\bigg(\frac{n  \jacvartwoim \left(\jacindexvec, \isotropicvec_{\isometrypos}\right)}{\isotropicvec_{\isometrypos}^2}\bigg)
		\cdot
		 \exp\left(-4 \pi q\big((\tilde{\lambda} + \jacvartwoim \jacindexvec)_{\subspaceisometrypos}\big) \jacvaroneim\right) d \jacvartwoim\, d \jacvaroneim \\
		&\quad\times e\left(\frac{\left(n\lambda, \jacindexvec\right)\left(\jacindexvec, \isotropicvecprime\right)}{\jacindexvec^2}\right) 
		\cdot
		a\big(\lambda, q\left(\lambda\right), \left(\lambda, \jacindexvec\right)\big) 
		\cdot
		e(n \tilde{\lambda}, \mu).
	\ea
	\textcolor{\newcolor}{The inner integral of~\eqref{eq:jacobithetalift3} is a Fourier transform of a Gaussian and given by
		\begin{align*}
			&\exp\bigg(-\frac{\pi n^2}{2 \jacvaroneim \isotropicvec_{\isometrypos}^2}\bigg)  \int_{-\infty}^{\infty} 
			\exp\bigg(2 \pi i \frac{n  \jacvartwoim \left(\jacindexvec, \isotropicvec_{\isometrypos}\right)}{\isotropicvec_{\isometrypos}^2}\bigg) 
			\exp\left(-4 \pi q\big((\tilde{\lambda} + \jacvartwoim \jacindexvec)_{\subspaceisometrypos}\big) \jacvaroneim\right) d \jacvartwoim
			\\ \\
			&= (2 \eta_{w^\perp}^2 y_1)^{-1/2} \exp\bigg(
			- \frac{\pi n^2 \eta_{z^\perp}^2}{2 y_1u_{z^\perp}^2 \eta_{w^\perp}^2} - 2 \pi y_1 \frac{\tilde{\lambda}_{w^\perp}^2 \eta_{w^\perp}^2 - (\tilde{\lambda}_{w^\perp}, \eta_{w^\perp})^2}{\eta_{w^\perp}^2}\bigg)
			\\
			&\quad\times \exp\bigg(-\frac{\pi i n (\eta, u_{z^\perp}) (\tilde{\lambda}_{w^\perp}, \eta_{w^\perp})}{u_{z^\perp}^2 \eta_{w^\perp}^2}\bigg).
		\end{align*}
		Hence, the integral over $y_1$ of~\eqref{eq:jacobithetalift3} is given by
		$$\int_0^\infty y_1^{b^-/2 + k - h - 3} \exp\bigg(
		- \frac{\pi n^2 \eta_{z^\perp}^2}{2 y_1u_{z^\perp}^2 \eta_{w^\perp}^2} - 2 \pi y_1 \frac{\tilde{\lambda}_{w^\perp}^2 \eta_{w^\perp}^2 - (\tilde{\lambda}_{w^\perp}, \eta_{w^\perp})^2}{\eta_{w^\perp}^2}\bigg).$$
		Using the formula \cite[p. 313, 6.3(17)]{ErdelyiIntegrals}
		$$\int_{y_1 = 0}^\infty \exp\bigg(-\alpha y_1 - \frac{\beta}{y_1} \bigg) y_1^\gamma \frac{d y_1}{y_1} = 2 \Big(\frac{\beta}{\alpha}\Big)^{\gamma / 2} K_\gamma(2 (\alpha \beta)^{1/2})$$
		shows that the integral over $y_1$ of~\eqref{eq:jacobithetalift3} is equal to
		\begin{align*}
			&2 \Bigg(
			\frac{n \lvert \eta_{z^\perp} \rvert}{2 \lvert u_{z^\perp} \rvert \big(\tilde{\lambda}_{w^\perp}^2 \eta_{w^\perp}^2 - (\tilde{\lambda}_{w^\perp}, \eta_{w^\perp})^2\big)^{1/2}}\Bigg)^{b^- / 2 + k - h - 2}
			\\
			&\quad\times
			K_{b^-/2 + k - h - 2}
			\Bigg(
			2 \pi n \frac{\lvert \eta_{z^\perp} \rvert \big(\tilde{\lambda}_{w^\perp}^2 \eta_{w^\perp}^2 - (\tilde{\lambda}_{w^\perp}, \eta_{w^\perp})^2\big)^{1/2}}{\lvert u_{z^\perp} \rvert \eta_{w^\perp}^2}
			\Bigg).
		\end{align*}
		Hence, \eqref{eq:jacobithetalift3} is given by
		\begin{align*}
			&\frac{2}{\lvert \isotropicvec_{\isometrypos} \rvert \lvert \eta_{w^\perp} \rvert} 
			\sum_{n = 1}^\infty n^{b^- /2 + k - 2}
			\sum_{\subpolposdeg} (2i)^{-\subpolposdeg}
			\\
			&\times 
			\sum_{\tilde{\lambda} \in (\jacindexvec^\perp \cap \sublattice)'} \sum_{\substack{\lambda \in \sublattice' / \jacindexvec_K \\ \lambda_{\jacindexvec} = \tilde{\lambda}}}
			\overline{\pol}_{\subspaceisometry, \subpolposdeg}\big(\subspaceisometry(\tilde{\lambda})\big)
			\Bigg(\frac{\lvert \eta_{z^\perp} \rvert}{2 \lvert u_{z^\perp} \rvert \big(\tilde{\lambda}_{w^\perp}^2 \eta_{w^\perp}^2 - (\tilde{\lambda}_{w^\perp}, \eta_{w^\perp})^2\big)^{1/2}}\Bigg)^{b^- / 2 + k - h - 2}
			\\
			&\times 
			\exp\bigg(-\frac{\pi i n (\eta, u_{z^\perp}) (\tilde{\lambda}_{w^\perp}, \eta_{w^\perp})}{u_{z^\perp}^2 \eta_{w^\perp}^2}\bigg)
			K_{b^-/2 + k - h - 2}\Bigg(
			2 \pi n \frac{\lvert \eta_{z^\perp} \rvert \big(\tilde{\lambda}_{w^\perp}^2 \eta_{w^\perp}^2 - (\tilde{\lambda}_{w^\perp}, \eta_{w^\perp})^2\big)^{1/2}}{\lvert u_{z^\perp} \rvert \eta_{w^\perp}^2}\Bigg)
			\\
			&\times e\left(\frac{\left(n\lambda, \jacindexvec\right)\left(\jacindexvec, \isotropicvecprime\right)}{\jacindexvec^2}\right) 
			\cdot
			a\big(\lambda, q\left(\lambda\right), \left(\lambda, \jacindexvec\right)\big) 
			\cdot
			e(n \tilde{\lambda}, \mu).
		\end{align*}
		Rewriting this as a divisor sum yields the result.
	}
\end{proof}
	\section{The Kudla--Millson theta function}\label{sec;deg2kmtform}
	In this section we introduce the Kudla--Millson Schwartz function~$\varphi_{\text{\rm KM},2}$ and the Kudla--Millson theta function~$\Theta(\tau,z,\varphi_{\text{\rm KM},2})$ of genus~$2$, following the wording of~\cite[Section~5]{kumi;intnum} and~\cite[Section~5.2]{fm;cycleswith}.
	We then illustrate how to rewrite~$\Theta(\tau,z,\varphi_{\text{\rm KM},2})$ in terms of the Siegel theta functions introduced in Section~\ref{sec;vvsiegeltheta2}.
	\\
	
	Let~$L$ be a (non-degenerate) even lattice of signature~$(b,2)$, for some~$b>0$.
	Recall that we denote by~$(\cdot{,}\cdot)$, resp.~$q(\cdot)$, the associated bilinear form, resp.\ quadratic form, and by~$\Gr(L)$ the Grassmannian associated to~$L$.
	The latter is the set of negative definite planes in~$V=L\otimes\RR$, and is identified with the Hermitian symmetric space~$\hermdom$ attached to~$V$.
	From now on, we write~$\hermdom$ and~$\Gr(L)$ interchangeably.
	We denote by~$X=\Gamma\backslash\hermdom$ an orthogonal Shimura variety arising from some finite index subgroup~$\Gamma\subseteq\widetilde{\SO}(L)$; see Section~\ref{sec;OSV} for further information.
	
	\subsection{The Kudla--Millson Schwartz function}\label{sec;Schwfunct}
	We keep the same notation introduced in Section~\ref{sec;vvsiegeltheta2}.
	In particular, we fix once and for all an orthogonal basis~$(\basevec_j)_j$ of~${V}$ such that~$(\basevec_\alpha,\basevec_\alpha)=1$ for every~$\alpha=1,\dots,b$, and~$(\basevec_\mu,\basevec_\mu)=-1$ for~$\mu=b+1,b+2$.
	We define the standard majorant~$(\cdot{,}\cdot)_z$ of~$V^2$ with respect to~$z\in\Gr(L)$ as
	\be\label{eq;stmajorantgen2}
	(\vect{\genvec},\vect{\genvec})_z=(\vect{\genvec}_{z^\perp},\vect{\genvec}_{z^\perp})-(\vect{\genvec}_z,\vect{\genvec}_z),\qquad\text{for every~$\vect{v}\in V^2$.}
	\ee
	
	Let $\mathfrak{g}$ be the Lie algebra of $G$, and let $\mathfrak{g}=\mathfrak{p}+\mathfrak{k}$ be its Cartan decomposition.
	It is well-known that~$\mathfrak{p}\cong\mathfrak{g}/\mathfrak{k}$ is isomorphic to the tangent space of~$\hermdom$ at any base point~$z_0$.
	We may choose~$z_0$ to be the negative-definite plane spanned by~$\basevec_{b+1}$ and~$\basevec_{b+2}$.
	
	With respect to the basis of $V$ chosen above, we have
	\be\label{eq;mathfracpisogen2}
	\mathfrak{p}\cong\left\{\left(
	\begin{smallmatrix}
	0 & X\\
	X^t & 0
	\end{smallmatrix}
	\right)|X\in\Mat_{b,2}(\RR)\right\}\cong \Mat_{b,2}(\RR).
	\ee
	We may assume that the chosen isomorphism is such that the complex structure on~$\mathfrak{p}$ is given as the right-multiplication by~$J=\big(\begin{smallmatrix}0 & 1\\ -1 & 0\end{smallmatrix}\big)\in\GL_2(\RR)$ on~$\Mat_{b,2}(\RR)$.
	
	
	The Kudla--Millson Schwartz function~$\varphi_{\text{\rm KM},2}$ is a~$G$-invariant element of~$\mathcal{S}(V^2)\otimes\mathcal{A}^4(\hermdom)$, where~$\mathcal{A}^4(\hermdom)$ is the space of differential~$4$-forms on~$\hermdom$.
	Recall that
	\bes
	\big[\mathcal{S}(V^2)\otimes\mathcal{A}^4(\hermdom)\big]^G
	\cong
	\big[
	\mathcal{S}(V^2)\otimes{\bigwedge}^4(\mathfrak{p}^*)
	\big]^K,
	\ees
	where the isomorphism is given by evaluating at the base point~$z_0$ of~$\hermdom$, and~\textcolor{\newcolor}{$K_\infty$} is the standard maximal compact of~$G$ stabilizing the base point~$z_0\in\Gr(V)$.
	We may then define~$\varphi_{\text{\rm KM},2}$ firstly as an element of~$[ \mathcal{S}(V^2)\otimes{\bigwedge}^4(\mathfrak{p}^*)]^{\textcolor{\newcolor}{K_\infty}}$, and then spread it to the whole~$\hermdom$ via the action of~$G$.
	
	\begin{defi}\label{defi;KMfunctgen2}
	We denote by $X_{\alpha,\mu}$, with $1\le\alpha\le b$ and~$1\le\mu\le 2$, the basis elements of~$\Mat_{b,2}(\RR)$ given by matrices with~$1$ at the~$(\alpha,\mu)$-th entry and zero otherwise.
	These elements give a basis of~$\mathfrak{p}$ via the isomorphism~\eqref{eq;mathfracpisogen2}.
	Let~$\omega_{\alpha,\mu}$ be the element of the dual basis which extracts the $(\alpha,\mu)$-th coordinate of elements in~$\mathfrak{p}$, and let~$A_{\alpha,\mu}$ be the left multiplication by~$\omega_{\alpha,\mu}$.
	The function~$\varphi_{\text{\rm KM},2}$ is defined applying the operator
	\bes
	\mathcal{D}^{b,2}_2=\frac{1}{4}\prod_{j,\mu=1}^2
	\sum_{\alpha=1}^b\Big(
	x_{\alpha,j}-\frac{1}{2\pi}\frac{\partial}{\partial x_{\alpha,j}}
	\Big)\otimes A_{\alpha,\mu}
	\ees
	to the standard Gaussian~$\stgatwo\otimes 1\in[\mathcal{S}(V^2)\otimes{\bigwedge}^4(\mathfrak{p}^*)]^{\textcolor{\newcolor}{K_\infty}}$, namely
	\bes
	\varphi_{\text{\rm KM},2}=\mathcal{D}^{b,2}_2\big(\stgatwo\otimes 1\big).
	\ees
	\end{defi}
	As a differential form,~$\varphi_{\text{\rm KM},2}$ is \emph{closed}; see~\cite{kumi;harmI}.	
	The following result provides an explicit formula of~$\varphi_{\text{\rm KM},2}$.
	The idea of the proof is analogous to the one in~\cite[Section~$2$]{zuffetti;gen1}, where we illustrated how to rewrite the Kudla--Millson Schwartz function of genus~$1$ in terms of certain polynomials~$\Gpol_{(\alpha,\beta)}$.
	Recall that the latter are defined on~$\RR^{b,2}$ as
	\be\label{eq;recallQabgen2}
	\Gpol_{(\alpha,\beta)}(x)\coloneqq\begin{cases}
	\pol_{(\alpha,\beta)}(x), & \text{if $\alpha\neq\beta$,}\\
	\pol_{(\alpha,\beta)}(x)-\frac{1}{2\pi}, & \text{otherwise,}
	\end{cases}
	\qquad\text{where}\qquad
	\pol_{(\alpha,\beta)}(x)\coloneqq 2x_\alpha x_\beta,
	\ee
	for every $x=(x_1,\dots,x_{b+2})^t\in\RR^{b,2}$.
	
	As in Definition~\ref{def;thetasergen2}, for simplicity we consider~$\Gpol_{(\alpha,\beta)}$ also as a polynomial in the coordinates of~$V$ with respect to the basis~$(\basevec_j)_j$ chosen above.
	Hence, we drop~$g_0$ from the notation and write~$\Gpol_{(\alpha,\beta)}(\genvec)$ instead of~$\Gpol_{(\alpha,\beta)}\big( g_0(\genvec)\big)$.
	This convention will be used for all polynomials defined on~$V$ and~$V^2$.
	\begin{prop}\label{prop;comexpphiKMdeg2}
	The Kudla--Millson Schwartz function $\varphi_{\text{\rm KM},2}\in\big[
	\mathcal{S}(V^2)\otimes{\bigwedge}^4(\mathfrak{p}^*)
	\big]^{\textcolor{\newcolor}{K_\infty}}$ may be rewritten as
	\ba\label{eq;schwKMexpldeg2}
	\varphi_{\text{\rm KM},2}(\vect{\genvec},z_0)
	&=
	\sum_{\substack{\alphaone,\betaone=1\\ \alphaone<\betaone}}^b
	\sum_{\substack{\alphatwo,\betatwo=1\\ \alphatwo<\betatwo}}^b \big(\Gpol_{\abcd}\cdot\stgatwo\big)(\vect{v})
	\\
	&\quad\times\omegaone\wedge\omegatwo\wedge\omegathree\wedge\omegafour,
	\ea
	where $\Gpoltwo_{\abcd}$ is the polynomial on~$V^2$ defined as
	\bes
	\Gpoltwo_{\abcd}(\vect{\genvec})=\sum_{\sigma,\sigma'\in S_2}\sgn(\sigma)\sgn(\sigma') \Gpol_{(\sigma(\alphaone),\sigma'(\alphatwo))}(\genvec_1) \cdot \Gpol_{(\sigma(\betaone),\sigma'(\betatwo))}(\genvec_2),
	\ees
	and where we denote by $\sigma$, resp.\ $\sigma'$, a permutation of the indexes $\{\alphaone,\betaone\}$, resp.\ $\{\alphatwo,\betatwo\}$.
	\end{prop}
	To simplify the notation, we will frequently replace~$\abcd$ by a vector of indices~$\vect{\alpha}$.
	\begin{rem}
	From~\eqref{eq;recallQabgen2}, we may rewrite the product of polynomials appearing in the summand of the defining sum of~$\Gpoltwo_{\vect{\alpha}}$ more explicitly as
	\bas
	\Gpol_{(\alphaone,\alphatwo)}(\genvec_1)\cdot \Gpol_{(\betaone,\betatwo)}(\genvec_2)=\begin{cases}
	4 x_{\alphaone,1} \cdot x_{\alphatwo,1} \cdot x_{\betaone,2} \cdot x_{\betatwo,2}, & \text{if $\alphaone\neq\alphatwo$ and $\betaone\neq\betatwo$,}\\
	2 x_{\alphaone,1} \cdot x_{\alphatwo,1} \cdot \big( 2 x_{\betaone,2}^2-\frac{1}{2\pi}\big), & \text{if $\alphaone\neq\alphatwo$ and $\betaone=\betatwo$,}\\
	\big(2 x_{\alphaone,1}^2-\frac{1}{2\pi}\big) \cdot 2 x_{\betaone,2} \cdot x_{\betatwo,2}, & \text{if $\alphaone=\alphatwo$ and $\betaone\neq\betatwo$,}\\
	\big(2 x_{\alphaone,1}^2-\frac{1}{2\pi}\big)\big( 2x_{\betaone,2}^2-\frac{1}{2\pi}\big), & \text{if $\alphaone=\alphatwo$ and $\betaone=\betatwo$.}
	\end{cases}
	\eas
	Moreover, an easy calculation shows that
	$$\Gpol_{(\alphaone,\alphatwo)}(\genvec_1)\cdot \Gpol_{(\betaone,\betatwo)}(\genvec_2) = \exp\Big(-\frac{\trace\Delta}{8 \pi}\Big)\polab(\genvec_1) \polcd(\genvec_2),$$
	where~$\Delta$ is the Laplacian on~$(\RR^{b,2})^2$ defined in~\eqref{eq;strdlapl}.
	\end{rem}
	
	\begin{proof}[Proof of Proposition~\ref{prop;comexpphiKMdeg2}]
	For simplicity, we write $\mathcal{F}_{\alpha,j}=x_{\alpha,j}-\frac{1}{2\pi}\frac{\partial}{\partial x_{\alpha,j}}$, for every~${j=1,2}$ and~${\alpha=1,\dots,b}$.
	We may use such operators to rewrite
	\ba\label{eq;dim;comexpphiKMdeg2}
	\varphi_{\text{\rm KM},2}
	&=
	\sum_{\substack{\alphaone,\betaone=1\\ \alphaone\neq\betaone}}^b\sum_{\substack{\alphatwo,\betatwo=1\\ \alphatwo\neq\betatwo}}^b \frac{1}{4}\mathcal{F}_{\alphaone,1}\mathcal{F}_{\alphatwo,1}\mathcal{F}_{\betaone,2}\mathcal{F}_{\betatwo,2}(\stgatwo)
	\cdot
	\omegaone\wedge\omegatwo\wedge \omegathree\wedge \omegafour,
	\ea
	where we deleted all summands associated to wedge products containing two functionals which are equal.
	We may compute
	\ba\label{eq2;dim;comexpphiKMdeg2}
	\frac{1}{4}\mathcal{F}_{\alphaone,1}\mathcal{F}_{\alphatwo,1}\mathcal{F}_{\betaone,2}\mathcal{F}_{\betatwo,2}(\stgatwo)
	&
	=
	\frac{1}{4}\mathcal{F}_{\alphaone,1}\mathcal{F}_{\alphatwo,1}\mathcal{F}_{\betaone,2}\big(2 x_{\betatwo,2}\cdot\stgatwo\big)
	\\
	&=
	\begin{cases}
	\frac{1}{2}\mathcal{F}_{\alphaone,1}\mathcal{F}_{\alphatwo,1}\big(2x_{\betaone,2}x_{\betatwo,2}\cdot\stgatwo \big), &\text{if $\betaone\neq\betatwo$,}\\
	\frac{1}{2}\mathcal{F}_{\alphaone,1}\mathcal{F}_{\alphatwo,1}\big((2x_{\betaone,2}^2-\frac{1}{2\pi})\stgatwo\big), &\text{if $\betaone=\betatwo$.}
	\end{cases}
	\ea
	Since the entries $x_1$ and $x_2$ of~$g_0(\vect{\genvec})$ are independent to each others, we may repeat an analogous procedure to compute the action of the operator $\frac{1}{2}\mathcal{F}_{1,\alphaone}\mathcal{F}_{1,\alphatwo}$ on the right-hand side of~\eqref{eq2;dim;comexpphiKMdeg2}, to deduce that
	\ba\label{eq3;dim;comexpphiKMdeg2}
	\varphi_{\text{\rm KM},2}(\vect{\genvec},z_0)
	&=
	\sum_{\substack{\alphaone,\betaone=1\\ \alphaone\neq\betaone}}^b\sum_{\substack{\alphatwo,\betatwo=1\\ \alphatwo\neq\betatwo}}^b\Big(\Gpol_{(\alphaone,\alphatwo)}(\genvec_1)\cdot \Gpol_{(\betaone,\betatwo)}(\genvec_2)\cdot\stgatwo(\vect{\genvec})\Big)
	\\
	&\quad\times
	\omegaone\wedge\omegatwo\wedge \omegathree\wedge \omegafour.
	\ea
	The wedge products appearing on the right-hand side of~\eqref{eq3;dim;comexpphiKMdeg2} are linearly dependent in the vector space~${\bigwedge}^4(\mathfrak{p}^*)$.
	A set of linearly independent wedge products, with respect to which we can write all the ones appearing in~\eqref{eq3;dim;comexpphiKMdeg2}, is
	\bes
	\{\omegaone\wedge\omegatwo\wedge \omegathree\wedge \omegafour\text{ : such that $\alphaone<\betaone$ and $\alphatwo<\betatwo$}\}.
	\ees
	If we rewrite~\eqref{eq3;dim;comexpphiKMdeg2} with respect to such set, taking into account permutations of the indexes~$\{\alphaone,\betaone\}$ and~$\{\alphatwo,\betatwo\}$, then we obtain~\eqref{eq;schwKMexpldeg2}.
	\end{proof}
	\begin{cor}\label{cor;spreadofgen2KMfunc}
	The \textcolor{\mycolor}{extension} of $\varphi_{\text{\rm KM},2}\in\big[\mathcal{S}(V^2)\otimes{\bigwedge}^4(\mathfrak{p}^*)\big]^{\textcolor{\newcolor}{K_\infty}}$ to the whole~$\hermdom$ is
	\bes
	\varphi_{\text{\rm KM},2}(\vect{\genvec},z)
	=
	\sum_{\substack{\alphaone,\betaone=1\\ \alphaone<\betaone}}^b
	\sum_{\substack{\alphatwo,\betatwo=1\\ \alphatwo<\betatwo}}^b \big(\Gpol_{\vect{\alpha}}\stgatwo\big)\big(g(\vect{\genvec})\big)
	\cdot
	g^*(\omegaone\wedge\omegatwo\wedge\omegathree\wedge\omegafour),
	\ees
	where $g\in G$ is any isometry that maps $z\in\hermdom$ to the base point $z_0$.
	\end{cor}
	\begin{proof}
	Note that~$\varphi_{\text{\rm KM},2}(\vect{\genvec},z)=g^*\varphi_{\text{\rm KM},2}\big(g(\vect{\genvec}),z_0\big)$.
	It is enough to replace~$\varphi_{\text{\rm KM},2}\big(g(\vect{\genvec}),z_0\big)$ with the formula provided by Proposition~\ref{prop;comexpphiKMdeg2}.
	\end{proof}
	We define additional auxiliary polynomials~$\pol_{\vect{\alpha}}$ on~$V^2$ as
	\ba\label{eq;defpolPabcd}
	\pol_{\vect{\alpha}}(\vect{\genvec})
	&\textcolor{\mycolor}{{}\coloneqq
	4
	\det\big(
	\begin{smallmatrix}
	x_{\alphaone,1} & x_{\alphaone,2}\\
	x_{\betaone,1} & x_{\betaone,2}
	\end{smallmatrix}
	\big)
	\det\big(
	\begin{smallmatrix}
	x_{\alphatwo,1} & x_{\alphatwo,2}\\
	x_{\betatwo,1} & x_{\betatwo,2}
	\end{smallmatrix}
	\big)}
	\\
	&=
	4 \sum_{\sigma,\sigma'\in S_2} \sgn(\sigma)\sgn(\sigma') x_{\sigma(\alphaone),1} \cdot x_{\sigma'(\alphatwo),1} \cdot x_{\sigma(\betaone),2} \cdot x_{\sigma'(\betatwo),2},
	\ea
	for every~$\vect{\genvec}=\big(\sum_{j}x_{j,1}\basevec_j,\sum_{j}x_{j,2}\basevec_j\big)\in V^2$, where~$\sigma$ and~$\sigma'$ are permutations of respectively~$\{\alphaone,\betaone\}$ and~$\{\alphatwo,\betatwo\}$.
	Note that~$\Gpol_{\vect{\alpha}}=\exp\big(-\trace\Delta/8 \pi\big) \pol_{\vect{\alpha}}$ and that if~$\alphaone\neq\alphatwo$ and~$\betaone\neq\betatwo$, then~$\pol_{\vect{\alpha}}$ is harmonic and~$\Gpol_{\vect{\alpha}}=\pol_{\vect{\alpha}}$.
	
	\begin{rem}
	\textcolor{\newcolor}{Let~$S(V^2)$ be the polynomial Fock space in~$\mathcal{S}(V^2)$, and let~$\iota\colon S(V^2)\to\mathcal{P}(\CC^{2(b+2)})$ be the associated intertwining map, where~$\mathcal{P}(\CC^{2(b+2)})$ is the space of complex polynomials in~$2(b+2)$ variables; see~\cite[Appendix~A]{fm;cycleswith} for precise definitions.
	In place of working with functions in the Fock space, it is sometimes easier to consider their images under~$\iota$.
	We remark that even if~$\Gpol_{\vect{\alpha}}\stgatwo$ lies in~$S(V^2)$, there is no simplification in working with its image~$\iota(\Gpol_{\vect{\alpha}}\stgatwo)$.
	In fact, one can easily check that
	\[
	\iota(\Gpol_{\vect{\alpha}}\stgatwo)(\vect{z})
	=
	\frac{1}{2^6}\det\big(\begin{smallmatrix}
	z_{\alpha_1,1} & z_{\alpha_1,2}
	\\
	z_{\beta_1,1} & z_{\beta_1,2}
	\end{smallmatrix}\big)
	\det\big(\begin{smallmatrix}
	z_{\alpha_2,1} & z_{\alpha_2,2}
	\\
	z_{\beta_2,1} & z_{\beta_2,2}
	\end{smallmatrix}\big),
	\]
	which is of the same form as~\eqref{eq;defpolPabcd}.
	Here we denoted~$\vect{z}=(z_{1,1},\dots,z_{b+2,1},z_{1,2},\dots,z_{b+2,2})$.}
	\end{rem}
	
	The polynomials~$\pol_{\vect{\alpha}}$ will play a key role in the upcoming sections, thanks to the following result.
	We suggest the reader to recall the construction of \emph{very homogeneous polynomials} from Definition~\ref{def;homogforrb22}.
	\begin{lemma}\label{lemma;homogpolabcddeg2}
	The polynomials~$\pol_{\vect{\alpha}}$ are very homogeneous of degree~$(2,0)$.
	\end{lemma}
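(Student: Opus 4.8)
The plan is to unwind the definition of very homogeneity (Definition~\ref{def;homogforrb22}) and reduce the statement to one elementary determinantal identity. First I would observe that in~\eqref{eq;defpolPabcd} all the indices $\sigma(\alphaone),\sigma'(\alphatwo),\sigma(\betaone),\sigma'(\betatwo)$ lie in $\{1,\dots,b\}$, so $\pol_{\vect{\alpha}}$ involves only the coordinates $x_{i,j}$ with $1\le i\le b$; that is, $\pol_{\vect{\alpha}}(\vect{x})$ depends only on the positive part $\vect{x}^+$. Thus in the splitting $\pol_{\vect{\alpha}}=\pol_b(\vect{x}^+)\pol_2(\vect{x}^-)$ required by Definition~\ref{def;homogforrb22} I may take $\pol_2\equiv 1$, which is trivially very homogeneous of degree $m^-=0$, and it remains only to check that $\pol_b=\pol_{\vect{\alpha}}$ satisfies $\pol_{\vect{\alpha}}(\vect{x}N)=(\det N)^2\,\pol_{\vect{\alpha}}(\vect{x})$ for every $N\in\CC^{2\times 2}$.

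The key step is to recognize $\pol_{\vect{\alpha}}$ as a product of two $2\times2$ minors. Since $\sigma$ acts only on $\{\alphaone,\betaone\}$ while $\sigma'$ acts only on $\{\alphatwo,\betatwo\}$, the $\sigma$-dependent factors $x_{\sigma(\alphaone),1}x_{\sigma(\betaone),2}$ and the $\sigma'$-dependent factors $x_{\sigma'(\alphatwo),1}x_{\sigma'(\betatwo),2}$ separate, and the double sum factors as
\[
\pol_{\vect{\alpha}}(\vect{x})=4\Big(\sum_{\sigma\in S_2}\sgn(\sigma)\,x_{\sigma(\alphaone),1}x_{\sigma(\betaone),2}\Big)\Big(\sum_{\sigma'\in S_2}\sgn(\sigma')\,x_{\sigma'(\alphatwo),1}x_{\sigma'(\betatwo),2}\Big).
\]
Writing $D_1$ and $D_2$ for the two factors, a direct evaluation of the two-element sums gives $D_1=\det\big(\begin{smallmatrix} x_{\alphaone,1} & x_{\alphaone,2}\\ x_{\betaone,1} & x_{\betaone,2}\end{smallmatrix}\big)$ and $D_2=\det\big(\begin{smallmatrix} x_{\alphatwo,1} & x_{\alphatwo,2}\\ x_{\betatwo,1} & x_{\betatwo,2}\end{smallmatrix}\big)$, i.e. $D_i$ is the minor of $\vect{x}$, regarded as a $(b+2)\times2$ matrix as in~\eqref{eq;stgatwomat}, on the rows $\alpha_i,\beta_i$ and both columns. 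Hence $\pol_{\vect{\alpha}}=4\,D_1D_2$.

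Finally I would read off the transformation law. Right multiplication $\vect{x}\mapsto\vect{x}N$ replaces the two columns of $\vect{x}$ by their $N$-linear combinations, and since selecting the rows $\alpha_i,\beta_i$ commutes with right multiplication, it acts on each $2\times2$ submatrix defining $D_i$ by right multiplication by $N$. By multiplicativity of the determinant each minor scales as $D_i\mapsto(\det N)\,D_i$, so that
\[
\pol_{\vect{\alpha}}(\vect{x}N)=4\,(\det N)D_1\,(\det N)D_2=(\det N)^2\,\pol_{\vect{\alpha}}(\vect{x}),
\]
which is exactly the homogeneity in Definition~\ref{def;homogforrb22} with $(m^+,m^-)=(2,0)$. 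The result is elementary once the determinantal factorization is in place; the only point requiring care is the combinatorial bookkeeping that isolates the $\sigma$- and $\sigma'$-dependent coordinates so that the sum over $S_2\times S_2$ genuinely splits into a product of two minors, and I expect that rearrangement—rather than any analytic input—to be the crux.
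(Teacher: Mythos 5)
Your proof is correct, and it takes a cleaner route than the paper's. The paper proves the lemma by brute force: it substitutes $\vect{x}N$ into the defining sum, expands everything into the three monomials $n_{1,1}^2n_{2,2}^2$, $n_{1,1}n_{1,2}n_{2,1}n_{2,2}$, $n_{1,2}^2n_{2,1}^2$ in the entries of $N$, and then observes by inspection that the three accompanying sums over $S_2\times S_2$ combine to $(n_{1,1}n_{2,2}-n_{1,2}n_{2,1})^2\,\pol_{\vect{\alpha}}(\vect{x})$. You instead factor the double sum over $(\sigma,\sigma')$ — which splits by distributivity since the $\sigma$-part and the $\sigma'$-part involve disjoint sets of factors — into the product $4\,D_1D_2$ of two $2\times 2$ row-minors of $\vect{x}$, and then the transformation law is immediate from multiplicativity of the determinant, since right multiplication by $N$ acts on each selected $2\times 2$ submatrix by right multiplication by $N$. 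Your version buys a conceptual explanation of \emph{why} the exponent is $2$ (one $\det N$ per minor) and avoids the lengthy expansion; it also makes explicit the point, left implicit in the paper, that since all indices lie in $\{1,\dots,b\}$ one may take $\pol_2\equiv 1$ in the splitting required by Definition~\ref{def;homogforrb22}, so the degree is genuinely $(2,0)$ and not merely ``scales by $(\det N)^2$''. The only thing the paper's computation yields that yours does not is the intermediate identity~\eqref{eq;nonextvpolabcdhom2} itself, which is not reused elsewhere, so nothing is lost.
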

	\begin{proof}
	\textcolor{\mycolor}{
	The~$2\times 2$-minor of the~$b\times 2$-matrix~$\vect{x}$ given as the determinant of~$\big(\begin{smallmatrix}
	x_{\alphaone,1} & x_{\alphaone,2}
	\\
	x_{\betaone,1} & x_{\betaone,2}
	\end{smallmatrix}\big)$ is a very homogeneous polynomial of degree~$(1,0)$, since~$\alphaone,\betaone\in\{1,\dots,b\}$.
	The polynomial~$\pol_{\vect{\alpha}}$ is, up to a constant, the product of two such minors.
	Hence, it is very homogeneous of degree~$(2,0)$.
	}
	\end{proof}
	
	\subsection{The Kudla--Millson theta function in terms of Siegel theta functions}\label{eq;vvKMthetamD}
	Let~$X$ be an orthogonal Shimura variety arising from~$L$; see Section~\ref{sec;OSV}.
	We fix once and for all the weight~$k=1+b/2$.
	
	\begin{defi}
	The \emph{Kudla--Millson theta form of genus~$2$} is defined as
	\be\label{eq;KMthetavv}
	\Theta(\tau,z,\varphi_{\text{KM},2})=\det y^{-k/2}\sum_{\gendisc\in\disc{L}^2}\sum_{\vect{\lambda}\in \gendisc + L^2}\big(\omega_{\infty,2}(\textcolor{\mycolor}{M_\tau})\varphi_{\text{KM},2}\big)(\vect{\lambda},z)
	\mathfrak{e}_\gendisc,
	\ee
	for every $\tau=x+iy\in\HH_2$ and $z\in\Gr(L)$, where $\textcolor{\mycolor}{M_\tau}=\left(\begin{smallmatrix}
	1 & x\\ 0 & 1
	\end{smallmatrix}\right)\left(\begin{smallmatrix}
	y^{1/2} & 0\\ 0 & (y^{1/2})^{-t}
	\end{smallmatrix}\right)$ is the standard element of~$\Sp_4(\RR)$ mapping~${iI_2\in\HH_2}$ to~$\tau$.
	\end{defi}
	
	Let~$A^k_{2,L}$ be the space of~$\CC[\disc{L}^2]$-valued analytic functions on~$\HH_2$ satisfying the weight~$k$ modular transformation property with respect to~$\Mp_4(\ZZ)$ under the Weil representation~$\weil{L}$.
	The Kudla--Millson theta function~\eqref{eq;KMthetavv} is a non-holomorphic modular form with respect to the variable~$\tau\in\HH_2$, and a closed~$4$-form with respect to the variable~${z\in\Gr(L)}$, in short~$\Theta(\tau,z,\varphi_{\text{\rm KM},2})\in A^k_{2,L}\otimes\mathcal{Z}^4(\hermdom)$.
	In fact, the Kudla--Millson theta function is~$\widetilde{\SO}(L)$-invariant, hence it descends to an element of~$A^k_2\otimes\mathcal{Z}^4(X)$.
	
	Recall that~$\halfint_2$ is the set of symmetric half-integral~$2\times 2$-matrices.
	Kudla and Millson~\cite{kumi;intnum} proved that the cohomology class~$[\Theta(\tau,z,\varphi_{\text{\rm KM},2})]$ is a \emph{holomorphic} modular form with values in~$H^{2,2}(X)$.
	Moreover, they proved that the Fourier coefficient of~$[\Theta(\tau,z,\varphi_{\text{\rm KM},2})]$ associated to the indices~$\gendisc\in\disc{L}$ and~$T\in q(\gendisc) + \halfint_2$ with~$T>0$ is a Poincaré dual form for the special cycle of same indices in~$X$.
	The class~$[\Theta(\tau,z,\varphi_{\text{\rm KM},2})]$ is the well-known Kudla's generating series of special cycles; see~\cite[Theorem~$3.1$]{Kudla;speccycl}.
	
	By Corollary~\ref{cor;spreadofgen2KMfunc} we may rewrite the Kudla--Millson theta function as
	\ba\label{eq;genformKMthetagen2}
	\Theta(\tau,z,\varphi_{\text{\rm KM},2})
	=
	\sum_{\substack{\alphaone,\betaone=1\\ \alphaone<\betaone}}^b
	\sum_{\substack{\alphatwo,\betatwo=1\\ \alphatwo<\betatwo}}^b
	&
	\underbrace{(\det y)^{-k/2}
	\sum_{\gendisc\in\disc{L}^2}
	\sum_{\vect{\lambda}\in \gendisc + L^2}
	\Big(\omega_{\infty,2}(\textcolor{\mycolor}{M_\tau})(\Gpol_{\vect{\alpha}}\stgatwo)\Big)\big(g(\vect{\lambda})\big)\mathfrak{e}_\gendisc}_{\eqqcolon F_{\vect{\alpha}}(\tau,g)}
	\\
	& \otimes g^*\big(\omegaone\wedge\omegatwo\wedge\omegathree\wedge\omegafour\big),
	\ea
	where $g\in G$ is any isometry of $V=L\otimes\RR$ mapping~$z$ to the base point~$z_0$ of~$\Gr(L)$, and~$\Gpol_{\vect{\alpha}}$ is the polynomial computed in Proposition~\ref{prop;comexpphiKMdeg2}.
	Recall that for simplicity we write~$\vect{\alpha}$ in place of the vector of indices~$(\alphaone,\alphatwo,\betaone,\betatwo)$.
	Since the Kudla--Millson Schwartz function is the spread to the whole~$\hermdom=\Gr(L)$ of an element of~$\mathcal{S}(V^2)\otimes \bigwedge^4(\mathfrak{p}^*)$ which is~$\textcolor{\newcolor}{K_\infty}$-invariant, the rewriting~\eqref{eq;genformKMthetagen2} does not depend on the choice of~$g$ mapping~$z$ to~$z_0$.
	
	We are going to rewrite the auxiliary functions~$F_{\vect{\alpha}}$ arising as in~\eqref{eq;genformKMthetagen2} in terms of the Siegel theta functions of genus~$2$ introduced in Section~\ref{sec;vvsiegeltheta2}.
	
	The following result is the generalization of~\cite[Lemma~$3.9$]{zuffetti;gen1} in genus~$2$.
	We suggest the reader to recall the construction of the very homogeneous polynomials~$\pol_{\vect{\alpha}}$ from~\eqref{eq;defpolPabcd}.
	\begin{lemma}\label{lemma;KMthetadeg2expl}
	We may rewrite the auxiliary functions~$F_{\vect{\alpha}}$ defined in~\eqref{eq;genformKMthetagen2} as
	\be\label{eq;lemmaKMthetadeg2expl22}
	F_{\vect{\alpha}}(\tau,g)=\Theta_{L,2}(\tau,g,\pol_{\vect{\alpha}}),
	\ee
	where~$\tau=x+iy\in\HH_2$ and~$g\in G$.
	\end{lemma}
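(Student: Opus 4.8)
The plan is to prove the identity term by term. Writing $\Theta_{L,2}(\tau,g,\pol_{\vect{\alpha}})=\sum_{\gendisc}\Theta_{L,2}^{\gendisc}(\tau,g,\pol_{\vect{\alpha}})\mathfrak{e}_{\gendisc}$ and comparing with the definition of $F_{\vect{\alpha}}$ in~\eqref{eq;genformKMthetagen2}, it suffices to establish, for each $\vect{\lambda}\in V^2$, the pointwise equality
\begin{align*}
(\det y)^{-k/2}\big(\omega_{\infty,2}(g_\tau)(\Gpol_{\vect{\alpha}}\stgatwo)\big)\big(g(\vect{\lambda})\big)
&=\det y\cdot\exp\Big(-\tfrac{1}{8\pi}\trace(\Delta y^{-1})\Big)(\pol_{\vect{\alpha}})\big(g(\vect{\lambda})\big)\\
&\quad\times e\Big(\trace\big(q(\vect{\lambda}_{z^\perp})\tau\big)+\trace\big(q(\vect{\lambda}_z)\bar{\tau}\big)\Big),
\end{align*}
since the right-hand side is precisely the $\vect{\lambda}$-summand of $\Theta_{L,2}^{\gendisc}$ (recall $\pol_{\vect{\alpha}}$ is very homogeneous of degree $(2,0)$ by Lemma~\ref{lemma;homogpolabcddeg2}, so $\det y^{1+m^-}=\det y$). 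Here I read $g$ inside $\omega_{\infty,2}(g_\tau)$, which is legitimate because the $\Sp_4(\RR)$- and $\bigO(V)$-actions in the Weil representation commute. First I would make the action of $g_\tau$ explicit: factoring $g_\tau=\big(\begin{smallmatrix}1&x\\0&1\end{smallmatrix}\big)\big(\begin{smallmatrix}a&0\\0&a^{-t}\end{smallmatrix}\big)$ with $a=y^{1/2}$ and applying the two relevant formulas of Definition~\ref{def;schrmoddeg2def} gives $\big(\omega_{\infty,2}(g_\tau)\varphi\big)(\vect{x})=(\det a)^{(b+2)/2}\,e(\trace(x\,q(\vect{x})))\,\varphi(\vect{x}a)$. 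Since $(b+2)/2=k$ and $\det a=(\det y)^{1/2}$, the weight factor $(\det a)^{(b+2)/2}=(\det y)^{k/2}$ cancels the prefactor $(\det y)^{-k/2}$, and it remains to evaluate $\Gpol_{\vect{\alpha}}\stgatwo$ at $g(\vect{\lambda})a$.

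For the Gaussian factor I would use $aa^t=y$ to write $\stgatwo(\vect{x}a)=\exp(-\pi\trace(\vect{x}^t\vect{x}\,y))$, and identify the Euclidean Gram matrix $\vect{x}^t\vect{x}$ with the majorant $2q(\vect{\lambda}_{z^\perp})-2q(\vect{\lambda}_z)$, using that $g$ sends $z$ to $z_0$ and preserves $q$, so the first $b$ rows of $g(\vect{\lambda})$ give $\vect{\lambda}_{z^\perp}$ and the last two give $\vect{\lambda}_z$. Combining this with the phase $e(\trace(x\,q(\vect{\lambda})))$ and the orthogonal splitting $q(\vect{\lambda})=q(\vect{\lambda}_{z^\perp})+q(\vect{\lambda}_z)$, and passing to $e(\cdot)$-notation, the exponential collapses to $e(\trace(q(\vect{\lambda}_{z^\perp})\tau)+\trace(q(\vect{\lambda}_z)\bar{\tau}))$, which is exactly the exponential appearing in $\Theta_{L,2}^{\gendisc}$ of Definition~\ref{def;thetasergen2}.

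The crux, and the step that genuinely differs from the scalar manipulations, is the polynomial factor $\Gpol_{\vect{\alpha}}(\vect{x}a)=\big[\exp(-\tfrac{1}{8\pi}\trace\Delta)\pol_{\vect{\alpha}}\big](\vect{x}a)$. The plan is to use a covariance of the Gaussian-smeared polynomial under $\vect{x}\mapsto\vect{x}a$: the chain rule conjugates the matrix Laplacian, so that for any symmetric $N$ one has $\big[\exp(-\tfrac{1}{8\pi}\trace(\Delta N))\,\pol_{\vect{\alpha}}(\cdot\, a)\big](\vect{x})=\big[\exp(-\tfrac{1}{8\pi}\trace(\Delta\, a^t N a))\,\pol_{\vect{\alpha}}\big](\vect{x}a)$, which is the content of~\cite[Lemma~4.4~(4.5)]{roehrig}. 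Taking $N=y^{-1}$ and using $a^t y^{-1}a=I$ (as $a=y^{1/2}$ is symmetric with $a^2=y$), the right-hand side becomes $\Gpol_{\vect{\alpha}}(\vect{x}a)$, while by very homogeneity of degree $(2,0)$ one has $\pol_{\vect{\alpha}}(\vect{x}a)=(\det a)^2\pol_{\vect{\alpha}}(\vect{x})=\det y\cdot\pol_{\vect{\alpha}}(\vect{x})$. Hence $\Gpol_{\vect{\alpha}}(\vect{x}a)=\det y\cdot\big[\exp(-\tfrac{1}{8\pi}\trace(\Delta y^{-1}))\pol_{\vect{\alpha}}\big](\vect{x})$, which is the desired polynomial factor, the extra $\det y$ matching the $\det y^{1+m^-}$ normalization of $\Theta_{L,2}$.

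Assembling the three factors and summing over $\vect{\lambda}\in\gendisc+L^2$ and $\gendisc\in\disc{L}^2$ then yields~\eqref{eq;lemmaKMthetadeg2expl22}. I expect the only real obstacle to be this polynomial covariance step: when $\alphaone\neq\alphatwo$ and $\betaone\neq\betatwo$ the polynomial $\pol_{\vect{\alpha}}$ is harmonic, so $\Gpol_{\vect{\alpha}}=\pol_{\vect{\alpha}}$ and the identity reduces to plain very homogeneity together with Remark~\ref{rem;fromFreitagGerman}; but in the remaining non-harmonic cases the operator $\exp(-\trace\Delta/8\pi)$ must be carried correctly through the substitution $\vect{x}\mapsto\vect{x}a$, which is exactly what the conjugation-of-Laplacian identity accomplishes.
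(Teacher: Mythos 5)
Your proposal is correct and follows essentially the same route as the paper: make the Schrödinger-model action of $g_\tau=\big(\begin{smallmatrix}1&x\\0&1\end{smallmatrix}\big)\big(\begin{smallmatrix}\sqrt{y}&0\\0&\sqrt{y}^{-t}\end{smallmatrix}\big)$ explicit so that $(\det y)^{k/2}$ cancels the prefactor, collapse the Gaussian and the phase via the majorant into $e(\trace(q(\vect{\lambda}_{z^\perp})\tau)+\trace(q(\vect{\lambda}_z)\bar{\tau}))$, and handle the polynomial factor by combining very homogeneity of degree $(2,0)$ with the Laplacian-conjugation identity of \cite[Lemma~4.4~(4.5)]{roehrig}. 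Your write-up is in fact slightly more explicit than the paper's at the last step, where the paper only cites very homogeneity and Röhrig's lemma without spelling out the substitution $N=y^{-1}$, $a^{t}y^{-1}a=I$.
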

	\begin{proof}
	\textcolor{\mycolor}{This is a trivial consequence of Lemma~\ref{lemma;fronv2tov1}, since~$\Gpol_{\vect{\alpha}}= \exp\left(-\trace \Delta/8 \pi\right)\pol_{\vect{\alpha}}$.}
	\end{proof}
	\section{The unfolding of the Kudla--Millson lift}\label{sec;deg2KMlift}
	In this section we unfold the defining integrals of the genus~$2$ Kudla--Millson theta lift.
	Recall that we denote by~$L$ an even indefinite lattice of signature~$(b,2)$, that~$X$ is an orthogonal Shimura variety arising from~$L$, and~$k=1+b/2$.
	\begin{defi}
	The \emph{Kudla--Millson lift of genus $2$} associated to the lattice~$L$ is the map~$\KMlift\colon S^k_{2,L}\to\mathcal{Z}^4(X)$ defined as
	\be\label{eq;liftingLambda2vv}
	f\longmapsto\KMlift(f)=\int_{\Sp_4(\ZZ)\backslash\HH_2}\det y^k\big\langle f(\tau),\Theta(\tau,z,\varphi_{\text{\rm KM},2})\big\rangle\,\frac{dx\, dy}{\det y^3}.
	\ee
	where $dx\,dy\coloneqq\prod_{k\leq\ell}dx_{k,\ell}\,dy_{k,\ell}$,~$\frac{dx\, dy}{\det y^3}$ is the standard~$\Sp_4(\ZZ)$-invariant volume element of~$\HH_2$, and~$\langle\cdot{,}\cdot\rangle$ is the scalar product of the group algebra~$\CC[\disc{L}^2]$.
	\end{defi}
	By Lemma~\ref{lemma;KMthetadeg2expl} we may rewrite the Kudla--Millson lift in terms of genus~$2$ Siegel theta functions as
	\ba\label{eq;KMdeg2liftmoreexpl}
	\KMlift(f)=
	\sum_{\substack{\alphaone,\betaone=1\\ \alphaone<\betaone}}^b
	\sum_{\substack{\alphatwo,\betatwo=1\\ \alphatwo<\betatwo}}^b
	&\Big(\int_{\Sp_4(\ZZ)\backslash\HH_2}
	\det y^{k} \big\langle f(\tau) , \Theta_{L,2}(\tau,g,\pol_{\vect{\alpha}})\big\rangle \frac{dx\,dy}{\det y^3}\Big)
	\\
	&\times g^*\big(\omegaone\wedge\omegatwo\wedge\omegathree\wedge\omegafour\big),
	\ea
	for every vector-valued Siegel cusp form~$f\in S^k_{2,L}$, and for every~${g\in G=\SO(L\otimes\RR)}$ mapping~$z$ to~$z_0$.
	We refer to the integrals appearing as coefficients in~\eqref{eq;KMdeg2liftmoreexpl}, namely
	\be\label{eq;defintKMliftgen2}
	\intfunctshort(g)
	\coloneqq
	\int_{\Sp_4(\ZZ)\backslash\HH_2}
	\det y^{k} \big\langle f(\tau) , \Theta_{L,2}(\tau,g,\pol_{\vect{\alpha}}) \big\rangle
	\frac{dx\,dy}{\det y^3},
	\ee
	as the \emph{defining integrals} of~$\KMlift(f)$.
	
	The goal of this section is to apply the unfolding trick to such integrals to deduce the Fourier expansion of~$\intfunctshort(g)$.	
	The unfolding trick of genus~$2$ is recalled in Section~\ref{sec;generalonunftrgen2}.
	We apply it to~$\intfunctshort$ in Section~\ref{sec;theunfoldingofKMliftgen2new}, while in Section~\ref{sec;deg2KMliftgo2} we compute the Fourier expansion of such defining integrals.
	
	As we will see, the behavior of the Siegel theta functions~$\Theta_{\brK,2}(\tau,\borw,\polw{\vect{\alpha},\borw}{\hone}{\htwo})$ appearing in the unfolded integrals differs from the one of their counterparts in the genus~$1$ unfolding of the Kudla--Millson lift~\cite[Section~$5$]{zuffetti;gen1}.
	In fact, such theta functions of genus~$2$ are not always modular; see Remark~\ref{rem;nonmodgen2}.
	We will show that a suitable combinations of them may be rewritten in terms of the Jacobi Siegel theta functions introduced in Section~\ref{subsec:JacobiForms}.
	The theory we developed in the latter section will be then employed in Section~\ref{sec;finalunfvvcase} to unfold the integrals a second time, and deduce the injectivity of the lift.
	
	\subsection{The unfolding trick in genus~$\boldsymbol{2}$}\label{sec;generalonunftrgen2}
	We recall here the \emph{unfolding trick} of genus~$2$.	
	This method enables us to simplify an integral of the form
	\be\label{eq;intofFforunftrick}
	\int_{\Sp_4(\ZZ)\backslash\HH_2}H(\tau)\frac{dx\,dy}{\det y^3},
	\ee
	where~$H\colon\HH_2\to\CC$ is a~$\Sp_4(\ZZ)$-invariant function, in the case where~$H$ can be written as an absolutely convergent series of the form
	\be\label{eq;Faspoincviaf}
	H(\tau)=\sum_{M\in\kling\backslash\Sp_4(\ZZ)}\hfunct(M\cdot\tau),
	\ee
	for some $\kling$-invariant map~$\hfunct$, where~$\kling$ is the Klingen parabolic subgroup of~$\Sp_4(\ZZ)$.	
	The unfolding trick aims to rewrite the integral~\eqref{eq;intofFforunftrick} as an integral of~$\hfunct$ over the unfolded domain $\kling\backslash\HH_2$, more precisely as
	\be\label{eq;unfoldingtrick}
	\int_{\Sp_4(\ZZ)\backslash\HH_2}H(\tau)\frac{dx\,dy}{\det y^3}=2\int_{\kling\backslash\HH_2}\hfunct(\tau)\frac{dx\,dy}{\det y^3}.
	\ee
	Let~$\Gamma^J=\SL_2(\ZZ)\ltimes\ZZ^2$ be the Jacobi modular group, and let~$\tau_j=x_j+iy_j$ for every~$1\le j\le 3$.
	Since we can choose
	\be\label{eq;fdomkling}
	\mathcal{S}=\left\{
	\tau\in\HH_2\,:\,\text{$(\tauone,\tautwo)\in\Gamma^J\backslash\HH_1\times\CC$, $\ythree>\ytwo^2/\yone$, $\xthree\in [0,1]$}
	\right\},
	\ee
	as fundamental domain of the action of~$\kling$ on~$\HH_2$, as explained for instance in~\cite[p.~$370$]{boda;petnorm}, the integral on the right-hand side of~\eqref{eq;unfoldingtrick} is easier to compute with respect to the one on the left-hand side.
	
	Let $\mathcal{F}$ be a fundamental domain of the action of~$\Sp_4(\ZZ)$ on~$\HH_2$.
	The equality~\eqref{eq;unfoldingtrick} can be easily checked as
	\bas
	\int_{\Sp_4(\ZZ)\backslash\HH_2} H(\tau)\frac{dx\,dy}{\det y^3}
	& =
	\int_{\mathcal{F}}\sum_{M\in\kling\backslash\Sp_4(\ZZ)}
	\hfunct(M\tau)\frac{dx\,dy}{\det y^3}
	=\sum_{M\in\kling\backslash\Sp_4(\ZZ)}\int_{M\cdot\mathcal{F}}
	\hfunct(\tau)\frac{dx\,dy}{\det y^3}
	\\
	&=
	2\int_{\kling\backslash\HH_2}
	\hfunct(\tau)\frac{dx\,dy}{\det y^3},
	\eas
	where the factor $2$ arises because the classes of~$\big(\begin{smallmatrix}I_2 & 0\\ 0 & I_2\end{smallmatrix}\big)$ and~$\big(\begin{smallmatrix}-I_2 & 0\\ 0 & -I_2\end{smallmatrix}\big)$ in~$\kling\backslash\Sp_4(\ZZ)$ are different.
	
	\subsection{The unfolding of~$\boldsymbol{\KMlift}$}\label{sec;theunfoldingofKMliftgen2new}
	Suppose that~$L$ splits off a hyperbolic plane.
	As usual, we choose~$\genU$,~$\genUU$ and~$\brK$ as in~\eqref{eq;choiceofuu'gen2}.
	
	To unfold the defining integrals~\eqref{eq;defintKMliftgen2} of the Kudla--Millson lift by means of the procedure illustrated in Section~\ref{sec;generalonunftrgen2}, we construct~$\kling$-invariant functions~$\hfunct_{\vect{\alpha}}(\tau,g)$ such that the integrand of~$\intfunctshort$ may be written as
	\ba\label{eq;splitsumC12habcd}
	\det y^{k}\big\langle f(\tau) , \Theta_{L,2}(\tau,g,\pol_{\vect{\alpha}}) \big\rangle
	= &{}
	\frac{\det y^{k}}{2 u_{z^\perp}^2}
	\big\langle f(\tau) , \Theta_{\brK,2}(\tau,\borw,\polw{\vect{\alpha},\borw}{0}{0})\big\rangle
	\\
	& + \sum_{M\in \kling\backslash\Sp_4(\ZZ)}
	\hfunct_{\vect{\alpha}}(M \tau,g),
	\ea
	for every~$g\in G$ and~$z\in\Gr(L)$ such that~$g$ maps~$z$ to the base point~$z_0$.
	The first summand on the right-hand side of~\eqref{eq;splitsumC12habcd} arises from the error term appearing on the right-hand side of~\eqref{eq;corgenborthmsplitthetaLklin}.
	As we will see, the integral of such a summand is the constant term of the Fourier expansion of~$\intfunctshort$.
	
	Recall that we identify values of modular forms on isomorphic group algebras, and that if~$A$ is a matrix, we denote by~$[A]_{i,j}$ its~$(i,j)$-entry.
	
	\begin{thm}\label{thm;unfongen2}
	Suppose that~$L$ splits off a hyperbolic plane and let~$\genU$, $\genUU$ and~$\brK$ as in~\eqref{eq;choiceofuu'gen2}.
	The function~$\hfunct_{\vect{\alpha}}$ in~\eqref{eq;splitsumC12habcd} may be chosen as
	\ba\label{eq;auxhfunctgen2}
	\hfunct_{\vect{\alpha}}(\tau,g)
	=&{}
	\frac{\det y^{k}}{2\genU_{z^\perp}^2}\sum_{r\ge 1}
	\exp\Big(
	-\frac{\pi r^2}{2\genU_{z^\perp}^2}[y^{-1}]_{2,2}
	\Big)
	\sum_{\hone,\htwo\textcolor{\mycolor}{=0}}^{\textcolor{\mycolor}{2}}
	\left(\frac{r}{2i}\right)^{\hone+\htwo}[y^{-1}]^{\hone}_{2,1} \cdot [y^{-1}]^{\htwo}_{2,2}
	\\
	&\times
	\big\langle f(\tau) , \Theta_{\brK,2}\big(\tau,(0,r\mu),0,\borw,\polw{\vect{\alpha},\borw}{\hone}{\htwo}\big)\big\rangle.
	\ea
	\end{thm}
	To prove Theorem~\ref{thm;unfongen2} we need to introduce the following auxiliary functions.	
	\begin{defi}\label{def;gen2auxfunctvaphih+}
	We define the auxiliary function~$\auxfunplus_{\htot}$ as
	\bas
	\auxfunplus_{\htot}(\tau,\vect{\boralpha},\vect{\borbeta},\borw)\coloneqq \sum_{\substack{\hone,\htwo\\ \hone+\htwo=\htot}}
	[\tau y^{-1}]^{\hone}_{2,1} \cdot [\tau y^{-1}]^{\htwo}_{2,2}\cdot
	\big\langle
	f(\tau) , \Theta_{\brK,2}(\tau,\vect{\boralpha}, \vect{\borbeta},\borw,\polw{\vect{\alpha},\borw}{\hone}{\htwo})\big\rangle,
	\eas
	for every $\tau=x+iy\in\HH_2$, $\vect{\boralpha},\vect{\borbeta}\in(\brK\otimes\RR)^2$, $0\le \htot\le 2$, and $g\in G$.
	\end{defi}
	Since if $y=\left(\begin{smallmatrix}
	\yone & \ytwo\\
	\ytwo & \ythree
	\end{smallmatrix}\right)$, then $y^{-1}=\frac{1}{\det y}\left(\begin{smallmatrix}
	\ythree & -\ytwo\\
	-\ytwo & \yone
	\end{smallmatrix}\right)$ and
	\bes
	\tau y^{-1}=
	\frac{1}{\det y}\begin{pmatrix}
	\ythree \tauone - \ytwo \tautwo & \yone \tautwo - \ytwo \tauone\\
	\ythree \tautwo - \ytwo \tauthree & \yone \tauthree - \ytwo \tautwo
	\end{pmatrix},
	\ees
	we deduce that
	\be\label{eq;formtimt-12122}
	[\tau y^{-1}]_{2,1}=\frac{\ythree \tautwo - \ytwo \tauthree}{\det y}
	\qquad
	\text{and}
	\qquad
	[\tau y^{-1}]_{2,2}=\frac{\yone \tauthree - \ytwo \tautwo}{\det y}.
	\ee
	This implies that we may rewrite~$\auxfunplus_{\htot}$ as
	\ba\label{varphigenforgen2finform}
	\auxfunplus_{\htot}(\tau,\vect{\boralpha},\vect{\borbeta},\borw)
	=
	\sum_{\substack{\hone,\htwo\\ \hone+\htwo=\htot}}
	&
	\det y^{-(\hone+\htwo)}
	(\ythree \tautwo - \ytwo \tauthree)^{\hone}
	\cdot
	(\yone \tauthree - \ytwo \tautwo)^{\htwo}
	\\
	&\times\big\langle
	f(\tau) , \Theta_{\brK,2}(\tau,\vect{\boralpha}, \vect{\borbeta},\borw,\polw{\vect{\alpha},\borw}{\hone}{\htwo})\big\rangle.
	\ea
	For future use, we compute here also~$\big[\tau y^{-1} \bar{\tau}\big]_{2,1}$ and~$\big[\tau y^{-1} \bar{\tau}\big]_{2,2}$.
	Since
	\bas
	\tau y^{-1} \bar{\tau} =\frac{1}{\det y}\begin{pmatrix}
	\ythree |\tauone|^2 - \ytwo \tautwo\overline{\tauone}
	+
	\yone |\tautwo|^2 - \ytwo \tauone\overline{\tautwo}
	&
	\ythree \tauone\overline{\tautwo} - \ytwo |\tautwo|^2
	+
	\yone \tautwo\overline{\tauthree} - \ytwo \tauone\overline{\tauthree}
	\\
	\ythree \tautwo\overline{\tauone} - \ytwo \tauthree\overline{\tauone}
	+
	\yone \tauthree\overline{\tautwo} - \ytwo |\tautwo|^2
	&
	\ythree |\tautwo|^2 - \ytwo \tauthree\overline{\tautwo}
	+
	\yone |\tauthree|^2 - \ytwo \tautwo\overline{\tauthree}
	\end{pmatrix},
	\eas
	we have
	\ba\label{eq;formtimt-1bart21}
	\big[\tau y^{-1} \bar{\tau}\big]_{2,1}=\frac{\ythree \tautwo\overline{\tauone} - \ytwo \tauthree\overline{\tauone}
	+
	\yone \tauthree\overline{\tautwo} - \ytwo |\tautwo|^2}{\det y},\\
	\big[\tau y^{-1} \bar{\tau}\big]_{2,2}=
	\frac{\ythree |\tautwo|^2 - \ytwo \tauthree\overline{\tautwo}
	+
	\yone |\tauthree|^2 - \ytwo \tautwo\overline{\tauthree}}{\det y}.
	\ea
	\begin{thm}\label{thm;transfThetaL2genlor}
	\textcolor{\newcolor}{The auxiliary function $\auxfunplus_{\htot}$ is equal to}
	\ba\label{eq;transfThetaL2genlor}
	\auxfunplus_{\htot}(\tau, \vect{\boralpha}, \vect{\borbeta},\borw)=&{}\,
	|\det \tau|^{-2k}\sum_{\substack{\hone,\htwo\\ \hone+\htwo=\htot}}[\tau y	^{-1}\bar{\tau}]_{2,1}^{\hone}\cdot [\tau y^{-1}\bar{\tau}]_{2,2}^{\htwo}
	\\
	&\times\big\langle f(-\tau^{-1}) , \Theta_{\brK,2}(-\tau^{-1},-\vect{\borbeta},\vect{\boralpha},\borw,\polw{\vect{\alpha},\borw}{\hone}{\htwo})\big\rangle.
	\ea
	\end{thm}
	\begin{rem}\label{rem;nonmodgen2}
	Along the proof of Theorem~\ref{thm;transfThetaL2genlor}, we will show that the Siegel theta function~$\Theta_{\brK,2}(\tau,\vect{\boralpha}, \vect{\borbeta},\borw,\polw{\vect{\alpha},\borw}{\hone}{\htwo})$ is \emph{non-modular} whenever either~$\hone$ or~$\htwo$ differs from zero.
	This is a consequence of their behavior with respect to the action of the symplectic matrix~${S=\big(\begin{smallmatrix}
	0 & -I_2\\ I_2 & 0
	\end{smallmatrix}\big)}$ on~$\HH_2$, which is illustrated in~\eqref{eq;ThetaLor2withFtransf02}, \eqref{eq;ThetaLor2withFtransf11} and~\eqref{eq;ThetaLor2withFtransf20}.
	\end{rem}
	\begin{proof}[Proof of Theorem~\ref{thm;transfThetaL2genlor}]
	\textbf{Case~$\boldsymbol{\htot=2}$:}
	We begin by rewriting the~$\gendisc$-component of the Siegel theta function associated to~$\polw{\vect{\alpha},\borw}{0}{2}$, for~$\gendisc\in\disc{\brK}^2$, as
	\be\label{eq;proofgvnnlor}
	\Theta_{\brK,2}^\gendisc(\tau,\vect{\boralpha},\vect{\borbeta},\borw,\polw{\vect{\alpha},\borw}{0}{2})
	=
	\sqrt{\det y} e\big(\vect{\borbeta},\vect{\boralpha}/2\big)
	\sum_{\vect{\lambda}\in \brK^2}
	F_{\tau,\vect{\boralpha},\vect{\borbeta},\borw}^{\gendisc}(\vect{\lambda}),
	\ee
	where $F_{\tau,\vect{\boralpha},\vect{\borbeta},\borw}^{\gendisc}(\vect{\lambda})
	\coloneqq
	f_{\tau,\borw,0,2}(\vect{\lambda} + \gendisc +\vect{\borbeta})
	\cdot
	e\big(-\vect{\lambda}+ \gendisc + \vect{\borbeta},\vect{\boralpha}\big)$, and $f_{\tau,\borw,0,2}$ is the function introduced in Lemma~\ref{lemma;someFtransfofabcdh12lor}.
	The idea is to apply the Poisson summation formula to the right-hand side of~\eqref{eq;proofgvnnlor}.
	To do so, we compute the Fourier transform of $F_{\tau,\vect{\boralpha},\vect{\borbeta},\borw}^\gendisc$ using the properties illustrated in Lemma~\ref{lemma;somepropFtr} as
	\bas
	\widehat{F_{\tau,\vect{\boralpha},\vect{\borbeta},\borw}^\gendisc}(\vect{\lambda})=\widehat{f_{\tau,\borw,0,2}}(\vect{\lambda}-\vect{\boralpha})\cdot e\big(-(\vect{\lambda},\vect{\borbeta}) - (\vect{\lambda},\gendisc)\big).
	\eas
	We now apply the Poisson summation formula and, using the formula of~$\widehat{f_{\tau,\borw,0,2}}$ provided by Lemma~\ref{lemma;someFtransfofabcdh12lor}, deduce that
	\begin{align*}
	\,&\Theta_{\brK,2}^\gendisc(\tau,\vect{\boralpha},\vect{\borbeta},\borw,\polw{\vect{\alpha},\borw}{0}{2})
	\\
	&=
	\sqrt{\det y} \frac{e\big((\vect{\borbeta},\vect{\boralpha}/2)\big)}{|\disc{\brK}|}
	\sum_{\vect{\lambda}\in \brK'^2}
	\widehat{F_{\tau,\vect{\boralpha},\vect{\borbeta},\borw}^\gendisc}(\vect{\lambda})
	\\
	&=
	\frac{\sqrt{\det y}}{|\disc{\brK}|}
	\sum_{\vect{\lambda}\in \brK'^2}
	\widehat{f_{\tau,\borw,0,2}}(\vect{\lambda}-\vect{\vect{\boralpha}})\cdot e\big( -(\vect{\lambda}-\vect{\boralpha}/2,\vect{\borbeta}) - (\vect{\lambda},\gendisc)\big)
	\\
	&=
	\frac{i^{2-b}}{|\disc{\brK}|} \sqrt{\det y}
	\det(\tau)^{-(b-1)/2-2}\det(\bar{\tau})^{-1/2}
	\sum_{\vect{\lambda}\in\brK'^2}
	\Big(\tauthree^2\cdot f_{-\tau^{-1},\borw,0,2}(\vect{\lambda} - \vect{\boralpha})
	\\
	&\quad+
	\tautwo\tauthree\cdot f_{-\tau^{-1},\borw,1,1}(\vect{\lambda}-\vect{\boralpha}) +
	\tautwo^2\cdot f_{-\tau^{-1},\borw,2,0}(\vect{\lambda} - \vect{\boralpha})\Big)
	e\big( -(\vect{\lambda}-\vect{\boralpha}/2,\vect{\borbeta}) - (\vect{\lambda},\gendisc)\big).
	\end{align*}
	In the sum over $\brK^2$ above, we replace~$\vect{\lambda}$ with~$-\vect{\lambda}$.
	Since all polynomials~$\polw{\vect{\alpha},\borw}{\hone}{\htwo}$ satisfying~${\hone+\htwo=2}$ are such that~$\polw{\vect{\alpha},\borw}{\hone}{\htwo}\big(g(-\vect{\xi})\big)=\polw{\vect{\alpha},\borw}{\hone}{\htwo}\big(g(\vect{\xi})\big)$ by~\eqref{eq;nonveryhompropgen2}, and since~$e(-(\vect{\lambda},\gendisc))=e(-(\gendisc',\gendisc))$ for every~$\vect{\lambda}\in\gendisc' + \brK^2$, we deduce that
	\bas
	\Theta_{\brK,2}^\gendisc  (\tau,\vect{\boralpha},\vect{\borbeta},\borw,\polw{\vect{\alpha},\borw}{0}{2})
	&=
	\frac{i^{2-b}}{|\disc{\brK}|}\det(\tau)^{-b/2-2}
	\\
	&\quad\times\Big(
	\tauthree^2 \sum_{\gendisc'\in\disc{\brK}^2}
	e\big(
	-(\gendisc',\gendisc)
	\big)
	\Theta_{\brK,2}^{\gendisc'}(-\tau^{-1},-\vect{\borbeta},\vect{\boralpha},\borw,\polw{\vect{\alpha},\borw}{0}{2})
	\\
	&\quad+
	\tautwo\tauthree
	\sum_{\gendisc'\in\disc{\brK}^2}
	e\big(
	-(\gendisc',\gendisc)
	\big)
	\Theta_{\brK,2}^{\gendisc'}(-\tau^{-1},-\vect{\borbeta},\vect{\boralpha},\borw,\polw{\vect{\alpha},\borw}{1}{1})
	\\
	&\quad+
	\tautwo^2
	\sum_{\gendisc'\in\disc{\brK}^2}
	e\big(
	-(\gendisc',\gendisc)
	\big)
	\Theta_{\brK,2}^{\gendisc'}(-\tau^{-1},-\vect{\borbeta},\vect{\boralpha},\borw,\polw{\vect{\alpha},\borw}{2}{0})
	\Big)
	.
	\eas
	This implies that
	\ba\label{eq;ThetaLor2withFtransf02}
	\Theta_{\brK,2} (\tau,\vect{\boralpha},\vect{\borbeta},\borw,\polw{\vect{\alpha},\borw}{0}{2})
	&=
	\det(\tau)^{-b/2-2}
	\\
	&\quad\times\Big(
	\tauthree^2 \cdot\weil{\brK}(S)
	\Theta_{\brK,2}(-\tau^{-1},-\vect{\borbeta},\vect{\boralpha},\borw,\polw{\vect{\alpha},\borw}{0}{2})
	\\
	&\quad+
	\tautwo\tauthree
	\cdot\weil{\brK}(S)
	\Theta_{\brK,2}(-\tau^{-1},-\vect{\borbeta},\vect{\boralpha},\borw,\polw{\vect{\alpha},\borw}{1}{1})
	\\
	&\quad+
	\tautwo^2
	\cdot\weil{\brK}(S)
	\Theta_{\brK,2}(-\tau^{-1},-\vect{\borbeta},\vect{\boralpha},\borw,\polw{\vect{\alpha},\borw}{2}{0})
	\Big),
	\ea
	where~$S=\big(\begin{smallmatrix}
	0 & -I_2\\
	I_2 & 0
	\end{smallmatrix}\big)$.
	With the same procedure, one can show also that
	\ba\label{eq;ThetaLor2withFtransf11}
	\Theta_{\brK,2}(\tau,\vect{\boralpha},\vect{\borbeta},\borw,\polw{\vect{\alpha},\borw}{1}{1})
	&=
	\det(\tau)^{-b/2-2}
	\\
	&\quad\times
	\Big(
	2\tautwo\tauthree\cdot \weil{\brK}(S)\Theta_{\brK,2}(-\tau^{-1},-\vect{\borbeta},\vect{\boralpha},\borw,\polw{\vect{\alpha},\borw}{0}{2})\\
	&\quad+
	(\tauone\tauthree + \tautwo^2)\cdot
	\weil{\brK}(S)\Theta_{\brK,2}(-\tau^{-1},-\vect{\borbeta},\vect{\boralpha},\borw,\polw{\vect{\alpha},\borw}{1}{1})
	\\
	&\quad+
	2\tauone\tautwo\cdot 
	\weil{\brK}(S)\Theta_{\brK,2}(-\tau^{-1},-\vect{\borbeta},\vect{\boralpha},\borw,\polw{\vect{\alpha},\borw}{2}{0})\Big).
	\ea
	and that
	\ba\label{eq;ThetaLor2withFtransf20}
	\Theta_{\brK,2}(\tau,\vect{\boralpha},\vect{\borbeta},\borw,\polw{\vect{\alpha},\borw}{2}{0})
	&=
	\det(\tau)^{-b/2-2}
	\\
	&\quad\times
	\Big(
	\tautwo^2\cdot \weil{\brK}(S)\Theta_{\brK,2}(-\tau^{-1},-\vect{\borbeta},\vect{\boralpha},\borw,\polw{\vect{\alpha},\borw}{0}{2})\\
	&\quad+
	\tautwo\tauone\cdot\weil{\brK}(S)\Theta_{\brK,2}(-\tau^{-1},-\vect{\borbeta},\vect{\boralpha},\borw,\polw{\vect{\alpha},\borw}{1}{1})
	\\
	&\quad+
	\tauone^2\cdot \weil{\brK}(S)\Theta_{\brK,2}(-\tau^{-1},-\vect{\borbeta},\vect{\boralpha},\borw,\polw{\vect{\alpha},\borw}{2}{0})\Big).
	\ea
	The isomorphic projection~$p\colon\disc{L}^2\to\disc{\brK}^2$ induces an isomorphism~$\CC[\disc{L}^2]\to\CC[\disc{\brK}^2]$.
	We use the latter to identify values of theta functions on isomorphic group algebras,	
	replacing~\eqref{eq;ThetaLor2withFtransf02}, \eqref{eq;ThetaLor2withFtransf11} and~\eqref{eq;ThetaLor2withFtransf20} in~\eqref{varphigenforgen2finform}.
	Since~$f(\tau)=\det \tau^{-k}\cdot\weil{\brK}(S)f(-\tau^{-1})$, we may rewrite~$\auxfunplus_{2}(\tau,\vect{\boralpha},\vect{\borbeta},\borw)$ as 
	\begin{align}\label{eq;rewritingeverlongfgen2bis}
	&\auxfunplus_{2}(\tau,\vect{\boralpha},\vect{\borbeta},\borw)
	=
	\det y^{-2} \cdot |\det\tau|^{-2k}\\
	&\times\Big[\Big( \nonumber
	(\yone \tauthree - \ytwo \tautwo)^2\cdot\overline{\tauthree}^2
	+2
	(\ythree \tautwo - \ytwo \tauthree)(\yone \tauthree - \ytwo \tautwo)\overline{\tautwo}\overline{\tauthree}+(\ythree \tautwo - \ytwo \tauthree)^2 \overline{\tautwo}^2
	\Big)\\
	&\times\big\langle f(-\tau^{-1} , \Theta_{\brK,2}(-\tau^{-1},-\vect{\borbeta},\vect{\boralpha},\borw,\polw{\vect{\alpha},\borw}{0}{2})\big\rangle \nonumber
	\\
	&+\Big( \nonumber
	(\yone \tauthree - \ytwo \tautwo)^2\overline{\tautwo}\overline{\tauthree}
	+(\ythree \tautwo - \ytwo \tauthree)(\yone \tauthree - \ytwo \tautwo)(\overline{\tauone}\overline{\tauthree}+\overline{\tautwo}^2)+(\ythree \tautwo - \ytwo \tauthree)^2\overline{\tautwo}\overline{\tauone}
	\Big)
	\\
	&\times\big\langle f(-\tau^{-1} , \Theta_{\brK,2}(-\tau^{-1},-\vect{\borbeta},\vect{\boralpha},\borw,\polw{\vect{\alpha},\borw}{1}{1})\big\rangle \nonumber
	\\
	&+\Big( \nonumber
	(\yone \tauthree - \ytwo \tautwo)^2\overline{\tautwo}^2
	+2(\ythree \tautwo - \ytwo \tauthree)(\yone \tauthree - \ytwo \tautwo)\overline{\tauone}\overline{\tautwo}+(\ythree \tautwo - \ytwo \tauthree)^2\overline{\tauone}^2
	\Big)
	\\
	&\times\big\langle f(-\tau^{-1} , \Theta_{\brK,2}(-\tau^{-1},-\vect{\borbeta},\vect{\boralpha},\borw,\polw{\vect{\alpha},\borw}{2}{0})\big\rangle\Big]. \nonumber
	\end{align}
	The factor multiplying
	\[
	\det y^{-2}\cdot|\det\tau|^{-2k}\big\langle f(-\tau^{-1} , \Theta_{\brK,2}(-\tau^{-1},-\vect{\borbeta},\vect{\boralpha},\borw,\polw{\vect{\alpha},\borw}{0}{2})\big\rangle
	\]
	\textcolor{\mycolor}{in the first summand} on the right-hand side of~\eqref{eq;rewritingeverlongfgen2bis} equals
	\ba\label{eq;factinfrofTlor202}
	\yone^2|\tauthree|^4 + 2\yone \ythree |\tautwo|^2 |\tauthree|^2 + 2 \ytwo^2 |\tautwo|^2 |\tauthree|^2 + \ythree^2|\tautwo|^4
	+ 2\ytwo^2 \Re(\tautwo^2\overline{\tauthree}^2) -\\
	- 4\yone\ytwo|\tauthree|^2\Re(\tautwo\overline{\tauthree}) -4\ytwo\ythree|\tautwo|^2\Re(\tautwo\overline{\tauthree}).
	\ea
	We verify that it equals~$[\tau y^{-1}\bar{\tau}]_{2,2}^2\cdot\det y^2$.
	We compute the latter via~\eqref{eq;formtimt-1bart21} as
	\ba\label{eq;compty-1bartpower2}
	\yone^2|\tauthree|^4 + 2\yone \ythree |\tautwo|^2 |\tauthree|^2 + \ythree^2|\tautwo|^4
	- 4\yone \ytwo|\tauthree|^2\Re(\tautwo\overline{\tauthree}) -\\
	-4\ytwo \ythree|\tautwo|^2\Re(\tautwo\overline{\tauthree}) + 4\ytwo^2 \big(\Re(\tauthree\overline{\tautwo})\big)^2.
	\ea
	Recall that if $a,b\in\CC$, then $2\big(\Re(a b)\big)^2 = \Re(a^2b^2) + |a|^2 |b|^2$.
	This implies that
	\bes
	4\ytwo^2 \Re(\tauthree\overline{\tautwo}) = 2 \ytwo^2|\tautwo|^2|\tauthree|^2 + 2\ytwo^2\Re(\tautwo^2\overline{\tauthree}^2),
	\ees
	and hence that~\eqref{eq;compty-1bartpower2} equals~\eqref{eq;factinfrofTlor202}.
	
	We skip the computation of the factors in front of the remaining two scalar products computed on the right-hand side of~\eqref{eq;rewritingeverlongfgen2bis},
	since the procedure is analogous to the previous one.
	
	\textbf{Case~$\boldsymbol{\htot=1}$:} It is analogous to the case of~$\htot=2$.
	For this reason, we skip it.
	
	\textbf{Case~$\boldsymbol{\htot=0}$:}
	It is enough to check that
	\bas
	\Theta_{\brK,2}(\tau,\vect{\boralpha}, \vect{\borbeta},\borw,\polw{\vect{\alpha},\borw}{0}{0})&=\det \tau^{-b/2-2}
	\weil{\brK}(S)\Theta_{\brK,2}(-\tau^{-1},-\vect{\borbeta},\vect{\boralpha},\borw,\polw{\vect{\alpha},\borw}{0}{0}).
	\eas
	This can be done using the Poisson summation formula, as we did above for the case~$\htot=2$, together with Lemma~\ref{lemma;someFtransfofabcdh12lor}.
	In fact, since the polynomial~$\polw{\vect{\alpha},\borw}{0}{0}$ is very homogeneous of degree~$(2,0)$ by Lemma~\ref{lemma;gen2funnypol}, such a theta function is \emph{modular}; cf.\ Remark~\ref{rem;trformsiegthv2}.
	\end{proof}
	We are now ready to prove~Theorem~\ref{thm;unfongen2}.
	\begin{proof}[Proof of Theorem~\ref{thm;unfongen2}]
	By Corollary~\ref{cor;genborthmsplitthetaLklin} it is enough to prove that
	\ba\label{eq;ideagen2unftr}
	\hfunct_{\vect{\alpha}}&{}(M\cdot\tau,g)
	=
	\frac{\det y^{k}}{2 u_{z^\perp}^2}
	\sum_{r\ge 1}
	\sum_{\hone,\htwo\textcolor{\mycolor}{=0}}^{\textcolor{\mycolor}{2}}
	\Big(\frac{r}{2i}\Big)^{\hone+\htwo}\big[\big(c\tau + d\big)y^{-1}\big]_1^{\hone}\big[\big(c\tau+d\big)y^{-1}\big]_2^{\htwo}
	\\
	&\times \exp\Big(-\frac{\pi r^2}{2 u_{z^\perp}^2}\trace \big(c\tau + d\big)^t\big(c\bar{\tau}+d\big)y^{-1}\Big)
	\big\langle f(\tau) , 
	\Theta_{\brK,2}(\tau,r\mu d,-r\mu c,\borw,\polw{\vect{\alpha},\borw}{\hone}{\htwo})\big\rangle,
	\ea
	for every $M=\big(\begin{smallmatrix}* & *\\ C & D\end{smallmatrix}\big)\in\kling\backslash\Sp_4(\ZZ)$, where $c$ (resp.~$d$) is the last row of $C$ (resp.~$D$).
	
	Since~$\polw{\vect{\alpha},\borw}{\hone}{\htwo}$ is very homogeneous only when it is zero or $\hone=\htwo=0$ by Lemma~\ref{lemma;gen2funnypol}, if we compute~$\polw{\vect{\alpha},\borw}{\hone}{\htwo}\big(\borw(\vect{\genvec}\cdot N)\big)$ for some $N\in\CC^{2\times 2}$, in general we do not obtain a multiple of $\polw{\vect{\alpha},\borw}{\hone}{\htwo}\big(\borw(\vect{\genvec})\big)$.
	In fact, the result is a linear combination of polynomials~$\polw{\vect{\alpha},\borw}{\hone'}{\htwo'}\big(\borw(\vect{\genvec})\big)$ such that~$\hone'+\htwo'=\hone+\htwo$, where the linear coefficients depend on the entries of the matrix~$N$; see Lemma~\ref{lemma;nonhomogofderpol}.
	This remark leads us to gather all summands of~$\hfunct_{\vect{\alpha}}$ appearing in~\eqref{eq;auxhfunctgen2} that have the same sum $h_1+h_2$, defining an auxiliary function~$\eta_{\htot}$ as
	\bas
	\eta_{\htot}(\tau,\borw)\textcolor{\newcolor}{\coloneqq}
	\sum_{\substack{\hone,\htwo\\ \hone+\htwo=\htot}}
	[y^{-1}]^{\hone}_{2,1} \cdot [y^{-1}]^{\htwo}_{2,2}
	\cdot
	\big\langle f(\tau),
	\Theta_{\brK,2}(\tau,(0,r\mu),0,\borw,\polw{\vect{\alpha},\borw}{\hone}{\htwo})\big\rangle.
	\eas
	In this way, we may rewrite~$\hfunct_{\vect{\alpha}}$ as
	\bas
	\hfunct_{\vect{\alpha}}(\tau,g)=\frac{\det y^{k}}{2\genU_{z^\perp}^2}\sum_{r\ge 1} \exp\Big(
	-\frac{\pi r^2}{2\genU_{z^\perp}^2}[y^{-1}]_{2,2}
	\Big) \sum_{\htot}
	\left(\frac{r}{2i}\right)^{\htot} \eta_{\htot}(\tau,\borw).
	\eas
	Therefore, we have
	\begin{align*}
	\hfunct_{\vect{\alpha}}(M\cdot\tau,g)
	&=
	\frac{\det\big(\Im(M\cdot\tau)\big)^{k}}{2\genU_{z^\perp}^2}
	\sum_{r\ge1}\exp\Big(-\frac{\pi r^2}{2\genU_{z^\perp}^2}\trace(c\tau+d)^t (c\bar{\tau}+d)y^{-1}\Big)
	\\
	&\quad\times
	\sum_{\htot}\left(\frac{r}{2i}\right)^{\htot}\eta_{\htot}(M\cdot\tau,\borw)
	\\
	&=
	\frac{\det y^{k}}{2u_{z^\perp}^2}\sum_{r\ge1}\exp\Big(-\frac{\pi r^2}{2\genU_{z^\perp}^2}\trace(c\tau+d)^t (c\bar{\tau}+d)y^{-1}\Big)
	\\
	&\quad\times\sum_{\htot}\left(\frac{r}{2i}\right)^{\htot}
	|\det(C\tau+D)|^{-2k}
	\eta_{\htot}(M\cdot \tau,\borw).
	\end{align*}
	We prove~\eqref{eq;ideagen2unftr} by showing that
	\ba\label{eq;proofunfgen2possvarphi}
	\eta_{\htot}(M\cdot\tau,\borw)
	=&\,{}|\det(C\tau+D)|^{2k}
	\sum_{\substack{\hone,\htwo\\ \hone+\htwo=\htot}}[(c\tau+d)y^{-1}]_1^{\hone}\cdot[(c\tau+d)y^{-1}]_2^{\htwo}
	\\
	&\times
	\big\langle
	f(\tau) , \Theta_{\brK,2}(\tau,r\mu d,-r\mu c,\borw,\polw{\vect{\alpha},\borw}{\hone}{\htwo})\big\rangle,
	\ea
	for every $0\le \htot\le 2$.
	Since~$\Sp_4(\ZZ)$ is generated by the matrices of the form
	\bes
	T_B=\big(\begin{smallmatrix}
	I_2 & B\\ 0 & I_2
	\end{smallmatrix}\big),\text{ where $B=B^t\in\ZZ^{2\times 2}$,}
	\qquad\text{and}\qquad
	S=\big(\begin{smallmatrix}
	0 & -I_2\\ I_2 & 0
	\end{smallmatrix}\big),
	\ees
	it is enough to check~\eqref{eq;proofunfgen2possvarphi} for such generators.
	For~$T_B$, this is implied by the trivial identity
	\bes
	\Theta_{\brK,2}(\tau+B,\vect{\boralpha}+\vect{\borbeta} B,\vect{\borbeta},\borw,\polw{\vect{\alpha},\borw}{\hone}{\htwo})=\Theta_{\brK,2}(\tau,\vect{\boralpha},\vect{\borbeta},\borw,\polw{\vect{\alpha},\borw}{\hone}{\htwo}),
	\ees
	which holds for every~$\vect{\boralpha},\vect{\borbeta}\in(\brK\otimes\RR)^2$, even if~$\polw{\vect{\alpha},\borw}{\hone}{\htwo}$ is non-very homogeneous.
	
	We now show~\eqref{eq;proofunfgen2possvarphi} for~$M=S$.
	That equality simplifies to
	\ba\label{eq;wanttoprovegen2forunf}
	\eta_{\htot}(-\tau^{-1},\borw)=|\det\tau|^{2k}\auxfunplus_{\htot}(\tau,0,-(0,r\mu),\borw),
	\ea
	where~$\auxfunplus_{\htot}$ is the auxiliary function of Definition~\ref{def;gen2auxfunctvaphih+}.
	Since the identity
	\[
	(C\bar{\tau}+D)^t\Im(M\tau)(C\tau+D)=\Im(\tau),
	\]
	read with~${M=S}$, may be rewritten as~$\Im(-\tau^{-1})^{-1}=\tau y^{-1}\bar{\tau}$, we may compute~$\eta_{\htot}(-\tau^{-1},\borw)$ as
	\begin{align*}
	\eta_{\htot}(-\tau^{-1},\borw)
	=&{}
	\sum_{\substack{\hone,\htwo\\ \hone+\htwo=\htot}}[\tau y^{-1}\bar{\tau}]_{2,1}^{\hone}\cdot [\tau y^{-1}\bar{\tau}]_{2,2}^{\htwo}
	\\
	&\times
	\big\langle f(-\tau^{-1}) , \Theta_{\brK,2}(-\tau^{-1},(0,r\mu),0,\borw,\polw{\vect{\alpha},\borw}{\hone}{\htwo})\big\rangle.
	\end{align*}
	Hence, the identity we want to prove, namely~\eqref{eq;wanttoprovegen2forunf}, can be now rewritten as
	\bas
	\sum_{\substack{\hone,\htwo\\ \hone+\htwo=\htot}}[\tau y	^{-1}\bar{\tau}]_{2,1}^{\hone}\cdot [\tau y^{-1}\bar{\tau}]_{2,2}^{\htwo}\cdot\big\langle f(-\tau^{-1}) , \Theta_{\brK,2}(-\tau^{-1},(0,r\mu),0,\borw,\polw{\vect{\alpha},\borw}{\hone}{\htwo})\big\rangle
	\\
	=|\det \tau|^{2k} \auxfunplus_{\htot}(\tau,0,-(0,r\mu),\borw).
	\eas
	Theorem~\ref{thm;transfThetaL2genlor} concludes the proof.
	\end{proof}
	
	By Theorem~\ref{thm;unfongen2} we may then unfold the defining integrals~$\intfunctshort$ of the genus~$2$ Kudla--Millson lift as
	\ba\label{eq;splitsumC12habcdunfolded}
	\intfunctshort(g)
	=&{}
	\int_{\Sp_4(\ZZ)\backslash\HH_2}\frac{\det y^{k}}{2 u_{z^\perp}^2} \big\langle f(\tau) , \Theta_{\brK,2}(\tau,\borw,\polw{\vect{\alpha},\borw}{0}{0})\big\rangle
	\,
	\frac{dx\,dy}{\det y^3}
	\\
	& +
	2\int_{\kling\backslash\HH_2} \hfunct_{\vect{\alpha}}(\tau,g)\,\frac{dx\,dy}{\det y^3}.
	\ea
	\subsection{Fourier series of unfolded integrals}\label{sec;deg2KMliftgo2}
	In this section we compute the Fourier expansion of the defining integrals~$\intfunctshort$ of the Kudla--Millson lift~$\KMlift$, for every tuple of indices~${\vect{\alpha}=(\alphaone,\alphatwo,\betaone,\betatwo)}$ with~$\alphaone<\betaone$ and~$\alphatwo<\betatwo$.
	
	By Theorem~\ref{thm;unfongen2}, using the fundamental domain~\eqref{eq;fdomkling} of~$\HH_2$ with respect to the action of the Klingen parabolic subgroup~$\kling$, we may rewrite the last term of the right-hand side of~\eqref{eq;splitsumC12habcdunfolded} as
	\ba\label{eq;rewritunfoldintgen2habcdgo2}
	2&\int_{\kling\backslash\HH_2} \hfunct_{\vect{\alpha}}(\tau,g)\frac{dx\,dy}{\det y^3}
	\\
	&=\int_{(\tauone,\tautwo)\in\Gamma^J\backslash\HH\times\CC}
	\int_{\ythree=\ytwo^2/\yone}^\infty
	\int_{\xthree=0}^1
	\frac{\det y^{k}}{\genU_{z^\perp}^2}\sum_{r\ge 1} \sum_{\hone,\htwo}
	\left(\frac{r}{2i}\right)^{\hone+\htwo}  [y^{-1}]^{\hone}_{2,1} \cdot [y^{-1}]^{\htwo}_{2,2}
	\\
	&\quad\times\exp\Big(
	-\frac{\pi r^2}{2\genU_{z^\perp}^2}[y^{-1}]_{2,2}
	\Big)
	\big\langle f(\tau) , \Theta_{\brK,2}\big(\tau,(0,r\mu),0,\borw,\polw{\vect{\alpha},\borw}{\hone}{\htwo}\big)\big\rangle \,\frac{dx\,dy}{\det y^3},
	\ea
	where $\tau=\left(\begin{smallmatrix}
	\tauone & \tautwo\\
	\tautwo & \tauthree
	\end{smallmatrix}\right)\in\HH_2$, with analogous notation for its real part~$x$ and imaginary part~$y$.
	We are going to replace in~\eqref{eq;rewritunfoldintgen2habcdgo2} the vector-valued Siegel cusp form~$f\in S^k_{2,L}$ with its Fourier--Jacobi expansion, and rewriting the genus~$2$ Siegel theta function~$\Theta_{\brK,2}$ in terms of the Jacobi theta functions introduced in Section~\ref{subsec:JacobiForms}.

	For every~$\lambda\in \brK'$, the Jacobi theta function~$\Theta_{\brK, \lambda}$ is valued in~$\CC[\disc{\brK}]$.
	Consider the natural inclusion
	\be\label{eq;inclusionoP}
	\CC[\disc{\brK}] \longrightarrow \CC[\disc{\brK}^2],\qquad \mathfrak{e}_\gendisc \longmapsto\mathfrak{e}_{(0,\gendisc)}.
	\ee
	Under this identification we may regard~$\Theta_{\brK, \lambda}$ as a~$\CC[\disc{\brK}^2]$-valued function.
	Let $\vect{\nu} = 0$. By Lemma~\ref{lem:SublatticePolynomialVeryHomogeneous} we may rewrite
	\ba\label{eq;rewritSiegthet2withjac}
		\,&\sum_{\hone + \htwo = \htot} \Big(-\frac{y_2}{y_1}\Big)^{\hone}
		\Theta_{\brK, 2}(\tau, \vect{\delta}, 0, \borw, \calP_{\vect{\alpha}, \borw, \hone, \htwo})
		\\
		&= \Big(\frac{\det y}{\yone}\Big)^{1/2} \sum_{\gendisc\in\disc{\brK}}
		\sum_{\lambda \in \gendisc + \brK}
		e\Big(-2 i q\big(\lambda_w\big) \frac {\det y}{y_1} - \Big(\lambda, \delta_2\Big)\Big)
		\\
		&\quad\times\Theta_{\brK, \lambda}\Big(\tau_1, \tau_2, \delta_1, 0, \borw, \exp\big(-\frac{y_1}{\det y} \Delta_2\big)
		\calP_{\vect{\alpha}, \borw, 0, \htot}\big( \cdot, \borw(\lambda)\big)\Big)
		\otimes
		\frake_{(\gendisc, 0)}\big(q(\lambda) \tauthree\big),
	\ea
	where~$\Delta_2$ is the Laplacian of the second copy of~$\RR^{b,2}$ in~$(\RR^{b,2})^2$; see Section~\ref{subsec:JacobiForms} for a similar result.
	
	Note that the left-hand side of~\eqref{eq;rewritSiegthet2withjac} is a combination of genus~$2$ Siegel theta functions twisted by a coefficient that is actually a function of~$\Im(\tau)$.
	Moreover, also the polynomial argument of the Jacobi theta functions~$\Theta_{\brK, \lambda}$ depends on~$\Im(\tau)$.
	
	We denote the Fourier--Jacobi expansion of~$f$ by
	\ba\label{eq;FJexpcuspf}
	f(\tau) &= \sum_{\gendisc\in\disc{L}} \sum_{\substack{m \in q(\gendisc) + \IZ \\ m > 0}} \phi_{\gendisc, m}(\tau_1, \tau_2) e(m \tau_3) \frake_{(h, 0)}
	\\
	&=\sum_{\gendisc\in\disc{L}} \sum_{\substack{m \in q(\gendisc) + \IZ \\ m > 0}} \phi_{\gendisc, m}(\tau_1, \tau_2) e(m \xthree) \exp(-2\pi m \ythree) \frake_{(h, 0)},
	\ea
	for some vector-valued Jacobi form~$\phi_{\gendisc,m}$ as introduced in Section~\ref{subsec:JacobiForms}.
	The values of~$\phi_{\gendisc,m}$ are in~$\CC[\disc{L}]$, and are considered as values in~$\CC[\disc{L}^2]$ by means of~\eqref{eq;inclusionoP}.

	We are now ready to illustrate the main result of this section.
	Its counterpart in genus~$1$ is~\cite[Theorem~$5.5$]{zuffetti;gen1}.
	We suggest the reader to recall the definition of~$\intfunctshort$ from~\eqref{eq;defintKMliftgen2}, the construction of the polynomials~$\pol_{\vect{\alpha},\borw,\hone,\htwo}$ and~$\pol_{\vect{\alpha},\borw,\htot}$ from Definition~\ref{def;genborpolgenus2}\textcolor{\mycolor}{, and the vector~$\mu$ from~\eqref{eq;defmu}}. 
	\begin{thm}\label{thm;Fexpgen2}
	Suppose that~$L$ splits off a hyperbolic plane as in~\eqref{eq;choiceofuu'gen2}.
	Then~$\intfunctshort$ admits a Fourier expansion of the form~$\intfunctshort(g)=\sum_{\lambda\in\brK} c_\lambda(g)\cdot e\big( (\lambda,\mu) \big)$.
	Let~$\gendisc\in\disc{\brK}$ and~$\lambda\in\gendisc + \brK$.
	If~$q(\lambda)>0$, then the Fourier coefficient of~$\intfunctshort$ associated to~$\lambda$ is
	\ba\label{eq;Fcoeffgen2KMliftgen2}
	c_\lambda(g)
	&=
	\sum_{t\ge 1, t|\lambda} \int_{(\tauone,\tautwo)\in\Gamma^J\backslash\HH\times\CC} \int_{\ythree=\ytwo^2/\yone}^\infty \nonumber
	\sum_{\htot}\frac{\det y^{k-5/2-\htot}}{\genU_{z^\perp}^2}\Big(\frac{t}{2i}\Big)^{\htot}
	\\
	&\quad\times
	\yone^{\htot-1/2} \nonumber
	\exp\Big(-\frac{\pi t^2\yone}{2\genU_{z^\perp}^2\det y}
		 -\frac{2\pi}{t^2}\Big(
		 \lambda_{w^\perp}^2\ythree + \lambda_w^2\frac{\ytwo^2}{\yone}
		 \Big)
		 \Big)\cdot
		 \Big\langle
		 \phi_{\gendisc/t , q(\lambda)/t^2}(\tauone,\tautwo)
		 ,
		 \\
		 &\qquad\nonumber
		 \Theta_{\brK,\lambda/t}\big(\tauone,\tautwo,\borw,\exp\big(-\yone\Delta_2\det y^{-1}\big)\polw{\vect{\alpha},\borw}{0}{\htot}(\cdot,\borw(\lambda)/t)\big)
		 \Big\rangle
		 \,d\xone\dots d\ythree,
	\ea
	where we say that an integer~$t\ge1$ divides~$\lambda\in\brK'$, in short~$t|\lambda$, if and only if~$\lambda/t$ is still in~$\brK'$, and denote by~$h/t$ the class~$\lambda/t+\brK \in \disc{\brK}$.
	
	The Fourier coefficient of~$\intfunctshort$ associated to~$\lambda=0$, i.e. the constant term of the Fourier series, is
	\ba\label{eq;intconstcoefgen2}
	c_0(g)
	=
	\int_{\Sp_4(\ZZ)\backslash\HH_2}\frac{\det y^{k}}{2 u_{z^\perp}^2}
	\big\langle f(\tau) , \Theta_{\brK,2}(\tau,\borw,\polw{\vect{\alpha},\borw}{0}{0})\big\rangle
	\frac{dx\,dy}{\det y^3}.
	\ea
	
	In all remaining cases, the Fourier coefficients vanish.
	\end{thm}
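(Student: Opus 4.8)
The plan is to take the decomposition~\eqref{eq;splitsumC12habcdunfolded} as the starting point. It already expresses $\intfunctshort(g)$ as the sum of the integral in~\eqref{eq;intconstcoefgen2}, which is by definition the constant term $c_0(g)$, and of the unfolded contribution $2\int_{\kling\backslash\HH_2}\hfunct_{\vect{\alpha}}(\tau,g)\,\frac{dx\,dy}{\det y^3}$. Hence it suffices to extract the Fourier coefficients $c_\lambda(g)$ with $\lambda\ne 0$ from this second term. Using the fundamental domain~\eqref{eq;fdomkling} for $\kling$ acting on $\HH_2$, this term is the iterated integral~\eqref{eq;rewritunfoldintgen2habcdgo2}, over $(\tauone,\tautwo)\in\Gamma^J\backslash\HH\times\CC$, then $\ythree\in(\ytwo^2/\yone,\infty)$, then $\xthree\in[0,1]$. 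All interchanges of summation and integration performed below are justified by the absolute convergence of the genus~$2$ Siegel and of the Jacobi Siegel theta series.

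First I would reorganize the double sum over $\hone,\htwo$ in~\eqref{eq;auxhfunctgen2} into a sum over $\htot=\hone+\htwo$. Since $[y^{-1}]_{2,1}=-\ytwo/\det y$ and $[y^{-1}]_{2,2}=\yone/\det y$, one has $[y^{-1}]_{2,1}^{\hone}[y^{-1}]_{2,2}^{\htwo}=(\yone/\det y)^{\htot}(-\ytwo/\yone)^{\hone}$, so the inner $\hone$-sum is exactly the combination $\sum_{\hone+\htwo=\htot}(-\ytwo/\yone)^{\hone}\Theta_{\brK,2}(\tau,(0,r\mu),0,\borw,\polw{\vect{\alpha},\borw}{\hone}{\htwo})$ to which the Fourier--Jacobi rewriting~\eqref{eq;rewritSiegthet2withjac} (itself an application of Lemma~\ref{lem:SublatticePolynomialVeryHomogeneous}) applies. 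This replaces the genus~$2$ Siegel theta functions by the Jacobi Siegel theta functions $\Theta_{\brK,\lambda}$ carrying $\exp(-\yone\Delta_2/\det y)\polw{\vect{\alpha},\borw}{0}{\htot}(\cdot,\borw(\lambda))$, and produces the character $\frake_{(\gendisc,0)}(q(\lambda)\tauthree)$ in the $\tauthree$-variable. Collecting $\det y^{k}$ from~\eqref{eq;liftingLambda2vv}, the factor $(\yone/\det y)^{\htot}$ just produced, the factor $(\det y/\yone)^{1/2}$ from~\eqref{eq;rewritSiegthet2withjac}, and $\det y^{-3}$ from the measure, gives the prefactor $\det y^{k-5/2-\htot}\yone^{\htot-1/2}$ of~\eqref{eq;Fcoeffgen2KMliftgen2}.

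Next I would insert the Fourier--Jacobi expansion~\eqref{eq;FJexpcuspf} of $f$ and perform the $\xthree$-integration. As the pairing is conjugate-linear in the theta slot, $f$ supplies $e(m\xthree)\exp(-2\pi m\ythree)$ while $\overline{\Theta_{\brK,2}}$ supplies $e(-q(\lambda)\xthree)\exp(-2\pi q(\lambda)\ythree)$, whence $\int_0^1 e\big((m-q(\lambda))\xthree\big)\,d\xthree$ vanishes unless $m=q(\lambda)$. This selects the Fourier--Jacobi coefficient $\phi_{\gendisc,q(\lambda)}$, and since $f$ is a cusp form, with only terms $m>0$, it forces $q(\lambda)>0$; in particular the coefficient attached to any $\lambda\ne0$ with $q(\lambda)\le0$ vanishes, which is the last assertion of the theorem. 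What remains is the integral over $(\tauone,\tautwo)$ and $\ythree$ of the Jacobi pairing of $\phi_{\gendisc,q(\lambda)}$ with $\Theta_{\brK,\lambda}$.

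Finally I would carry out the reindexing yielding the divisor sum and recombine the Gaussian factors, which is the main obstacle. The conjugate of the phase $e(-(\lambda,\delta_2))=e(-r(\lambda,\mu))$ from~\eqref{eq;rewritSiegthet2withjac} is $e(r(\lambda,\mu))$, so setting the Fourier index equal to $r\lambda$ produces the character $e((\,\cdot\,,\mu))$ of the claimed expansion; writing $t=r$ and renaming the index back to $\lambda$ turns the condition $r\lambda\in r\brK$ into $t\mid\lambda$, while $q(\lambda/t)=q(\lambda)/t^2$ and the shift of the discriminant class turn $\phi_{\gendisc,q(\lambda)}$ and $\Theta_{\brK,\lambda}$ into $\phi_{\gendisc/t,q(\lambda)/t^2}$ and $\Theta_{\brK,\lambda/t}$. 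The delicate point is the recombination of the exponentials sitting \emph{outside} $\Theta_{\brK,\lambda}$: the decay $\exp(-4\pi q(\lambda)\ythree)$ produced by $f\cdot\overline{\Theta_{\brK,2}}$ after the mode selection, the factor $e(-2iq(\lambda_w)\det y/\yone)=\exp\big(4\pi q(\lambda_w)(\ythree-\ytwo^2/\yone)\big)$ from~\eqref{eq;rewritSiegthet2withjac}, and the factor $\exp\big(-\pi r^2[y^{-1}]_{2,2}/2\genU_{z^\perp}^2\big)=\exp\big(-\pi r^2\yone/2\genU_{z^\perp}^2\det y\big)$ from~\eqref{eq;auxhfunctgen2}. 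Splitting $q(\lambda)=q(\lambda_{w^\perp})+q(\lambda_w)$ and using $\det y/\yone=\ythree-\ytwo^2/\yone$, the two $q(\lambda_w)\ythree$-contributions cancel and one is left, after the scaling $\lambda\mapsto\lambda/t$, precisely with $\exp\big(-\tfrac{\pi t^2\yone}{2\genU_{z^\perp}^2\det y}-\tfrac{2\pi}{t^2}(\lambda_{w^\perp}^2\ythree+\lambda_w^2\ytwo^2/\yone)\big)$, the internal Gaussian of $\Theta_{\brK,\lambda/t}$ being carried along and not counted again. Keeping consistent track of the conjugations and of the $w^\perp\oplus w$ splitting throughout this bookkeeping is the only real difficulty; everything else is direct substitution, and assembling the pieces gives~\eqref{eq;Fcoeffgen2KMliftgen2}.
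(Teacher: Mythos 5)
Your proposal is correct and follows essentially the same route as the paper's proof: starting from the unfolding~\eqref{eq;splitsumC12habcdunfolded}, regrouping the $(\hone,\htwo)$-sum by $\htot$ so that the Fourier--Jacobi rewriting~\eqref{eq;rewritSiegthet2withjac} applies, inserting the Fourier--Jacobi expansion of $f$, selecting $m=q(\lambda)$ via the $\xthree$-integration, and reindexing $r\lambda\mapsto\lambda$ to obtain the divisor sum; your bookkeeping of the prefactor $\det y^{k-5/2-\htot}\yone^{\htot-1/2}$ and of the cancellation of the $q(\lambda_w)\ythree$-terms matches the paper's computation exactly.
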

	The Kudla--Millson lift of a Siegel cusp form is a function of~$g\in G=\SO(L\otimes\RR)$.
	The Fourier expansion of such a function follows from its invariance with respect to translations on the tube domain model of~$\Gr(L)$ given by certain Eichler transformations.
	This method is well-known, and can be employed by choosing an Iwasawa decomposition of~$G$; see~\cite[Section~$4$]{zuffetti;gen1} for details, and~\cite[Theorem~$5.5$]{zuffetti;gen1} for its application to the Kudla--Millson lift in genus~$1$.
	Alternatively, it is possible to use Lemma~\ref{lemma;auxlemmaforchfe} to check the invariance with respect to Eichler transformations of the form~$E(\genU,\lambda')$, with~$\lambda'\in \brK'$; see the end of Section~\ref{sec:JacobiThetaInnerProducts} for details.

	\begin{proof}[Proof of Theorem~\ref{thm;Fexpgen2}]
	We consider the unfolding~\eqref{eq;splitsumC12habcdunfolded} of the defining integrals~$\intfunctshort$.
	The first summand on the right-hand side of~\eqref{eq;splitsumC12habcdunfolded} is the constant term of the Fourier expansion of~$\intfunctshort$.
	This can be easily showed as in the genus~$1$ case; see~\cite[Theorem~$5.5$]{zuffetti;gen1}.
	
	We compute the Fourier expansion of the second summand appearing on the right-hand side of~\eqref{eq;splitsumC12habcdunfolded}.
	First of all, we compute the series expansion of the scalar product of~$f$ with the left-hand side of~\eqref{eq;rewritSiegthet2withjac}, evaluated at~$(\vect{\boralpha},\vect{\borbeta})=((0,r\mu),0)$, with respect to the third entry~${\tau_3=x_3+iy_3}$ of~${\tau\in\HH_2}$.
	By~\eqref{eq;FJexpcuspf} and~\eqref{eq;rewritSiegthet2withjac} such a product is
	\bas
	\Big\langle &{} f(\tau) , \sum_{\hone + \htwo = \htot} \left(-\frac{y_2}{y_1}\right)^{\hone}
		\Theta_{\brK, 2}(\tau, (0,r\mu), 0, \borw, \calP_{\vect{\alpha}, \borw, \hone, \htwo})\Big\rangle
		\\
		 &=
		 \sqrt{\frac{\det y}{\yone}} \sum_{\gendisc\in\disc{\brK}}\sum_{\ell\in\QQ}\bigg(
		 \sum_{\substack{m\in q(\gendisc)+\ZZ,\, m>0\\\lambda\in\gendisc + \brK\\ m-q(\lambda)=\ell}}
		 \exp\Big(
		 -2\pi\big(m+q(\lambda)\big)\ythree
		 \Big)
		 \\
		 &\quad\times
		 e\Big(-\frac{2iq(\lambda_w)\det y}{y_1} + r(\lambda,\mu)\Big)\cdot\Big\langle\phi_{\gendisc,m}(\tauone,\tautwo) ,
		 \\
		 &\qquad
		  \Theta_{\brK,\lambda}\big(\tauone,\tautwo,\borw,\exp\big(-\yone\Delta_2\det y^{-1}\big)\polw{\vect{\alpha},\borw}{0}{\htot}(\cdot,\borw(\lambda))\big)\Big\rangle
		 \bigg)e(\ell\xthree)
	\eas
	We replace the previous formula in the defining formula of~$\hfunct_{\vect{\alpha}}$ provided by Theorem~\ref{thm;unfongen2}, deducing that
	\begin{align}\label{eq;proofintofFJcoef}
	2&\int_{\kling\backslash\HH_2} \hfunct_{\vect{\alpha}}(\tau,g)\frac{dx\,dy}{\det y^3}= 
	\int_{(\tauone,\tautwo)\in\Gamma^J\backslash\HH\times\CC} \int_{\ythree=\ytwo^2/\yone}^\infty
	\sum_{r\ge1}\sum_{\htot}\frac{\det y^{k-3-\htot}}{\genU_{z^\perp}^2}
	\\
	&\quad\times \Big(\frac{r}{2i}\Big)^{\htot}\yone^{\htot} \nonumber
	\exp\Big(-\frac{\pi r^2\yone}{2\genU_{z^\perp}^2\det y}\Big)
	\\
	&\quad\times\int_{\xthree=0}^1
	\Big\langle
	f(\tau)
	,
	\sum_{\hone+\htwo=\htot}\Big(-\frac{\ytwo}{\yone}\Big)^{\hone}
	\Theta_{\brK,2}\big(\tau,(0,r\mu),0,\borw,\polw{\vect{\alpha},\borw}{\hone}{\htwo}\big)\Big\rangle
	dx\,dy\nonumber
	\\
	&=
	\int_{(\tauone,\tautwo)\in\Gamma^J\backslash\HH\times\CC} \int_{\ythree=\ytwo^2/\yone}^\infty \nonumber
	\sum_{r\ge1}\sum_{\htot}\frac{\det y^{k-5/2-\htot}}{\genU_{z^\perp}^2}
	\Big(\frac{r}{2i}\Big)^{\htot}\yone^{\htot-1/2} \nonumber
	\exp\Big(-\frac{\pi r^2\yone}{2\genU_{z^\perp}^2\det y}\Big)
	\\
	&\quad\times\sum_{\gendisc\in\disc{\brK}}
	\sum_{\ell\in\QQ}\bigg(
		 \sum_{\substack{m\in q(\gendisc)+\ZZ,\,m>0\\ \lambda\in\gendisc + \brK\\ m-q(\lambda)=\ell}}\nonumber
		 \exp\Big(
		 -2\pi\big(m+q(\lambda)\big)\ythree
		 \Big)
		 \cdot
		 e\Big(-\frac{2iq(\lambda_w)\det y}{y_1} + r(\lambda,\mu)\Big)
		 \\
		 &\quad\times\nonumber
		 \Big\langle
		 \phi_{\gendisc,m}(\tauone,\tautwo)
		 ,
		 \Theta_{\brK,\lambda}\big(\tauone,\tautwo,\borw,\exp\big(-\yone\Delta_2\det y^{-1}\big)\polw{\vect{\alpha},\borw}{0}{\htot}(\cdot,\borw(\lambda))\big)\Big\rangle
		 \bigg)
		 \\
		 &\quad\times\nonumber
		 \int_{\xthree=0}^1
		 e(\ell\xthree)dx\,dy.
	\end{align}
	The last integral appearing on the right-hand side of~\eqref{eq;proofintofFJcoef} may be computed as
	\bes
	\int_{\xthree=0}^1 e(\ell\xthree) d\xthree=\begin{cases}
	1 & \text{if $\ell=0$,}\\
	0 & \text{otherwise.}
	\end{cases}
	\ees
	We may then simplify~\eqref{eq;proofintofFJcoef} extracting only the terms with~$\ell=0$, obtaining that
	\bas
	2 & \int_{\kling\backslash\HH_2} \hfunct_{\vect{\alpha}}(\tau,g)\frac{dx\,dy}{\det y^3}
	\\
	&=
	\sum_{\gendisc\in\disc{\brK}}
	\sum_{\lambda\in\gendisc + \brK}\int_{(\tauone,\tautwo)\in\Gamma^J\backslash\HH\times\CC} \int_{\ythree=\ytwo^2/\yone}^\infty \nonumber
	\sum_{r\ge1}\sum_{\htot}\frac{\det y^{k-5/2-\htot}}{\genU_{z^\perp}^2}\Big(\frac{r}{2i}\Big)^{\htot}
	\\
	&\quad\times \yone^{\htot-1/2} \nonumber
	\exp\Big(-\frac{\pi r^2\yone}{2\genU_{z^\perp}^2\det y}\Big)
		 \exp\Big(
		 -2\pi\lambda^2\ythree
		 \Big)\cdot
		 e\Big(-\frac{2iq(\lambda_w)\det y}{y_1} \Big)
		 \\
		 &\quad\times\nonumber
		 \Big\langle \phi_{\gendisc,q(\lambda)}(\tauone,\tautwo)
		 ,
		 \Theta_{\brK,\lambda}\big(\tauone,\tautwo,\borw,\exp\big(-\yone\Delta_2\det y^{-1}\big)\polw{\vect{\alpha},\borw}{0}{\htot}(\cdot,\borw(\lambda))\big)
		 \Big\rangle
		 \\
		 &\quad\times d\xone\,d\xtwo\,d\yone\,d\ytwo\,d\ythree\cdot e\big(r(\lambda,\mu)\big).
		 \eas
	We now gather the terms multiplying~$e\big((\lambda,\mu)\big)$, for every~$\lambda\in\brK'$.
	This can be done simply \textcolor{\mycolor}{by} replacing the sum~$\sum_{r\ge 1}$ with~$\sum_{t\ge 1, t|\lambda}$, the vector $\lambda$ with~$\lambda/t$, and~$\gendisc$ with~$\gendisc/t$.
	In this way, we obtain that
	\begin{align*}
	2 & \int_{\kling\backslash\HH_2} \hfunct_{\vect{\alpha}}(\tau,g)\frac{dx\,dy}{\det y^3}
	\\
	&=
	\sum_{\gendisc\in\disc{\brK}}\sum_{\lambda\in\gendisc + \brK}\sum_{t\ge 1, t|\lambda}\int_{(\tauone,\tautwo)\in\Gamma^J\backslash\HH\times\CC} \int_{\ythree=\ytwo^2/\yone}^\infty \nonumber
	\sum_{\htot}\frac{\det y^{k-5/2-\htot}}{\genU_{z^\perp}^2}\Big(\frac{t}{2i}\Big)^{\htot}
	\\
	& \quad \times \yone^{\htot-1/2} \nonumber
	\exp\Big(-\frac{\pi t^2\yone}{2\genU_{z^\perp}^2\det y}
		 -\frac{2\pi}{t^2}\Big(
		 \lambda_{w^\perp}^2\ythree + \lambda_w^2\frac{\ytwo^2}{\yone}
		 \Big)
		 \Big)
		 \cdot
		 \Big\langle\phi_{\gendisc/t,q(\lambda)/t^2}(\tauone,\tautwo)
		 ,
		 \\
		 & \qquad \nonumber
		 \Theta_{\brK,\lambda/t}\big(\tauone,\tautwo,\borw,\exp\big(-\yone\Delta_2\det y^{-1}\big)\polw{\vect{\alpha},\borw}{0}{\htot}(\cdot,\borw(\lambda/t))\big)\Big\rangle
		 \\
		 & \quad \times
		 d\xone\,d\xtwo\,d\yone\,d\ytwo\,d\ythree\cdot e\big((\lambda,\mu)\big).
		 \qedhere
	\end{align*}
	\end{proof}
	\section{The injectivity of the genus 2 Kudla--Millson lift}\label{sec;finalunfvvcase}
	\renewcommand{\isometrypos}{w^\perp}
	\renewcommand{\isometryneg}{w}
	\renewcommand{\subspaceisometry}{{\isometry_{\sublattice}}}
	\renewcommand{\subspaceisometryprime}{{\isometry'_{\sublattice}}}
	\renewcommand{\subspaceisometrypos}{{\tilde{w}^\perp}}
	\renewcommand{\subspaceisometryneg}{\tilde{w}}
	
	Let $L$ be an even lattice of signature $\left(b, 2\right)$ that splits off two (orthogonal) hyperbolic planes, i.e.~$b \geq 5$	and
\bes
	L = U \oplus \underbrace{U \oplus L^+}_{=\brK}\qquad\text{for some positive definite lattice $L^+$}.
	\ees
	As in the previous sections, we denote by~$\textcolor{\newcolor}{K}$ the orthogonal complement of a hyperbolic plane split off by~$L$.
	Let~$\genU$ and~$\genUU$ be a standard basis of the hyperbolic plane orthogonal to~$M$.
	Similarly, we denote by~$\genUtwo$ and~$\genUUtwo$ the standard basis vectors of the hyperbolic plane orthogonal to~$\Lpos$ in~$\textcolor{\newcolor}{K}$.
	These basis vectors are chosen such that
	\begin{align*}
		u^2 = u'^2 = \tilde{u}^2 = \tilde{u}'^2 = 0
		\qquad\text{and}\qquad
		(u, u') = (\tilde{u}, \tilde{u}') = 1.
	\end{align*}
	Without loss of generality we may assume that the orthonormal basis~$e_1, \ldots, e_{b+2}$ of~$L \otimes \IR$ is such that
	\begin{align*}
		u = \frac{e_b + e_{b+2}}{\sqrt{2}}
		 ,\qquad 
		 u' = \frac{e_b - e_{b+2}}{\sqrt{2}}
		 ,\qquad 
		\tilde{u} = \frac{e_{b-1} + e_{b+1}}{\sqrt{2}}
		\qquad
		\text{and} \qquad 
		\tilde{u}' = \frac{e_{b-1} - e_{b+1}}{\sqrt{2}}.
	\end{align*}
	
	Recall that for every isometry~$g$ we denote by~$\borw$ the linear map defined for every~${\genvec\in L\otimes\RR}$ as~$\borw(v) \coloneqq g(v_{\textcolor{\newcolor}{K} \otimes \IR})$.
	Similarly, we define $\borww(v) \coloneqq \borw(v_{L^+ \otimes \IR})$.
	For every~$z \in \Gr(L)$, let $w$ be the line in $z$ that is orthogonal to $u_z$. The base point~$z_0$ of~$\Gr(L)$ is spanned by~$e_{b+1}$ and~$e_{b+2}$.
	We write~$w_0$ for the line in~$z_0$ orthogonal to~$u_{z_0}$.
		
	We now calculate the polynomials~$\calP_{\boldsymbol{\alpha}, \borw, 0, \htot}$ occurring in the Fourier expansion of the Kudla--Millson lift under certain assumptions on~$\borw$ and the tuple of indices~$\vect{\alpha}$.
	These will eventually simplify the Fourier coefficients computed in Theorem~\ref{thm;Fexpgen2}.
	
	Let $\boldsymbol{\alpha} = \left(\alphaone, \alphatwo, \betaone, \betatwo\right)\in \{1, \ldots, b-2\}^4$ be such that
	\be\label{eq;hypothonalpha}
	\alphaone \neq \alphatwo,\qquad \betaone \neq \betatwo, \qquad \alphaone < \betaone \qquad \text{and} \qquad \alphatwo < \betatwo.
	\ee
	Recall that under these assumptions the homogeneous polynomial~$\pol_{\vect{\alpha}}$ is given by
	$$\calP_{\boldsymbol{\alpha}}\big(\boldsymbol{x}\big) = 4 \big(x_{\alphaone, 1} x_{\betaone, 2} - x_{\alphaone, 2} x_{\betaone, 1}\big) \big(x_{\alphatwo, 1} x_{\betatwo, 2} - x_{\alphatwo, 2} x_{\betatwo, 1}\big)$$
	for every~$\boldsymbol{x} = (x_{i, j})_{i,j} \in (\IR^{b, 2})^2$.
	
	Let $g$ be \emph{an isometry interchanging $e_{\alphaone}$ with $e_b$ and fixing $e_{b+2}$}. To shorten the notation, we will write $v_{i, j} \coloneqq \left(v_j, e_{i}\right)$ if $\boldsymbol{v} = \left(v_1, v_2\right) \in \left(L \otimes \IR\right)^2$. We have
	$$\calP_{\boldsymbol{\alpha}}(g(\boldsymbol{v})) 
	= 
	4 \big(v_{b, 1} g(\boldsymbol{v})_{\betaone, 2} - v_{b, 2} g(\boldsymbol{v})_{\betaone, 1}\big)
	\cdot
	\big(g(\boldsymbol{v})_{\alphatwo, 1} g(\boldsymbol{v})_{\betatwo, 2} - g(\boldsymbol{v})_{\alphatwo, 2} g(\boldsymbol{v})v_{\betatwo, 1}\big).$$
	Moreover, since $u_{z^\perp} = e_b/\sqrt{2}$ and $(v, u_{z^\perp}) = x_b/\sqrt{2}$, we deduce
	\begin{align*}
		\calP_{\boldsymbol{\alpha}}(g(\boldsymbol{v}))
		&= 
		4 \sqrt{2} \Big((v_1, u_{z^\perp}) g(\boldsymbol{v})_{\betaone, 2} - (v_2, u_{z^\perp}) g(\boldsymbol{v})_{\betaone, 1}\Big)
		\\
		&\quad\times
		\Big(g(\boldsymbol{v})_{\alphatwo, 1} g(\boldsymbol{v})_{\betatwo, 2} - g(\boldsymbol{v})_{\alphatwo, 2} g(\boldsymbol{v})_{\betatwo, 1}\Big)
		\\
		&= 4 \sqrt{2} \Big((v_2, u_{z^\perp}) \borw(\boldsymbol{v})_{\betaone, 1} - (v_1, u_{z^\perp}) \borw(\boldsymbol{v})_{\betaone, 2}\Big)
		\\
		&\quad\times
		\Big(\borw(\boldsymbol{v})_{\alphatwo, 2} \borw(\boldsymbol{v})_{\betatwo, 1} - \borw(\boldsymbol{v})_{\alphatwo, 1} \borw(\boldsymbol{v})_{\betatwo, 2}\Big).
	\end{align*}
	By comparing the previous formula with~\eqref{eq;gengen2borfupol}, we deduce that
	\begin{align*}
	&\calP_{\boldsymbol{\alpha}, \borw, 0, \htot}\left(\borw\left(\boldsymbol{v}\right)\right) \\
	&= \begin{cases}
		4 \sqrt{2} g(\boldsymbol{v})_{\betaone, 1} \cdot
		\big(\borw(\boldsymbol{v})_{\betatwo, 1} \borw(\boldsymbol{v})_{\alphatwo, 2} - \borw(\boldsymbol{v})_{\alphatwo, 1} \borw(\boldsymbol{v})_{\betatwo, 2}\big) &\mbox{if } \htot  = 1, \\
		0 &\mbox{otherwise},
	\end{cases}
	\end{align*}
	so that the corresponding polynomial for $g$ is given by
	$$\calP_{\boldsymbol{\alpha}, \borw, 0, 1}(\boldsymbol{x}) = 4 \sqrt{2} x_{\betaone, 1} \big(x_{\betatwo, 1} x_{\alphatwo, 2} - x_{\alphatwo, 1} x_{\betatwo, 2}\big),$$
	which is in fact independent of the choice of $g$ as long as it interchanges~$e_{\alphaone}$ with~$e_b$ and fixes~$e_{b+2}$.
	Since we assumed $\alphaone \neq \alphatwo$ and~$\betaone \neq \betatwo$, the polynomials $\calP_{\boldsymbol{\alpha}, \borw, 0, 1}(\boldsymbol{x})$ are harmonic with respect to $\Delta_1$, $\Delta_2$ and $\trace(\Delta y^{-1})$.
	
	Next, recall that if~$\eta\in\brK$ is of positive norm, then the Fourier coefficient~$c_\jacindexvec(g)$ of the defining integral~$\intfunctshort$ of the lift~$\KMlift(f)$ calculated in Theorem~\ref{thm;Fexpgen2} equals
	\begin{align*}
	 & \sum_{t\ge 1, t|\jacindexvec}\int_{\left(\tauone,\tautwo\right)\in\Gamma^J\backslash\HH\times\CC} \int_{\ythree=\ytwo^2/\yone}^\infty
	 \sum_{\htot}\frac{\det y^{k-5/2-\htot}}{\genU_{z^\perp}^2}\left(\frac{t}{2i}\right)^{\htot}
	 \\
	 &\times
	 \left\langle \phi_{\jacindexvec / t, q\left(\jacindexvec\right)/t^2}\left(\tauone,\tautwo\right), \Theta_{\brK,\jacindexvec/t}\Big(\tauone,\tautwo,\borw,\exp\left(-\yone\Delta_2\det y^{-1}\right)\big(\polw{\vect{\alpha},\borw}{0}{\htot}\left(\cdot,\borw\left(\jacindexvec/t\right)\right)\big)\Big)\right\rangle
	 \\
	 &\times \yone^{\htot-1/2}
	 \exp\bigg(-\frac{\pi t^2\yone}{2\genU_{z^\perp}^2\det y}
	 -\frac{2\pi}{t^2}\bigg(
	 \jacindexvec_{w^\perp}^2\ythree + \jacindexvec_w^2\frac{\ytwo^2}{\yone}
	 \bigg)
	 \bigg)
	 \,
	 d\xone\,d\xtwo\,d\yone\,d\ytwo\,d\ythree.
	\end{align*}
	We apply the change of variables $\ythree \mapsto \ythree + \ytwo^2 / \yone$ to the previous integral and obtain
	\ba\label{eq;injproofrocucv}
		&\sum_{t\ge 1, t|\jacindexvec}
		\int_{\left(\tauone,\tautwo\right)\in\Gamma^J\backslash\HH\times\CC} \int_{\ythree=0}^\infty
		\sum_{\htot}\frac{y_3^{k-5/2-\htot}}{\genU_{z^\perp}^2}\left(\frac{t}{2i}\right)^{\htot}
		\\
		&\times \Big\langle \phi_{\jacindexvec / t, q\left(\jacindexvec\right)/t^2}\left(\tauone,\tautwo\right), \Theta_{\brK,\jacindexvec/t}\Big(\tauone,\tautwo,\borw,\exp\left(-\Delta_2y_3^{-1}\right)\big(\polw{\vect{\alpha},\borw}{0}{\htot}(\cdot,\borw(\jacindexvec/t))\big)\Big) \Big\rangle
		\\
		&\times \yone^{k-3}
		\exp\bigg(-\frac{\pi t^2}{2\genU_{z^\perp}^2 y_3}
		-\frac{2\pi}{t^2}
		\jacindexvec_{w^\perp}^2\ythree
		\bigg) \exp\bigg(-\frac{2\pi \ytwo^2 \jacindexvec^2}{\yone t^2}\bigg)
		d\xone\,d\xtwo\,d\yone\,d\ytwo\,d\ythree.
	\ea
	If the Kudla--Millson lift of the Siegel cusp form~$f$ vanishes, then the Fourier coefficients of~$\intfunctshort$ vanish for all~$\boldsymbol{\alpha}$, since the wedge products~$\omegaone\wedge\omegatwo\wedge\omegathree\wedge\omegafour$ are linearly independent in~$\bigwedge^4 T_{z_0}^* \hermdom$.
	This means that the coefficients~$c_\eta$ computed in Theorem~\ref{thm;Fexpgen2} are, as functions on~$G$, identically zero.
	
	In particular~$c_\eta(g)=0$ for the special choice of~$g$ made above, i.e.\ an isometry interchanging $e_{\alphaone}$ with $e_b$ and fixing $e_{b+2}$.
	Under this assumption on~$g$ and~\eqref{eq;hypothonalpha} on~$\vect{\alpha}$, the computation given in~\eqref{eq;injproofrocucv} of the coefficient~$c_\eta(g)$ may be simplified.
	In fact, the polynomials~$\polw{\vect{\alpha},\borw}{0}{\htot}$ are harmonic, the Jacobi theta functions appearing therein is independent of $y_3$, and the integral over~$\ythree$ is positive.
	Hence, if the Kudla--Millson lift of a Siegel cusp form~$f$ vanishes, then
	\ba\label{eq:DivisorSumInnerProducts}
		& \frac{1}{\genU_{z^\perp}^2} \sum_{t\ge1,\,t|\jacindexvec} \frac{t}{2i} \int_{\left(\tauone,\tautwo\right)\in\Gamma^J\backslash\HH\times\CC} \yone^{k-3} 
		\exp\bigg(-\frac{2\pi \ytwo^2 \jacindexvec^2}{\yone t^2}\bigg)
		\\
		&\quad\times 
		\Big\langle \phi_{\jacindexvec / t, q(\jacindexvec)/t^2}(\tauone,\tautwo), \Theta_{\brK, \jacindexvec / t}
		\big(\tau_1, \tau_2,\borw,\polw{\vect{\alpha},\borw}{0}{1}\big(\cdot,\borw(\jacindexvec/t)\big)\big)\Big\rangle
		\,d\xone\,d\xtwo\,d\yone\,d\ytwo \\
		&=\frac{1}{2i \genU_{z^\perp}^2}
		\sum_{t\ge1,\,t|\jacindexvec} t \Big\langle \phi_{\jacindexvec / t, q\left(\jacindexvec\right)/t^2}, \Theta_{\brK, \jacindexvec / t}\big(\cdot,\cdot ,\borw,\polw{\vect{\alpha},\borw}{0}{1}\big(\cdot,\borw(\jacindexvec/t)\big)\big) \Big\rangle_{\Pet}
	\ea
	vanishes for all $\boldsymbol{\alpha}$ satisfying~\eqref{eq;hypothonalpha} and $\jacindexvec \in \brK'$, where $\left\langle \cdot{,} \cdot \right\rangle_{\Pet}$ is the Petersson inner product introduced in Section \ref{sec:JacobiThetaInnerProducts}.
	
	If~$\jacindexvec \in \brK'$ is primitive, then there is only one summand in~\eqref{eq:DivisorSumInnerProducts}, hence the only Petersson inner product appearing therein vanishes.
	By induction on the number of divisors on~$\jacindexvec$ we obtain the vanishing of every Petersson inner product appearing in~$\eqref{eq:DivisorSumInnerProducts}$, independently from the primitivity of~$\jacindexvec \in \brK'$.
	\textcolor{\mycolor}{ This proves Corollary~\ref{cor;introvanpetjac}.}
	
	We now show an injectivity result for the inner products in~\eqref{eq:DivisorSumInnerProducts} involving the Jacobi theta functions that we have introduced in Section \ref{sec:JacobiThetaInnerProducts}.
	
	\renewcommand{\jacindexvec}{\eta}
	\renewcommand{\sublattice}{{L^+}}
	\renewcommand{\isometry}{{g_M}}
	\renewcommand{\subspaceisometry}{{g_{\Lpos}}}
	
	\begin{thm}\label{thm:JacobiThetaLiftInjectivity}
		Let $\jacindexvec \in \sublattice'$ and let $N$ be the largest natural number dividing $\jacindexvec$. Let $\phi$ be a Jacobi cusp form of index $\jacindexvec$ with Fourier expansion
		$$\phi(\jacvarone, \jacvartwo)
		=
		\sum_{\substack{\sigma \in D_\lattice \\ r \in \IZ + q(\sigma) \\ s \in \IZ + (\sigma, \jacindexvec)}}
		a(\sigma, r, s) \cdot \frake_\lambda(r \jacvarone + s \jacvartwo).$$
		If the inner product
		\begin{align}
			\left\langle \phi, \Theta_{\brK, \jacindexvec + l \tilde{\isotropicvec}}\Big(\cdot, \cdot, \borw, \calP_{\boldsymbol{\alpha}, \borw, 0, 1}\big(\cdot, \borw\left(\jacindexvec + l \tilde{\isotropicvec}\right)\big)\Big) \right\rangle_{\Pet} \label{eq:jacinnerproduct}
		\end{align}
		vanishes for all $l = 1, \ldots, N$, for every~$\boldsymbol{\alpha}$ satisfying hypothesis~\eqref{eq;hypothonalpha} and every isometry~$g$ interchanging~$e_{\alpha_1}$ with~$e_b$ and fixing~$e_{b+2}$, then
		$$a\big(\lambda, q(\lambda), (\lambda, \jacindexvec)\big) = 0\qquad \text{for every~$\lambda \in \sublattice'$}.$$
	\end{thm}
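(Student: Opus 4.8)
The plan is to feed each inner product \eqref{eq:jacinnerproduct} into the unfolding of Theorem~\ref{thm:JacobiThetaUnfolding}, reducing $\brK$ along the \emph{second} hyperbolic plane $\langle \tilde{u},\tilde{u}'\rangle$, so that the reduction lattice is $L^+$ and $N_{\tilde{u}}=1$. Since $(\eta,\tilde{u})=0$ and $q(\eta+l\tilde{u})=q(\eta)$, each index $\eta+l\tilde{u}$ lies in $\tilde{u}^\perp$ and has the same norm as $\eta$, so the theorem applies once its polynomial hypotheses are checked. First I would verify these for $\calP_{\boldsymbol{\alpha},\borw,0,1}(\cdot,\borw(\eta+l\tilde{u}))$: the computation preceding the theorem shows this polynomial is harmonic, and an explicit evaluation of its $\tilde{u}$-reduction $\calP_{\subspaceisometry,\subpolposdeg}$ — using that the admissible indices satisfy $\boldsymbol{\alpha}\in\{1,\dots,b-2\}^4$ and therefore avoid the $\tilde{u}$-directions $e_{b-1},e_{b+1}$ — shows that only the term $\subpolposdeg=0$ survives and that, in a basis adapted to $\eta$ (say $\eta\parallel e_1$) and for indices with $\alphatwo=1<\betatwo$ and $2\le\betaone<\betatwo$, the reduction is a nonzero multiple of $x_{\betaone}x_{\betatwo}$, which is harmonic and invariant under $\lambda\mapsto\lambda+C\eta$. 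This is exactly the data needed to invoke Theorem~\ref{thm:JacobiThetaUnfolding}.

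Next, since by hypothesis \eqref{eq:jacinnerproduct} vanishes for $l=1,\dots,N$ and every admissible $g$, Theorem~\ref{thm:JacobiThetaUnfolding} forces every positive-norm Fourier coefficient $c_{\tilde{\lambda}}(g)$ of \eqref{eq:jacinnerproduct} to vanish for all $\tilde{\lambda}\in(\eta^\perp\cap L^+)'$, all $g$ and all $l$. I would then isolate the $l$-dependence in \eqref{eq;unfjacint}. The projection $\lambda\mapsto\lambda_{\eta+l\tilde{u}}$, the complement $(\eta+l\tilde{u})^\perp\cap L^+$, the norm $|\eta+l\tilde{u}|=|\eta|$, the polynomial factor and the $y_1$-integral are all independent of $l$; the sole $l$-dependence is the phase $e\!\left(l(\lambda,\eta)/\eta^2\right)$. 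Writing $\eta=N\eta_0$ with $\eta_0$ primitive, the fibre $\{\lambda\in(L^+)'/\IZ\eta:\lambda_\eta=\tilde{\lambda}\}$ has exactly $N$ elements, distinguished by the residue of $(\lambda,\eta)$ modulo $\eta^2$, on which this phase restricts to $e(ls_0/\eta^2)\,e(lj/N)$ with $j\in\IZ/N$. The vanishing for the $N$ consecutive values $l=1,\dots,N$ is thus a nonsingular Vandermonde system in $e(j/N)$; inverting it separates the fibre, so for each residue $j$ and each $g$ we obtain the vanishing of a single divisor sum.

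That divisor sum has the shape $\sum_{n\mid\tilde{\lambda}}\kappa_n\,\tilde{a}(\lambda_j/n)=0$, where $\tilde{a}(\mu):=a(\mu,q(\mu),(\mu,\eta))$ and where I have used $q(\lambda)/n^2=q(\lambda/n)$ and $(\lambda,\eta)/n=(\lambda/n,\eta)$ to recognise the occurring coefficients as these ``diagonal'' ones; here $\kappa_1\neq0$ since the $y_1$-integral in \eqref{eq;unfjacint} is a positive constant and the prefactor $\overline{\calP}_{\subspaceisometry,0}(\subspaceisometry(\tilde{\lambda}))$ can be made nonzero by a suitable choice of $g$, a nonzero harmonic polynomial not vanishing on the whole orbit $\{\subspaceisometry(\tilde{\lambda})\}$, which is a sphere in the positive-definite part of $\eta^\perp$ (here $\dim(\eta^\perp\cap L^+)=b-3\ge2$ uses $b\ge5$). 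With $\kappa_1\neq0$ I would run an induction on the content of $\tilde{\lambda}=\mu_\eta$: when $\tilde{\lambda}$ is primitive only $n=1$ occurs and the sum reads $\kappa_1\tilde{a}(\mu)=0$; in general every term with $n>1$ involves $\tilde{a}$ at a vector whose $\eta$-projection $\tilde{\lambda}/n$ has strictly smaller content and hence vanishes by induction, leaving $\kappa_1\tilde{a}(\mu)=0$ once more. Since $\tilde{a}$ is $\IZ\eta$-periodic by the periodicity of the Fourier coefficients of Jacobi forms and every class of $(L^+)'/\IZ\eta$ arises as some $\lambda_j$, this yields $a(\mu,q(\mu),(\mu,\eta))=0$ for all $\mu\in(L^+)'$, which is the assertion.

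I expect the main obstacle to be the simultaneous bookkeeping in the first and third steps: one must choose the basis adapted to $\eta$, the admissible tuple $\boldsymbol{\alpha}$ satisfying \eqref{eq;hypothonalpha}, and the isometry $g$ so that the reduction $\calP_{\subspaceisometry,\subpolposdeg}$ is at once harmonic, invariant under $\lambda\mapsto\lambda+C\eta$ (so Theorem~\ref{thm:JacobiThetaUnfolding} applies) \emph{and} non-vanishing at $\subspaceisometry(\tilde{\lambda})$ (so $\kappa_1\neq0$), and to ensure that as $g$ and $\boldsymbol{\alpha}$ range over the admissible choices this non-vanishing is achievable for every nonzero $\tilde{\lambda}$. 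Once this is secured, the Vandermonde separation over $l$ and the divisor induction are purely formal.
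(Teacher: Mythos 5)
Your skeleton coincides with the paper's proof: reduce each theta function $\Theta_{\brK,\eta+l\tilde{u}}$ along the second isotropic vector $\tilde{u}$ via Theorem~\ref{thm:JacobiThetaUnfolding} (using $(\eta+l\tilde{u},\tilde{u})=0$ and $N_{\tilde{u}}=1$), conclude that the Fourier coefficients $c_{\eta+l\tilde{u},\tilde{\lambda}}(g)$ vanish, separate the fibre $\{\lambda:\lambda_{\eta}=\tilde{\lambda}\}$ by orthogonality of the characters $m\mapsto e(lm/N)$ on $\IZ/N\IZ$ (your Vandermonde system is exactly this), and close with an induction over the divisors of $\tilde{\lambda}$, using $\IZ\eta$-periodicity of the coefficients and Corollary~\ref{cor:Representations}-style coverage of $(L^+)'$. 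Those steps all match the paper and are sound.

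The genuine divergence, and the place where your plan has a gap that you name but do not close, is the polynomial reduction. You arrange $g$ so that only $\subpolposdeg=0$ survives, leaving the \emph{quadratic} reduced polynomial $\lambda\mapsto c\,\lambda_{\betaone}\big(\lambda_{\betatwo}\eta'_{\alphatwo}-\lambda_{\alphatwo}\eta'_{\betatwo}\big)$. Theorem~\ref{thm:JacobiThetaUnfolding} demands that this be harmonic \emph{and} invariant under $\lambda\mapsto\lambda+C\eta$; the bracket is invariant by antisymmetry, but the extra factor $\lambda_{\betaone}$ is invariant only if $\big(\borw(\eta),e_{\betaone}\big)=0$, and must simultaneously satisfy $\betaone\notin\{\alphatwo,\betatwo\}$ for harmonicity and $\big(\borw(\tilde{\lambda}),e_{\betaone}\big)\neq0$ for non-vanishing. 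Your device of a ``basis adapted to $\eta$'' cannot literally put $\borw(\eta)$ along $e_1$, since $g$ is forced to send $e_{\alphaone}$ to $e_b$, so the $e_{\alphaone}$-component of $\eta$ lands in the $e_b$-direction; one must instead work with $\borw(\eta)$ supported on $\{e_1,e_b\}$ and then check that the constraints on $(\alphaone,\alphatwo,\betaone,\betatwo)$ and on the residual rotations of $g$ are simultaneously satisfiable for \emph{every} nonzero $\tilde{\lambda}$ — exactly the ``simultaneous bookkeeping'' you defer. The paper makes the opposite choice and thereby dissolves the problem: it takes $g$ to interchange $e_{\alphatwo}$ with $e_{b-1}$, i.e.\ it deliberately moves a coordinate of the polynomial onto the $\tilde{u}_{w_0^\perp}$-direction, so that only $\subpolposdeg=1$ survives and the reduced polynomial $8(x_{\betatwo,1}x_{\alphatwo,2}-x_{\alphatwo,1}x_{\betatwo,2})$ is \emph{linear} in $\lambda$, hence automatically harmonic and automatically $\eta$-translation-invariant by antisymmetry, with no adaptation of $g$ to $\eta$; its non-vanishing for some admissible $\boldsymbol{\alpha}$ then needs only $\tilde{\lambda}\neq0$, the degenerate case being absorbed by the cusp-form condition on the moment matrix $q(\tilde{\lambda},\eta+l\tilde{u})$. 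To make your route rigorous you must supply the omitted compatibility verification; switching to the $\subpolposdeg=1$ reduction is the cleaner repair.
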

	
	\begin{proof}
		Let~$g$ be the isometry that interchanges~$e_{\alpha_1}$ with~$e_b$, $e_{\alpha_2}$ with~$e_{b - 1}$, and fixes the remaining basis vectors.
		Then $z = z_0$ and $w = w_0$.
		It is easy to see that~$\tilde{u}_{w_0^\perp} = e_{b - 1}/\sqrt{2}$ and~$(v, \tilde{u}_{w_0^\perp}) = x_{b - 1}/\sqrt{2}$.
		Hence, we have
		\begin{align*}
			\calP_{\boldsymbol{\alpha}, \borw, 0, 1}(\borw(\boldsymbol{v}))
			&=
			8 (v_1, \tilde{u}_{w_0^\perp})
			\cdot
			\big(\borw(\boldsymbol{v})_{\betatwo, 1} \borw(\boldsymbol{v})_{\alphatwo, 2} - \borw(\boldsymbol{v})_{\alphatwo, 1} \borw(\boldsymbol{v})_{\betatwo, 2}\big)
			\\
			&=
			8 (v_1, \tilde{u}_{w_0^\perp})
			\cdot
			(v_{\betatwo, 1} v_{\alphatwo, 2} - v_{\alphatwo, 1} v_{\betatwo, 2}),
		\end{align*}
		from which by definition of $\calP_{\boldsymbol{\alpha}, \borww, 0, 1, \htot}$ we obtain
		\begin{align*}
			\calP_{\boldsymbol{\alpha}, \borww, 0, 1, \htot}\left(\boldsymbol{x}\right) = \begin{cases}
				8(x_{\betatwo, 1} x_{\alphatwo, 2} - x_{\alphatwo, 1} x_{\betatwo,2}) &\mbox{if } \htot = 1, \\
				0 &\mbox{otherwise}.
			\end{cases}
		\end{align*}
		Hence, the assumptions of Theorem~\ref{thm:JacobiThetaUnfolding} are satisfied and the vanishing of the inner products in \eqref{eq:jacinnerproduct} implies that the Fourier coefficients computed in Theorem~\ref{thm:JacobiThetaUnfolding}, namely
		\begin{align*}
				&\textcolor{\newcolor}{c_{\eta + l \tilde{u},\tilde{\lambda}}(g) = \frac{2}{\lvert \tilde{u}_{\isometrypos} \rvert \lvert \eta_{\tilde{w}^\perp} \rvert} 
				\sum_{n \mid \tilde{\lambda}} \sum_{\subpolposdeg}
				\frac{n^{b^- + h + k - 4}}{(2i)^{-\subpolposdeg}}
				\Bigg(\frac{\lvert \eta_{w^\perp} \rvert}{2 \lvert u_{w^\perp} \rvert \sqrt{\tilde{\lambda}_{\tilde{w}^\perp}^2 \eta_{\tilde{w}^\perp}^2 - (\tilde{\lambda}_{\tilde{w}^\perp}, \eta_{\tilde{w}^\perp})^2}}\Bigg)^{b^- / 2 + k - h - 2}}
				\\
				&\textcolor{\newcolor}{\times 
				\exp\Bigg(-\frac{\pi i (\eta, \tilde{u}_{w^\perp}) (\tilde{\lambda}_{\tilde{w}^\perp}, \eta_{\tilde{w}^\perp})}{\tilde{u}_{w^\perp}^2 \eta_{\tilde{w}^\perp}^2}\Bigg)
				K_{b^-/2 + k - h - 2}\left(2 \pi \frac{\lvert \eta_{w^\perp} \rvert \sqrt{\tilde{\lambda}_{\tilde{w}^\perp}^2 \eta_{\tilde{w}^\perp}^2 - (\tilde{\lambda}_{\tilde{w}^\perp}, \eta_{\tilde{w}^\perp})^2}}{\lvert \tilde{u}_{w^\perp} \rvert \eta_{\tilde{w}^\perp}^2}\right) }\\
				&\textcolor{\newcolor}{\times 
				\overline{\pol}_{\mathbf{\alpha},\subspaceisometry, 0, 1 \subpolposdeg}\big(\subspaceisometry(\tilde{\lambda}), g_K(\eta + l \tilde{u})\big) \sum_{\substack{\lambda \in \sublattice' / \jacindexvec_K \\ \lambda_{\jacindexvec} = \tilde{\lambda}}} e\left(\frac{l\left(\lambda, \jacindexvec\right)}{\jacindexvec^2}\right)
				a\big(\lambda / n, q\left(\lambda\right) / n^2, \left(\lambda, \jacindexvec\right) / n\big),}
		\end{align*}
		vanish for all $l = 1, \ldots, N$ and $\tilde{\lambda} \in (\jacindexvec^\perp \cap L^+)'$.
		Here we added the index $\eta + l \tilde{\isotropicvec}$ to avoid confusion with the Fourier coefficients, written as~$c_\lambda(g)$, of the defining integrals of the Kudla--Millson lift.
		
		Next assume that $\tilde{\lambda}$ is primitive. Then we see that
		\begin{align*}
				&\textcolor{\newcolor}{\sum_{\subpolposdeg} (2i)^{\subpolposdeg} \left(\frac{\lvert \eta_{w^\perp} \rvert}{2 \lvert u_{w^\perp} \rvert \sqrt{\tilde{\lambda}_{\tilde{w}^\perp}^2 \eta_{\tilde{w}^\perp}^2 - (\tilde{\lambda}_{\tilde{w}^\perp}, \eta_{\tilde{w}^\perp})^2}}\right)^{b^- / 2 + k - h - 2}}
				\\
				&\textcolor{\newcolor}{\times 
				\exp\left(-\frac{\pi i (\eta, \tilde{u}_{w^\perp}) (\tilde{\lambda}_{\tilde{w}^\perp}, \eta_{\tilde{w}^\perp})}{\tilde{u}_{w^\perp}^2 \eta_{\tilde{w}^\perp}^2}\right) K_{b^-/2 + k - h - 2}\left(2 \pi \frac{\lvert \eta_{w^\perp} \rvert \sqrt{\tilde{\lambda}_{\tilde{w}^\perp}^2 \eta_{\tilde{w}^\perp}^2 - (\tilde{\lambda}_{\tilde{w}^\perp}, \eta_{\tilde{w}^\perp})^2}}{\lvert \tilde{u}_{w^\perp} \rvert \eta_{\tilde{w}^\perp}^2}\right) }\\
				&\textcolor{\newcolor}{\times 
				\overline{\pol}_{\mathbf{\alpha},\subspaceisometry, 0, 1, \subpolposdeg}\big(\subspaceisometry(\tilde{\lambda}), g_K(\eta + l \tilde{u})\big) \sum_{\substack{\lambda \in \sublattice' / \jacindexvec_K \\ \lambda_{\jacindexvec} = \tilde{\lambda}}} e\left(\frac{l\left(\lambda, \jacindexvec\right)}{\jacindexvec^2}\right)
				a\big(\lambda, q\left(\lambda\right), \left(\lambda, \jacindexvec\right)\big)}
		\end{align*}
		vanishes. If the moment matrix $q(\tilde{\lambda}, \jacindexvec + l\tilde{u})$ is not positive definite, then the corresponding Fourier coefficient of $\phi$ vanishes since~$\phi$ is a Jacobi cusp form.
		Otherwise, one can choose $\boldsymbol{\alpha}$ such that
		$$\calP_{\boldsymbol{\alpha}, \borww, 0, 1, 1}\Big(\big(\subspaceisometry(\tilde{\lambda}), \isometry(\jacindexvec + l \tilde{\isotropicvec})\big)\Big)
		=
		8 (\lambda_\delta \jacindexvec_\beta - \lambda_\beta \jacindexvec_\delta)$$
		does not vanish. Since \textcolor{\newcolor}{the $K$-Bessel function is strictly positive}, the finite sum
		$$\sum_{\substack{\lambda \in \sublattice' / \jacindexvec_{\sublattice} \\ \lambda_{\jacindexvec} = \tilde{\lambda}}}
		e\bigg(\frac{l (\lambda, \jacindexvec)}{\jacindexvec^2}\bigg)
		\cdot
		a\big(\lambda, q(\lambda), (\lambda, \jacindexvec)\big)$$
		must vanish for all $l = 1, \ldots, N$. Now fix some $\lambda \in \sublattice' / \jacindexvec_{\sublattice}$ with $\lambda_{\jacindexvec} = \tilde{\lambda}$. Then this sum can be rewritten as
		\begin{align*}
			e\bigg(\frac{l \big(\lambda, \jacindexvec\big)}{\jacindexvec^2}\bigg)
			\sum_{m \in \IZ / N \IZ}
			e\Big(\frac{l m}{N}\Big) 
			\cdot
			a\Big(\lambda + \frac{m \jacindexvec}{N}, q\Big(\lambda + \frac{m \jacindexvec}{N}\Big), \Big(\lambda + \frac{m \jacindexvec}{N}, \jacindexvec\Big)\Big)
		\end{align*}
		and the sum must vanish for all $l = 1, \ldots, N$. But this sum can be interpreted as an inner product of the Fourier coefficient with the character $m \mapsto e\left(lm / N\right)$ and since characters form an orthogonal basis of the functions on $\IZ / N \IZ$, this implies
		$$a\Big(\lambda + \frac{m \jacindexvec}{N}, q\Big(\lambda + \frac{m \jacindexvec}{N}\Big), \Big(\lambda + \frac{m \jacindexvec}{N}, \jacindexvec\Big)\Big) = 0$$
		for all $m \in \IZ / N \IZ$. If $\lambda$ is not primitive, then, by induction, for every $1 < n \mid \lambda$, the corresponding Fourier coefficient $a\big(\lambda / n, q(\lambda) / n^2, (\lambda, \jacindexvec) / n\big)$ vanishes. But since the sum over all divisors of $\lambda$ must vanish, we obtain again that $a\big(\lambda + \frac{m \jacindexvec}{N}, q(\lambda + \frac{m \jacindexvec}{N}), (\lambda + \frac{m \jacindexvec}{N}, \jacindexvec)\big)$ must vanish, which proves the theorem.
	\end{proof}

	\begin{thm}\label{thm:KMInjectivity}
		Assume that $L_p^+$ splits off two hyperbolic planes for every prime $p$. Then the Kudla--Millson lift $\KMlift$ is injective.
	\end{thm}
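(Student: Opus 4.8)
The plan is to show that $\KMlift(f)=0$ forces every Fourier coefficient $a_f(\gendisc,T)$ of $f\in S^k_{2,L}$ to vanish, by chaining together the double unfolding of the lift with the representability results of Section~\ref{sec;somebackground}. First I would use that the vectors $\omegaone\wedge\omegatwo\wedge\omegathree\wedge\omegafour$ are linearly independent in $\bigwedge^4 T_{z_0}^*\hermdom$: by~\eqref{eq;KMdeg2liftmoreexpl} the vanishing of $\KMlift(f)$ is then equivalent to the vanishing of every defining integral $\intfunctshort$ as a function on $G$, and hence, by Theorem~\ref{thm;Fexpgen2}, to the vanishing of all Fourier coefficients $c_\eta(g)$ for every $g\in G$ and every $\eta\in M'$. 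Specializing $g$ to the isometries that interchange $e_{\alphaone}$ with $e_b$ and fix $e_{b+2}$, the computation leading to~\eqref{eq:DivisorSumInnerProducts} shows that for each positive-norm $\eta\in M'$ the divisor sum $\sum_{t\ge 1,\,t\mid\eta} t\,\langle \phi_{\eta/t,q(\eta)/t^2},\Theta_{M,\eta/t}(\cdot,\cdot,\borw,\polw{\vect{\alpha},\borw}{0}{1}(\cdot,\borw(\eta/t)))\rangle_{\Pet}$ vanishes, for every $\vect{\alpha}$ satisfying~\eqref{eq;hypothonalpha}. An induction on the number of divisors of $\eta$ then yields the vanishing of each individual Jacobi Petersson inner product.

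The second step is to feed these vanishing inner products into Theorem~\ref{thm:JacobiThetaLiftInjectivity}. Fixing $\eta\in L^{+'}$ of positive norm and letting $N$ be the largest integer dividing it, I would apply the previous paragraph to the vectors $\eta+l\tilde u$ with $l=1,\dots,N$. Since $\tilde u$ is isotropic and orthogonal to $L^+$, each $\eta+l\tilde u$ has the same positive norm $q(\eta)$ and the same class modulo $M$, so the associated Fourier--Jacobi coefficient is the single Jacobi form $\phi_{\gendisc_2,q(\eta)}$, which is cuspidal because $f$ is. With the hypotheses of Theorem~\ref{thm:JacobiThetaLiftInjectivity} met, I conclude that $a(\lambda,q(\lambda),(\lambda,\eta))=0$ for all $\lambda\in L^{+'}$. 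Translating through the relation~\eqref{eq:FouerCoeffFourierJacExpansion} between the Fourier coefficients of $\phi_{\gendisc_2,q(\eta)}$ and those of $f$, this says precisely that $a_f(\gendisc,T)=0$ whenever $T$ is the Gram matrix of a pair $(\lambda,\eta)$ of vectors of $L^{+'}$ and $\gendisc$ is the corresponding discriminant class.

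It remains to show that every positive definite $T$, in every coset, arises this way, and this is where the local hypothesis enters. Given positive definite $T\in q(\gamma)$, Corollary~\ref{cor:Representations} applied to $M=U\oplus L^+$ with $r=2$ — legitimate exactly because each $L^+_p$ splits two hyperbolic planes — produces a splitting $M=\tilde U\oplus\tilde L^+$ with $\tilde L^+\in\gen(L^+)$ together with a pair $\lambda\in(\tilde L^{+'})^2$ of Gram matrix $T$ and class $\gamma$. Since adding a hyperbolic plane trivializes the genus by~\cite[Theorem 1.14.2]{Nikulin}, all these $U\oplus\tilde L^+$ are isometric to the fixed $M$, so each such $T$ is realized inside a genuine hyperbolic-plane splitting of $L$; running the argument of the first two paragraphs for that splitting gives $a_f(\gendisc,T)=0$. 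As a cusp form has no coefficients supported on non-positive-definite $T$, this exhausts all Fourier coefficients of $f$, whence $f=0$ and the lift is injective.

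The main obstacle I anticipate lies in this last step: passing from the coefficients indexed by moment matrices actually represented by $L^{+'}$ to all positive definite $T$. This relies not only on the local--global representability of Corollary~\ref{cor:Representations}, but also on the compatibility check that the distinct hyperbolic-plane splittings used to realize different $T$ all refer back to the same lattice $L$ and the same cusp form $f$, so that the vanishings derived splitting-by-splitting genuinely concern the intrinsic coefficients $a_f(\gendisc,T)$. Verifying that the two nested inductions on divisors (the one above and the one internal to Theorem~\ref{thm:JacobiThetaLiftInjectivity}) interact correctly with the sum over classes $\gamma$, and that every discriminant component is covered, is the delicate bookkeeping that a complete proof must settle.
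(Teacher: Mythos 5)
Your proposal follows essentially the same route as the paper's proof: vanishing of the lift forces the divisor sums of Jacobi Petersson inner products in~\eqref{eq:DivisorSumInnerProducts} to vanish, induction over divisors isolates each inner product, Theorem~\ref{thm:JacobiThetaLiftInjectivity} then kills the Fourier coefficients $a_f((\lambda,\eta),q(\lambda,\eta))$, and Corollary~\ref{cor:Representations} (where the local hypothesis on $L^+_p$ enters) shows every positive definite $T$ in every coset is of this form. The compatibility concern you raise at the end is exactly what the appeal to \cite[Theorem 1.14.2]{Nikulin} inside Corollary~\ref{cor:Representations} settles, so no further work is needed.
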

	
	\begin{proof}
		Let $f$ be a cusp form such that its Kudla--Millson lift $\KMlift(f)$ vanishes and let
		$$f(\tau) = \sum_{\sigma \in D_L^2} \sum_{\substack{T \in \Lambda_2 + q(\sigma) \\ T \geq 0}}
		a_f(\sigma, T) \frake_\sigma(T \tau)$$
		be its Fourier expansion. Then the Fourier coefficients $c_\jacindexvec(g)$ of $\KMlift(f)$ vanish and thus~\eqref{eq:DivisorSumInnerProducts} vanishes.
		Denote by~$\phi_{\sigma_2, m}$ the Fourier--Jacobi coefficients of~$f$.
		Then according to~\eqref{eq:FouerCoeffFourierJacExpansion} the Fourier coefficients of $\phi_{\jacindexvec, q\left(\jacindexvec\right)}$ are given by $a_f\big((\lambda, \jacindexvec), q(\lambda, \jacindexvec)\big)$, where $\lambda \in (L^+)'$. If~$\jacindexvec \in {L^+}'$ is primitive, then~\eqref{eq:DivisorSumInnerProducts} is given by
		\begin{align*}
		\frac{1}{2i \genU_{z^\perp}^2} \Big\langle \phi_{\jacindexvec / t, q\left(\jacindexvec\right)/t^2}
		,
		\Theta_{\brK, \jacindexvec / t}\Big(\cdot, \cdot,\borw,\polw{\vect{\alpha},\borw}{0}{1}\big(\cdot,\borw(\jacindexvec/t)\big)\Big) \Big\rangle_{\Pet}
		\end{align*}
		which implies that the inner product vanishes.
		Theorem~\ref{thm:JacobiThetaLiftInjectivity} implies that for all choices of~${\jacindexvec \in \sublattice'}$ the Fourier coefficient~$a\big((\lambda , \jacindexvec), q(\lambda,\jacindexvec)\big)$ of~$f$ vanishes.
		Induction over $1 < t \mid \jacindexvec$ now shows that all Fourier coefficients of the form $a\big((\lambda, \jacindexvec), q(\lambda, \jacindexvec)\big)$ of $f$ vanish.
		\textcolor{\mycolor}{Since~$L_p^+$ splits off two hyperbolic planes for every~$p$}, by Corollary~\ref{cor:Representations} all Fourier coefficients are of the form~$a\big((\lambda, \jacindexvec), q(\lambda, \jacindexvec)\big)$ for some~$\lambda,\jacindexvec \in L^{+'}$.
		Hence, the Siegel cusp form~$f$ vanishes.
	\end{proof}

	\begin{cor}\label{cor;frommaininjres}
	Let $L$ be an even lattice of signature $(b, 2)$ and let $l(L)$ denote the minimal number of generators of the discriminant group~$\disc{L}$. If $b > l(L) + 6$, then The Kudla--Millson lift is injective. In particular, if~$L$ is unimodular or, more generally, maximal and $b > 9$, then the Kudla--Millson lift is injective.
	\end{cor}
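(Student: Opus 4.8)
The plan is to verify, under the hypothesis $b > l(L)+6$, the two standing assumptions behind Theorem~\ref{thm:KMInjectivity}: namely that $L$ splits off two orthogonal hyperbolic planes, say $L \cong U \oplus U \oplus L^+$ with $L^+$ positive definite, and that the localization $L^+_p = L^+ \otimes \ZZ_p$ splits off two hyperbolic planes for every prime $p$. Once both are in place the injectivity of $\KMlift$ follows immediately. First I would record the elementary but useful observation that, since $U$ is unimodular, splitting off hyperbolic planes leaves the discriminant group unchanged; hence any $L^+$ arising in this way satisfies $\disc{L^+} \cong \disc{L}$, so that $l(L^+) = l(L)$ while $\operatorname{rank} L^+ = b-2$.

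Next I would treat the \emph{global} splitting. The lattice $L$ is even and indefinite of signature $(b,2)$, so by the classical splitting criterion for indefinite lattices (a form of which appears in \cite{Nikulin}; see also \cite{Kneser}) it is enough that $\operatorname{rank} N \ge l(\disc{N}) + 3$ to guarantee that an even indefinite lattice $N$ admits $U$ as an orthogonal direct summand. Applying this to $L$ (legitimate since $b+2 \ge l(L)+3$) yields $L \cong U \oplus M$ with $M$ of signature $(b-1,1)$ and $\disc{M} \cong \disc{L}$; applying it once more to $M$ (legitimate since $M$ is indefinite and $b+1 \ge l(L)+3$) gives $M \cong U \oplus L^+$ with $L^+$ positive definite. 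Thus $L \cong U \oplus U \oplus L^+$, and the hypothesis $b>l(L)+6$ is comfortably more than what this step requires.

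The main work, and the step I expect to be the most delicate, is the \emph{local} splitting. Fix a prime $p$ and set $N \coloneqq L^+_p$, an even $\ZZ_p$-lattice of rank $b-2$ with $l(\disc{N}) \le l(L)$. Choosing a Jordan splitting $N \cong J_0 \perp J'$ into its unimodular part $J_0$ and the orthogonal sum $J'$ of its $p^k$-modular constituents with $k \ge 1$, one has $\operatorname{rank} J' = l(\disc{N})$, whence $\operatorname{rank} J_0 = (b-2) - l(\disc{N}) \ge (b-2) - l(L) \ge 5$, using $b \ge l(L)+7$. It then suffices to show that an even unimodular $\ZZ_p$-lattice $J_0$ of rank at least $5$ splits off two copies of the hyperbolic plane $U_p$. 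For $p$ odd this is elementary: a binary unimodular form is universal modulo $p$, so by Hensel's lemma $J_0$ is isotropic and splits off $U_p$, after which its orthogonal complement is again unimodular of rank $\ge 3$ and the argument repeats. For $p=2$ the unimodular constituent is automatically of even type (an odd diagonal entry would produce a vector of odd norm, contradicting evenness of $N$), so it has even rank $\ge 6$ and is an orthogonal sum of copies of $U_2$ and of $V_2 = \left(\begin{smallmatrix} 2 & 1 \\ 1 & 2\end{smallmatrix}\right)$; since $V_2 \oplus V_2 \cong U_2 \oplus U_2$, any such lattice of rank $\ge 6$ contains $U_2 \oplus U_2$ as an orthogonal summand. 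In all cases $L^+_p$ splits off two hyperbolic planes, so the hypotheses of Theorem~\ref{thm:KMInjectivity} hold and $\KMlift$ is injective.

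Finally I would deduce the two special cases. If $L$ is unimodular then $l(L)=0$, so $b>6$ already suffices and $b>9$ certainly does. If $L$ is maximal then $\disc{L}$ is anisotropic; the length of an anisotropic finite quadratic form is at most $2$ at each odd prime and at most $3$ at $p=2$, so that $l(L) = \max_p l(\disc{L_p}) \le 3$, and hence $b>9 \ge l(L)+6$ forces injectivity via the general statement. The only genuine subtleties in the whole argument are the careful bookkeeping at $p=2$ in the local step—securing the even type of the unimodular Jordan constituent and upgrading the rank bound $\ge 5$ to an honest splitting of two hyperbolic planes—together with the (standard) bound $l(L)\le 3$ for maximal lattices that underlies the clean threshold $b>9$.
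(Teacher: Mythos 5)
Your argument is correct and follows essentially the same route as the paper: the global splitting $L\cong U\oplus U\oplus L^+$ via Nikulin's splitting criterion, the local splitting of $L^+_p$ via the Jordan decomposition (which the paper delegates to O'Meara \S91C, \S92, \S93 and you carry out explicitly, including the type-II bookkeeping at $p=2$), and the bound $l(L)\le 3$ for maximal lattices. No gaps; you simply supply the details behind the paper's citations.
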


	\begin{proof}
		The lattice~$L$ has rank~$b + 2 > l(L) + 8$.
		By~\cite[Corollary~$1.13.5$]{Nikulin} the lattice~$L$ decomposes as~$L \simeq U \oplus U \oplus L^+$ for some even positive definite lattice~$L^+$.
		Since $l(L) = l(L^+)$, the rank of~$L^+$ is $b - 2 > l(\Lpos) + 4$. By the Jordan decomposition~\cite[§91C, §92, §93]{OMeara} we deduce that~$L_p^+$ splits two hyperbolic planes.
		If~$L$ is maximal, then~$l(L) < 4$ and thus~${b \geq 10 > l(L) + 6}$.
		The assertion now follows from Theorem~\ref{thm:KMInjectivity}.
	\end{proof}
	
	\appendix
	\section{Fourier transforms and very homogeneous polynomials}\label{sec;appendixtot}
	In this appendix we \textcolor{\mycolor}{prove the properties on Fourier transforms stated in Section~\ref{sec;Ftransfgen2} and} illustrate certain technical results regarding the polynomials introduced in Section~\ref{sec;splitgen2theta}.
	These are needed to study the non-modularity of the Siegel theta function~$\Theta_{L,2}$ associated polynomials which are not very homogeneous, as well as to prove Theorem~\ref{sec;theunfoldingofKMliftgen2new}, which provides the first unfolding of the Kudla--Millson lift.
	
	\subsection{Fourier transforms and differential operators}\label{sec;Ftransfgen2}
	Let $W$ be a real vector space endowed with a non-degenerate symmetric bilinear form~$(\cdot{,}\cdot)$, and let $f\colon W^2\to\CC$ be a~$L^1$-function.
	The Fourier transform~$\widehat{f}\colon W^2\to\CC$ of~$f$ is defined as
	\bes
	\widehat{f}(\vect{\xi})=\int_{\vect{\genvec}\in W^2}f(\vect{\genvec})\cdot e\big(\trace (\vect{\xi},\vect{\genvec})\big)d\vect{\genvec}.
	\ees
	The integral defining the Fourier transform can be studied also when the variable~$\vect{\xi}$ takes~\emph{complex values}.
	Depending on~$f$, such an integral might not converge for some~${\vect{\xi}\in W^2\otimes\CC}$.
	In this section, we assume that~$f$ admits an extension of its Fourier transform to the whole complexification of~$W$.
	
	The following results collect all properties of Fourier transforms needed for the purposes of this article.
	\begin{lemma}\label{lemma;somepropFtr}
	Let $\vect{\genvec}_0\in W^2$.
	\begin{enumerate}[label=(\roman*), leftmargin=*]
	\item\label{it;Ftr1} The Fourier transform of $f(\vect{\genvec}-\vect{\genvec}_0)$ is $e\big(\trace(\vect{\genvec}_0,\vect{\genvec})\big)\cdot\widehat{f}(\vect{\genvec})$.
	\item\label{it;Ftr2} The Fourier transform of $f(\vect{\genvec})\cdot e\big(\trace(\vect{\genvec}_0,\vect{\genvec})\big)$ is $\widehat{f}(\vect{\genvec}+\vect{\genvec}_0)$.
	\end{enumerate}
	\end{lemma}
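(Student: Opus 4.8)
The statement to prove is Lemma~\ref{lemma;somepropFtr}, the two elementary translation/modulation properties of the Fourier transform on $W^2$.

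The plan is to verify both identities by direct substitution in the defining integral, exploiting the translation-invariance of the Lebesgue measure $d\vect{\genvec}$ on $W^2$. For part~\ref{it;Ftr1}, I would start from the definition applied to the shifted function:
\[
\widehat{f(\cdot - \vect{\genvec}_0)}(\vect{\xi}) = \int_{\vect{\genvec}\in W^2} f(\vect{\genvec}-\vect{\genvec}_0)\cdot e\big(\trace(\vect{\xi},\vect{\genvec})\big)\,d\vect{\genvec},
\]
and then perform the change of variables $\vect{u} = \vect{\genvec} - \vect{\genvec}_0$, so that $\vect{\genvec} = \vect{u} + \vect{\genvec}_0$ and $d\vect{\genvec} = d\vect{u}$ by invariance of the measure. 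The key observation is that the bilinear form distributes over the sum, giving $\trace(\vect{\xi},\vect{u}+\vect{\genvec}_0) = \trace(\vect{\xi},\vect{u}) + \trace(\vect{\xi},\vect{\genvec}_0)$, and since $e(\cdot)$ is multiplicative the factor $e\big(\trace(\vect{\xi},\vect{\genvec}_0)\big)$ pulls out of the integral. What remains is exactly $e\big(\trace(\vect{\genvec}_0,\vect{\xi})\big)\cdot\widehat{f}(\vect{\xi})$, using the symmetry of $(\cdot{,}\cdot)$ to rewrite $\trace(\vect{\xi},\vect{\genvec}_0) = \trace(\vect{\genvec}_0,\vect{\xi})$; renaming the free variable $\vect{\xi}$ to $\vect{\genvec}$ gives the stated form.

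For part~\ref{it;Ftr2}, I would again start from the definition, this time for the modulated function $f(\vect{\genvec})\cdot e\big(\trace(\vect{\genvec}_0,\vect{\genvec})\big)$:
\[
\int_{\vect{\genvec}\in W^2} f(\vect{\genvec})\, e\big(\trace(\vect{\genvec}_0,\vect{\genvec})\big)\, e\big(\trace(\vect{\xi},\vect{\genvec})\big)\,d\vect{\genvec}.
\]
Here no change of variables is needed; I would simply combine the two exponentials into $e\big(\trace(\vect{\xi}+\vect{\genvec}_0,\vect{\genvec})\big)$ using bilinearity of the pairing in its first argument, and recognize the resulting integral as $\widehat{f}$ evaluated at the shifted point $\vect{\xi}+\vect{\genvec}_0$. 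Renaming $\vect{\xi}$ to $\vect{\genvec}$ yields the claim.

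There is no substantive obstacle here: both parts are formal manipulations that hold verbatim whether $\vect{\xi}$ is real or, under the standing hypothesis that $\widehat{f}$ extends holomorphically to the complexification of $W$, complex-valued. The only points requiring minor care are the symmetry of the bilinear form in part~\ref{it;Ftr1} (so that $\trace(\vect{\xi},\vect{\genvec}_0)$ and $\trace(\vect{\genvec}_0,\vect{\xi})$ coincide, matching the asymmetric way the two variables appear in the statement) and the tacit justification that the complex-analytic extension respects these identities, which follows by analytic continuation from the real case where convergence of the integral is guaranteed.
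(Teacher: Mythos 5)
Your verification is correct and is exactly the standard argument; the paper itself gives no proof, simply stating that these properties are well-known. Your change of variables for part (i) and the combination of exponentials for part (ii), together with the remark on symmetry of the bilinear form and analytic continuation to complex arguments, cover everything needed.
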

	\begin{proof}
	These properties are well-known.
	\end{proof}
	The next lemma provides a generalization in genus~$2$ of the main results of~\cite[Section~$3$]{bo;grass}.
	\begin{lemma}\label{lemma;onFtransfgenus2}
	\leavevmode
	\begin{enumerate}[label=(\roman*), leftmargin=*]
	\item Let $B\in\CC^{2\times 1}$.\label{item;1onFtransfgenus2}
	The Fourier transform of $f(\vect{\genvec})\cdot e\big(\trace(B\vect{\genvec})\big)$ is~${\widehat{f}(\vect{\genvec}+B^t)}$.
	\item Let $\tau\in\HH_2$, and let $\pol$ be a polynomial on the space $\RR^{m\times 2}$, endowed with the standard bilinear product.\label{item;2onFtransfgenus2}
	The Fourier transform of
	\bes
	\pol(\vect{\genvec})\cdot e\big(\trace(\vect{\genvec}^t\vect{\genvec}\tau)/2)
	\ees
	is
	\bes
	\det (-i\tau)^{-m/2}\cdot
	\exp\Big(\frac{i}{4\pi}\trace(\Delta\tau^{-1})\Big)(\pol)
	(-\vect{\genvec}\tau^{-1})
	\cdot e\Big(-\frac{1}{2}\trace(\vect{\genvec}^t \vect{\genvec} \tau^{-1})\Big).
	\ees
	\item Let $\pol$ be a polynomial on $\RR^{1\times 2}$, where the latter is endowed with the standard bilinear product~$(\vect{x},\vect{y})=x_1y_1 + x_2y_2$, and let $A\in\HH_2$, $B\in\CC^{2\times1}$, $C\in\CC$.\label{item;3onFtransfgenus2}
	The Fourier transform of
	\bes
	\pol(\vect{\genvec})\cdot e\big(\trace(A\vect{\genvec}^t\vect{\genvec}) + \trace(B\vect{\genvec}) + C\big)
	\ees
	is
	\bas
	\det(-2iA)^{-1/2} & \exp\Big(\frac{i}{8\pi}\trace(\Delta A^{-1})\Big)(\pol)\Big(\frac{1}{2}(-\vect{\genvec}-B^t)A^{-1}\Big)\times\\
	\times & e\Big(-\frac{1}{4}\trace(\vect{\genvec}^t\vect{\genvec}A^{-1}) - \frac{1}{2}\trace(B\vect{\genvec}A^{-1}) - \frac{1}{4}\trace(BB^tA^{-1}) + C\Big).
	\eas
	\item\label{item;4onFtransfgenus2}
	Let $\tau\in\HH_2$, and let $\pol$ be a polynomial on $\RR^{m\times 2}$, endowed with the standard bilinear product.
	The Fourier transform of
	\be\label{eq;4onFtransfgenus2}
	\exp\Big( -\frac{1}{8\pi}\trace(\Delta y^{-1}) \Big)(\pol)(\vect{\genvec})\cdot e\Big(\frac{1}{2}\trace(\vect{\genvec}^t\vect{\genvec}\tau)\Big)
	\ee
	is
	\bes
	\det(-i\tau)^{-m/2}\cdot
	\exp\Big(
	-\frac{1}{8\pi}\trace\big(\Delta \tau^{-2}\Im(-\tau^{-1})^{-1}\big)
	\Big)(\pol)(-\vect{\genvec}\tau^{-1})\cdot
	e\Big(
	-\frac{1}{2}\trace(\vect{\genvec}^t\vect{\genvec}\tau^{-1})
	\Big),
	\ees
	which is equal to
	\bes
	\det(-i\tau)^{-m/2}\cdot\det (\tau)^{-s}
	\exp\Big(
	-\frac{1}{8\pi}\trace\big(\Delta\Im(-\tau^{-1})^{-1}\big)
	\Big)(\pol)(\vect{\genvec})\cdot
	e\Big(
	-\frac{1}{2}\trace(\vect{\genvec}^t\vect{\genvec}\tau^{-1})
	\Big),
	\ees
	if $\pol$ is \emph{very homogeneous} of degree $s$.
	\item\label{item;5onFtransfgenus2}
	Suppose that $\pol$ is a polynomial defined on $(\auxspace^+\oplus \auxspace^-)^2$, where $\auxspace^+$ (resp.~$\auxspace^-$) is a positive definite (resp.\ negative definite) subspace of $\RR^{b,2}$.
	Denote by $d^+$ and $d^-$ the dimensions of $\auxspace^+$ and $\auxspace^-$ respectively.
	If the value of $\pol(\vect{\genvec})$ depends only on the projection $\vect{\genvec}_{\auxspace^+}$, that is, $\pol$ is of degree zero on $(\auxspace^-)^2$, then the Fourier transform of
	\bes
	\exp\Big(
	-\frac{1}{8\pi}\trace(\Delta y^{-1})
	\Big)(\pol)(\vect{\genvec})\cdot e\Big(
	\trace\big(q(\vect{\genvec}_{\auxspace^+})\tau\big) + \trace\big(q(\vect{\genvec}_{\auxspace^-})\bar{\tau}\big)
	\Big)
	\ees
	is
	\bas
	\det(-i\tau)^{-d^+/2} \det(i\bar{\tau})^{-d^-/2}
	\exp\Big(
	-\frac{1}{8\pi} \trace\big(\Delta \tau^{-2}\Im(-\tau^{-1})^{-1}\big)
	\Big)(\pol)(-\vect{\genvec} \tau^{-1})\\
	\times
	e\Big(
	- \trace\big(q(\vect{\genvec}_{\auxspace^+})\tau^{-1}\big) - \trace\big(q(\vect{\genvec}_{\auxspace^-})\bar{\tau}^{-1}\big)
	\Big).
	\eas
	\item\label{item;6onFtransfgenus2}
	Let $\pol$ be a \emph{very homogeneous} polynomial of degree $(m^+,m^-)$ on~${(\auxspace^+\oplus \auxspace^-)^2}$, where~$\auxspace^+$ (resp.~$\auxspace^-$) is a positive definite (resp.\ negative definite) subspace of $\RR^{b,2}$.
	Denote by $d^+$ and $d^-$ the dimensions of $\auxspace^+$ and $\auxspace^-$ respectively.
	The Fourier transform of
	\bes
	\exp\Big(
	-\frac{1}{8\pi}\trace(\Delta y^{-1})
	\Big)(\pol)(\vect{\genvec})\cdot e\Big(
	\trace\big(q(\vect{\genvec}_{\auxspace^+})\tau\big) + \trace\big(q(\vect{\genvec}_{\auxspace^-})\bar{\tau}\big)
	\Big)
	\ees
	is
	\bas
	\,&\det(-i\tau)^{-d^+/2} \cdot \det(\tau)^{-m^+} \cdot \det(i\bar{\tau})^{-d^-/2} \cdot \det(\bar{\tau})^{-m^-}
	\\
	&\times\exp\Big(
	-\frac{1}{8\pi} \trace\big(\Delta \Im(-\tau^{-1})^{-1}\big)
	\Big)(\pol)(\vect{\genvec})\cdot
	e\Big(
	- \trace\big(q(\vect{\genvec}_{\auxspace^+})\tau^{-1}\big) - \trace\big(q(\vect{\genvec}_{\auxspace^-})\bar{\tau}^{-1}\big)
	\Big).
	\eas
	\end{enumerate}
	\end{lemma}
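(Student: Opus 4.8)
The plan is to reduce every assertion to the single Gaussian computation of part~\ref{item;2onFtransfgenus2} together with the elementary translation--modulation rules of Lemma~\ref{lemma;somepropFtr}. Part~\ref{item;1onFtransfgenus2} is immediate: writing $\trace(B\vect{\genvec})$ as $\trace(\vect{\genvec}_0,\vect{\genvec})$ with $\vect{\genvec}_0=B^t$, the claim is exactly Lemma~\ref{lemma;somepropFtr}\ref{it;Ftr2}. For part~\ref{item;2onFtransfgenus2} I would proceed in two steps. First treat $\pol=1$: with $\vect{\genvec}\in\RR^{m\times2}$ one has $\trace(\vect{\genvec}^t\vect{\genvec}\tau)=\sum_{k,\ell}\tau_{k\ell}\,\genvec_k\cdot\genvec_\ell$, so $e(\tfrac12\trace(\vect{\genvec}^t\vect{\genvec}\tau))$ is a joint Gaussian on $\RR^{2m}$ whose complex quadratic form is $\tau\otimes I_m$. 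For $\tau$ purely imaginary this is a genuine convergent real Gaussian, and completing the square against the pairing $\trace(\vect{\xi}^t\vect{\genvec})$ gives $\det(-i\tau)^{-m/2}e(-\tfrac12\trace(\vect{\xi}^t\vect{\xi}\tau^{-1}))$, the determinant power and the branch being fixed by $\det(\tau\otimes I_m)=\det\tau^{\,m}$ and the principal-branch convention. This extends to all $\tau\in\HH_2$ by analytic continuation, both sides being holomorphic in $\tau$. Second, for general $\pol$ I would use that multiplication by the coordinate $x_{ij}$ intertwines with a constant multiple of $\partial_{\xi_{ij}}$ under the Fourier transform, so that $\widehat{\pol\cdot g_\tau}=\pol\big(\tfrac{1}{2\pi i}\partial_{\vect{\xi}}\big)\widehat{g_\tau}$; applying a polynomial in the derivatives to the Gaussian $e(-\tfrac12\trace(\vect{\xi}^t\vect{\xi}\tau^{-1}))$ produces, by a Hermite/heat-kernel identity, the expression $\exp(\tfrac{i}{4\pi}\trace(\Delta\tau^{-1}))(\pol)$ evaluated at $-\vect{\xi}\tau^{-1}$ times the same Gaussian.

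\textbf{From the core to parts~\ref{item;3onFtransfgenus2} and~\ref{item;4onFtransfgenus2}.} Part~\ref{item;3onFtransfgenus2} follows from part~\ref{item;2onFtransfgenus2} (with $\tau=2A$) by completing the square in $\vect{\genvec}$ to absorb the linear term $B$ via Lemma~\ref{lemma;somepropFtr}, the constant $C$ passing through the integral and the residual quadratic $-\tfrac14\trace(BB^tA^{-1})$ being precisely the value picked up by the shift. For part~\ref{item;4onFtransfgenus2} I would feed the polynomial $P\coloneqq\exp(-\tfrac{1}{8\pi}\trace(\Delta y^{-1}))(\pol)$ into part~\ref{item;2onFtransfgenus2}. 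Since each $\trace(\Delta M)$ is a constant-coefficient operator, these all commute, so the two heat factors combine into $\exp\big(-\tfrac{1}{8\pi}\trace(\Delta(y^{-1}-2i\tau^{-1}))\big)$; the algebraic identity $y^{-1}-2i\tau^{-1}=\tau^{-2}\Im(-\tau^{-1})^{-1}$, which one checks directly (e.g.\ $y^{-1}-2i\tau^{-1}=\bar\tau\,(y\tau)^{-1}$ in the scalar model), yields the first displayed form. For the second form I would invoke very-homogeneity: the substitution $\vect{\genvec}\mapsto-\vect{\genvec}\tau^{-1}$ conjugates the $2\times2$ direction of $\Delta$, turning $\trace(\Delta\,\tau^{-2}\Im(-\tau^{-1})^{-1})$ into $\trace(\Delta\,\Im(-\tau^{-1})^{-1})$ and, via Definition~\ref{def;homogforrb22} and Remark~\ref{rem;veryhomogpolprop}, pulling out the scalar $\det(-\tau^{-1})^{s}=\det\tau^{-s}$ while replacing the argument $-\vect{\genvec}\tau^{-1}$ by $\vect{\genvec}$.

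\textbf{Parts~\ref{item;5onFtransfgenus2} and~\ref{item;6onFtransfgenus2}.} These follow by separation of variables along the orthogonal splitting $\auxspace^+\oplus\auxspace^-$. Because $q(\vect{\genvec}_{\auxspace^+})\tau+q(\vect{\genvec}_{\auxspace^-})\bar\tau$ splits as a sum over the two blocks, the Gaussian factors and so does the Fourier transform. On the positive block I apply part~\ref{item;4onFtransfgenus2} with parameter $\tau$; on the negative block, since $q$ is negative definite, the effective Gaussian parameter is $-\bar\tau$, which yields the determinant $\det(i\bar\tau)^{-d^-/2}$ (the sign being absorbed by $\det(-(-\bar\tau))=\det(\bar\tau)$ for $2\times2$ matrices). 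In part~\ref{item;5onFtransfgenus2}, where $\pol$ is of degree zero on $(\auxspace^-)^2$, only the first form of part~\ref{item;4onFtransfgenus2} on the positive block and the pure Gaussian transform on the negative block are needed. In part~\ref{item;6onFtransfgenus2} the very homogeneous $\pol$ factors, by Definition~\ref{def;homogforrb22}, as a product of a degree-$m^+$ polynomial in the $\auxspace^+$ variables and a degree-$m^-$ polynomial in the $\auxspace^-$ variables, so the second form of part~\ref{item;4onFtransfgenus2} applies to each factor and produces $\det(\tau)^{-m^+}$ and $\det(\bar\tau)^{-m^-}$; the two heat operators then combine into the single operator displayed, because $\Im(\bar\tau^{-1})=\Im(-\tau^{-1})$ makes the matrices on the two blocks agree.

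\textbf{Main obstacle.} The genuine difficulty is the core identity of part~\ref{item;2onFtransfgenus2}: getting both the heat operator $\exp(\tfrac{i}{4\pi}\trace(\Delta\tau^{-1}))$ and the determinant $\det(-i\tau)^{-m/2}$ exactly right in the \emph{matrix} variable $\tau$, which forces one to track the principal branch carefully and to justify the passage from purely imaginary $\tau$ to all of $\HH_2$ by analytic continuation. The second subtlety is the very-homogeneity bookkeeping in part~\ref{item;4onFtransfgenus2}: the conjugation rule describing how $\trace(\Delta M)$ transforms under the $\GL_2$-substitution $\vect{\genvec}\mapsto\vect{\genvec}N$, together with the scaling $\pol(\vect{\genvec}N)=\det(N)^s\pol(\vect{\genvec})$, is what converts $\tau^{-2}\Im(-\tau^{-1})^{-1}$ into $\Im(-\tau^{-1})^{-1}$ and extracts the clean factor $\det\tau^{-s}$. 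This is exactly the step that goes beyond the genus-$1$ computations of~\cite[Section~$3$]{bo;grass}.
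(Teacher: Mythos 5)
Your proposal is correct and follows essentially the same route as the paper: part~\ref{item;1onFtransfgenus2} from the translation/modulation rules, part~\ref{item;3onFtransfgenus2} from part~\ref{item;2onFtransfgenus2} with $\tau=2A$, part~\ref{item;4onFtransfgenus2} by feeding the heat-operated polynomial into part~\ref{item;2onFtransfgenus2} and combining the exponential operators via $y^{-1}-2i\tau^{-1}=\tau^{-2}\Im(-\tau^{-1})^{-1}$ together with the very-homogeneity scaling, and parts~\ref{item;5onFtransfgenus2}--\ref{item;6onFtransfgenus2} by separating variables along $\auxspace^+\oplus\auxspace^-$ and treating the negative block as a positive-definite transform with parameter $-\bar\tau$. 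The only divergence is that you sketch a direct proof of the core Gaussian identity in part~\ref{item;2onFtransfgenus2} (Gaussian integral for $\pol=1$, analytic continuation in $\tau$, then the coordinate-multiplication/derivative intertwining), whereas the paper simply imports this statement from \cite[Lemma~4.5]{roehrig}; your sketch is the standard argument behind that citation and is sound.
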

	\begin{proof}
	Part~\ref{item;1onFtransfgenus2} is well known.
	Part~\ref{item;2onFtransfgenus2} is~\cite[Lemma~$4.5$]{roehrig}.
	Part~\ref{item;3onFtransfgenus2} follows from~\ref{item;2onFtransfgenus2} applied with~$\tau=2A$, and from~\ref{item;1onFtransfgenus2}.
	
	To prove Part~\ref{item;4onFtransfgenus2}, we apply~\ref{item;2onFtransfgenus2} with~$\exp\big( -\frac{1}{8\pi}\trace(\Delta y^{-1}) \big)(\pol)$ in place of~$\pol$, deducing that the Fourier transform of~\eqref{eq;4onFtransfgenus2} is
	\be\label{eq;prooflemFtransfs}
	\det (-i\tau)^{-m/2}\cdot
	\exp\Big(\frac{i}{4\pi}\trace(\Delta\tau^{-1})-\frac{1}{8\pi}\trace(\Delta y^{-1})\Big)(\pol)
	(-\vect{\genvec}\tau^{-1})
	\cdot e\big(-\trace(\vect{\genvec}^t \vect{\genvec} \tau^{-1})/2\big),
	\ee
	where we decompose $\tau=x+iy\in\HH_2$.
	We rewrite the exponential operator appearing in~\eqref{eq;prooflemFtransfs} as
	\ba\label{eq;simplifofexpopgen2}
	\exp\Big(\frac{i}{4\pi}\trace(\Delta\tau^{-1})-\frac{1}{8\pi}\trace(\Delta y^{-1})\Big)=
	\exp\Big(
	-\frac{1}{8\pi}\trace\big(
	\Delta ( y^{-1} - 2i\tau^{-1})
	\big)
	\Big)
	\\
	=\exp\Big(
	-\frac{1}{8\pi}\trace\big(
	\Delta \tau^{-1}y^{-1}(\tau-2iy)
	\big)
	\Big)=
	\exp\Big(
	-\frac{1}{8\pi}\trace\big(
	\Delta \tau^{-1}y^{-1}\bar{\tau}
	\big)
	\Big).
	\ea
	It is well-known that
	\bes
	(C\bar{\tau}+D)^t\Im(M\cdot \tau) (C\tau+D)=\Im(\tau),\qquad\text{for every $M=\left(\begin{smallmatrix}
	A & B\\ C & D
	\end{smallmatrix}\right)\in\Sp_4(\ZZ)$.}
	\ees
	If we specialize it with $M=\left(\begin{smallmatrix}
	0 & -I_2\\ I_2 & 0
	\end{smallmatrix}\right)$, we may rewrite it as
	\bes
	\Im(-\tau^{-1})^{-1}=\tau \Im(\tau)^{-1} \bar{\tau}.
	\ees
	We use such relation to rewrite the right-hand side of~\eqref{eq;simplifofexpopgen2} as
	\bas
	\exp\Big(
	-\frac{1}{8\pi}\trace\big(
	\Delta \tau^{-1}y^{-1}\bar{\tau}
	\big)
	\Big)=
	\exp\Big(
	-\frac{1}{8\pi}\trace\big(
	\Delta \tau^{-2}\Im(-\tau^{-1})^{-1}
	\big)
	\Big).
	\eas
	If we assume $\pol$ to be \emph{very homogeneous} of degree $m$, then by~\cite[Lemma~$4.4$~($4.5$)]{roehrig} we deduce that
	\bas
	\,&\exp\Big(
	-\frac{1}{8\pi}\trace
	\big(
	\Delta \tau^{-2}\Im(-\tau^{-1})^{-1}
	\big)
	\Big)(\pol)(-\vect{\genvec}\tau^{-1})
	\\
	&\qquad =
	\exp\Big(
	-\frac{1}{8\pi}\trace\big(
	\Delta\Im(-\tau^{-1})^{-1}
	\big)
	\Big)\big(\pol(-\vect{\genvec}\tau^{-1})\big)
	\\
	&\qquad= 
	\exp\Big(
	-\frac{1}{8\pi}\trace\big(
	\Delta\Im(-\tau^{-1})^{-1}
	\big)
	\Big)\big(\det(-\tau)^{-s}\cdot\pol(\vect{\genvec})\big)
	\\
	&\qquad= 
	\det(-\tau)^{-s}\exp\Big(
	-\frac{1}{8\pi}\trace\big(
	\Delta\Im(-\tau^{-1})^{-1}
	\big)
	\Big)(\pol)(\vect{\genvec}).
	\eas
	
	To prove Part~\ref{item;5onFtransfgenus2} and Part~\ref{item;6onFtransfgenus2}, it is enough to apply~\ref{item;4onFtransfgenus2} to~$\auxspace^+$ and~$\auxspace^-$.
	Since the idea is analogous, we provide only the proof of Part~\ref{item;6onFtransfgenus2}.
	Since~$\pol$ is a very homogeneous polynomial of degree $(m^+,m^-)$, there exist two polynomials $\pol_+$ and $\pol_-$ defined respectively on $\auxspace^+$ and $\auxspace^-$, such that~$\pol(\vect{\genvec})=\pol_+(\vect{\genvec}_{\auxspace^+})\cdot\pol_-(\vect{\genvec}_{\auxspace^-})$, and such that
	\bes
	\pol_+(\vect{\genvec}_{\auxspace^+}\cdot N)=(\det N)^{m^+}\cdot \pol_+(\vect{\genvec}_{\auxspace^+})\qquad\text{and}\qquad
	\pol_-(\vect{\genvec}_{\auxspace^-}\cdot N)=(\det N)^{m^-}\cdot \pol_-(\vect{\genvec}_{\auxspace^-}),
	\ees
	for every $N\in\RR^{2\times 2}$ and $\vect{\genvec}\in (\auxspace^+\oplus \auxspace^-)^2$.
	We may then rewrite
	\ba\label{eq;proofgenCor2linesbor}
	&\,\exp\Big(
	-\frac{1}{8\pi}\trace(\Delta y^{-1})
	\Big)(\pol)(\vect{\genvec})\cdot  e\Big(
	\trace\big(q(\vect{\genvec}_{\auxspace^+})\tau\big) + \trace\big(q(\vect{\genvec}_{\auxspace^-})\bar{\tau}\big)
	\Big)
	\\
	&\qquad= 
	\underbrace{\exp\Big(
	-\frac{1}{8\pi}\trace(\Delta y^{-1})
	\Big)(\pol_+)(\vect{\genvec}_{\auxspace^+})\cdot e\Big(\trace\big(q(\vect{\genvec}_{\auxspace^+})\tau\big)\Big)}_{\eqqcolon f^+_\tau(\vect{\genvec}_{\auxspace^+})}
	\\
	&\qquad\quad\times 
	\underbrace{\exp\Big(
	-\frac{1}{8\pi}\trace(\Delta y^{-1})
	\Big)(\pol_-)(\vect{\genvec}_{\auxspace^-})\cdot e\Big(\trace\big(q(\vect{\genvec}_{\auxspace^-})\bar{\tau}\big)
	\Big)}_{\eqqcolon f^-_\tau(\vect{\genvec}_{\auxspace^-})}.
	\ea
	The Fourier transform of the left-hand side of~\eqref{eq;proofgenCor2linesbor} is the product of the Fourier transforms of~$f^+_\tau$ and~$f^-_\tau$, since the latter two functions do not depend on common variables.
	Since the quadratic form~$q|_{\auxspace^*}$ on $\auxspace^+$ is \emph{positive definite}, we may apply~\ref{item;4onFtransfgenus2} to compute the Fourier transform of~$f^+_\tau$ as
	\ba\label{eq;proofgenCor2linesbor2}
	\widehat{f^+_\tau}(\vect{\xi}_{\auxspace^+})
	&=
	\det(\tau/i)^{-d^+/2}\cdot\det (\tau)^{-m^+}\\
	&\quad\times
	\exp\Big(
	-\frac{1}{8\pi}\trace\big(\Delta\Im(-\tau^{-1})^{-1}\big)
	\Big)(\pol_+)(\vect{\xi}_{\auxspace^+})\cdot
	e\big(
	-\trace(q(\vect{\xi}_{\auxspace^+})\tau^{-1})
	\big).
	\ea
	Since the quadratic form~$q|_{\auxspace^-}$ on~$\auxspace^-$ is \emph{negative definite}, before applying~\ref{item;4onFtransfgenus2} we rewrite~$\widehat{f^-_\tau}$ as
	\ba\label{eq;proofgenCor2linesbor3}
	\widehat{f^-_\tau}(\vect{\xi})
	&=
	\int_{\auxspace^-} f^-_\tau(\vect{x})\cdot e\big((\vect{\xi},\vect{x})\big) d\vect{x}
	=
	\int_{\auxspace^-} \exp\Big(
	-\frac{1}{8\pi}\trace(\Delta y^{-1})
	\Big)(\pol_-)(\vect{x})
	\\
	&\quad\times e\Big(\trace\big(-q(\vect{x})\cdot (-\bar{\tau})\big)
	\Big)\cdot
	e\big(-(-\vect{\xi},\vect{x})\big) d\vect{x},
	\ea
	where we denote by~$(\cdot{,}\cdot)$ the bilinear form associated to~$q|_{\auxspace^-}$.
	The right-hand side of~\eqref{eq;proofgenCor2linesbor3} is now the evaluation on $-\vect{\xi}$ of the Fourier transform of the function
	\bes
	\exp\Big(
	-\frac{1}{8\pi}\trace(\Delta y^{-1})
	\Big)(\pol_-)(\vect{\genvec}_{\auxspace^-})\cdot e\Big(\trace\big(-q(\vect{\genvec}_{\auxspace^-})\cdot (-\bar{\tau})\big)
	\Big)
	\ees
	with respect to the \emph{positive definite} quadratic space $(\auxspace^-,-q|_{\auxspace^-})$.
	Since $\Im(\bar{\tau}^{-1})=\Im(-\tau^{-1})$, we may apply~\ref{item;4onFtransfgenus2} and deduce that~\eqref{eq;proofgenCor2linesbor3} equals
	\ba\label{eq;proofgenCor2linesbor4}
	\,&\det(-\bar{\tau}/i)^{-d^-/2}\cdot\det(\bar{\tau})^{-m^-}\cdot \exp\Big(
	-\frac{1}{8\pi}\trace(\Delta \Im(-\tau^{-1})^{-1})
	\Big)(\pol_-)(-\vect{\xi})
	\\
	&\qquad\times e\Big(
	-\trace\big(q(-\vect{\xi})\bar{\tau}^{-1}\big)
	\Big).
	\ea
	Since~$\pol$ is very homogeneous, we deduce that
	\bas
	\exp\Big(
	-\frac{1}{8\pi}\trace(\Delta y^{-1})
	\Big)(\pol_-)(-\vect{\xi})
	=\exp\Big(
	-\frac{1}{8\pi}\trace(\Delta y^{-1})
	\Big)
	(\pol_-)(\vect{\xi})
	\eas
	for every positive definite $y\in\RR^{2\times 2}$.
	It is enough to insert~\eqref{eq;proofgenCor2linesbor2} and~\eqref{eq;proofgenCor2linesbor4} in~\eqref{eq;proofgenCor2linesbor} to conclude the proof.
%
	\end{proof}

	\subsection{Some decompositions of very homogeneous polynomials}\label{sec;appendix}
	
	Let~$\polw{\vect{\alpha},\borw}{\hone}{\htwo}$ be the auxiliary polynomials arising as in Definition~\ref{def;genborpolgenus2} with~$\pol=\pol_{\vect{\alpha}}$.
	
	The following result provides an explicit formula for~$\polw{\vect{\alpha},\borw}{\hone}{\htwo}$ in the usual case that~$L$ splits off a hyperbolic plane~$U$.
	Recall that we choose~$\genU$ and~$\genUU$ to be the standard generator of~$U$; see~\eqref{eq;choiceofuu'gen2}.
	\begin{lemma}\label{lemma;gen2funnypol}
	\textcolor{\mycolor}{
	Let~${z\in\Gr(L)}$ and~${g\in G}$ such that $g$ maps $z$ to~$z_0$.
	Let~$L$ split a hyperbolic plane~$U$, and let~$\genU,\genUU$ be the standard generators of~$U$.
	For every~${\vect{\genvec}=(\genvec_1,\genvec_2)}$ in~$V^2$, the value~${\polw{\vect{\alpha},\borw}{\hone}{\htwo}\big(\borw(\vect{\genvec})\big)}$ may be computed as follows.
	\begin{itemize}[leftmargin=*]
	\item If $\htot_j=0$ and $\htot_{3-j}=2$, where $j=1,2$, then
	\bas
	\polw{\vect{\alpha},\borw}{\hone}{\htwo}\big(\borw(\vect{\genvec})\big)
	&=
	\frac{4}{\genU_{z^\perp}^4}
	\prod_{i=1,2}\det\left(
	\begin{smallmatrix}
	(g(\genU),\basevec_{\alphagen{i}}) & (\borw(\genvec_j),\basevec_{\alphagen{i}})\\
	(g(\genU),\basevec_{\betagen{i}}) & (\borw(\genvec_j),\basevec_{\betagen{i}})
	\end{smallmatrix}
	\right).
	\eas
	\item If $\hone=\htwo=1$, then~$\polw{\vect{\alpha},\borw}{1}{1}\big(\borw(\vect{\genvec})\big)$ is
	\bas
	-\frac{4}{\genU_{z^\perp}^4}
	\sum_{i=1,2}
	\det\left(
	\begin{smallmatrix}
	(g(\genU),\basevec_{\alphaone}) & (\borw(\genvec_i),\basevec_{\alphaone}) \\
	(g(\genU),\basevec_{\betaone}) & (\borw(\genvec_i),\basevec_{\betaone})
	\end{smallmatrix}
	\right)
	\det\left(
	\begin{smallmatrix}
	(g(\genU),\basevec_{\alphatwo}) & (\borw(\genvec_{3-i}),\basevec_{\alphatwo}) \\
	(g(\genU),\basevec_{\betatwo}) & (\borw(\genvec_{3-i}),\basevec_{\betatwo})
	\end{smallmatrix}
	\right)
	.
	\eas
	\item If $\htot_j=1$ and $\htot_{3-j}=0$, where $j=1,2$, then~$\polw{\vect{\alpha},\borw}{\hone}{\htwo}\big(\borw(\vect{\genvec})\big)$ is
	\bas
	\frac{4}{\genU_{z^\perp}^2}
	\sum_{i=1,2}
	\det\left(
	\begin{smallmatrix}
	(g(\genU),\basevec_{\alphagen{i}}) & (\borw(\genvec_{3-j}),\basevec_{\alphagen{i}}) \\
	(g(\genU),\basevec_{\betagen{i}}) & (\borw(\genvec_{3-j}),\basevec_{\betagen{i}})
	\end{smallmatrix}
	\right)
	\det\left(
	\begin{smallmatrix}
	(\borw(\genvec_{j}),\basevec_{\alphagen{3-i}}) &  (\borw(\genvec_{3-j}),\basevec_{\alphagen{3-i}})\\
	(\borw(\genvec_{j}),\basevec_{\betagen{3-i}}) & (\borw(\genvec_{3-j}),\basevec_{\betagen{3-i}})
	\end{smallmatrix}
	\right)
	.
	\eas
	\item If $\hone=\htwo=0$, then
	\bas
	\polw{\vect{\alpha},\borw}{0}{0}\big(\borw(\vect{\genvec})\big)
	&=
	4\prod_{i=1,2}
	\det\left(
	\begin{smallmatrix}
	(\borw(\genvec_{1}),\basevec_{\alphagen{i}}) & (\borw(\genvec_{2}),\basevec_{\alphagen{i}})\\
	(\borw(\genvec_{1}),\basevec_{\betagen{i}}) & (\borw(\genvec_{2}),\basevec_{\betagen{i}})
	\end{smallmatrix}
	\right).
	\eas
	\item In all remaining cases, we have $\polw{\vect{\alpha},\borw}{\hone}{\htwo}=0$.
	\end{itemize}
	If the polynomial $\polw{\vect{\alpha},\borw}{\hone}{\htwo}$ differs from zero, then it is \emph{very homogeneous} only when both~${\hone}$ and~${\htwo}$ are zero.
	}
	\end{lemma}
	\begin{proof}
	\textcolor{\mycolor}{
	We deduce from~\eqref{eq;defpolPabcd} that
	\ba\label{eq;decgenpolabcd}
	\pol_{\vect{\alpha}}\big(g(\vect{\genvec})\big)
	&=
	4\prod_{i=1,2}
	\det\left(
	\begin{smallmatrix}
	(\genvec_1,g^{-1}(\basevec_{\alphagen{i}})) & (\genvec_2,g^{-1}(\basevec_{\alphagen{i}}))\\
	(\genvec_1,g^{-1}(\basevec_{\betagen{i}})) & (\genvec_2,g^{-1}(\basevec_{\betagen{i}}))
	\end{smallmatrix}
	\right).
	\ea
	}
	\textcolor{\mycolor}{We decompose~${g^{-1}(v_j)=s_j u_{z^\perp} + v'_j}$, with~$s_j\in\RR$ and~$v'_j=\big( g^{-1}(v_j)\big)_{w^\perp}$, for every~$j$, and replace such decomposition in~\eqref{eq;decgenpolabcd}, obtaining that~$\pol_{\vect{\alpha}}\big(g(\vect{\genvec})\big)$ equals
	\ba\label{eq;factorPwrtsplit}
	4\prod_{i=1,2}\det\left(
	\left(\begin{smallmatrix}
	s_{\alphagen{i}}(\genvec_1, \genU_{z^\perp})
	&
	s_{\alphagen{i}}(\genvec_2, \genU_{z^\perp})
	\\
	s_{\betagen{i}}(\genvec_1, \genU_{z^\perp})
	&
	s_{\betagen{i}}(\genvec_2, \genU_{z^\perp})
	\end{smallmatrix}
	\right)+\left(
	\begin{smallmatrix}
	(\genvec_1,v'_{\alphagen{i}})
	&
	(\genvec_2,v'_{\alphagen{i}})
	\\
	(\genvec_1,v'_{\betagen{i}})
	&
	(\genvec_2,v'_{\betagen{i}})
	\end{smallmatrix}
	\right)\right)
	.
	\ea
	Since~$\det(M+N)=\det (M) + \det (N) + \trace(M)\trace(N) - \trace(MN)$ for any~$2\times 2$ matrix, we may compute that the~$i$-th factor in~\eqref{eq;factorPwrtsplit} equals
	\bas
	\det \left(
	\begin{smallmatrix}
	(\genvec_1,v'_{\alphagen{i}})
	&
	(\genvec_2,v'_{\alphagen{i}})
	\\
	(\genvec_1,v'_{\betagen{i}})
	&
	(\genvec_2,v'_{\betagen{i}})
	\end{smallmatrix}
	\right)
	+
	(\genvec_1,\genU_{z^\perp})
	\det\left(
	\begin{smallmatrix}
	s_{\alphagen{i}} & (\genvec_2,v'_{\alphagen{i}})\\
	s_{\betagen{i}} & (\genvec_2,v'_{\betagen{i}})
	\end{smallmatrix}
	\right)
	-
	(\genvec_2, \genU_{z^\perp})
	\det\left(
	\begin{smallmatrix}
	s_{\alphagen{i}} & (\genvec_1,v'_{\alphagen{i}})\\
	s_{\betagen{i}} & (\genvec_1,v'_{\betagen{i}})
	\end{smallmatrix}
	\right).
	\eas
	We then replace this in~\eqref{eq;factorPwrtsplit} together with~$s_j=\big(g(\genU),\basevec_j\big)/\genU_{z^\perp}^2$ and~$(\genvec,\genvec_j')=\big(\borw(\genvec),\basevec_j\big)$, for~$j=\alphagen{i},\betagen{i}$.
	A simple comparison of that new formula for~\eqref{eq;decgenpolabcd} with~\eqref{eq;gengen2borfupol}, extracting the factors multiplying~$(\genvec_1,\genU_{z^\perp})^{h_1}(\genvec_2,\genU_{z^\perp})^{h_2}$ for every~$h_1$ and~$h_2$, verifies Lemma~\ref{lemma;gen2funnypol}.
	}

	To prove that the polynomial $\polw{\vect{\alpha},\borw}{0}{0}\big(\borw(\vect{\genvec})\big)$ is very homogeneous, one can follow the same wording of Lemma~\ref{lemma;homogpolabcddeg2}.
	It is an easy exercise to see that the remaining non-trivial polynomials are non-very homogeneous.
	\end{proof}
	
	Although the auxiliary polynomials~$\polw{\vect{\alpha},\borw}{\hone}{\htwo}$ are in general not very homogeneous, they satisfy the property
	\be\label{eq;nonveryhompropgen2}
	\polw{\vect{\alpha},\borw}{\hone}{\htwo}\big(\borw(\lambda \vect{\genvec})\big)=\lambda^{4-\hone-\htwo}\polw{\vect{\alpha},\borw}{\hone}{\htwo}\big(\borw(\vect{\genvec})\big),
	\ee
	for every $\lambda\in\CC$, or equivalently, they are homogeneous of degree~$4-\hone-\htwo$ in the classical sense.
	
	The following result illustrates the transformation property of~$\polw{\vect{\alpha},\borw}{\hone}{\htwo}$ induced by the right-multiplication of its argument by a matrix of~$\HH_2$.
	\begin{lemma}\label{lemma;nonhomogofderpol}
	\textcolor{\mycolor}{
	Let~${\tau=\left(\begin{smallmatrix}	
	\tauone & \tautwo\\ \tautwo & \tauthree
	\end{smallmatrix}\right)\in\HH_2}$.
	Let~$L$ split a hyperbolic plane~$U$, and let~$\genU,\genUU$ be the standard generators of~$U$.
	\begin{itemize}[leftmargin=*]
	\item If $\hgen{j}=0$ and $\hgen{3-j}=2$, then
	\bas
	\polw{\vect{\alpha},\borw}{h_1}{h_2}\big(\borw(\vect{\genvec}) \tau \big)
	&=
	\taugen{j}^2 \cdot \polw{\vect{\alpha},\borw}{0}{2}\big(\borw(\vect{\genvec})\big)
	+
	\taugen{j+1}^2 \cdot \polw{\vect{\alpha},\borw}{2}{0}\big(\borw(\vect{\genvec})\big)
	\\
	&\quad
	-\taugen{j}\cdot \taugen{j+1} \cdot \polw{\vect{\alpha},\borw}{1}{1}\big(\borw(\vect{\genvec})\big).
	\eas
	\item If $\hone=\htwo=1$, then
	\bas
	\polw{\vect{\alpha},\borw}{1}{1}\big(\borw(\vect{\genvec}) \tau \big)
	&=
	-2\tauone \tautwo \cdot \polw{\vect{\alpha},\borw}{0}{2}\big(\borw(\vect{\genvec})\big)
	-
	2\tautwo \tauthree \cdot \polw{\vect{\alpha},\borw}{2}{0}\big(\borw(\vect{\genvec})\big)
	\\
	&\quad+
	(\tauone\tauthree+\tautwo^2) \cdot \polw{\vect{\alpha},\borw}{1}{1}\big(\borw(\vect{\genvec})\big)
	.
	\eas
	\item If $\hgen{j}=0$ and $\hgen{3-j}=1$, then
	\bas
	\polw{\vect{\alpha},\borw}{h_1}{h_2}\big(\borw(\vect{\genvec}) \tau \big)
	=
	(-1)^{j+1}\det\tau\Big(
	\taugen{j}\cdot\polw{\vect{\alpha},\borw}{0}{1}\big(\borw(\vect{\genvec})\big)
	-
	\taugen{j+1}\cdot\polw{\vect{\alpha},\borw}{1}{0}\big(\borw(\vect{\genvec}) \big)\Big).
	\eas
	\end{itemize}
	}
	\end{lemma}
	\begin{proof}
	\textcolor{\mycolor}{This is an immediate consequence of Lemma~\ref{lemma;gen2funnypol} and the multilinearity of the determinant.}
	\end{proof}
	The following result will be relevant to compute the transformation property of the theta function~$\Theta_{\brK,2}$ attached to~$\polw{\vect{\alpha},\borw}{\hone}{\htwo}$, with respect to the action of~$\Mp_4(\ZZ)$.
	\begin{lemma}\label{lemma;someFtransfofabcdh12lor}
	\textcolor{\mycolor}{
	Let~$L$ split a hyperbolic plane~$U$, and let~$\genU,\genUU$ be the standard generators of~$U$.
	Let
	\bas
	f_{\tau,\borw,\hone,\htwo}(\vect{\genvec})
	&=
	\exp \Big(
	-\frac{1}{8\pi}\trace(\Delta y^{-1})
	\Big)\big(\polw{\vect{\alpha},\borw}{\hone}{\htwo}\big)\big(\borw(\vect{\genvec})\big)
	\\
	&\quad
	\times e\Big(
	\trace\big( q(\vect{\genvec}_{w^\perp})\tau\big)+
	\trace\big( q(\vect{\genvec}_w)\bar{\tau}\big)
	\Big),
	\eas
	where $\vect{\genvec}\in(\brK\otimes\RR)^2$ and~$\tau=\left(\begin{smallmatrix}	
	\tauone & \tautwo\\ \tautwo & \tauthree
	\end{smallmatrix}\right)\in\HH_2$.
	\begin{itemize}[leftmargin=*]
	\item If $\hgen{j}=2$ and $\hgen{3-j}=0$, then the Fourier transform of~$f_{\tau,\borw,h_1,h_2}$ is
	\bas
	\widehat{f_{\tau,\borw,h_1,h_2}}(\vect{\xi})
	&=
	i^{2-b}
	\det(\tau)^{-(b-1)/2-2}\det(\bar{\tau})^{-1/2}\Big(
	\taugen{j+1}^2 f_{-\tau^{-1},\borw,0,2}(\vect{\xi})
	\\
	&\quad+
	\taugen{j}\taugen{j+1} f_{-\tau^{-1},\borw,1,1}(\vect{\xi}) +
	\taugen{j}^2 f_{-\tau^{-1},\borw,2,0}(\vect{\xi})
	\Big).
	\eas
	\item If $\hone=\htwo=1$, then the Fourier transform of~$f_{\tau,\borw,1,1}$ is
	\bas
	\widehat{f_{\tau,\borw,1,1}}(\vect{\xi})
	&=
	i^{2-b}
	\det(\tau)^{-(b-1)/2-2}\det(\bar{\tau})^{-1/2}\Big(
	2\tautwo\tauthree f_{-\tau^{-1},\borw,0,2}(\vect{\xi})
	\\
	&\quad+
	(\tauone\tauthree + \tautwo^2) f_{-\tau^{-1},\borw,1,1}(\vect{\xi}) +
	2\tauone\tautwo f_{-\tau^{-1},\borw,2,0}(\vect{\xi})
	\Big).
	\eas
	\item If $\hgen{j}=1$ and $\hgen{3-j}=0$, then the Fourier transform of~$f_{\tau,\borw,h_1,h_2}$ is
	\bas
	\widehat{f_{\tau,\borw,\hone,\htwo}}(\vect{\xi})
	=
	-i^{2-b}
	\det(\tau)^{-(b-1)/2-2} \det(\bar{\tau})^{-1/2}
	\Big(
	\taugen{j+1} f_{-\tau^{-1},\borw,0,1}(\vect{\xi}) + \taugen{j} f_{-\tau^{-1},\borw,1,0}(\vect{\xi})
	\Big).
	\eas
	\item If $\hone=\htwo=0$, then the Fourier transform of~$f_{\tau,\borw,0,0}$ is
	\bas
	\widehat{f_{\tau,\borw,0,0}}(\vect{\xi})
	=
	i^{2-b} \det(\tau)^{-(b-1)/2-2}  \det(\bar{\tau})^{-1/2} f_{-\tau^{-1},\borw,0,0}(\vect{\xi}).
	\eas
	\end{itemize}
	}
	\end{lemma}
	\begin{proof}
	\textbf{Case $\boldsymbol{\hone=\htwo=0}$:}
	By Lemma~\ref{lemma;nonhomogofderpol} the polynomial~$\polw{\vect{\alpha},\borw}{0}{0}$ is \emph{very homogeneous} of degree~$(2,0)$, hence we may apply Lemma~\ref{lemma;onFtransfgenus2}~\ref{item;6onFtransfgenus2} to deduce that
	\bas
	\widehat{f_{\tau,\borw,0,0}}(\vect{\xi})
	&=
	i^{2-b}\det(\tau)^{-(b-1)/2-2} \det(\bar{\tau})^{-1/2} e\Big(
	- \trace\big(q(\vect{\xi}_{w^\perp})\tau^{-1}\big) - \trace\big(q(\vect{\xi}_{w})\bar{\tau}^{-1}\big)
	\Big)
	\\
	&\quad\times\exp\Big(
	-\frac{1}{8\pi} \trace\big(\Delta \Im(-\tau^{-1})^{-1}\big)
	\Big)(\polw{\vect{\alpha},\borw}{0}{0})\big(\borw(\vect{\xi})\big).
	\eas
	
	\textbf{Case $\boldsymbol{\hone=0}$ and $\boldsymbol{\htwo=2}$:}
	By Lemma~\ref{lemma;nonhomogofderpol}, the polynomial~$\polw{\vect{\alpha},\borw}{0}{2}$ is non-very homogeneous.
	We apply Lemma~\ref{lemma;onFtransfgenus2}~\ref{item;5onFtransfgenus2} to deduce that~$\widehat{f_{\tau,\borw,0,2}}(\vect{\xi})$ equals
	\ba\label{eq;casegenus202}
	\det(\tau/i)^{-(b-1)/2} 
	\det(i\bar{\tau})^{-1/2} e\Big(
	- \trace\big(q(\vect{\xi}_{w^\perp})\tau^{-1}\big) - \trace\big(q(\vect{\xi}_{w})\bar{\tau}^{-1}\big)
	\Big)\quad
	\\
	\times
	\exp\Big(
	-\frac{1}{8\pi} \trace\big(\Delta \tau^{-2}\Im(-\tau^{-1})^{-1}\big)
	\Big)(\polw{\vect{\alpha},\borw}{0}{2})\big(-\borw(\vect{\xi}) \tau^{-1}\big).
	\ea
	By~\cite[Lemma~$4.4$~($4.5$)]{roehrig}, we rewrite the exponential operator applied to~$\polw{\vect{\alpha},\borw}{0}{2}$ appearing in~\eqref{eq;casegenus202} as
	\ba\label{eq;casegenus202bis}
	\exp\Big(
	-\frac{1}{8\pi} \trace \big(\Delta \tau^{-2}\Im(-\tau^{-1})^{-1}\big)
	\Big)(\polw{\vect{\alpha},\borw}{0}{2})\big(-\borw(\vect{\xi}) \tau^{-1}\big)
	\quad
	\\
	=
	\exp\Big(
	-\frac{1}{8\pi}\trace\big(
	\Delta\Im(-\tau^{-1})^{-1}
	\big)
	\Big)\Big(\polw{\vect{\alpha},\borw}{0}{2}\big(-\borw (\vect{\xi})\tau^{-1}\big)\Big).
	\ea
	Since if $\tau=\left(\begin{smallmatrix}
	\tauone & \tautwo\\
	\tautwo & \tauthree
	\end{smallmatrix}\right)\in\HH_2$, then $-\tau^{-1}=\frac{1}{\det\tau}\left(\begin{smallmatrix}
	-\tauthree & \tautwo\\
	\tautwo & -\tauone
	\end{smallmatrix}\right)$, we deduce by Lemma~\ref{lemma;nonhomogofderpol} that
	\bas
	\polw{\vect{\alpha},\borw}{0}{2} \big(-\borw(\vect{\xi})\tau^{-1}\big)
	&=
	\frac{\tauthree^2}{\det\tau^2} \cdot \polw{\vect{\alpha},\borw}{0}{2}\big( \borw(\vect{\xi})\big)
	+
	\frac{\tautwo\tauthree}{\det\tau^2}
	\\
	&\quad\times
	\polw{\vect{\alpha},\borw}{1}{1}\big(\borw(\vect{\xi})\big)
	+
	\frac{\tautwo^2}{\det\tau^2}\cdot \polw{\vect{\alpha},\borw}{2}{0}\big(\borw(\vect{\xi})\big).
	\eas
	Replacing this in~\eqref{eq;casegenus202bis}, we deduce that
	\begin{align*}
	\widehat{f_{\tau,\borw,0,2}}(\vect{\xi})
	&=
	\det(\tau/i)^{-(b-1)/2}\det(i\bar{\tau})^{-1/2} e\Big(
	- \trace\big(q(\vect{\xi}_{w^\perp})\tau^{-1}\big) - \trace\big(q(\vect{\xi}_{w})\bar{\tau}^{-1}\big)
	\Big)
	\\
	&\quad\times \Big(
	\frac{\tauthree^2}{\det\tau}\exp\Big(
	-\frac{1}{8\pi}\trace\big(
	\Delta\Im(-\tau^{-1})^{-1}
	\big)
	\Big)(\polw{\vect{\alpha},\borw}{0}{2})\big(\borw(\vect{\xi})\big)
	\\
	&\quad+
	\frac{\tautwo\tauthree}{\det\tau^2} \cdot
	\exp\Big(
	-\frac{1}{8\pi}\trace\big(
	\Delta\Im(-\tau^{-1})^{-1}
	\big)
	\Big)(\polw{\vect{\alpha},\borw}{1}{1})\big(\borw(\vect{\xi})\big)
	\\
	&\quad+
	\frac{\tautwo^2}{\det\tau^2} \cdot
	\exp\Big(
	-\frac{1}{8\pi}\trace\big(
	\Delta\Im(-\tau^{-1})^{-1}
	\big)
	\Big)(\polw{\vect{\alpha},\borw}{2}{0})\big(\borw(\vect{\xi})\big)
	\Big)
	\\
	&=
	i^{2-b}\det(\tau)^{-(b-1)/2-2}\det(\bar{\tau})^{-1/2}\Big(
	\tauthree^2\cdot f_{-\tau^{-1},\borw,0,2}(\vect{\xi})
	\\
	&\quad+
	\tautwo\tauthree\cdot f_{-\tau^{-1},\borw,1,1}(\vect{\xi}) +
	\tautwo^2\cdot f_{-\tau^{-1},\borw,2,0}(\vect{\xi})
	\Big).
	\end{align*}
	
	\textbf{All remaining cases:}
	The proof is analogous and left to the reader.
	\end{proof}
	
	\printbibliography

\end{document}